\def\newaliasedtheorem#1[#2]#3{
  \newaliascnt{#1@alt}{#2}
  \newtheorem{#1}[#1@alt]{#3}
  \expandafter\newcommand\csname #1@altname\endcsname{#3}
}
\numberwithin{equation}{section}
\newtheoremstyle{slanted}{\topsep}{\topsep}{\slshape}{}{\bfseries}{.}{.5em}{}
\theoremstyle{plain}
\newtheorem{theorem}{Theorem}[section]
\theoremstyle{definition}
\theoremstyle{remark}
\newcommand{\setN}{\mathbb{N}}
\newcommand{\setR}{\mathbb{R}}
\newcommand{\eps}{\varepsilon}
\let\phi\varphi
\newcommand{\weakto}{\rightharpoonup}
\newcommand{\Id}{\mathrm{Id}}
\newcommand{\di}{\mathop{}\!\mathrm{d}}
\newcommand{\loc}{{\rm loc}}
\newcommand{\res}{\mathop{\hbox{\vrule height 7pt width .5pt depth 0pt
\vrule height .5pt width 6pt depth 0pt}}\nolimits}
\DeclareMathOperator{\supp}{supp}
\newcommand{\Ch}{{\sf Ch}}
\newcommand{\scal}[2]{\ensuremath{\langle #1 , #2 \rangle}} 
\newcommand{\Leb}{\mathrm{Leb}}
\DeclareMathOperator{\Lip}{Lip}
\newcommand{\haus}{\mathscr{H}}
\newcommand{\dist}{\mathsf{d}}
\newcommand{\meas}{\mathfrak{m}}
\DeclareMathOperator{\CD}{CD}
\DeclareMathOperator{\RCD}{RCD}
\DeclareMathOperator{\Span}{Span}
\DeclareMathOperator{\vol}{\mathrm{vol}}
\newfont{\tmpf}{cmsy10 scaled 2500}
\begin{document}
\title{Embedding of $\RCD^*(K,N)$ spaces in $L^2$ via eigenfunctions}
\author{Luigi Ambrosio
\thanks{Scuola Normale Superiore, \url{luigi.ambrosio@sns.it}} \and
Shouhei Honda
\thanks{Tohoku University, \url{shouhei.honda.e4@tohoku.ac.jp}} \and
 Jacobus W. Portegies
\thanks{Eindhoven University of Technology, \url{j.w.portegies@tue.nl}} \and
David Tewodrose
\thanks{CY Cergy Paris University,  \url{david.tewodrose@cyu.fr}}} \maketitle
\begin{center}
\textit{Dedicated to the memory of Professor Kazumasa Kuwada.}
\end{center}

\begin{abstract} In this paper we study the family of embeddings $\Phi_t$ of a compact $\RCD^*(K,N)$ space $(X,\dist,\meas)$ into $L^2(X,\meas)$
via eigenmaps. Extending part of the classical results \cite{Berard85,BerardBessonGallot} known for closed Riemannian manifolds, we prove 
convergence as $t\downarrow 0$ of the rescaled pull-back metrics $\Phi_t^*g_{L^2}$ in $L^2(X,\meas)$ induced by $\Phi_t$.
Moreover we discuss the behavior of $\Phi_t^*g_{L^2}$ with respect to measured Gromov-Hausdorff convergence and $t$.
Applications include the quantitative $L^p$-convergence in the noncollapsed setting for all $p<\infty$, a result new even for closed 
Riemannian manifolds and Alexandrov spaces.
\end{abstract}

\tableofcontents
\section{Introduction}

General Riemannian manifolds, defined through charts, could a priori have been much more complex than submanifolds of Euclidean space, but Nash's embedding theorem tells us that this is not the case: a general closed Riemannian manifold can always be isometrically embedded into a Euclidean space. This reduction in complexity is useful for many reasons, ranging from making it easier to think about Brownian motion on a Riemannian manifold, to opening up analytical tools when studying harmonic maps with a Riemannian manifold as a target. 

Approximately a decade ago, Sturm \cite{Sturm06}, Lott and Villani \cite{LottVillani} independently gave a meaning to a Ricci curvature lower bound and a dimension upper bound on metric measure spaces. It was at that time already well-known that lower bounds on the Ricci curvature ensure many key estimates in the analysis of geometric inequalities and partial differential equations on Riemannian manifolds. Sturm, Lott and Villani moved away from Riemannian manifolds and considered the general class of metric measure spaces, which includes weighted Riemannian manifolds.

The theory of metric measure spaces with generalized lower Ricci curvature bounds has been under rapid development, both in the general
classes singled out by Lott-Villani and Sturm, and in the class of $\RCD^*(K,N)$ spaces. Particularly in the latter, many classical results from Riemannian geometry have been carried over. We recall the precise definition of $\RCD^*(K,N)$ spaces in Subsection~\ref{se:rcdstarkn}.

A priori, $\RCD^*(K,N)$ spaces could be very complex. Certainly, they are more general than Riemannian manifolds, as they contain both the Gromov-Hausdorff limits of $N$-dimensional Riemannian manifolds with uniform Ricci curvature lower bounds and Alexandrov spaces. 
Currently, we do not know whether isometric embeddings of $\RCD^*(K,N)$ spaces into Euclidean spaces always exist, but in this paper
we study a relaxed version of this question. We will seek an embedding into a Hilbert space rather than a Euclidean space, and look for an 
embedding which is only approximately rather than precisely isometric. 

\subsection*{Importance in data analysis}

Another motivation for studying embeddings of $\RCD^*(K,N)$ spaces comes from data analysis.
Indeed, embeddings of \emph{data} into Euclidean space are an important tool in manifold-learning or non-linear dimensionality reduction \cite{LeeV07}. This is a branch unsupervised machine-learning tasked with finding a small set of relevant latent variables in a priori high-dimensional data. Eigenmaps \cite{BelkinN03} and Diffusion Maps \cite{CoifmanL06} are examples of manifold-learning algorithms that are closely related to the embeddings considered in this article.

While the merit of such embeddings is of course application-dependent, it often hinges on how well the embedding preserves distances.

To analyze the quality of embeddings of data, there are at least two good reasons to look at \emph{continuous} spaces. Not only do continuous metric measure spaces often provide a good model to approximate large amounts of data, also in many situations the data is sampled from a ``ground truth'' distribution which in fact forms a continuous space itself. 

For smooth Riemannian manifolds, classical theorems can be used to produce embeddings, but as a side-effect the quality of embeddings depends on high regularity of the manifold. 
Yet embeddings are also desired in situations where for instance bounds on high derivatives of the metric are not available, or worse, when the ground truth has singularities. To produce embeddings and guarantee their quality for Riemannian manifolds that depend only on relatively low-level geometric information such as curvature and dimension, or to construct embeddings for nonsmooth spaces, it is essential to understand whether and how certain maps embed metric measure spaces into Euclidean spaces. Our goal is to provide convergence results that depend only on lower bounds on curvature, volume and upper bounds 
on dimension and diameter, not using bounds on injectivity radius or derivatives of the metric; of course the price we pay is that convergence is understood
in weaker topologies. In order to obtain quantitative estimates, in the $\epsilon-\delta$ form, we argue by contradiction and, for this reason,
it is necessary to work in the compact category of $\RCD^*(K,N)$ spaces.

\subsection*{Embedding a manifold $M^n$ into $L^2$}

For a closed $n$-dimensional Riemannian manifold $(M^n, g)$ and a positive time $t > 0$, the map $\Phi_t: M^n \to L^2(M^n, \mathrm{vol}_g)$ is given by
\begin{equation}\label{eq:introPhi}
\Phi_t(x) = p(x, \cdot,t)
\end{equation}
where $p: M^n \times M^n \times (0,\infty) \to (0,\infty)$ is the heat kernel on $M^n$.
B\'erard, Besson and Gallot showed that the map $\Phi_t$ provides a smooth embedding of the Riemannian manifold $(M^n,g)$ \cite{Berard85,BerardBessonGallot}. In addition, they showed that it almost preserves distances. Their original result was phrased in terms of asymptotics
for the pullback metric of the metric on $L^2(M^n, \mathrm{vol}_g)$ as $t$ converges to $0$, namely
\begin{equation*}
c(n)t^{(n+2)/2} \Phi_t^*g_{L^2} =  g - \frac{2t}{3}\left(\mathrm{Ric}_g - \frac{1}{2}\mathrm{Scal}_g\,  g\right) + O(t^2), \quad t \downarrow 0.
\end{equation*}
This asymptotic expansion contains explicit curvature tensors on the right-hand side, which are not available in a nonsmooth context, or which cannot be bounded if only a lower-bound on the Ricci curvature is known. 

A slightly different approach is more robust, thus better suited in nonsmooth settings, and was used by the third author to obtain convergence results for the diffusion maps algorithm \cite{Portegies16}. 
Omitting for notational simplicity the $x$ dependence of $g$, the differential of the map $\Phi_t$ in the direction of the tangent vector $v \in T_x M^n$ is
\[
(d\Phi_t)_x(v) = g(v, \nabla_x p(x, \cdot, t)).
\]
Its length is therefore
\begin{equation}\label{eq:goodlimit1}
c(n) t^{(n+2)/2} \| (d\Phi_t)_x(v) \|^2_{L^2} = c(n) t^{(n+2)/2} \int_{M^n}  g(v , \nabla_x p(x,y,t) )^2 \di \mathrm{vol}_g(y).
\end{equation}
Now, every Riemannian manifold is locally Euclidean. For small enough $t$, the heat kernel localizes so strongly, that only a small neighborhood is probed in the integral at the right-hand side. Hence, this integral converges to its value in Euclidean space.

\subsection*{Embedding an $\RCD^*(K,N)$ space and our convergence results}

The analogous map $\Phi_t: X \to L^2(X, \meas)$ can also be constructed for a compact $\RCD^*(K,N)$ space $(X, \dist, \meas)$ and is 
still given by \eqref{eq:introPhi}.
The heat kernel $p$ exists and satisfies natural estimates: this follows from the theory of linear heat flow on an $\RCD^*(K,N)$ space 
$(X,\dist, \meas)$ developed by Gigli, Savar\'e and the first author \cite{AmbrosioGigliSavare13}, and from decay estimates on the heat 
kernel obtained by Jiang, Li and Zhang \cite{JiangLiZhang}.  

We show that the map $\Phi_t: X \to L^2(X, \meas)$ is a \emph{continuous} embedding of the compact $\RCD^*(K,N)$ space $(X, \dist, \meas)$ into $L^2(X,\meas)$, in other words, the map $\Phi_t$ is a homeomorphism onto its image.
Note that our proof actually shows that $\Phi_t$ is Lipschitz, but in general $(\Phi_t)^{-1}$ is not (see Remark \ref{counterex}). 
The next step is to define the pull-back metric $g_t$, formally given at $x\in X$ by 
$$
g_t(v,v)= \int_X g(\nabla_x p(x,y,t),v)^2\di\meas (y)
$$
for a tangent vector $v$ at $x$, where $g$ is the Riemannian metric of the space $(X,\dist,\meas)$, canonically
derived from Cheeger's energy (see Proposition~\ref{prop:gcanonical}).

However, since in calculus in metric measure spaces 
many objects (vector fields, gradients, Hessians, etc.) are only defined up to $\meas$-negligible sets, we shall
rather work with the integral formula 
$$
\int_X g_t(V,V)\di\meas:=\int_X \int_X g(\nabla_x p(x,y,t),V(x))^2\di\meas (y)\di\meas(x)
$$
for any square integrable tangent vector field $V$ and we 
prove convergence as $t\downarrow 0$, after a suitable rescaling, to $\int_X g(V,V)\di\meas$.
In the $\RCD^*(K,N)$ theory we know, thanks to the very recent work \cite{BrueSemola}
(which extends a part of \cite{ColdingNaber} from Ricci limit spaces to general spaces)
that $\RCD^*(K,N)$ spaces have a unique ``essential dimension'' $n$ (see Theorem~\ref{th: RCD decomposition} for the precise statement)
related to $\meas$ by the identity $\meas=\theta\mathcal{H}^n\res\mathcal R_n$, where $\mathcal R_n$ is the $n$-dimensional regular set of $(X,\dist,\meas)$ according to \cite{MondinoNaber}. Because of the weight $\theta$, 
it is natural to replace the scaling $c(n)t^{(n+2)/2}$ in \eqref{eq:goodlimit1} by the local and dimension-free scaling
function $t\meas(B_{\sqrt{t}}(x))$. We prove in Theorem~\ref{th:convergence1} that $\hat{g}_t:=t\meas(B_{\sqrt{t}}(\cdot))g_t$ converge as
$t\downarrow 0$, in a strong sense (which involves also the Hilbert-Schmidt norms of the metrics), to 
$$
\hat{g}:=c_n g
$$
where $c_n$ is a suitable dimensional constant (see \eqref{eq:defck}). Under an additional technical assumption, see \eqref{eq:goodlimit22} (satisfied for instance in Alexandrov spaces, weighted Riemannian manifolds and 
Ahlfors regular spaces), we can also consider the rescalings
$$
\tilde{g}_t:=t^{(n+2)/2}g_t
$$
and prove, in Theorem~\ref{th:convergence2}, their convergence to 
$$
\tilde{g}:=\frac{c_n}{\theta\omega_n}1_{\mathcal{R}_n^*}g.
$$
where $\mathcal{R}_n^*$ is the ``reduced'' regular set introduced in (\ref{eq:defRkstar}) (in particular $c_n$ is related to the
constant $c(n)$ in \eqref{eq:goodlimit1} by $c(n)=\omega_n/c_n$).

It would be desirable to have a counterpart of these convergence results
involving also the global (or, better, non-infinitesimal) point of view, i.e. distances instead of  metrics. Unfortunately, in the nonsmooth setting, the
process that allows to recover distances out of metrics is not straightforward, since the latter are only defined up
to $\meas$-negligible sets. We will tackle this problem in a forthcoming paper.

However, in this paper we prove two results that go in this direction. Specifically, let us endow
the class of $\RCD^*(K,N)$ metric measure spaces with the topology of measured Gromov-Hausdorff convergence.
We prove in Theorem~\ref{th:convergence3} that, one has: 
\begin{itemize}
\item[(1)] the map $(X,\dist,\meas)\mapsto g_t(X,\dist,\meas)$ is continuous, with respect to $t>0$ and the convergence of metrics on 
different metric measure spaces of Definition~\ref{def:conriemaspaces};
\item[(2)] the map $(X,\dist,\meas)\mapsto (\Phi_t(X),\dist_t)$, where $\dist_t$ is the \emph{restriction of the ambient $L^2(X, \meas)$-distance},
is continuous, endowing the target space with the Gromov-Hausdorff topology.
\end{itemize}
Moreover, in the noncollapsed setting, (1) can be improved to the case up to continuity at $t=0$, which allows us to
show the sharp quantitative convergence of $\hat{g}_t$ as $t \downarrow 0$ (see Theorems \ref{thm:quantitativ} and \ref{label}). These 
results are new, as far as we know, even for Riemannian manifolds and Alexandrov spaces.

\subsection*{Plan of the paper}

The paper is organized as follows: Section~\ref{sec:prem} collects all notation, preliminary results and terminology on $\RCD^*(K,N)$ spaces. In particular 
we focus on convergence results for Sobolev functions and heat flows, also in the local form that is sometimes needed in
the paper, when proving results by a blow-up argument. Section~\ref{sec:tan} provides a description of the tangent bundle, where we follow
closely Gigli's axiomatization in \cite{Gigli}. In particular, on the basis of this axiomatization and of \cite{AmbrosioGigliSavare14}, we are able to define the notion of Riemannian metric on an infinitesimally Hilbertian metric measure space $(X,\dist,\meas)$: in this family, the canonical Riemannian metric is the one induced by Cheeger's energy, since Cheeger's energy can be canonically built out of distance $\dist$ and measure $\meas$. In Section~\ref{sec:embedding} we introduce
the embedding map $\Phi_t$, first in the smooth case (on the basis of \cite{Berard85,BerardBessonGallot}) and then in the nonsmooth case. Section~\ref{sec:5} provides the proof of all convergence results except for quantitative ones, to which Section~\ref{sec:6} is dedicated. Finally, Appendix is devoted to asymptotic bounds on the eigenvalues
in the $\RCD^*(K,N)$ setting and to the expansion as a power series of the heat kernel.

\smallskip\noindent
\textbf{Acknowledgement.}
The first and fourth authors acknowledge the support of the MIUR PRIN 2015 project ``Calculus of Variations''.
The second author acknowledges supports of the JSPS Program for Advancing Strategic
International Networks to Accelerate the Circulation of Talented Researchers, of the Grantin-Aid
for Young Scientists (B) 16K17585, Grant-in-Aid for Scientific Research (B) of 18H01118 and of 20H01799.
The authors warmly thank the referee for the detailed reading of the paper and for the constructive comments. 

\section{Preliminary notions}\label{sec:prem}

Throughout this paper, by metric measure space we mean a triple $(X,\dist,\meas)$ where $(X,\dist)$ is a complete and separable
metric space and $\meas$ is a nonnegative measure on the Borel $\sigma$-algebra, finite on bounded sets. We use the notation
$L^p(X,\meas)$ for the space of $p$-integrable functions, where $1\le p \le \infty$, and $L^0(X,\meas)$ for $\meas$-measurable functions. Similarly we define $L^p(A, \meas)$ for all $A \subset X$ Borel, and $L^p_{\mathrm{loc}}(X, \meas)$ as the set of all $f \in L^0(X, \meas)$ with $f1_A \in L^p(X, \meas)$ for all bounded Borel subset $A$ of $X$, where $1_A$ denotes the characteristic function of a set $A$, with values in $\{0,1\}$. 

We adopt standard metric space notation, as $B_r(x)$ ($\overline{B}_r(x)$, resp.) for open (closed, resp.) balls, $C(X)$ ($C_c(X)$, resp.) for continuous (compactly supported Lipschitz, resp.) functions, $\Lip(X,\dist)$ ($\Lip_b$, $\Lip_c$, resp.) for Lipschitz (bounded
Lipschitz, compactly supported Lipschitz, resp.) functions, etc.

\subsection{Cheeger energy and Laplacian}\label{cheelap}

The Cheeger energy $\Ch:L^2(X,\meas)\to [0,\infty]$ associated to the metric measure structure $(X,\dist,\meas)$
is the convex and $L^2(X,\meas)$-lower semicontinuous functional defined by
\begin{equation}\label{eq:defchee}
\Ch(f):=\inf\left\{\liminf_{i\to\infty}\int_X{\rm lip}^2f_i\di\meas:\ f_i\in\Lip_b(X,\dist)\cap L^2(X,\meas),\,\,\,\|f_i-f\|_{L^2(X, \meas)}\to 0
\right\},
\end{equation}
where
$$
{\rm lip}f(x) :=
\begin{cases}
\limsup\limits_{y\to x}\frac{|f(y)-f(x)|}{\dist(y,x)} &  \text{if $x \in X$ is not isolated},\\
0 & \text{otherwise}.
\end{cases}
$$
denotes the local Lipschitz constant.
Accordingly, the Sobolev space $H^{1,2}(X,\dist,\meas)$ is defined as the finiteness domain of $\Ch$.

By looking at the optimal sequence in \eqref{eq:defchee} one can identify a canonical object $|\nabla f|$, called the minimal relaxed slope,
which is local on Borel sets (i.e. $|\nabla f_1|=|\nabla f_2|$ $\meas$-a.e. on $\{f_1=f_2\}$) and provides integral representation to $\Ch$, namely
$$\Ch (f)=\int_X|\nabla f|^2\di\meas\qquad\forall f\in H^{1,2}(X,\dist,\meas).$$

In this paper we shall only deal with \textit{infinitesimally Hilbertian} metric measure spaces, i.e. the metric measure spaces
such that $\Ch$ is a quadratic form. The following result, borrowed from \cite{AmbrosioGigliSavare14} (see also \cite{Gigli1} for the first part), 
plays an important role in our discussion:

\begin{theorem} \label{thm:duke} If $\Ch$ is quadratic, the function 
$$
\langle\nabla f_1,\nabla f_2\rangle:=\lim_{\epsilon\to 0}\frac{|\nabla (f_1+\epsilon f_2)|^2-|\nabla f_1|^2}{2\epsilon}
$$
provides a symmetric bilinear form on $H^{1,2}(X,\dist,\meas)\times H^{1,2}(X,\dist,\meas)$ with values in $L^1(X,\meas)$, and
$$
\mathcal{E} (f_1,f_2) := \int_X \langle \nabla f_1, \nabla f_2 \rangle \di \meas, \qquad \forall f_1, f_2 \in H^{1,2}(X,\dist,\meas) 
$$
defines a strongly local Dirichlet form.
\end{theorem}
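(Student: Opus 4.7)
The plan is to deduce everything from the assumed quadratic nature of $\Ch$ together with the established properties of the minimal relaxed slope $|\nabla\cdot|$: it is local on Borel sets, satisfies the chain rule with Lipschitz functions, and provides the integral representation of $\Ch$. The starting observation is that the polarization identity on $\Ch$, namely $2\Ch(f_1)+2\Ch(f_2)=\Ch(f_1+f_2)+\Ch(f_1-f_2)$ for all $f_1,f_2\in H^{1,2}(X,\dist,\meas)$, upgrades (using locality and the integral representation) to the pointwise parallelogram identity
\[
2|\nabla f_1|^2+2|\nabla f_2|^2=|\nabla(f_1+f_2)|^2+|\nabla(f_1-f_2)|^2\qquad\meas\text{-a.e.}
\]
This is the analytic engine behind the whole theorem.

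First I would verify existence of the limit defining $\langle\nabla f_1,\nabla f_2\rangle$. Since $g\mapsto|\nabla g|$ is a seminorm on $H^{1,2}$ at the a.e.\ pointwise level, the function $\epsilon\mapsto|\nabla(f_1+\epsilon f_2)|^2$ is convex, so the incremental ratio in the definition is monotone in $\epsilon$ and the pointwise limit exists $\meas$-a.e. Using the parallelogram identity one gets the closed expression
\[
\langle\nabla f_1,\nabla f_2\rangle=\tfrac{1}{4}\bigl(|\nabla(f_1+f_2)|^2-|\nabla(f_1-f_2)|^2\bigr),
\]
which manifestly belongs to $L^1(X,\meas)$ by $2|ab|\le a^2+b^2$ applied to minimal relaxed slopes, and convergence of the incremental ratios in $L^1(X,\meas)$ follows by dominated convergence.

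Next I would establish symmetry and bilinearity. Symmetry and scalar homogeneity are immediate from the polarization formula and the $2$-homogeneity of $|\nabla\cdot|^2$. The key point is additivity in the first argument: $\langle\nabla(f_1+f_2),\nabla h\rangle=\langle\nabla f_1,\nabla h\rangle+\langle\nabla f_2,\nabla h\rangle$, which I would prove by applying the parallelogram identity to the pairs $(f_1+\epsilon h,f_2)$ and $(f_1,f_2+\epsilon h)$, subtracting, dividing by $\epsilon$ and passing to the limit. This is the main obstacle, and it is the step where quadraticity of $\Ch$ is genuinely used beyond the mere existence of the limit.

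Finally I would verify the Dirichlet form axioms for $\mathcal E$. Closedness on $L^2(X,\meas)$ is equivalent to $L^2$-lower semicontinuity of $\mathcal E(f,f)=\Ch(f)$, which holds by construction. The Markov property $\mathcal E(T\circ f,T\circ f)\le\mathcal E(f,f)$ for $1$-Lipschitz $T\colon\setR\to\setR$ with $T(0)=0$ follows from the chain rule $|\nabla(T\circ f)|\le|\nabla f|$ at the level of minimal relaxed slopes. Strong locality reduces to the locality property of $|\nabla\cdot|$: if $g$ is constant on a Borel set $A$ then $|\nabla g|=0$ $\meas$-a.e.\ on $A$, hence by Cauchy--Schwarz $|\langle\nabla f,\nabla g\rangle|\le|\nabla f||\nabla g|=0$ a.e.\ on $A$, giving $\mathcal E(f,g)=0$ whenever $f(g-c)=0$ for a suitable constant $c$. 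All of these are consequences of properties of $|\nabla\cdot|$ already recorded in the literature, so the only genuinely delicate point remains the additivity of $\langle\nabla\cdot,\nabla\cdot\rangle$ in each argument.
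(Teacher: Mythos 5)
Your outline follows the standard route, but it has one genuine gap, and it sits exactly at the load-bearing point. You assert that the polarization identity for $\Ch$ ``upgrades (using locality and the integral representation) to the pointwise parallelogram identity'' for the minimal relaxed slopes, and everything downstream — the closed formula $\langle\nabla f_1,\nabla f_2\rangle=\tfrac14(|\nabla(f_1+f_2)|^2-|\nabla(f_1-f_2)|^2)$ and your additivity argument, which you run precisely by applying the pointwise parallelogram identity to shifted pairs — depends on it. But neither locality nor the integral representation delivers this step: you cannot localize the integrated identity to a Borel set $A$, since $1_A f$ is in general not a Sobolev function and $\Ch$ cannot be tested on such localizations; and the parallelogram defect $|\nabla(f+g)|^2+|\nabla(f-g)|^2-2|\nabla f|^2-2|\nabla g|^2$ has no pointwise sign (convexity of $\lambda\mapsto|\nabla(f+\lambda g)|^2$ only gives $|\nabla(f+g)|^2+|\nabla(f-g)|^2-2|\nabla f|^2\ge 0$ $\meas$-a.e., and subtracting $2|\nabla g|^2$ destroys the sign), so ``zero integral'' does not imply ``vanishes a.e.''. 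In effect the argument is circular: the pointwise parallelogram law and the $\meas$-a.e.\ bilinearity are two faces of the same statement, and quadraticity of $\Ch$ is consumed already at the step you leave unjustified, not only at the additivity step you flag.

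Note that the paper itself gives no proof of this theorem; it is quoted from the Ambrosio--Gigli--Savar\'e Duke paper (and Gigli's memoir), where the gap is closed by working with quantities that \emph{do} carry a pointwise sign. From convexity one proves, $\meas$-a.e., the one-sided inequalities $\langle\nabla f,\nabla(g_1+g_2)\rangle\le\langle\nabla f,\nabla g_1\rangle+\langle\nabla f,\nabla g_2\rangle$ (write $f+\epsilon(g_1+g_2)$ as the midpoint of $f+2\epsilon g_1$ and $f+2\epsilon g_2$ and use convexity of the squared slope) and $\langle\nabla f,\nabla g\rangle+\langle\nabla f,\nabla(-g)\rangle\ge 0$. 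By monotone convergence of the difference quotients, $\int_X\langle\nabla f,\nabla g\rangle\di\meas$ equals the polarization of $\Ch$, so when $\Ch$ is quadratic the two sides of each inequality have equal integrals; each nonnegative defect therefore has zero integral and vanishes $\meas$-a.e. This yields a.e.\ bilinearity, from which the pointwise parallelogram identity and your closed formula follow, and the rest of your outline (the $L^1$ bound via $|\langle\nabla f_1,\nabla f_2\rangle|\le|\nabla f_1||\nabla f_2|$, closedness as $L^2$-lower semicontinuity of $\Ch$, the Markov property via $1$-Lipschitz post-composition, and strong locality from $|\nabla g|=0$ a.e.\ on level sets of $g$) goes through essentially as you describe.
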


Still assuming that $\Ch$ is a quadratic form, we can adopt the standard definition of Laplacian, namely
\begin{align*}
\mathcal{D}(\Delta) := \{ f \in H^{1,2}(X,\dist,\meas) \, : \, \, & \text{there exists} \, \, h \in L^2(X,\meas) \, \, \text{such that} \\
&  \mathcal{E}(f,g)= - \int_X h g \di \meas \, \, \, \text{for all} \, \,  g\in H^{1,2}(X,\dist,\meas) \, \}
\end{align*}
and $\Delta f := h$ for any $f \in \mathcal{D}(\Delta)$.

Besides the construction of $\Ch$, we need the following results from the seminal paper \cite{Cheeger}. They
hold in the class of so-called PI spaces, namely metric measure spaces $(X,\dist,\meas)$ satisfying 
the local doubling condition and a local $(1, 2)$-Poincar\'e inequality.

\begin{theorem}\label{thm:Cheeger}
Let $(X,\dist,\meas)$ be a PI space. Then:
\begin{itemize}
\item[(1)] For all $f\in H^{1,2}(X,\dist,\meas)$, for $\meas$-a.e. $x\in X$ one has
${\rm Dev}(f,B_r(x))=o(\meas(B_r(x)))$ as $r\downarrow 0$, where
\begin{equation}\label{eq:odev}
{\rm Dev}(f,B_r(x))=\int_{B_r(x)}|\nabla f|^2\di\meas-\inf\left\{\int_{B_r(x)}|\nabla h|^2\di\meas:\ f-h\in\Lip_c(B_r(x))\right\}.
\end{equation}
\item[(2)] There exist
$M\geq 0$ and a sequence of Borel subsets of positive measure $A_j$ such that $\meas \left(X \setminus \bigcup_jA_j\right)=0$ holds and for any $j$, there exist an integer $1\le k=k(j)\le M$ and Lipschitz functions $F_i\in \mathrm{Lip}(X,\dist)$, $1\leq i\leq k$, 
such that, for all $f\in \Lip(X, \dist)$, one has
\begin{equation}\label{eq:Cheexp}
{\rm lip}\bigl( f(\cdot)-\sum_{i=1}^k\chi_i(x_0)F_i(\cdot)\bigr)(x_0)=0
\qquad\text{for $\meas$-a.e. $x_0\in A_j$}
\end{equation}
for suitable $\chi_i\in L^2(A_j, \meas)$ with $\sum_i\chi_i^2\leq M|\nabla f|^2$ $\meas$-a.e. in $A_j$.
\end{itemize}
\end{theorem}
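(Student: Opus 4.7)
The plan is to follow Cheeger's original strategy, in which parts (1) and (2) are intertwined: the chart construction in (2) provides the ``infinitesimal linearization'' that forces the deviation in (1) to be negligible. I would first fix the general framework (doubling + 2-Poincaré), and observe that because $\Ch$ is convex (quadratic in the infinitesimally Hilbertian case, but convexity is what is used here) with the local Dirichlet boundary condition $f-h\in \Lip_c(B_r(x))$, the infimum in \eqref{eq:odev} is attained by a unique minimizer $h_r$. The Euler--Lagrange condition $\int_{B_r(x)}\langle \nabla h_r,\nabla(h_r-f)\rangle \di\meas = 0$ (or its nonlinear variant in the general Cheeger setting) gives the identity
$$
{\rm Dev}(f,B_r(x)) = \int_{B_r(x)}|\nabla(f-h_r)|^2\di\meas,
$$
so the task in (1) reduces to showing that $\meas$-a.e. $x$ is a point of vanishing energy density of $f-h_r$.

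For (2), I would follow Cheeger's greedy construction: fix a countable dense family $\{g_n\}\subset \Lip_b\cap H^{1,2}$, and at each point try to select a maximal ``linearly independent'' collection among them by iteratively minimizing the local Lipschitz constant ${\rm lip}(g_n-\sum_j \alpha_j g_{n_j})(x_0)$ over scalar coefficients. A Borel selection argument then covers $\meas$-almost all of $X$ by sets $C$ on which the chosen indices and coefficients are uniform, producing the family $F_i$. The pointwise expansion \eqref{eq:Cheexp} would follow from the maximality of the selection, and the bound $\sum_i \chi_i^2 \leq M|\nabla f|^2$ from the $(1,2)$-Poincar\'e inequality, which converts oscillation control of $f - \sum_i \chi_i(x_0)F_i$ on $B_r(x_0)$ into a gradient bound on the coefficients.

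With (2) in hand, I would return to (1). The Poincar\'e inequality plus doubling yield a weak Rademacher-type statement: at $\meas$-a.e. $x_0\in C$, the function $f$ can be written as $\sum_i \chi_i(x_0) F_i + R$ with ${\rm lip}(R)(x_0)=0$, and a Caccioppoli-type inequality controls $\int_{B_r(x_0)}|\nabla R|^2\di\meas$ by $\meas(B_r(x_0))\cdot o(1)$. Comparing $h_r$ with the affine-like competitor $\sum_i \chi_i(x_0) F_i$ and using the minimality of $h_r$ then gives the desired $o(\meas(B_r(x)))$ rate, applying Lebesgue differentiation for $|\nabla f|^2$ at the same time to dispose of lower order terms.

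The main obstacle I expect is the chart construction in (2), specifically the uniform finiteness of $k$. Showing that only finitely many $F_i$ are needed globally (on each piece $C$) requires a genuine use of the Poincar\'e inequality beyond soft selection arguments: one must prevent the greedy algorithm from running forever, which Cheeger achieves via a quantitative pigeonhole on the doubling constant. Everything else — the $L^2$ bound on $\chi_i$, the Borel measurability of the selection, and the passage from (2) to (1) — is comparatively routine once this quantitative dimension bound is secured.
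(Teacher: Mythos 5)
You should first be aware that the paper does not prove this statement at all: Theorem \ref{thm:Cheeger} is imported verbatim from Cheeger's paper \cite{Cheeger} (part (1) is his asymptotic minimality/generalized-linearity result, part (2) his measurable differentiable structure), so your sketch has to be measured against Cheeger's arguments, not against anything internal to this article. Against that benchmark, the decisive flaw is your step ``(2) $\Rightarrow$ (1)''. There is no Caccioppoli-type inequality for $R=f-\sum_i\chi_i(x_0)F_i$: $R$ is not a minimizer, and ${\rm lip}(R)(x_0)=0$ (i.e.\ oscillation $o(r)$ on $B_r(x_0)$) gives no control whatsoever on $\int_{B_r(x_0)}|\nabla R|^2\di\meas$ --- small-amplitude, rapidly oscillating perturbations have vanishing ${\rm lip}$ at a point while their energy density stays bounded away from zero; the Poincar\'e inequality runs in the opposite direction (gradient controls oscillation, not conversely). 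In fact the energy-density smallness $\int_{B_r(x_0)}|\nabla(f-w)|^2\di\meas=o(\meas(B_r(x_0)))$, $w=\sum_i\chi_i(x_0)F_i$, does hold at $\meas$-a.e.\ $x_0\in C$, but it follows from the module identity $\nabla f=\sum_i\chi_i\nabla F_i$ on $C$ and Lebesgue points of the $\chi_i$, not from any Caccioppoli argument --- and even granting it you cannot conclude: $w$ is not a competitor in \eqref{eq:odev}, so minimality of $h_r$ only bounds ${\rm Dev}$ from \emph{below}, while the correct comparison in the Hilbertian case, ${\rm Dev}(f,B_r)^{1/2}\le {\rm Dev}(w,B_r)^{1/2}+\bigl(\int_{B_r}|\nabla(f-w)|^2\di\meas\bigr)^{1/2}$, requires ${\rm Dev}(w,B_r(x_0))=o(\meas(B_r(x_0)))$, i.e.\ statement (1) for the chart functions themselves. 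Your derivation is therefore circular: (1) cannot be extracted from (2), and in Cheeger's scheme it is not --- the asymptotic minimality statement is proved by a separate argument (minimizing replacements plus covering arguments), independently of the charts.

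Two further points. First, your opening reduction ${\rm Dev}(f,B_r(x))=\int_{B_r(x)}|\nabla(f-h_r)|^2\di\meas$, and the uniqueness of $h_r$, use the parallelogram rule for $\Ch$ (infinitesimal Hilbertianity), not mere convexity; the theorem is stated for arbitrary PI spaces, where this identity can fail, so even the starting point is not available in the stated generality (it is fine for the paper's $\RCD$ applications). Second, for part (2) your sketch concedes that the genuinely hard ingredients --- the uniform bound on $k$ via doubling and Poincar\'e, and the reverse estimate yielding $\sum_i\chi_i^2\le M|\nabla f|^2$ --- are left unproved; note also that in Cheeger's construction the coefficient bound comes from the quantitative independence of the differentials of the $F_i$ built into the selection of the charts, not from ``converting oscillation control into a gradient bound'' via Poincar\'e, which again is the wrong direction for that inequality. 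As it stands, the proposal establishes neither part.
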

\begin{proof}
Let us prove \textit{(1)}. This statement holds if $f \in \mathrm{Lip}(X, \dist)$, see \cite[Th.~3.7]{Cheeger}. For $f \in H^{1, 2}(X, \dist, \meas)$, a telescoping argument (for instance \cite[Th.~4.14]{Cheeger}) allows us to prove that there exist Borel subsets $A_i \subset X (i=1, 2, \ldots)$ such that $\meas \left(X \setminus \bigcup_iA_i\right)=0$ holds and that $f|_{A_i}$ is Lipschitz. Take a Lipschitz function $F_i:X \to \mathbb{R}$ with $F_i|_{A_i}=f|_{A_i}$. Note that ${\rm Dev}(f,B_r(x))=o(\meas(B_r(x)))$ holds whenever ${\rm Dev}(F,B_r(x))=o(\meas(B_r(x)))$ and 
\begin{equation}\label{eq:added}
\int_{B_r(x)}|\nabla (F_i-f)|^2\di \meas=o(\meas(B_r(x))).
\end{equation}
Since (\ref{eq:added}) holds for $\meas$-a.e. $x \in A_i$, we have \textit{(1)}.

For the proof of \textit{(2)}, see \cite[Th.~4.38]{Cheeger}. 
\end{proof}

\subsection{$\RCD^*(K,N)$ spaces: definition and main properties}\label{se:rcdstarkn}

Throughout this paper the parameters $K\in\mathbb{R}$ (lower bound on Ricci curvature) and $1<N<\infty$ (upper bound on dimension) will
be kept fixed. The class of $\RCD^*(K,N)$ metric measure spaces, introduced in \cite{Gigli1} (after the case $N=\infty$ studied in \cite{AmbrosioGigliSavare14})
can now be characterized in many ways, (under suitable assumptions) via entropic convexity inequalities along Wasserstein geodesics
or evolution variational inequalities satisfied by the heat flow or nonlinear diffusion equations
(see \cite{ErbarKuwadaSturm}, \cite{AmbrosioMondinoSavare}). For the language adopted in this paper, where optimal transport does not play
a dominant role, the most appropriate characterization is the one based on the quadraticity of $\Ch$, the growth condition
$\meas(B_r(\bar x))\leq c_1\exp(c_2r^2)$ (for some, and thus any, $\bar x\in X$) on the measure of balls, the 
Sobolev-to-Lipschitz property (namely that any $f\in H^{1,2}(X,\dist,\meas)$ with $|\nabla f|\in L^\infty(X,\meas)$ has a Lipschitz
representative, with Lipschitz constant smaller than $\||\nabla f|\|_\infty$) and the validity of Bochner's
inequality
$$
\frac 12 \Delta |\nabla f|^2-\langle\nabla f,\nabla\Delta f\rangle\geq \frac{(\Delta f)^2}{N}+K|\nabla f|^2 
$$
in the class of functions
\begin{equation}\label{eq:deftest}
\mathrm{Test}F(X,\dist,\meas):=\left\{f\in\Lip_b(X,\dist )\cap H^{1,2}(X,\dist,\meas):\ \Delta f\in H^{1,2}(X,\dist,\meas)
\right\}
\end{equation}
which, a posteriori, turns out to be an algebra thanks to Bochner's inequality \cite{Savare}.

True for the larger class of weak $\CD(K,N)$ spaces \cite[Th.~30.11]{Villani}, the Bishop-Gromov theorem holds for any $\RCD^*(K,N)$ space $(X,\dist,\meas)$:
\begin{equation}\label{eq:BishopGromov}
\frac{\meas(B_R(x))}{\meas(B_r(x))} \le \frac{\mathrm{Vol}_{K,N}(R)}{\mathrm{Vol}_{K,N}(r)}
\le c_0 e^{c_1 R/r}
\end{equation}
for any $x \in\supp\meas$ and $0<r\leq R$, where $\mathrm{Vol}_{K,N}(r)$ denotes the volume of a ball of radius $r$ in the $N$-dimensional model space with Ricci curvature $K$ and $c_0, \,c_1 >0$ depend only on $K^-$ and $N$. A first trivial consequence is that $(X,\dist,\meas)$ is locally doubling, meaning that for any $R>0$, there exists $C_D>0$ depending only on $K^-$, $N$ and $R$, such that
\begin{equation}\label{eq:locdoubling}
\meas(B_{2r}(x)) \le C_D \meas(B_r(x))\qquad\forall x\in\supp\meas,\,\forall r\le R.
\end{equation}

Because of \eqref{eq:BishopGromov}, the following lemma, whose proof is omitted for brevity, applies to the whole class of $\RCD^{*}(K,N)$ spaces. It is a simple consequence of Cavalieri's formula together with \eqref{eq:BishopGromov} and its useful corollary:
\begin{equation}\label{doubspec}
\frac{\meas (B_1(y))}{\meas (B_1(x))} \le c_2 \exp \left( c_1\dist (x, y)\right)
\qquad\forall x,\, y\in\supp\meas
\end{equation}
with $c_2=c_0 e^{c_1}$, thanks to the inclusion $B_1(x)\subset B_{1+\dist(x,y)}(y)$.

\begin{lemma}\label{lem:volume}
Let $(Y,\dist_Y,\meas_Y)$ be a metric measure space and let $x\in\supp\meas_Y$ be satisfying 
\begin{equation}\label{eq:exp}
\frac{\meas(B_R(x))}{\meas(B_1(x))}\leq c_0 e^{c_1 R}\qquad \quad\forall R\geq 1
\end{equation}
for some constants $c_0,\,c_1>0$. Then:
\begin{itemize}
\item[(1)] for any $\delta>0$ there exists $L_0=L_0(\delta,c_0,c_1)>1$ such that
\begin{equation}\label{eq:11}
\int_{Y \setminus B_{L_0}(x)}\meas_Y (B_1(y))\exp \left(-\frac{2\dist_Y^2 (x, y)}{5}\right)\di\meas_Y(y)\le \delta(\meas_Y(B_1(x)))^2;
\end{equation}
\item[(2)] for any $\ell \in \mathbb{Z}$ there exists $C=C(\ell,c_0,c_1)\in [0,\infty)$ such that
\begin{equation}\label{lem:bound}
\int_Y \meas_Y (B_1(y))^\ell \exp \left(-\frac{2\dist_Y^2 (x, y)}{5}\right)\di\meas_Y(y) \le C(\meas_Y(B_1(x)))^{\ell+1}.
\end{equation}
\end{itemize}
\end{lemma}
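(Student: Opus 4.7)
The plan is to reduce both claims to the convergence of a single Gaussian–exponential series, by combining a two-sided volume comparison with a decomposition into concentric annuli.

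First, I would apply \eqref{doubspec} twice, with the roles of $x$ and $y$ exchanged, to obtain the two-sided estimate
$$
c_2^{-1}e^{-c_1\dist_Y(x,y)}\le\frac{\meas_Y(B_1(y))}{\meas_Y(B_1(x))}\le c_2\,e^{c_1\dist_Y(x,y)}.
$$
Raising this to the $\ell$-th power gives, for every $\ell\in\mathbb{Z}$, a constant $C_\ell=C_\ell(c_0,c_1,\ell)$ such that
$$
\meas_Y(B_1(y))^\ell\le C_\ell\,\meas_Y(B_1(x))^\ell\exp\bigl(|\ell|\,c_1\dist_Y(x,y)\bigr).
$$

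Second, I would slice the integration domain along the annuli $A_k:=B_{k+1}(x)\setminus B_k(x)$ for $k\ge 0$. On $A_k$ the Gaussian factor is at most $\exp(-2k^2/5)$, while \eqref{eq:exp} applied with $R=k+1\ge 1$ gives
$$
\meas_Y(A_k)\le\meas_Y(B_{k+1}(x))\le c_0\,\meas_Y(B_1(x))\,\exp(c_1(k+1)).
$$
Combining this with the pointwise estimate from the first step, the integral in (2) is dominated by
$$
C_\ell\,c_0\,\meas_Y(B_1(x))^{\ell+1}\sum_{k=0}^\infty\exp\bigl((|\ell|+1)c_1(k+1)-2k^2/5\bigr).
$$
The series converges because the Gaussian term dominates the exponential one, and its sum depends only on $\ell$, $c_0$, $c_1$, which proves (2).

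For (1), one runs the same argument with $\ell=1$ but restricted to the indices $k\ge L_0-1$, so that $A_k\subset Y\setminus B_{L_0}(x)$. The resulting tail of the convergent series can be made smaller than $\delta$ by taking $L_0$ large, with the required size depending only on $\delta$, $c_0$, $c_1$. The argument does not present any real obstacle; it is essentially the elementary fact that a Gaussian beats an exponential. The only point worth watching is the two-sided control of $\meas_Y(B_1(y))$ by $\meas_Y(B_1(x))$, which is exactly the content of \eqref{doubspec} (together with its symmetric counterpart) available in the $\RCD^*(K,N)$ setting where the lemma is applied.
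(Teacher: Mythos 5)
Your proof is correct and takes essentially the same route the paper indicates: the paper attributes the (omitted) proof to ``Cavalieri's formula together with \eqref{eq:BishopGromov} and its useful corollary \eqref{doubspec},'' and your annular decomposition is precisely the discrete form of Cavalieri's formula, combined with the same two ingredients. One tiny slip: for part (1) the correct set-theoretic relation is $Y\setminus B_{L_0}(x)\subset\bigcup_{k\ge\lfloor L_0\rfloor}A_k$ (the containment $A_k\subset Y\setminus B_{L_0}(x)$ you assert fails for $k=L_0-1$), but since you are only bounding the integral from above by the tail of a convergent series this does not affect the argument.
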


Besides the doubling condition, Rajala proved \cite[Th.~1]{Rajala} that a local $(1,1)$-Poincar\'e inequality holds on the larger class of $\CD(K,\infty)$ spaces, and thus on $\RCD^{*}(K,N)$ spaces:
\begin{equation}\label{eq:locPoincaré}
\int_{B_r(x)} |f - \fint_{B_r(x)}f\di \meas| \di \meas \le 4 r e^{|K|r^2} \int_{B_{2r}(x)} |\nabla f| \di \meas,
\end{equation}
for any $f \in H^{1,2}(X,\dist,\meas)$ and any ball $B_r(x)$ with $x\in\supp\meas$. Here $\fint_{B_r(x)}f\di \meas$ denotes the mean value $\meas(B_r(x))^{-1} \int_{B_r(x)} f \di \meas$. It is also worth pointing out that also a local $(2, 2)$-Poincar\'e inequality holds if $N<\infty$, as a direct consequence of \cite[Th. 5.1]{HajlaszKoskela} with \eqref{eq:BishopGromov} and \eqref{eq:locPoincaré}.

Furthermore, it follows from the Sobolev-to-Lipschitz property 
(see \cite[Th.~6.2]{AmbrosioGigliSavare14}, \cite[Th.~12.8]{AmbrosioErbarSavare} for details)
that, on any $\RCD^{*}(K,N)$ space $(X,\dist,\meas)$, the intrinsic distance
$$
\dist_{\mathcal{E}}(x,y):= \sup \{ |f(x)-f(y)| \, : \, f \in H^{1,2}(X,\dist,\meas) \cap C_b(X), \, |\nabla f| \le 1 \}
$$
associated to $\mathcal{E}$ coincides with the original distance $\dist$. Consequently, Sturm's works on the general theory of Dirichlet forms on PI spaces provide existence of a locally H\"older continuous representative $p$ on $\supp\meas\times\supp\meas\times (0,\infty)$ for the heat kernel of $(X,\dist,\meas)$: see \cite[Prop.~2.3]{Sturm95} and \cite[Cor.~3.3]{Sturm96}. The sharp Gaussian estimates on this heat kernel have been proved later on in the $\RCD$ context by Jiang, Li and Zhang \cite[Th.~1.2]{JiangLiZhang}:
for any $\epsilon>0$, there exist $C_i:=C_i(\epsilon, K, N)>1$ for $i=1,\,2$, depending only on $K$, $N$ and $\epsilon$, such that 
\begin{equation}\label{eq:gaussian}
\frac{C_1^{-1}}{\meas (B_{\sqrt{t}}(x))}\exp \left(-\frac{\dist^2 (x, y)}{(4-\epsilon)t}-C_2t \right) \le p(x, y, t) \le \frac{C_1}{\meas (B_{\sqrt{t}}(x))}\exp \left( -\frac{\dist^2 (x, y)}{(4+\epsilon)t}+C_2t \right)
\end{equation}
for all $x,\, y \in\supp\meas$ and any $t>0$, where from now on we state our inequalities with the H\"older continuous representative. Combined with the Li-Yau inequality \cite{GarofaloMondino, Jiang15}, 
\eqref{eq:gaussian} implies a gradient estimate \cite[Cor.~1.2]{JiangLiZhang}:
\begin{equation}\label{eq:equi lip}
|\nabla_x p(x, y, t)|\le \frac{C_3}{\sqrt{t}\meas (B_{\sqrt{t}}(x))}\exp \left(-\frac{\dist^2(x, y)}{(4+\epsilon) t}+C_4t\right)
\qquad\text{for $\meas$-a.e. $x\in X$}
\end{equation}
for any $t>0$, $y\in\supp\meas$.  
Moreover by \cite[Th.~4]{Davies} with (\ref{eq:gaussian}) the inequality
\begin{equation}\label{eq:lapheatbd}
\left|\frac{\di}{\di t}p(x, y, t)\right|=|\Delta_x p(x, y, t)|\le \frac{C_5}{t\meas (B_{\sqrt{t}}(x))}\exp \left( -\frac{\dist (x, y)^2}{(4+\epsilon)t}+C_6t\right)
\end{equation}
holds for all $t>0$ and $\meas \times \meas$-a.e. $(x, y) \in X \times X$, 
where $C_i:=C_i(\epsilon, K, N)>1$ ($i=3,\,4,\,5,\,6$). (see also \cite[(3.11)]{JiangLiZhang}).
Note that in this article, we will always work with \eqref{eq:gaussian}, \eqref{eq:equi lip} and \eqref{eq:lapheatbd} in the case $\epsilon = 1$.

In the sequel we shall denote by $p_{k}$ the Euclidean heat kernel in $\setR^k$, given by
\begin{equation}\label{eq:pEuclidean}
p_{k}(x,y,t):=\frac{1}{\sqrt{4\pi t}^{k}}\exp\left(-\frac{|x-y|^2}{4t}\right)
\end{equation}
and recall the classical identity 
\begin{equation}\label{eq:Gaussian_variance}
\frac{1}{\sqrt{2\pi t}}\int_{\setR}x^2\exp\left(-\frac{x^2}{2t}\right)\di x=t.
\end{equation}
Furthermore, we shall often use the scaling formula
\begin{equation}\label{eq:1001}
\tilde{p}(x,y,s)=b^{-1} p(x,y,a^{-2} s)  \qquad \forall x,\,y \in\supp\meas, \, \, \, \forall s > 0 
\end{equation}
relating for any $a,\,b >0$, the heat kernel $\tilde{p}$ of the rescaled space $(X,a\dist,b\meas)$ to the heat
 kernel $p$ of $(X,\dist,\meas)$.

Let us spend some words concerning spectral theory on compact $\RCD^{*}(K,N)$ spaces. It follows from a standard argument on Dirichlet forms \cite{FukushimaOshimaTakeda} that the resolvent operators $R_\alpha:=(\alpha \Id - \Delta)^{-1} : L^2(X,\meas) \to H^{1,2}(X,\dist,\meas)$, $\alpha >0$, are well-defined injective bounded linear operators and that $R_\alpha (L^2(X,\meas))$ is a dense subset of $L^2(X,\meas)$, independent of $\alpha$, which coincides with $D(\Delta)$. By the Rellich-Kondrachov theorem \cite[Th.~8.1]{HajlaszKoskela}, all the $R_{\alpha}$ are compact operators sharing the same discrete positive spectrum $\mu_1 \ge \mu_2 \ge \cdots \to 0$, implying that (minus) the Laplacian operator $-\Delta$ admits a discrete positive spectrum $0=\lambda_0 < \lambda_1 \le \lambda_2 \le \cdots \to + \infty$. This provides the following expansions for the heat kernel $p$:
\begin{equation}\label{eq:expansion1}
p(x,y,t) = \sum_{i \ge 0} e^{- \lambda_i t} \phi_i(x) \phi_i (y) \qquad \text{in $C(\supp\meas\times\supp\meas)$}
\end{equation}
for any $t>0$ and
\begin{equation}\label{eq:expansion2}
p(\cdot,y,t) = \sum_{i \ge 0} e^{- \lambda_i t} \phi_i(y) \phi_i \qquad \text{in $H^{1,2}(X,\dist,\meas)$}
\end{equation}
for any $y\in\supp\meas$ and $t>0$. We refer to the Appendix for a detailed proof of these expansions.

Let us conclude this overview by mentioning the main structural properties of $\RCD^{*}(K,N)$ spaces. Before that, we need to recall the definitions of rectifiable sets and of tangent spaces to a metric measure space $(X,\dist,\meas)$ at a point $x$.

\begin{definition}[Rectifiable sets]
Let $(Y,\dist_Y)$ be a metric space and let $k\geq 1$ be an integer.
\begin{itemize}
\item[(1)] We say that $S\subset Y$ is
countably $k$-rectifiable if there exist at most countably many bounded Borel sets $B_i\subset\setR^k$ 
and Lipschitz maps $f_i:B_i\to Y$ such that $S\subset\cup_i f_i(B_i)$.
\item[(2)] For a nonnegative Borel measure $\mu$ in $Y$ (not necessarily $\sigma$-finite), we say
that $S$ is $(\mu,k)$-rectifiable if there exists a countably $k$-rectifiable set $S'\subset S$
such that $\mu^*(S\setminus S')=0$, i.e. $S\setminus S'$ is contained in a
$\mu$-negligible Borel set.
\end{itemize}
\end{definition}

\begin{definition}[Tangent metric measure spaces]
For $x \in\supp\meas$, we denote by $\mathrm{Tan}(X, \dist,\meas,x)$ the set of tangent spaces to $(X,\dist,\meas)$ at $x$: the collection of all
pointed metric measure spaces $(Y, \dist_Y,\meas_Y,y)$ such that, as $i\to\infty$, one has
\begin{equation}\label{eq:ulla_ulla}
\left(X, r_i^{-1}\dist, \meas (B_{r_i}(x))^{-1}\meas, x\right) \stackrel{mGH}{\to} (Y, \dist_Y,\meas_Y,y) 
\end{equation}
for some $r_i\to 0^+$, where mGH denotes the measured pointed Gromov-Hausdorff convergence. 
\end{definition}

If $\meas$ is doubling, it is not hard to prove, by rescaling the $r_i$ in \eqref{eq:ulla_ulla} by a constant factor,
 that $\mathrm{Tan}(X, \dist,\meas,x)$ is not empty for all
$x\in\supp\meas$ and that it is a cone in the following very weak sense: for all $t>0$ and all
$(Y, \dist_Y, \meas_Y,y)\in \mathrm{Tan}(X, \dist,\meas,x)$,
$$
(Y, t^{-1}\dist_Y, \meas_Y (B_t(y))^{-1}\meas_Y,y)\in \mathrm{Tan}(X, \dist,\meas,x).
$$

\begin{definition}[Regular set $\mathcal{R}_k$]
For any $k \geq 1$, we denote by $\mathcal{R}_k$ the $k$-dimensional regular set  of $(X, \dist, \meas)$, 
namely the set of points $x \in\supp\meas$ such that
$$
\mathrm{Tan}(X, \dist,\meas,x) =\left\{ \left(\mathbb{R}^k, \dist_{\mathbb{R}^k},(\omega_k)^{-1}\mathcal{H}^k,0_k\right) \right\},
$$
where $\omega_k$ is the $k$-dimensional volume of the unit ball in $\mathbb{R}^k$ with respect to the $k$-dimensional Hausdorff measure $\mathcal{H}^k$. 
\end{definition}

We are now in a position to introduce the latest structural result for $\RCD^*(K,N)$ spaces.

\begin{theorem}[Essential dimension of $\RCD^*(K,N)$ spaces]\label{th: RCD decomposition}
Let $(X,\dist,\meas)$ be a $\RCD^*(K,N)$ space. Then, there exists a unique integer $n\in [1,N]$ such that
 \begin{equation}\label{eq:regular set is full}
\meas(X\setminus \mathcal{R}_n\bigr)=0.
\end{equation}
In addition, the set $\mathcal{R}_n$ is $(\meas,n)$-rectifiable and $\meas$ is representable as
$\theta\mathcal{H}^n\res\mathcal{R}_n$.
\end{theorem}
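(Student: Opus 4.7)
The plan is to decompose the statement into three parts: (i) a decomposition of $\supp\meas$ modulo $\meas$-null sets into regular sets of various dimensions; (ii) uniqueness of the dimension $n$; (iii) the absolute continuity representation with density $\theta$. Steps (i) and (iii) are now classical in the $\RCD^*$ literature, whereas step (ii) is the genuinely new and hard ingredient due to Bru\'e-Semola.

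First I would invoke the Mondino-Naber structure theorem: under the doubling and $(1,2)$-Poincar\'e assumptions recorded in Subsection~\ref{se:rcdstarkn}, together with the splitting theorem and the rigidity of mGH limits of rescaled $\RCD^*(K,N)$ spaces, one shows that $\meas$-almost every $x\in\supp\meas$ belongs to some $\mathcal{R}_k$ with $1\le k\le \lfloor N\rfloor$, and that each $\mathcal{R}_k$ is $(\meas,k)$-rectifiable via biLipschitz charts onto Borel subsets of $\mathbb{R}^k$. Thus one obtains a Borel partition (modulo a $\meas$-null set) $\supp\meas=\bigsqcup_{k=1}^{\lfloor N\rfloor}\mathcal{R}_k$ and a well-defined Borel function $\dim:\supp\meas\to\{1,\dots,\lfloor N\rfloor\}$ with $\dim\equiv k$ on $\mathcal{R}_k$.

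The main obstacle is step (ii), namely showing that $\dim$ is $\meas$-almost everywhere constant. I would follow the Bru\'e-Semola strategy: work with the regular Lagrangian flow $F_s$ of a test vector field $v=\nabla f$ with $f\in\mathrm{Test}F(X,\dist,\meas)$, whose existence and good properties (incompressibility bounds, uniqueness) rely on the calculus in Section~\ref{sec:tan} and on Bochner's inequality. The crucial analytic input is that such flows almost preserve the measure and admit a blow-up analysis at $\meas$-a.e. starting point: the tangent at $F_s(x)$ is a rigid transformation of a tangent at $x$, so $\dim(F_s(x))=\dim(x)$ for $\meas$-a.e.\ $x$ and for all $s$ in a common interval. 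Combining this invariance with the fact that, by choosing a rich enough family of vector fields, the orbit equivalence classes saturate $\supp\meas$ up to $\meas$-null sets, one concludes that $\dim$ is $\meas$-a.e.\ equal to a single integer $n\in[1,N]$. The non-triviality lies precisely in producing enough gradient flows to connect points in different $\mathcal{R}_k$, and in justifying the invariance of the tangent-space dimension along an a.e.-defined flow on a nonsmooth space.

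Finally, for step (iii), once $\meas(X\setminus\mathcal{R}_n)=0$ is known, I would combine the $(\meas,n)$-rectifiability of $\mathcal{R}_n$ with the fact that at every $x\in\mathcal{R}_n$ the unique blow-up is the renormalized Euclidean space $(\mathbb{R}^n,\dist_{\mathbb{R}^n},\omega_n^{-1}\mathcal{H}^n,0_n)$. This forces the existence $\meas$-a.e.\ of the $n$-density
\[
\theta(x):=\lim_{r\downarrow 0}\frac{\meas(B_r(x))}{\omega_n r^n},
\]
via Bishop-Gromov monotonicity \eqref{eq:BishopGromov} and the convergence of renormalized volumes along the blow-up sequence. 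A standard Radon-Nikodym / differentiation argument for measures on doubling spaces, applied along the rectifiable charts of $\mathcal{R}_n$, then yields $\meas=\theta\,\mathcal{H}^n\res\mathcal{R}_n$ with a Borel density $\theta$. Uniqueness of $n$ follows either by dimensional comparison of Hausdorff measures or directly from step (ii).
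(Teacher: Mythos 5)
The paper does not prove this theorem; immediately after stating it, it attributes the result entirely to the literature, namely rectifiability and $\meas(X\setminus\bigcup_k\mathcal{R}_k)=0$ to Mondino-Naber (using Gigli-Mondino-Rajala and Gigli's splitting theorem), the absolute continuity $\meas\ll\mathcal{H}^n$ on the regular sets to Kell-Mondino, De~Philippis-Marchese-Rindler and Gigli-Pasqualetto, and the constancy of the dimension to Bru\'e-Semola. Your steps (i) and (ii) correctly reproduce this attribution and are consistent with the cited strategy, so there is nothing to quarrel with there.

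Step (iii), however, contains a genuine gap. First, the existence of the pointwise $n$-density
\[
\theta(x)=\lim_{r\downarrow 0}\frac{\meas(B_r(x))}{\omega_n r^n}
\]
does \emph{not} follow from Bishop-Gromov plus the blow-up convergence. Bishop-Gromov monotonicity controls the quantity $\meas(B_r(x))/\mathrm{Vol}_{K,N}(r)\sim\meas(B_r(x))/(\omega_N r^N)$, i.e.\ it furnishes the $N$-density, not the $n$-density when $n<N$. And the mGH blow-up to $(\mathbb{R}^n,\dist_{\mathbb{R}^n},\omega_n^{-1}\mathcal{H}^n,0_n)$ only says that the ratios $\meas(B_{tr}(x))/\meas(B_r(x))\to t^n$, which is strictly weaker than the existence of $\lim_r\meas(B_r(x))/r^n$ — a logarithmically oscillating volume function is entirely compatible with blow-up to Euclidean space but has no $n$-density. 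This is precisely why the paper records (Theorem~\ref{thm:RN}, quoted from Ambrosio-Honda-Tewodrose) the existence of the density only on the \emph{reduced} set $\mathcal{R}_n^*$, which is $\meas$-full in $\mathcal{R}_n$ but is introduced for exactly this reason.

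Second, and more importantly, the claim that ``a standard Radon-Nikodym / differentiation argument \ldots then yields $\meas=\theta\mathcal{H}^n\res\mathcal{R}_n$'' misses the hard point. Countable $n$-rectifiability of $\mathcal{R}_n$ together with the unique Euclidean tangent does \emph{not} imply $\meas\res\mathcal{R}_n\ll\mathcal{H}^n$: a rectifiable set can carry a singular measure. The absolute continuity is the deep ingredient here; it rests on the structure theorem of De~Philippis-Marchese-Rindler for measures admitting a Lipschitz differentiability structure (resolving a conjecture of Cheeger), combined with the results of Kell-Mondino and Gigli-Pasqualetto that make this applicable on the biLipschitz charts of $\mathcal{R}_n$. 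Once $\meas\res\mathcal{R}_n\ll\mathcal{H}^n$ is secured, writing $\theta$ as the Radon-Nikodym density is of course standard — but that absolute continuity is not a differentiation triviality and needs to be cited or proved. You should replace your step (iii) by an explicit appeal to those results rather than to a ``standard differentiation argument''.
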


We denote by $\dim_{\dist,\meas}(X)$ the ``essential dimension'' of $(X,\dist,\meas)$, namely the integer $n$ such that $\meas(\mathcal{R}_n) >0$. Note that the rectifiability of all sets $\mathcal{R}_k$ was inspired by \cite{CheegerColding1} and proved in \cite{MondinoNaber}, together with the concentration property
$\meas(X\setminus\cup_k\mathcal{R}_k)=0$, with the crucial uses of \cite{GigliMondinoRajala} and of \cite{Gigli13} ; the absolute continuity of $\meas$ on regular sets with respect to the corresponding Hausdorff measure was proved afterwards and is a consequence of \cite{KellMondino}, \cite{DePhillippisMarcheseRindler} and \cite{GigliPasqualetto}. Finally, in the very recent work \cite{BrueSemola} it is proved that only one set $\mathcal{R}_k$ has positive
$\meas$-measure, leading to \eqref{eq:regular set is full} and to the representation $\meas=\theta\mathcal{H}^n\res\mathcal{R}_n$.

By slightly refining the definition of $n$-regular set, passing to a reduced set $\mathcal{R}_n^*$, general results of measure differentiation provide also  
the converse absolutely continuity property $\haus^n \ll \meas$ on $\mathcal{R}_n^*$. We summarize here the results obtained in this direction in
\cite{AmbrosioHondaTewodrose}:

\begin{theorem}[Weak Ahlfors regularity]\label{thm:RN}
Let $(X, \dist, \meas)$ be a $\RCD^* (K, N)$-space, $n=\dim_{\dist,\meas}(X)$, $\meas=\theta\mathcal{H}^n\res\mathcal{R}_n$ and set
\begin{equation}\label{eq:defRkstar}
{\mathcal R}_n^*:=\left\{x\in\mathcal{R}_n:\
\exists\lim_{r\to 0^+}\frac{\meas(B_r(x))}{\omega_nr^n}\in (0,\infty)\right\}.
\end{equation}
Then $\meas(\mathcal{R}_n\setminus\mathcal{R}_n^*)=0$, $\meas\res\mathcal{R}_n^*$ and 
$\mathcal{H}^n\res\mathcal{R}_n^*$ are mutually absolutely continuous and
\begin{equation}\label{eq:gooddensity}
\lim_{r\to 0^+}\frac{\meas(B_r(x))}{\omega_nr^n}=\theta(x)
\qquad\text{for $\meas$-a.e. $x\in\mathcal{R}_n^*$,}
\end{equation}
\begin{equation}\label{eq:goodlimitRN}
\lim_{r\to 0^+} \frac{\omega_nr^n}{\meas(B_{r}(x))}=1_{\mathcal{R}^*_{n}}(x)
\frac{1}{\theta(x)}
\qquad\text{for $\meas$-a.e. $x\in X$.}
\end{equation}
Moreover  $\mathcal{H}^n(\mathcal{R}_n\setminus\mathcal{R}_n^*)=0$ 
if $n=N$.
\end{theorem}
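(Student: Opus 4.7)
The proof combines the structural Theorem~\ref{th: RCD decomposition}---which provides the representation $\meas=\theta\,\haus^n\res\mathcal{R}_n$ and the $(\meas,n)$-rectifiability of $\mathcal{R}_n$---with two classical tools available in doubling metric measure spaces (doubling being ensured by \eqref{eq:locdoubling}): the density theorem for countably $n$-rectifiable subsets of a metric space, and the Lebesgue-Radon-Nikodym differentiation theorem. Choose a countably $n$-rectifiable $S\subseteq\mathcal{R}_n$ with $\meas(\mathcal{R}_n\setminus S)=0$; the first tool yields $\lim_{r\downarrow 0}\haus^n(S\cap B_r(x))/(\omega_n r^n)=1$ at $\haus^n$-a.e.\ $x\in S$, while the second, applied to $\meas=\theta\,\haus^n\res\mathcal{R}_n$, gives $\lim_{r\downarrow 0}\meas(B_r(x))/\haus^n(\mathcal{R}_n\cap B_r(x))=\theta(x)$ at $\haus^n\res\mathcal{R}_n$-a.e.\ $x$. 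Multiplying these limits yields \eqref{eq:gooddensity} at $\haus^n\res\mathcal{R}_n$-a.e.\ $x$, hence at $\meas$-a.e.\ $x$ (using $\meas\ll\haus^n\res\mathcal{R}_n$); since $\theta(x)\in(0,+\infty)$ for $\meas$-a.e.\ $x$, each such point belongs to $\mathcal{R}_n^*$, which proves (a).

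For mutual absolute continuity (b), the inclusion $\meas\res\mathcal{R}_n^*\ll\haus^n\res\mathcal{R}_n^*$ is built into the representation of $\meas$. Conversely, by definition every $x\in\mathcal{R}_n^*$ satisfies $\theta_*(x):=\lim_{r\downarrow 0}\meas(B_r(x))/(\omega_n r^n)>0$; I would decompose $\mathcal{R}_n^*=\bigcup_{j\ge 1}\{\theta_*>1/j\}$ and, on each level set, apply a Vitali-type covering (available via \eqref{eq:locdoubling}) together with outer regularity of $\meas$ to bound $\haus^n(A)$ by a constant multiple of $\meas(A)$ for every Borel $A\subseteq\{\theta_*>1/j\}$. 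Passing to $j\to\infty$ then yields $\haus^n\res\mathcal{R}_n^*\ll\meas\res\mathcal{R}_n^*$. Formula \eqref{eq:goodlimitRN} follows by inverting \eqref{eq:gooddensity} at $\meas$-a.e.\ $x$: by (a) such points lie in $\mathcal{R}_n^*$, so the indicator factor equals $1$ and the reciprocal of the density is $1/\theta(x)$.

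For (e), when $n=N$, I would use \eqref{eq:BishopGromov} directly. Rearranging it gives, for $r\le R$,
\[
\frac{\meas(B_r(x))}{\omega_N r^N}\;\geq\;\frac{\meas(B_R(x))}{\mathrm{Vol}_{K,N}(R)}\cdot\frac{\mathrm{Vol}_{K,N}(r)}{\omega_N r^N},
\]
and since $\mathrm{Vol}_{K,N}(r)/(\omega_N r^N)\to 1$ as $r\downarrow 0$, one obtains $\liminf_{r\downarrow 0}\meas(B_r(x))/(\omega_N r^N)\geq \meas(B_R(x))/\mathrm{Vol}_{K,N}(R)>0$ at every $x\in\supp\meas$ and any $R>0$. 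The monotonicity of $r\mapsto\meas(B_r(x))/\mathrm{Vol}_{K,N}(r)$, also a consequence of \eqref{eq:BishopGromov}, ensures the existence of $\theta_*(x)$ in $(0,+\infty]$ at every $x\in\supp\meas$. By the density formula already proved, $\theta_*(x)=\theta(x)$ at $\haus^N\res\mathcal{R}_N$-a.e.\ $x$, and $\theta<+\infty$ there by local finiteness of $\meas$; hence $\theta_*(x)\in(0,+\infty)$, that is $x\in\mathcal{R}_N^*$, at $\haus^N$-a.e.\ $x\in\mathcal{R}_N$, proving $\haus^N(\mathcal{R}_N\setminus\mathcal{R}_N^*)=0$.

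The most delicate point I expect is the Vitali argument for (b): because $\theta_*$ is only measurable on each level set, the lower density bound $\theta_*>1/j$ must be turned into a pointwise inequality $\meas(B_r(x))\ge (2j)^{-1}\omega_n r^n$ valid for all sufficiently small $r$ depending on $x$. One then extracts a disjoint subcover using \eqref{eq:locdoubling} and passes to the limit in the radii via outer regularity of $\meas$ to transfer $\meas(A)=0$ into arbitrarily small mass of an open neighborhood of $A$; this is routine in doubling metric spaces but needs careful bookkeeping to keep the constants uniform on each level set and to pass to the limit $j\to\infty$.
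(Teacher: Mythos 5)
The paper itself does not give a proof of Theorem~\ref{thm:RN}; it defers to \cite{AmbrosioHondaTewodrose}, and your overall plan---rectifiability of $\mathcal{R}_n$, Kirchheim's density theorem for rectifiable subsets, and differentiation of measures---is indeed the expected route. Your treatment of the mutual absolute continuity and of \eqref{eq:goodlimitRN} is essentially correct, modulo the standard remark that the Lebesgue--Radon--Nikodym differentiation of $\meas$ with respect to $\haus^n\res\mathcal{R}_n$ cannot be cited off the shelf: the reference measure $\haus^n\res\mathcal{R}_n$ is not known to be doubling, so one must transfer the differentiation to $(\mathbb{R}^n,\leb^n)$ via the $(1+\epsilon)$-biLipschitz charts furnished by Theorem~\ref{th: RCD decomposition} and then let $\epsilon\to 0$. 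Similarly, Kirchheim's theorem controls $\haus^n(S\cap B_r(x))/(\omega_n r^n)$ at $\haus^n$-a.e.~$x\in S$, not $\haus^n(\mathcal{R}_n\cap B_r(x))$; passing from $S$ to $\mathcal{R}_n$ uses the observation that $\theta=0$ holds $\haus^n$-a.e.~on $\mathcal{R}_n\setminus S$ (since $\meas(\mathcal{R}_n\setminus S)=0$).

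There is, however, a genuine gap in your proof of (e). You assert that ``by the density formula already proved, $\theta_*(x)=\theta(x)$ at $\haus^N\res\mathcal{R}_N$-a.e.~$x$,'' but part (a) yields this only $\meas$-a.e., i.e.~at $\haus^N$-a.e.~$x\in S$. Since a priori $\haus^N(\mathcal{R}_N\setminus S)$ could be positive, and $\meas\ll\haus^N\res\mathcal{R}_N$ is the known inclusion (the reverse is exactly what (e) is trying to establish), ``$\meas$-a.e.'' does not upgrade to ``$\haus^N$-a.e.'' here; your argument could silently miss a $\haus^N$-positive set on which $\theta_*=\infty$. The finiteness of $\theta_*$ must instead be shown $\haus^N$-a.e.~on $\mathcal{R}_N$ directly, which your own Vitali comparison from (b) does: for bounded Borel $F\subset\{x\in\mathcal{R}_N:\ \theta_*(x)=+\infty\}$ one has $\Theta^{*N}(\meas,x)\ge c$ on $F$ for every $c>0$, hence $\meas(F)\ge c\,C(N)^{-1}\haus^N(F)$; letting $c\to\infty$ and using local finiteness of $\meas$ forces $\haus^N(F)=0$. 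Combined with the Bishop--Gromov lower bound $\theta_*>0$ everywhere, this closes part (e).
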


\subsection{Sobolev spaces and Laplacians on open sets}

Following a standard approach, let us localize some of the concepts introduced in Section~\ref{cheelap}. First of all, let us introduce the
Sobolev space $H^{1, 2}(U, \dist, \meas)$ for an open subset $U$ of a $\RCD^*(K, N)$-space $(X, \dist, \meas)$. See also \cite{Cheeger,Shanmugalingam} for the definition of Sobolev space $H^{1, p}(U, \dist, \meas)$ for any $p \in [1, \infty)$. Our working definition is the following.

\begin{definition}\label{def:reddu}
Let $U\subset X$ be open. 
\begin{enumerate}
\item{($H^{1,2}_0$-Sobolev space)} We denote by $H^{1,2}_0(U, \dist, \meas )$ the 
$H^{1,2}$-closure of $\Lip_c(U, \dist)$.
\item{(Sobolev space on an open set $U$)} We say that $f\in L^2_{\mathrm{loc}}(U,\meas)$ belongs to $H^{1,2}_{\rm loc}(U,\dist,\meas)$ if
$\phi f \in H^{1, 2}(X, \dist, \meas)$ for any $\phi \in\Lip_c(U, \dist)$. If, in addition, $f,  |\nabla f|\in L^2(U,\meas)$,
we say that $f\in H^{1,2}(U,\dist,\meas)$. 
\end{enumerate}
\end{definition}

Notice that $f\in H^{1,2}_{\rm loc}(U,\dist,\meas)$  if and only if  for any $V\Subset U$ there exists 
$\tilde f\in H^{1,2}(X,\dist,\meas)$ with $\tilde f\equiv f$ on $V$.
The global condition $f, |\nabla f|\in L^2(U,\meas)$ in the definition of $H^{1,2}(U,\dist,\meas)$
is meaningful, since the locality properties of the minimal relaxed slope ensure that $|\nabla f|$ makes sense 
$\meas$-a.e. in $X$ for all functions $f\in H^{1,2}_{\rm loc}(U,\dist,\meas)$. Indeed, choosing
$\phi_n\in\Lip_c(U, \dist)$ with $\{\phi_n=1\}\uparrow U$ and defining
$$
|\nabla f|:=|\nabla(f\phi_n)|\qquad\text{$\meas$-a.e. in $\{\phi_n=1\}$}
$$
we obtain an extension of the minimal relaxed gradient to $H^{1,2}(U,\dist,\meas)$
(for which we keep the same notation, being also $\meas$-a.e. independent of the choice of $\phi_n$) 
which retains all bilinearity and locality properties. 


We introduce the Dirichlet Laplacian acting only on $H^{1, 2}_0$-functions as follows:

\begin{definition}[Dirichlet Laplacian on an open set $U$]\label{def:dlapballs}
Let $D_0(\Delta, U)$ denote the set of all $f \in H^{1, 2}_0(U, \dist, \meas)$ such that 
there exists $h:=\Delta_{U} f\in L^2(U,\meas)$ satisfying
$$\int_U hg\di\meas=-\int_U \langle\nabla f,\nabla g\rangle\di\meas\qquad\forall g\in H^{1,2}_0(U,\dist,\meas).
$$
We also set $\Delta_{x, R}:=\Delta_{B_R(x)}$ when $U=B_R(x)$ for some $x \in X$ and $R >0$.
\end{definition}

Strictly speaking, the Dirichlet Laplacian $\Delta_{U}$ should not be confused with the operator $\Delta$, even if the two operators
agree on functions compactly supported on $U$; for this reason we adopted a distinguished symbol. 
Notice that $\lambda_1^D(B_R(x))>0$ whenever $\meas(X\setminus B_{R}(x))>0$, as
a direct consequence of the local Poincar\'e inequality.

\begin{definition}[Laplacian on an open set $U$]
For $f \in H^{1, 2}(U, \dist, \meas)$, we write $f\in D(\Delta, U)$ if there exists 
$h:=\Delta_{U}f\in L^2(U,\meas)$  satisfying
$$
\int_U hg\di\meas=-\int_U \langle\nabla f, \nabla g\rangle \di\meas \qquad
\forall g\in H^{1,2}_0(U,\dist,\meas).
$$
\end{definition}

Since for $f \in H^{1, 2}_0(U, \dist, \meas)$ one has $f\in D(\Delta, U)$ iff
$f\in D_0(\Delta,U)$ and the Laplacians are the same, we retain the same notation $\Delta_{U}$
of Definition~\ref{def:dlapballs}.
It is easy to check that for any $f \in D(\Delta, U)$ and any 
$\phi \in D(\Delta ) \cap \Lip_c(U, \dist)$ with $\Delta \phi \in L^{\infty}(X, \meas)$
one has (understanding $\phi\Delta_{U}f$ to be null out of $U$)
$\phi f \in D(\Delta )$ with
\begin{equation}\label{eq:local to global}
\Delta (\phi f)=f\Delta \phi +2\langle\nabla\phi,\nabla f\rangle +\phi \Delta_{U} f
\qquad\text{$\meas$-a.e. in $X$.}
\end{equation}

Such notions allow to define harmonic functions on an open set $U$ as follows.

\begin{definition}
Let $U\subset X$ be open. We say that $f\in H^{1,2}_{\rm loc}(U,\dist,\meas)$ is harmonic in $U$ if 
$f \in \mathcal{D}(\Delta, V)$ with $\Delta_V f=0$ for any open set $V \Subset U$, namely
$$
\int_U \langle\nabla f, \nabla g\rangle \di\meas=0 \qquad\forall g\in\Lip_c(U,\dist).
$$
Let us denote by $\mathrm{Harm}(U, \dist, \meas)$ the set of harmonic functions on $U$.
\end{definition}

In this article, we will consider mainly globally defined harmonic functions. It is worth pointing out that, in general, these functions
do not belong to $H^{1,2}(X,\dist,\meas)$ but, by definition, they belong to $H^{1,2}_{\rm loc}(X,\dist,\meas)$.

\subsection{Convergence of global/local Sobolev functions}\label{secopt}

Let us fix a pointed measured Gromov-Hausdorff (mGH) convergent sequence
\begin{equation}\label{eq:05}
(X_i,\dist_i,\meas_i,x_i) \stackrel{mGH}{\to} (X,\dist,\meas,x)
\end{equation}
of $\RCD^{*}(K,N)$ spaces.  From now on we denote by $\Ch^i=\Ch_{\meas_i}$, $\langle\cdot,\cdot\rangle_i$, $\Delta_i$, etc. the various objects associated to the $i$-th
metric measure structure.

We adopt here the so-called
extrinsic approach from \cite[Def.~3.9]{GigliMondinoSavare13} to deal with the convergence \eqref{eq:05}, that is to say that we assume $X_i=\supp\meas_i$, $X=\supp\meas$ and all the sets $X_i$, as well as $X$, are isometrically embedded into a common complete and separable metric space $(Y,\underline{\dist})$, and if we identify $x_i$, $x$ with their image through  the isometric embeddings into $Y$ and $\meas_i$, $\meas$ with their pushforward through these embeddings, then $x_i\to x$ in $Y$ and $\meas_i \weakto \meas$ in duality with $C_{bs}(Y)$. When all the spaces involved are uniformly locally doubling (what we do have in \eqref{eq:05} thanks to \eqref{eq:locdoubling}), this approach, also called pointed measured Gromov convergence (pmG for short), is equivalent to the classical mGH convergence. See \cite[Th.~3.15, Prop. 3.30 and 3.33]{GigliMondinoSavare13} for a proof of this equivalence. Moreover, there is no loss of generality in assuming $Y$ proper (i.e.~bounded sets are compact), see \cite[Rk.~3.27]{GigliMondinoSavare13}.

The extrinsic approach is convenient to formulate various notions of convergence and to avoid the use of
$\epsilon$-isometries. However, it should be handled with care: for instance, if $f\in\Lip_b(Y,\dist)$ 
is viewed as a sequence of bounded Lipschitz functions in the spaces $(X_i,\dist_i,\meas_i)$, then the sequence need not be strongly
convergent in $H^{1,2}$ (see \cite{AmbrosioStraTrevisan} for a simple example).
Unlike $X_i$, the ambient space $(Y,\dist)$ will not appear often in our notation, since the measures $\meas_i$ are concentrated 
on $X_i$; however $Y$ plays an important role to define weak convergence of functions $f_i\in L^p(X_i,\meas_i)$, since the test functions 
are continuous and compactly supported in the ambient space. Notice also that any continuous (compactly
supported, resp.) function $\varphi:B_R^{X_i}(x)\to\mathbb R$ can be thought as the restriction of a continuous (compactly supported, resp.)
function $\tilde\varphi:B_R^Y(x)\to\mathbb R$.

In this setting, let us recall the definition of $L^2$-strong/weak convergence of functions with respect to the mGH-convergence.
The following formulation is due to \cite{GigliMondinoSavare13} and \cite{AmbrosioStraTrevisan}, which fits the pmG-convergence well.
Other equivalent
formulations of $L^2$-convergence, in connection with mGH-convergence, can be found in \cite{KuwaeShioya03, Honda2}.
See also their references and \cite{AmbrosioHonda} for the definition of $L^p$-convergence for all $p \in [1, \infty)$.

\begin{definition}[$L^2$-convergence of functions defined on varying spaces]\label{def:l2}
We say that $f_i \in L^2(X_i, \meas_i)$ \textit{$L^2$-weakly converge to $f \in L^2(X, \meas )$} 
if $\sup_i\|f_i\|_{L^2}<\infty$ and $\int_Yhf_i\di\meas_i \to \int_Yhf\di \meas$ for all $h \in C_c(Y)$.
Moreover, we say that $f_i\in L^2(X_i,\meas_i)$ \textit{$L^2$-strongly converge to $f\in L^2(X,\meas)$} if 
$f_i$ $L^2$-weakly converge to $f$ with $\limsup_i\|f_i\|_{L^2}\le \|f\|_{L^2}$. 
\end{definition}

Note that it was proven in \cite{GigliMondinoSavare13} (see also \cite{AmbrosioStraTrevisan}, \cite{AmbrosioHonda}) 
that any $L^2$-bounded sequence has an $L^2$-weak convergent subsequence in the above sense. 

Following \cite{GigliMondinoSavare13}, let us now define weak and strong convergence of 
Sobolev functions defined on varying metric measure spaces.

\begin{definition}[$H^{1,2}$-convergence of functions defined on varying spaces]
We say that $f_i\in H^{1,2}(X_i,\dist_i,\meas_i)$ are weakly convergent in $H^{1,2}$ to 
$f\in H^{1,2}(X_i,\dist_i,\meas_i)$ if $f_i$ are $L^2$-weakly convergent to $f$ 
and $\sup_i\Ch^i(f_i)$ is finite. Strong convergence
in $H^{1,2}$ is defined by requiring $L^2$-strong convergence of the functions, and $\Ch(f)=\lim_i\Ch^i(f_i)$. 
\end{definition}

We can now introduce the  local counterpart of these concepts.

\begin{definition}[Local $L^2$-convergence on varying spaces]\label{def:l2loc}
We say that $f_i \in L^2(B_R(x_i), \meas_i)$ are $L^2$-weakly (or strongly, resp.) convergent to $f \in L^2(B_R(x), \meas)$ on $B_R(x)$ if $f_i1_{B_R(x_i)} \in L^2(X_i, \meas_i)$ $L^2$-weakly (or strongly, resp.) convergent to $f1_{B_R(x)}$ according to Definition~\ref{def:l2}. 

We say that $g_i \in L^2_{\mathrm{loc}}(X_i, \meas_i)$ are $L^2_{\mathrm{loc}}$-weakly (or strongly, resp.) convergent to $g \in L^2_{\mathrm{loc}}(X, \meas)$ if $g_i$ $L^2$-weakly (or strongly, resp.) convergent to $g$ on $B_R(x)$ for all $R>0$.
\end{definition}

Similarly, let us define local $H^{1, 2}$-convergence as follows. 

\begin{definition}[Local $H^{1, 2}$-convergence on varying spaces]
We say that the functions $f_i \in H^{1, 2}(B_R(x_i), \dist_i, \meas_i)$ are weakly convergent in $H^{1, 2}$ to 
$f \in H^{1, 2}(B_R(x), \dist, \meas)$ on $B_R(x)$ if $f_i$ are $L^2$-weakly 
convergent to $f$ on $B_R(x)$ with $\sup_i\|f_i\|_{H^{1, 2}(B_R(x_i))}<\infty$.
Strong convergence in $H^{1, 2}$ on $B_R(x)$ is defined by requiring strong $L^2$ convergence and
$\lim_i\||\nabla f_i|_i\|_{L^2(B_R(x_i))}=\||\nabla f|\|_{L^2(B_R(x))}$.

We say that $g_i \in H^{1, 2}_{\mathrm{loc}}(X_i, \dist_i, \meas_i)$ $H^{1, 2}_{\mathrm{loc}}$-weakly (or strongly, resp.) convergent to $g \in H^{1, 2}_{\mathrm{loc}}(X, \dist, \meas)$ if $g_i|_{B_R(x_i)}$ $H^{1, 2}$-weakly (or strongly, resp.) convergent to $g|_{B_R(x)}$ for all $R>0$. 
\end{definition}

The following fundamental properties of local convergence of functions have been established in \cite{AmbrosioHonda2}. 
They imply, among other things, that in the definition of local $H^{1,2}$-weak convergence one may equivalently
require $L^2$-weak or $L^2$-strong convergence of the functions. 

\begin{theorem}[Compactness of local Sobolev functions]\label{thm:compact loc sob}
Let $R>0$ and let $f_i \in H^{1, 2}(B_{R}(x_i), \dist_i, \meas_i)$ with $\sup_i\|f_i\|_{H^{1, 2}}<\infty$.
Then there exist $f \in H^{1, 2}(B_{R}(x), \dist, \meas)$ and a subsequence $f_{i(j)}$ such that $f_{i(j)}$ $L^2$-strongly converge to $f$ on $B_R(x)$
and  
$$
\liminf_{j \to \infty}\int_{B_R(x_{i(j)})}|\nabla f_{i(j)}|^2_{i(j)}\di\meas_{i(j)} \ge \int_{B_R(x)}|\nabla f|^2\di\meas.
$$  
\end{theorem}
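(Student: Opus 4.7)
The plan is to reduce the local statement to the by-now-standard \emph{global} $H^{1,2}$-compactness and energy lower semicontinuity under mGH-convergence (as developed in \cite{GigliMondinoSavare13,AmbrosioStraTrevisan,AmbrosioHonda}) via a cutoff and diagonal-extraction argument.

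First I would fix an exhaustion $r_k\uparrow R$ together with Lipschitz cutoffs $\phi_k:Y\to[0,1]$ on the ambient proper space with $\phi_k\equiv 1$ on $B_{r_k}(x)$ and $\supp\phi_k\subset B_{r'_k}(x)$ for some $r_k<r'_k<r_{k+1}<R$. For all sufficiently large $i$ the product $\tilde f_{i,k}:=\phi_k\,f_i$, extended by $0$ outside $B_R(x_i)$, lies in $H^{1,2}(X_i,\dist_i,\meas_i)$; the Leibniz rule together with $\sup_i\|f_i\|_{H^{1,2}(B_R(x_i))}<\infty$ and the Lipschitz bound on $\phi_k$ yield $\sup_{i}\|\tilde f_{i,k}\|_{H^{1,2}(X_i)}<\infty$. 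Applying the global mGH-compactness of $H^{1,2}$ to each $k$ and extracting diagonally, I obtain a subsequence $f_{i(j)}$ and functions $h_k\in H^{1,2}(X,\dist,\meas)$ with $\tilde f_{i(j),k}\to h_k$ $L^2$-strongly on $X$ as $j\to\infty$ for every $k$. Since $\phi_k\equiv 1$ on $B_{r_k}(x)$, the family $\{h_k\}$ is compatible on nested balls, so $f:=h_k$ on $B_{r_k}(x)$ unambiguously defines a function on $B_R(x)$ with $f\in H^{1,2}_{\loc}(B_R(x),\dist,\meas)$, and $f_{i(j)}\to f$ $L^2$-strongly on each $B_r(x)$ with $r<R$.

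It remains to verify that $f\in H^{1,2}(B_R(x),\dist,\meas)$, that the convergence is $L^2$-strong on the full ball $B_R(x)$, and the announced $\liminf$ inequality. Since $|\nabla \tilde f_{i(j),k}|_{i(j)} = |\nabla f_{i(j)}|_{i(j)}$ $\meas_{i(j)}$-a.e.\ on $B_{r_k}(x_{i(j)})$ (because $\phi_k\equiv 1$ and $|\nabla\phi_k|\equiv 0$ there), the mGH-lower semicontinuity of the Cheeger energy localized to $B_{r_k}(x)$ gives
\begin{equation*}
\int_{B_{r_k}(x)}|\nabla f|^2\di\meas
\le \liminf_{j\to\infty}\int_{B_{r_k}(x_{i(j)})}|\nabla f_{i(j)}|^2_{i(j)}\di\meas_{i(j)}
\le \liminf_{j\to\infty}\int_{B_R(x_{i(j)})}|\nabla f_{i(j)}|^2_{i(j)}\di\meas_{i(j)}.
\end{equation*}
Letting $k\to\infty$ yields both $|\nabla f|\in L^2(B_R(x))$ and the $\liminf$ inequality; the $L^2$-bound on $f$ itself comes from lower semicontinuity of the $L^2$-norm. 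The upgrade of the $L^2$-strong convergence from $B_r(x)$ ($r<R$) to $B_R(x)$ reduces to controlling the tails $\|f_{i(j)}\|_{L^2(B_R(x_{i(j)})\setminus B_{r}(x_{i(j)}))}$ uniformly in $j$ and vanishingly as $r\uparrow R$, a Rellich--Kondrachov-type uniform integrability statement provided by the uniform local $(1,2)$-Poincar\'e inequality \eqref{eq:locPoincaré} and the uniform local doubling \eqref{eq:locdoubling} inherited from the $\RCD^*(K,N)$ assumption.

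The main technical hurdle is the mGH-\emph{local} lower semicontinuity of the Cheeger energy on balls: the minimal relaxed slopes $|\nabla\cdot|_i$ live on different metric measure structures, and the balls $B_{r_k}(x_{i(j)})$ themselves vary with $j$, so passing from the global mGH-lower semicontinuity of Cheeger's energy to its local version on the $B_{r_k}$ is exactly the content of the framework of \cite{AmbrosioHonda2}. Once this local lower semicontinuity and the accompanying uniform tail control are granted, the rest is routine bookkeeping via cutoff and diagonal extraction.
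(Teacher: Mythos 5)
Your overall architecture (cutoffs, global compactness, diagonal extraction, patching, localized lower semicontinuity) is sound, and since the paper does not prove Theorem \ref{thm:compact loc sob} itself but imports it from \cite{AmbrosioHonda2}, your argument can only be judged on its own completeness. There it has a genuine gap, located exactly at the step that carries the content of the theorem: the upgrade from $L^2$-strong convergence on $B_r(x)$ for every $r<R$ to $L^2$-strong convergence on the full ball $B_R(x)$, i.e.\ the statement that no $L^2$ mass of $f_{i(j)}$ concentrates on shrinking neighborhoods of the spheres $\partial B_R(x_{i(j)})$. You assert that the required uniform tail control is ``provided by'' the local Poincar\'e inequality \eqref{eq:locPoincaré} and the doubling bound \eqref{eq:locdoubling}. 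As stated this is not a proof and, taken literally, it is not correct: these are properties of the ambient spaces, while $f_{i(j)}$ is defined only on the ball, so the Poincar\'e inequality can be applied only on balls $B_{2s}(y)\subset B_R(x_{i(j)})$; this yields precisely the interior compactness you already have and gives nothing at the boundary. Indeed, for a general open set $U$ in place of $B_R(x)$ the analogous statement is false even in a fixed Euclidean space (rooms-and-corridors domains have non-compact embedding $H^{1,2}(U)\hookrightarrow L^2(U)$ despite the ambient space being PI), so any correct argument must use the specific geometry of metric balls, which your proposal never invokes.

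The missing ingredient is a Sobolev--Poincar\'e embedding on the varying balls themselves with constants depending only on $K,N,R$: since the spaces are geodesic, every $y\in B_R(x_i)$ is joined to the center by a geodesic along which the distance to $X_i\setminus B_R(x_i)$ grows at least linearly, so the balls are John domains with a universal constant, and the John-domain theory of \cite{HajlaszKoskela} (chaining the small-ball Poincar\'e inequality and doubling along such curves) gives $\sup_j\|f_{i(j)}\|_{L^{2^*}(B_R(x_{i(j)}))}<\infty$ for some $2^*>2$; combined with the uniform smallness of $\meas_{i(j)}\bigl(B_R(x_{i(j)})\setminus B_r(x_{i(j)})\bigr)$ as $r\uparrow R$ coming from Bishop--Gromov \eqref{eq:BishopGromov}, H\"older's inequality then yields the tail control and hence $\limsup_j\|f_{i(j)}\|_{L^2(B_R(x_{i(j)}))}\le\|f\|_{L^2(B_R(x))}$. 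Without this (or an equivalent uniform-domain/extension argument) your proof delivers only $L^2$-strong convergence on every $B_r(x)$ with $r<R$, plus the liminf inequality, which is strictly weaker than the theorem. Note also that the two facts you defer --- the localized lower semicontinuity of the energy (the more standard one, extractable from the convergence machinery of \cite{GigliMondinoSavare13,AmbrosioStraTrevisan,AmbrosioHonda}) and the tail control --- are both deferred to \cite{AmbrosioHonda2}, which is precisely the source from which the theorem is quoted; as a blind proof this amounts to assuming the substance of the result, so at least the boundary step must be argued along the lines above.
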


\begin{theorem}[Stability of Laplacian on balls]\label{thm:stability lap}
Let $f_i \in D(\Delta, B_R(x_i))$ with 
$$
\sup_i(\|f_i\|_{H^{1, 2}(B_R(x_i), \dist_i, \meas_i)}+\|\Delta_{x_i,R} f_i\|_{L^2(B_R(x_i), \meas_i)})<\infty,
$$
and with $f_i$ $L^2$-strongly convergent to $f$ on $B_R(x)$ (so that, by Theorem~\ref{thm:compact loc sob}, 
$f \in H^{1, 2}(B_R(x), \dist, \meas)$). Then: 
\begin{enumerate}
\item[(1)] $f \in D(\Delta, B_R(x))$;
\item[(2)] $\Delta_{x_i,R} f_i$ $L^2$-weakly converge to $\Delta_{x,R} f$ on $B_R(x)$; 
\item[(3)] $|\nabla f_i|_i$ $L^2$-strongly converge to $|\nabla f|$ on $B_r(x)$ 
for any $r<R$. 
\end{enumerate}
\end{theorem}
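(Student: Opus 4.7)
The proof will split along the three assertions, the unifying idea being to pass to the limit in the integration-by-parts identity that defines $\Delta_{x,R} f$, using $L^2$-weak compactness of $(\Delta_{x_i,R} f_i)$ combined with the calculus on varying spaces available for $\RCD^*(K,N)$.

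For items (1) and (2), Theorem~\ref{thm:compact loc sob} already places $f$ in $H^{1,2}(B_R(x), \dist, \meas)$ and yields $L^2$-strong and (up to a subsequence) $H^{1,2}$-weak convergence of $f_i$. Since $(\Delta_{x_i,R} f_i)$ is uniformly $L^2$-bounded, a further subsequence converges $L^2$-weakly on $B_R(x)$ to some $h$. To identify $h = \Delta_{x,R} f$, I would fix $g \in \Lip_c(B_R(x),\dist)$, extend it to a Lipschitz function on the ambient space $Y$ supported in $B_R$, and produce (possibly after multiplying by a cutoff adapted to the mGH-approximations) functions $g_i \in \Lip_c(B_R(x_i),\dist_i)$ converging \emph{strongly} in $H^{1,2}$ to $g$ (this is precisely where the extrinsic setup and $\Gamma$/Mosco-type stability of Cheeger energies enter). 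Inserting $g_i$ in the defining identity and passing to the limit, the left-hand side tends to $\int h g \di\meas$ by weak-strong pairing, while the right-hand side tends to $-\int \langle \nabla f, \nabla g\rangle \di\meas$ by polarization of the Cheeger form together with $g_i \to g$ strongly and $f_i \to f$ weakly in $H^{1,2}$. Density of $\Lip_c(B_R(x),\dist)$ in $H^{1,2}_0(B_R(x),\dist,\meas)$ then gives (1); uniqueness of the limit removes the subsequence extraction in (2).

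For (3), given $r < R$ I would pick $\phi \in \Lip_c(B_R(x))$ (extended to $Y$) with $0 \le \phi \le 1$ and $\phi \equiv 1$ on $B_r(x)$, and use $\phi^2 f_i \in H^{1,2}_0(B_R(x_i),\dist_i,\meas_i)$ as a test function to obtain
$$\int \phi^2 |\nabla f_i|^2_i \di\meas_i = -\int (\Delta_{x_i,R} f_i)\, \phi^2 f_i \di\meas_i - 2\int \langle \nabla f_i, \phi f_i \nabla \phi\rangle_i \di\meas_i,$$
together with the analogous identity on $B_R(x)$. The first term on the right passes to the limit by weak-strong $L^2$ pairing ($\Delta_{x_i,R} f_i \rightharpoonup \Delta_{x,R} f$ by (2), and $\phi^2 f_i \to \phi^2 f$ strongly). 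For the second term one reads $\phi f_i \nabla \phi$ as a strongly convergent ``test field'' in the Cheeger $L^2$-module and $\nabla f_i$ as a weakly convergent field, so the pairing passes to the limit. Taking the $\limsup$ yields $\limsup_i \int \phi^2 |\nabla f_i|^2_i \di\meas_i \le \int \phi^2 |\nabla f|^2 \di\meas$, which, combined with $\phi \equiv 1$ on $B_r(x)$ and the local semicontinuity in Theorem~\ref{thm:compact loc sob}, gives equality of norms on $B_r(x)$. Together with $L^2$-weak convergence this upgrades to $L^2$-strong convergence of $|\nabla f_i|_i$ on $B_r(x)$.

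The main obstacle is the cross term in (3): the pairing $\int \langle \nabla f_i, \phi f_i \nabla \phi\rangle_i$ couples a merely weakly convergent gradient with a ``vector field'' living in the varying $L^2$-tangent modules, which is not directly an $L^2$ weak-strong pairing on a fixed space. Its convergence rests on the $H^{1,2}$-stability of the Cheeger bilinear form along mGH-convergent $\RCD^*(K,N)$ spaces, applied to combinations like $\phi f_i$ and $\phi^2$ on a slightly larger ball than $B_r(x)$; this is where the local Sobolev calculus developed for varying spaces is genuinely used and where the choice $r<R$ (buying a margin for the cutoff $\phi$) is essential.
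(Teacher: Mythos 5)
The paper does not actually prove this theorem; it is imported verbatim from \cite{AmbrosioHonda2}, so there is no in-paper argument to compare against line by line. Your overall scheme --- extract an $L^2$-weak limit $h$ of $\Delta_{x_i,R}f_i$, identify $h=\Delta_{x,R}f$ by passing to the limit in the defining identity against suitable test functions, then a cut-and-localize argument to promote gradient convergence to strong --- is the standard route and is the right one in outline.

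There is, however, a genuine gap in the handling of the cross term in (3), and a related soft spot in (1)--(2). You take $\phi$ to be a \emph{fixed} Lipschitz cutoff on the ambient space $Y$, restricted to each $X_i$, and you claim $\phi f_i\nabla\phi$ is a \emph{strongly} convergent test field, so that its pairing with the weakly convergent $\nabla f_i$ passes to the limit. But the restriction of a fixed ambient Lipschitz function to $X_i$ need \emph{not} be $H^{1,2}$-strongly convergent --- the paper itself flags exactly this pitfall at the start of Subsection~2.4 --- and in particular $|\nabla\phi|_i$ need not converge in $L^2(X_i,\meas_i)$. So, a priori, $\phi f_i\nabla\phi$ is only weakly convergent, and the weak--weak pairing with $\nabla f_i$ does not close. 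The same objection bites your $g_i$ in (1)--(2): extending $g\in\Lip_c(B_R(x))$ to $Y$ and restricting gives weak $H^{1,2}$-convergence only, and ``multiplying by a cutoff'' fixes nothing unless the cutoffs \emph{themselves} converge $H^{1,2}$-strongly, which restrictions of a fixed ambient function do not guarantee.

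The repair, and what the cited proof actually leans on, is to replace fixed ambient cutoffs by the Mondino--Naber good cut-off functions: for $r<R'<R$ there exist $\phi_i\in\mathrm{Test}F(X_i,\dist_i,\meas_i)$ with $\supp\phi_i\subset B_{R'}(x_i)$, $\phi_i\equiv 1$ on $B_r(x_i)$, a uniform bound $\sup_i\bigl(\||\nabla\phi_i|_i\|_{L^\infty}+\|\Delta^i\phi_i\|_{L^\infty}\bigr)<\infty$, and $\phi_i\to\phi$ $H^{1,2}$-strongly. With such $\phi_i$ the cleanest route to (3) bypasses the cross term altogether: set $u_i:=\phi_i f_i$. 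By \eqref{eq:local to global}, $u_i\in D(\Delta)$ on all of $X_i$ with $\Delta u_i=f_i\Delta^i\phi_i+2\langle\nabla\phi_i,\nabla f_i\rangle_i+\phi_i\Delta_{x_i,R}f_i$ uniformly $L^2$-bounded, so the \emph{global} stability of the Laplacian under mGH convergence (see \cite{GigliMondinoSavare13}, \cite{AmbrosioHonda}) gives $H^{1,2}$-strong convergence of $u_i$ to $u=\phi f$, hence $L^2$-strong convergence of $|\nabla u_i|_i=|\nabla f_i|_i$ on $B_r(x_i)$, which is (3). (One could also salvage your integration-by-parts variant with these $\phi_i$ by rewriting the cross term and integrating by parts against the uniformly bounded $\Delta^i\phi_i^2$, at the cost of a truncation of $f_i$ to justify the Leibniz manipulations.) The identification step in (1)--(2) should be done with the same $\phi_i$: multiply a recovery sequence $\tilde g_i$ from Mosco convergence of the Cheeger energies by $\phi_i$ to obtain genuinely $H^{1,2}$-strongly convergent test functions in $H^{1,2}_0(B_R(x_i),\dist_i,\meas_i)$. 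As written, your argument is missing precisely this ingredient.
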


The pointwise convergence of heat kernels for a convergent sequence of $\RCD^*(K,N)$ spaces has been proved 
in \cite[Th.~3.3]{AmbrosioHondaTewodrose}; building on this, and using the ``concentration'' estimate \eqref{eq:gaussian_ep} below, 
one can actually prove the global $H^{1, 2}$-strong convergence.

\begin{theorem}[$H^{1, 2}$-strong convergence of heat kernels]\label{thm:local conv heat kernel}
For all convergent sequences $t_i \to t$ in $(0,\infty)$ and $y_i \in X_i \to y \in X$, 
$p_i(\cdot, y_i, t_i) \in H^{1,2}(X_i,\dist_{i},\meas_i)$ $H^{1,2}$-strongly converge to $p(\cdot, y, t) \in H^{1,2}(X,\dist,\meas)$.
\end{theorem}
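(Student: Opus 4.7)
The plan is to split the proof into three stages: global $L^2$-strong convergence; local $H^{1,2}$-strong convergence on every ball via the stability of the Laplacian; and finally a uniform tail estimate for the gradients that upgrades the local statement to a global one.

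\textbf{Step 1 ($L^2$-strong convergence).} Writing $u_i := p_i(\cdot, y_i, t_i)$ and $u := p(\cdot, y, t)$, I would first invoke Theorem~3.3 of \cite{AmbrosioHondaTewodrose}, which provides the pointwise convergence $p_i(x_i', y_i, t_i) \to p(x', y, t)$ whenever $x_i' \to x'$. On a fixed ball $B_R(x)$ the Gaussian upper bound \eqref{eq:gaussian} gives a uniform $L^\infty$ bound on $u_i$, and dominated convergence against test functions $h \in C_c(Y)$ yields local $L^2$-weak convergence. The norms satisfy $\|u_i\|_{L^2_i}^2 = p_i(y_i, y_i, 2t_i) \to p(y, y, 2t) = \|u\|_{L^2}^2$ by pointwise convergence, hence $L^2$-strong convergence on bounded balls. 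To pass to a global statement, one controls $\int_{X_i \setminus B_R(y_i)} u_i^2 \, d\meas_i$ by squaring \eqref{eq:gaussian}, using uniform local doubling \eqref{eq:locdoubling} to replace $\meas_i(B_{\sqrt{t_i}}(\cdot))^{-2}$ by a constant multiple of $\meas_i(B_1(\cdot))^{-2}$, and applying Lemma~\ref{lem:volume}(1). The resulting tail estimate is uniform in $i$ and vanishes as $R \to \infty$, giving global $L^2$-strong convergence in the sense of Definition~\ref{def:l2}.

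\textbf{Step 2 (Local $H^{1,2}$-strong convergence on balls).} Since $u_i \in D(\Delta_i)$ with $\Delta_i u_i = \partial_t p_i(\cdot, y_i, t_i)$, one has $u_i \in D(\Delta_i, B_R(x_i))$ for every $R > 0$. The gradient estimate \eqref{eq:equi lip} integrated against $\meas_i$ over $B_R(x_i)$ and the Laplacian estimate \eqref{eq:lapheatbd} give, respectively, uniform bounds on $\|u_i\|_{H^{1,2}(B_R(x_i))}$ and $\|\Delta_{x_i, R} u_i\|_{L^2(B_R(x_i))}$. Theorem~\ref{thm:stability lap} then applies, and its conclusion (3) provides local $L^2$-strong convergence $|\nabla u_i|_i \to |\nabla u|$ on every $B_r(x)$ with $r < R$, hence on every bounded ball centered at $x$ (or indeed at $y$, after translating the center).

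\textbf{Step 3 (Global energy convergence).} To reach global $H^{1,2}$-strong convergence it suffices to prove $\Ch^i(u_i) \to \Ch(u)$. By Step 2 this amounts to a uniform tail estimate for the gradients. The bound \eqref{eq:equi lip} yields
\[
|\nabla_x p_i(x, y_i, t_i)|^2 \leq \frac{C_3^2 e^{2C_4 t_i}}{t_i\,\meas_i(B_{\sqrt{t_i}}(x))^2}\exp\left(-\frac{2 \dist^2(x, y_i)}{(4+\epsilon)\, t_i}\right).
\]
Again by uniform doubling, $\meas_i(B_{\sqrt{t_i}}(x))^{-2}$ is dominated by a $(K, N, t)$-constant times $\meas_i(B_1(x))^{-2}$. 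Lemma~\ref{lem:volume}(1) (after the harmless rescaling of the exponent $\dist_i^2/((4+\epsilon)t_i)$ to the form $2\dist_i^2/5$) then bounds $\int_{X_i \setminus B_R(y_i)} |\nabla u_i|_i^2 \, d\meas_i$ by a quantity independent of $i$ that vanishes as $R \to \infty$. Combined with Step 2 this gives $\lim_i \Ch^i(u_i) = \Ch(u)$, which together with Step 1 yields $H^{1,2}$-strong convergence.

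\textbf{Main obstacle.} The delicate point is controlling the tails uniformly in $i$ when the Gaussian-type bounds contain the $i$-dependent normalizing factors $\meas_i(B_{\sqrt{t_i}}(x))$. The key observation is that the $\RCD^*(K,N)$ class is uniformly locally doubling, so these factors can be compared on a scale-$1$ ball, and the remaining exponential decay is large enough to absorb the volume growth factor $c_2 \exp(c_1 \dist(x, y_i))$ coming from \eqref{doubspec}. Once this comparison is in place, both tail estimates reduce to direct applications of Lemma~\ref{lem:volume}.
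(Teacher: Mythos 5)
Your proof is correct and follows essentially the same route as the paper's: local $H^{1,2}$-strong convergence via Theorem~\ref{thm:stability lap} (fed by the pointwise heat-kernel convergence from \cite[Th.~3.3]{AmbrosioHondaTewodrose} and the bound \eqref{eq:lapheatbd}), upgraded to global convergence by a uniform Gaussian tail estimate obtained from \eqref{eq:gaussian}, \eqref{eq:equi lip}, \eqref{doubspec} and Lemma~\ref{lem:volume}. The only cosmetic difference is that the paper first reduces to $t_i=t=1$ by rescaling and then bounds the tails of $q^2$ and $|\nabla q|^2$ in a single displayed estimate, whereas you keep the general $t_i\to t$ (which works, since uniform doubling lets you compare $\meas_i(B_{\sqrt{t_i}}(\cdot))$ with $\meas_i(B_1(\cdot))$ once $t_i$ is bounded away from $0$ and $\infty$) and split the tail control between Steps~1 and~3.
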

\begin{proof} By a rescaling argument we can assume $t_i=t=1$.
Applying Theorem~\ref{thm:stability lap} for $p_i$ with (\ref{eq:lapheatbd}) yields that $p_i( \cdot, y_i, 1)$ $H^{1, 2}_{\mathrm{loc}}$-strongly converge to $p( \cdot, y, 1)$.
We claim that for any $\delta>0$ there exists $L:=L(K^-,N,\delta)>1$ 
such that for any $\RCD^*(K, N)$ space $(Z,\dist,\nu)$ and any $y\in\supp\nu$ one has
($q$ denoting its heat kernel)
\begin{equation}\label{eq:gaussian_ep}
\int_{Z\setminus B_L(y)}q^2(z, y, 1) + |\nabla_z q(z, y, 1)|^2 \di\nu (z) \le \frac \delta {\nu^2(B_1(y))}. 
\end{equation}
Indeed, let us prove the estimate for $q$, the proof of the estimate for $|\nabla_z q|$ (based on \eqref{eq:equi lip}) being similar.
Combining \eqref{doubspec} with the Gaussian estimate \eqref{eq:gaussian} with 
$\epsilon=1$, one obtains
$$
\int_{Z\setminus B_L(y)}q^2(z, y, 1) \di\nu(z)\leq\frac{c_2^2C_1^2e^{2C_2}}{\nu^2(B_1(y))}
\int_{Z\setminus B_L(y)}\exp\biggl(-\frac{2}{5}\dist^2(z,y)+2c_1\dist(z,y)\biggr)\di\nu(z)
$$
and then one can use the exponential growth condition on $\nu(B_R(y))$, coming from \eqref{eq:BishopGromov}, 
to obtain that the right hand side is smaller than $\delta/\nu^2(B_1(y))$ for $L=L(K^-,N,\delta)$ sufficiently large.

Combining \eqref{eq:gaussian_ep} with the $H^{1, 2}_{\mathrm{loc}}$-strong convergence of $p_i$ shows that
\begin{equation}\label{eq:global conv}
\lim_{i \to \infty}\|p_i(\cdot, y_i, 1)\|_{H^{1, 2}(X_i,\dist_i,\meas_i)}=\|p (\cdot, y, 1)\|_{H^{1, 2}(X,\dist,\meas)},
\end{equation}
which completes the proof.
\end{proof}

We shall also use the following local compactness theorem under $BV$ bounds, applied to sequences of Sobolev
functions. 

\begin{theorem}\label{thm:last_compactness}
Assume that a sequence $(f_i)\subset H^{1,2}(B_2(x_i), \dist_i, \meas_i)$ satisfy
$$
\sup_i\|f_i\|_{L^\infty(B_2(x_i), \meas_i)}+\int_{B_2(x_i)} |\nabla f_i|_i\di\meas_i<\infty.
$$
Then $(f_i)$ has a subsequence $L^p$-strong convergent on $B_1(x)$ for all $p\in [1,\infty)$.
\end{theorem}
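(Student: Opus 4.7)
My plan is to combine a Lipschitz cutoff with heat-flow regularization to reduce the claim to the $H^{1,2}$ compactness result of Theorem~\ref{thm:compact loc sob}. First I would choose Lipschitz cutoffs $\chi_i$ with $\chi_i\equiv 1$ on $B_1(x_i)$, $\supp\chi_i\subset B_{3/2}(x_i)$ and ${\rm lip}\,\chi_i\leq 4$, set $g_i:=\chi_i f_i$ (extended by zero outside $B_2(x_i)$), and observe that by~\eqref{eq:locdoubling} and the hypotheses the sequence $(g_i)$ is uniformly bounded in $L^\infty$, has uniformly bounded $L^1$-norm of $|\nabla g_i|_i$, and lies in $H^{1,2}(X_i,\dist_i,\meas_i)$. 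Since $f_i=g_i$ on $B_1(x_i)$ and a uniform $L^\infty$ bound is available, interpolation reduces the task to producing a subsequence of $(g_i)$ that is $L^2$-strongly convergent on $B_1(x)$.

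For $s>0$, set $g_{i,s}:=h_s^i g_i$, where $h_s^i$ denotes the heat semigroup on $X_i$. For each fixed $s>0$, $L^2$-contraction together with the standard bound $\||\nabla h_s^i u|_i\|_{L^2}^2\leq(2s)^{-1}\|u\|_{L^2}^2$ renders $(g_{i,s})_i$ uniformly bounded in $H^{1,2}(X_i,\dist_i,\meas_i)$ and in $L^\infty$. The heart of the argument is the uniform BV-regularization estimate
\[
\|g_{i,s}-g_i\|_{L^2(X_i,\meas_i)}^2\leq C(K,N)\sqrt{s},
\]
which I would prove by writing $g_{i,s}-g_i=\int_0^s\Delta_i h_t^i g_i\di t$, pairing with a bounded test $\phi$ to obtain $\int(g_{i,s}-g_i)\phi\di\meas_i=-\int_0^s\int\langle\nabla g_i,\nabla h_t^i\phi\rangle_i\di\meas_i\di t$, and invoking the gradient bound $\||\nabla h_t^i\phi|_i\|_{L^\infty}\leq C(K,N)t^{-1/2}\|\phi\|_{L^\infty}$ (obtained by integrating~\eqref{eq:equi lip} in $y$, with constants depending only on $K$ and $N$) to deduce $\|g_{i,s}-g_i\|_{L^1}\leq C(K,N)\sqrt{s}\,\||\nabla g_i|_i\|_{L^1}$; the uniform $L^\infty$ bound then upgrades this to the displayed $L^2$ estimate via $\|u\|_{L^2}^2\leq\|u\|_{L^\infty}\|u\|_{L^1}$.

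For $s_k:=1/k$, Theorem~\ref{thm:compact loc sob} applied on a ball containing $B_1(x)$ extracts an $L^2$-strongly convergent subsequence of $(g_{i,s_k})_i$ on $B_1(x)$, and a diagonal extraction produces a single subsequence $(g_{i_j})$ such that $g_{i_j,s_k}\to\bar g_{s_k}$ $L^2$-strongly on $B_1(x)$ for every $k$. The uniform estimate of the previous paragraph, together with lower semicontinuity of the $L^2$-norm under $L^2$-strong convergence on varying spaces, forces $(\bar g_{s_k})_k$ to be Cauchy in $L^2(B_1(x),\meas)$ and hence to converge to some $g$. A standard three-term triangle argument then gives $g_{i_j}\to g$ $L^2$-strongly on $B_1(x)$, and interpolation with the uniform $L^\infty$ bound upgrades this to $L^p$-strong convergence for every $p\in[1,\infty)$. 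I expect the main obstacle to be the BV-regularization estimate in the second paragraph; what makes it go through uniformly in $i$ is that the relevant Bakry--\'Emery-type gradient bound for $h_t^i\phi$ has constants depending only on $K$ and $N$ across the family of $\RCD^*(K,N)$ spaces.
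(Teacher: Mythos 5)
Your proposal is correct, and in substance it supplies the argument that the paper delegates to the literature: the paper's own proof consists of localizing with cut-off functions, invoking the $L^1$-compactness under uniform $L^\infty$ and BV (here $W^{1,1}$) bounds from \cite[Prop.~7.5]{AmbrosioHonda} (see also \cite[Prop.~3.39]{Honda2}), and then upgrading to $L^p$-strong convergence via the uniform $L^\infty$ bound, exactly as in your last step. What you do differently is prove the compactness step from scratch: heat-semigroup mollification $g_{i,s}=h^i_sg_i$, the uniform estimate $\|g_{i,s}-g_i\|_{L^1}\le C\sqrt{s}\,\||\nabla g_i|_i\|_{L^1}$ obtained by duality from the $L^\infty$-to-Lipschitz regularization of the semigroup, the local $H^{1,2}$-compactness of Theorem~\ref{thm:compact loc sob} for each fixed $s$, and a diagonal/three-term argument in the varying-space $L^2$-convergence framework. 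This is essentially the same mollification mechanism that underlies the cited compactness result, so the two routes differ mainly in that yours is written out rather than cited; it also makes clear that plain Lipschitz cut-offs suffice here, since no Laplacian bound on the cut-off is ever used. Two details worth pinning down: the bound $\||\nabla h^i_t\phi|_i\|_{L^\infty}\le Ct^{-1/2}\|\phi\|_{L^\infty}$ with $C=C(K,N)$ (via Bakry--\'Emery, or by integrating \eqref{eq:equi lip} in $y$ together with a volume estimate as in Lemma~\ref{lem:volume}) holds uniformly only for $t$ in a bounded range, say $t\le 1$, which is all you need since $s\downarrow 0$; and the uniform bounds on $\|g_i\|_{L^2(X_i,\meas_i)}$ and on the cut-off contribution to $\||\nabla g_i|_i\|_{L^1}$ require $\sup_i\meas_i(B_2(x_i))<\infty$, which follows from the weak convergence of $\meas_i$ to $\meas$ in the proper ambient space $Y$ rather than from \eqref{eq:locdoubling} alone.
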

\begin{proof} The proof of the compactness w.r.t. $L^1$-strong convergence can be obtained arguing as in 
\cite[Prop.~7.5]{AmbrosioHonda} (where the result
is stated in global form, for normalized metric measure spaces, even in the $BV$ setting), 
using good cut-off functions, see also \cite[Prop.~3.39]{Honda2} where a
uniform $L^p$ bound on gradients, for some $p>1$ is assumed. Then, because of the uniform
$L^\infty$ bound, the convergence is $L^p$-strong for any $p\in [1,\infty)$, see \cite[Prop.~3.3(e)]{AmbrosioHonda}.
\end{proof}

Let us conclude this section by introducing the notion of harmonic replacement which will play key roles in Sections \ref{sec:embedding} and \ref{sec:5}.
As we already remarked, the assumption that the first Dirichlet eigenvalue $\lambda_1^D(B_R(x))$ for the ball $B_R(x)$ is
strictly positive is valid for sufficently small balls, indeed it holds as soon as $\meas(X\setminus B_R(x))>0$. See \cite[Lem.~4.2]{AmbrosioHonda2} for the proof of the following proposition.

\begin{proposition}\label{prop: harmonic replacement}
Assume $\lambda_{1}^{D}(B_R (x))>0$. Then for any 
$f \in H^{1,2}(B_R (x),\dist,\meas)$, there exists a unique $\hat{f} \in D(\Delta, B_R(x))$, called \textit{harmonic replacement} of $f$, such that 
\begin{equation}\label{eq: harmonic replacement}
\begin{cases}
\Delta_{x,R}\hat{f}=0\\\\
f-\hat{f} \in H^{1, 2}_0(B_R(x), \dist, \meas).
\end{cases} 
\end{equation}
Moreover,
\begin{equation}
\label{eq:sol bound}
\||\nabla\hat{f}|\|_{L^2(B_R(x), \meas)}\leq 2\||\nabla f|\|_{L^2(B_R(x), \meas)},
\end{equation}
\begin{equation}\label{eq:l2 est}
\|\hat f\|_{L^2(B_R(x), \meas)}\leq \|f\|_{L^2(B_R(x), \meas)}+\frac{1}{\lambda_1^D(B_R(x))}\||\nabla f|\|_{L^2(B_R(x), \meas)}.
\end{equation}
Finally, $\hat{f}-f$ is the unique minimizer of the functional
$$ \psi\in H^{1,2}_0(B_R(x),\dist,\meas)\mapsto\int_X|\nabla (f+\psi)|^2\di\meas.$$
\end{proposition}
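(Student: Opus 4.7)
The plan is to apply the direct method of the calculus of variations to the functional
\[
\mathcal{F}(\psi) := \int_{B_R(x)} |\nabla(f+\psi)|^2 \di\meas, \qquad \psi\in H^{1,2}_0(B_R(x),\dist,\meas),
\]
and to set $\hat{f}:=f+\bar\psi$, where $\bar\psi$ is the minimizer. Since Cheeger's energy is quadratic (Theorem~\ref{thm:duke}), $\mathcal{F}$ expands as
\[
\mathcal{F}(\psi)=\||\nabla f|\|_{L^2}^2+2\int_{B_R(x)}\langle\nabla f,\nabla\psi\rangle\di\meas+\||\nabla\psi|\|_{L^2}^2,
\]
which is strictly convex on $H^{1,2}_0(B_R(x),\dist,\meas)$ by the parallelogram identity for the quadratic form $\psi\mapsto\||\nabla\psi|\|_{L^2}^2$. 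The hypothesis $\lambda_1^D(B_R(x))>0$ is exactly the Poincaré inequality $\|\psi\|_{L^2}^2\le \lambda_1^D(B_R(x))^{-1}\||\nabla\psi|\|_{L^2}^2$ on $H^{1,2}_0$; combined with Cauchy-Schwarz applied to the cross term, this forces every minimizing sequence to be bounded in $H^{1,2}$. Extracting an $L^2$-weakly convergent subsequence and invoking the $L^2$-lower semi-continuity of $\Ch$ produces a minimizer, while strict convexity gives its uniqueness.

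Differentiating $t\mapsto\mathcal{F}(\bar\psi+t\phi)$ at $t=0$ for arbitrary $\phi\in H^{1,2}_0(B_R(x),\dist,\meas)$ yields the Euler-Lagrange identity
\[
\int_{B_R(x)}\langle\nabla\hat{f},\nabla\phi\rangle\di\meas=0,
\]
which is exactly the condition defining $\hat{f}\in D(\Delta,B_R(x))$ with $\Delta_{x,R}\hat{f}=0$. By construction $f-\hat{f}=-\bar\psi\in H^{1,2}_0(B_R(x),\dist,\meas)$, so \eqref{eq: harmonic replacement} holds. Uniqueness of $\hat{f}$ as a solution of \eqref{eq: harmonic replacement} is immediate: the difference of two such candidates lies in $H^{1,2}_0(B_R(x))$ and is harmonic, hence testing it against itself gives the vanishing of its gradient, and the Poincaré inequality concludes.

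The quantitative bounds come from testing the harmonicity relation against $\hat{f}-f\in H^{1,2}_0(B_R(x))$, which gives
\[
\int_{B_R(x)}|\nabla\hat{f}|^2\di\meas=\int_{B_R(x)}\langle\nabla\hat{f},\nabla f\rangle\di\meas.
\]
Cauchy-Schwarz yields $\||\nabla\hat{f}|\|_{L^2}\le\||\nabla f|\|_{L^2}$, a fortiori \eqref{eq:sol bound}; expanding the square in the same identity gives in addition $\||\nabla(\hat{f}-f)|\|_{L^2}\le\||\nabla f|\|_{L^2}$. The triangle inequality combined with the Poincaré inequality applied to $\hat{f}-f\in H^{1,2}_0$ then produces \eqref{eq:l2 est}. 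The final variational characterization of $\hat{f}-f$ as the unique minimizer of $\psi\mapsto\int|\nabla(f+\psi)|^2\di\meas$ is merely a restatement of the direct method step.

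The main obstacle is of a purely bookkeeping nature: since $H^{1,2}_0(B_R(x),\dist,\meas)$ is defined as the $H^{1,2}$-closure of $\Lip_c(B_R(x),\dist)$ inside the global Sobolev space, while $\mathcal{F}$ involves only the relaxed gradient restricted to $B_R(x)$, one should check that the restricted functional inherits lower semi-continuity and strict convexity from $\Ch$. This is transparent from the locality on Borel sets of the minimal relaxed gradient, already recorded in Section~\ref{cheelap}. Once this is in hand, the argument reduces to the Hilbert-space projection of $f$ onto the closed affine subspace $f+H^{1,2}_0(B_R(x),\dist,\meas)$ with respect to the Dirichlet inner product.
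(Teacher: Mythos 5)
Your argument is correct and is essentially the argument the paper itself relies on: the paper gives no proof here but defers to \cite[Lem.~4.2]{AmbrosioHonda2}, and that proof is exactly this projection/direct-method scheme on the affine set $f+H^{1,2}_0(B_R(x),\dist,\meas)$ for the localized Dirichlet energy, with $\lambda_1^D(B_R(x))>0$ supplying coercivity on $H^{1,2}_0$, the Euler--Lagrange equation giving $\Delta_{x,R}\hat f=0$, and the orthogonality relation giving the quantitative bounds. One small caveat: what your triangle-plus-Poincar\'e step actually yields is $\|\hat f\|_{L^2}\le \|f\|_{L^2}+\bigl(\lambda_1^D(B_R(x))\bigr)^{-1/2}\,\||\nabla f|\|_{L^2}$ (since the Poincar\'e inequality reads $\|\psi\|_{L^2}\le (\lambda_1^D)^{-1/2}\||\nabla\psi|\|_{L^2}$), which is the scale-consistent estimate; it coincides with \eqref{eq:l2 est} only up to the power of $\lambda_1^D$ as printed, so you should state the square-root version rather than claim \eqref{eq:l2 est} verbatim.
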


Next proposition, which is crucial for Section~\ref{sec:5}, gives some conditions under which harmonic replacements are continuous with 
respect to measured Gromov-Hausdorff convergence. It is a consequence of \cite[Th.~3.4]{AmbrosioHonda2}.

\begin{proposition}[Continuity of harmonic replacements]\label{prop:harmconti}
Assume $\lambda_1^D(B_R(x))>0$ with
\begin{equation}\label{eq:countably}
H^{1, 2}_0(B_R(x), \dist, \meas)=\bigcap_{\epsilon >0}H^{1, 2}_0(B_{R+\epsilon}(x), \dist, \meas).
\end{equation}
Let $f_i \in H^{1, 2}(B_R(x_i), \dist_i, \meas_i)$ be a weakly $H^{1, 2}$-convergent sequence to $f \in H^{1, 2}(B_R(x), \dist, \meas)$ on $B_R(x)$. 
Then the harmonic replacements $\hat{f_i}$ of $f_i$ on $B_R(x_i)$ exist for $i$ large enough and $L^2$-strongly converge to the
harmonic replacement $\hat{f}$ of $f$ on $B_R(x)$.
\end{proposition}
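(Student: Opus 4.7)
\emph{Plan.} The strategy follows the usual pattern for passing a Dirichlet problem to the limit under mGH convergence. I would extract an $L^2$-strong subsequential limit $g$ of $\hat f_i$ using compactness, show that $g$ is harmonic on $B_R(x)$ via the stability of the Laplacian, verify the Dirichlet condition $f-g\in H^{1,2}_0(B_R(x))$ (this is where the hypothesis \eqref{eq:countably} enters), and conclude by uniqueness of harmonic replacements that $g=\hat f$, so that the full sequence converges.

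First I would produce uniform bounds. Since $\lambda_1^D(B_R(x))>0$ forces $\meas(X\setminus B_R(x))>0$, the mGH convergence together with the local Poincar\'e inequality \eqref{eq:locPoincaré} and uniform doubling \eqref{eq:locdoubling} yields $\liminf_i \lambda_1^D(B_R(x_i))>0$, so the harmonic replacements $\hat f_i$ exist for $i$ large. Combining the weak $H^{1,2}$-convergence of $f_i$ with the a priori bounds \eqref{eq:sol bound}--\eqref{eq:l2 est} gives $\sup_i\|\hat f_i\|_{H^{1,2}(B_R(x_i))}<\infty$. By Theorem~\ref{thm:compact loc sob} a subsequence of $\hat f_i$ is $L^2$-strongly convergent on $B_R(x)$ to some $g\in H^{1,2}(B_R(x),\dist,\meas)$; Theorem~\ref{thm:stability lap}, applied with $\Delta_{x_i,R}\hat f_i\equiv 0$, then shows $g\in D(\Delta, B_R(x))$ with $\Delta_{x,R} g=0$.

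The main obstacle is the Dirichlet boundary condition. From $f_i-\hat f_i\in H^{1,2}_0(B_R(x_i))$ one has approximants $\phi_i^{(k)}\in\Lip_c(B_R(x_i),\dist_i)$ converging in $H^{1,2}$ to $f_i-\hat f_i$, extended by zero to the ambient space. A diagonal selection produces a sequence approximating $f-g$ whose supports sit in $\overline{B_R(x)}$; under mGH the location of these supports can only be controlled \emph{strictly} within $B_{R+\epsilon}(x)$ and not within $B_R(x)$, because mass could a priori concentrate on $\partial B_R(x)$. Thus the direct conclusion is merely $f-g\in H^{1,2}_0(B_{R+\epsilon}(x))$ for every $\epsilon>0$, and it is precisely the assumption \eqref{eq:countably} that upgrades this to $f-g\in H^{1,2}_0(B_R(x))$.

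Once $g$ is identified as a harmonic function on $B_R(x)$ with $f-g\in H^{1,2}_0(B_R(x))$, the uniqueness stated in Proposition~\ref{prop: harmonic replacement} forces $g=\hat f$. Since this holds for every $L^2$-subsequential limit, the whole sequence $\hat f_i$ converges $L^2$-strongly to $\hat f$, which is the desired conclusion.
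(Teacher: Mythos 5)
The paper does not prove this proposition; it simply cites \cite[Th.~3.4]{AmbrosioHonda2}, so there is no in-paper argument to compare against. Your reconstruction follows the natural route and is essentially correct: extract an $L^2$-strong subsequential limit $g$ of $\hat f_i$ via Theorem~\ref{thm:compact loc sob}, use Theorem~\ref{thm:stability lap} with $\Delta_{x_i,R}\hat f_i\equiv 0$ to get $g$ harmonic, identify $f-g\in\bigcap_{\epsilon>0}H^{1,2}_0(B_{R+\epsilon}(x))$ by passing compactly supported Lipschitz approximants of $f_i-\hat f_i$ to the limit, invoke \eqref{eq:countably} and uniqueness of the harmonic replacement, and upgrade to full-sequence convergence by a subsequence argument.

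The only place you are too quick is the assertion $\liminf_i\lambda_1^D(B_R(x_i))>0$, where two things deserve care. First, ``$\lambda_1^D(B_R(x))>0$ forces $\meas(X\setminus B_R(x))>0$'' is not an unconditional implication (a Dirichlet gap can persist on a full-measure ball, e.g. $B_1(0)=[0,1)$ in $X=[0,1]$); it is the conjunction with \eqref{eq:countably} that yields it, since if $\meas(X\setminus B_R(x))=0$ then $X=\supp\meas\subset\overline{B_R(x)}$ is compact, every $H^{1,2}_0(B_{R+\epsilon}(x))$ equals $H^{1,2}(X)$, \eqref{eq:countably} forces $H^{1,2}_0(B_R(x))=H^{1,2}(X)\ni 1$, and $\lambda_1^D(B_R(x))=0$. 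Second, uniform Poincar\'e and doubling alone do not give a uniform spectral gap on a fixed-radius ball; one must also exclude that normalized near-optimizers $\psi_i\in H^{1,2}_0(B_R(x_i))$ with $\int|\nabla\psi_i|_i^2\di\meas_i\to 0$ converge to a nonzero constant. This is closed by compactness: a subsequential $L^2$-strong limit $\psi$ on $B_{R'}(x)$ ($R'>R$) satisfies $|\nabla\psi|=0$, hence $\psi$ is constant with $\|\psi\|_{L^2}=1$ and $\supp\psi\subset\overline{B_R(x)}$, while Bishop--Gromov forces $\meas(\partial B_R(x))=0$, so $\meas(X\setminus\overline{B_R(x)})=\meas(X\setminus B_R(x))>0$, a contradiction. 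With these two points supplied, your proof is complete.
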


Notice that a simple separability argument shows that, given $x\in X$, the condition \eqref{eq:countably} is satisfied for
all $R>0$ with at most countably many exceptions (see \cite[Lem.~2.12]{AmbrosioHonda2}).

\section{Tangent bundle}\label{sec:tan}

In this section we introduce the tangent bundle $T(X,\dist,\meas)$ on an infinitesimally Hilbertian space $(X, \dist, \meas)$. More precisely, in the smooth
setting, the construction we give provides $L^2(T(X,\dist,\meas))$, namely all $L^2$ sections of the tangent bundle; here, according to
\cite{Gigli,Weaver} we describe the tangent bundle implicity, through the collection of its sections.
We follow closely the construction from \cite{Gigli}, with minor simplifications deriving from the Hilbertian assumption, 
since the original construction therein starts from $L^2$ sections of the cotangent bundle $L^2(T^*(X,\dist,\meas))$ 
and then recovers $L^2(T(X,\dist,\meas))$ by duality. 

Recall that, according to \cite{Gigli}, a (real) $L^\infty(X,\meas)$-module is a Banach real vector space $(M,\|\cdot\|_B)$ with an additional structure of bilinear multiplication
by $L^\infty(X,\meas)$ functions $m\in M\mapsto\chi m\in M$, $\chi \in L^\infty(X,\meas)$, such that $\|\chi m\|_B \le \|\chi\|_\infty \|m\|_B$ and the associative property $\chi(\chi' m)=(\chi\chi') m$ hold, satisfying
also the locality and gluing axioms (see (1.2.1) and (1.2.2) in \cite{Gigli});
in addition, multiplication by $\lambda\in\setR$ corresponds to multiplication by the $L^\infty(X,\meas)$ function equal $\meas$-a.e.
to $\lambda$. We say that a $L^\infty(X,\meas)$-module $M$ is a $L^2(X,\meas)$-normed module if there exists a ``local norm''
$|\cdot|:M\to \{f\in L^2(X,\meas):\ f\geq 0\}$ satisfying:
\begin{itemize}
\item[(a)] $|m+m'|\leq |m|+|m'|$ $\meas$-a.e.~in $X$ for all $m,\,m'\in M$;
\item[(b)] $|\chi m|=|\chi||m|$ $\meas$-a.e.~in $X$ for all $m\in M$, $\chi\in L^\infty(X,\meas)$;
\item[(c)] the function
\begin{equation}\label{eq:natural_norm}
\|m\| := \biggl(\int_X|m(x)|^2\di\meas(x)\biggr)^{1/2}
\end{equation}
is a norm in $M$ which coincides with $\|\cdot\|_B$.
\end{itemize}
Notice that homogeneity and subadditivity of $\|\cdot\|$ are obvious consequences of (a), (b).

The starting point of Gigli's construction is provided by the formal expressions $\{(A_i,\nabla f_i)\}_{i\in I}$, where $I$ is a finite
index set, $\{A_i\}_{i\in I}$ 
is a $\meas$-measurable partition of $X$ and $f_i\in H^{1,2}(X,\dist,\meas)$. The sum of two families $\{(A_i,\nabla f_i)\}_{i\in I}$, 
$\{(B_j,\nabla g_i)\}_{j\in J}$ is
$\{A_i\cap B_j,\nabla (f_i+g_j)\}_{(i,j)\in I\times J}$ and multiplication by $\meas$-measurable functions $\chi$ taking finitely many values 
is defined by
$$
\chi\{(E_i,\nabla f_i)\}_{i\in I}=\{(E_i\cap F_j,\nabla (z_jf_i))\}_{(i,j)\in I\times J}\qquad\text{with $\chi=\sum_{j=1}^Nz_j1_{F_j}$.}
$$
Two families $\{(A_i,\nabla f_i)\}_{i\in I}$, $\{(B_j,\nabla g_i)\}_{j\in J}$ are said to be equivalent if
$f_i=g_j$ $\meas$-a.e. on $A_i\cap B_j$ for all $(i,j)\in I\times J$ and one works with the vector space $M$
of these equivalence classes, since the above defined operations are compatible with the equivalence relation. 

The local norm $|\{(A_i,\nabla f_i)\}|\in L^2(X,\meas)$ of $\{(A_i,\nabla f_i)\}$ is defined by 
$$
|\{(A_i,\nabla f_i)\}|(x):=|\nabla f_i|(x)\qquad\text{$\meas$-a.e. on $A_i$.}
$$
Thanks to the locality properties of the minimal relaxed slope, this definition does not depend on the choice of
the representative and satisfies $|\chi \{(A_i,\nabla f_i)\}|=|\chi||\{(A_i,\nabla f_i)\}|$ whenever $\chi$ takes finitely many values. 

This way, all properties of $L^2(X,\meas)$ normed modules are satisfied, with the only difference that multiplication is
defined only for functions $\chi\in L^\infty(X,\meas)$ having finitely many values. By completion of $M$ with respect to the norm 
$\bigl(\int_X |\{A_i,f_i\}|^2\di\meas\bigr)^{1/2}$ we obtain the normed module $L^2(T(X,\dist,\meas))$.   

In the sequel we shall denote by $V,\,W$, etc. the typical elements of $L^2(T(X,\dist,\meas))$ and by $|V|$ the local norm.
As in other papers on this topic we start using a more intuitive notation, using $\nabla f$ for (the equivalence class of)
$\{(X,\nabla f)\}$ and expressions like finite sums $\sum_i\chi_i f_i$.

The following result is a simple consequence of the definition of $L^2(T(X,\dist,\meas))$.

\begin{theorem} \label{thm:Gigli_density} The vector space
$$
\left\{\sum_{i=1}^n\chi_i\nabla f_i:\ \chi_i\in L^\infty(X,\meas),\,\,f_i\in H^{1,2}(X,\dist,\meas),\,\,n\geq 1\right\}
$$
is dense in $L^2(T(X,\dist,\meas))$.
\end{theorem}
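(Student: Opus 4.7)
The plan is to observe that this density statement essentially just unpacks the construction of $L^2(T(X,\dist,\meas))$. By definition, this space is the completion, with respect to the norm \eqref{eq:natural_norm}, of the quotient module $M$ of formal expressions $\{(A_i,\nabla f_i)\}_{i\in I}$, indexed by finite $\meas$-measurable partitions and by elements $f_i\in H^{1,2}(X,\dist,\meas)$. Hence $M$ is already dense in $L^2(T(X,\dist,\meas))$, and it suffices to show that every class in $M$ is a finite sum $\sum_{i=1}^n\chi_i\nabla f_i$ with $\chi_i\in L^\infty(X,\meas)$.

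Using the conventions fixed just before the theorem, I would identify $\nabla f$ with the class of $\{(X,\nabla f)\}$. For a characteristic function $\chi=1_A$ (which is a simple $L^\infty(X,\meas)$ function with two values $0,1$), the definition of scalar multiplication directly yields $1_A\nabla f=\{(A,\nabla f),\,(X\setminus A,\nabla 0)\}$. Then, given a generator $\{(A_i,\nabla f_i)\}_{i\in I}$ of $M$ with $\{A_i\}_{i\in I}$ a finite measurable partition, I would compute $\sum_{i\in I}1_{A_i}\nabla f_i$ by iterating the sum rule $\{(A_i,\nabla f_i)\}+\{(B_j,\nabla g_j)\}=\{(A_i\cap B_j,\nabla(f_i+g_j))\}$. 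Because the $A_i$ are pairwise disjoint, all nondiagonal intersections $A_i\cap(X\setminus A_j)$ with $i\ne j$ collapse and the total class obtained is exactly $\{(A_i,\nabla f_i)\}_{i\in I}$, up to the equivalence relation (the locality of the minimal relaxed slope taking care of the $\nabla 0$ contributions on the complement of $\cup_i A_i$, if the partition does not cover $X$, which can always be arranged by adding the piece $(X\setminus\cup_i A_i,\nabla 0)$).

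Combining the two observations, every element of $M$ is literally represented by a finite sum $\sum_{i=1}^n 1_{A_i}\nabla f_i$ lying in the vector space appearing in the statement, and density of that vector space in $L^2(T(X,\dist,\meas))$ follows from the density of $M$ built into the completion. I do not foresee any real obstacle: the argument is purely formal, and the only care needed is to correctly unpack the definitions of sum and scalar multiplication on $M$ and to invoke the locality axiom of the minimal relaxed slope to handle the equivalence classes.
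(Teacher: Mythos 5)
Your argument is correct and is exactly the route the paper intends: the paper states this result without proof as ``a simple consequence of the definition of $L^2(T(X,\dist,\meas))$'', and your proposal simply spells out that consequence by writing each generator $\{(A_i,\nabla f_i)\}_{i\in I}$ of the pre-completion module $M$ as the finite sum $\sum_i 1_{A_i}\nabla f_i$ and invoking the density of $M$ built into the completion. No gaps.
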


More generally, density still holds if the functions $\chi_i$ vary in a set $D\subset L^2\cap L^\infty(X,\meas)$ stable
under truncations and dense in $L^2(X,\meas)$ (such as $\Lip_b(X,\dist)\cap L^2(X,\meas)$).

Since Theorem~\ref{thm:duke} guarantees that the square $|\cdot|^2$ of the local norm
satisfies $\meas$-a.e. the parallelogram rule, the same holds on $L^2(T(X,\dist,\meas))$, therefore
one has the following:

\begin{proposition}[The canonical metric $g$]\label{prop:gcanonical}
There exists a unique symmetric and $L^\infty(X,\meas)$-bilinear form $g$ on $L^2(T(X,\dist,\meas))\times L^2(T(X,\dist,\meas))$ 
satisfying
$$
g(\nabla f_1,\nabla f_2)=\langle\nabla f_1,\nabla f_2\rangle\qquad\text{$\meas$-a.e. on $X$}
$$
for all $f_1,\,f_2\in H^{1,2}(X,\dist,\meas)$.
\end{proposition}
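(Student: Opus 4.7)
The plan is to construct $g$ first on the dense subspace $M \subset L^2(T(X,\dist,\meas))$ of equivalence classes of finite sums $[\{(A_i,\nabla f_i)\}_{i\in I}]$ described before the completion, and then extend by continuity to the whole of $L^2(T(X,\dist,\meas))$. Symmetry and $L^\infty(X,\meas)$-bilinearity are imposed at every stage, and uniqueness will follow from Theorem~\ref{thm:Gigli_density} together with the polarization identity.

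For the construction on $M$, given $V = [\{(A_i,\nabla f_i)\}_{i\in I}]$ and $W = [\{(B_j,\nabla g_j)\}_{j\in J}]$, I would pass to the common refinement $\{A_i\cap B_j\}$ and set
\[
g(V,W) := \sum_{i\in I,\, j\in J} 1_{A_i\cap B_j}\, \langle \nabla f_i, \nabla g_j\rangle,
\]
where $\langle\cdot,\cdot\rangle$ is the pointwise form provided by Theorem~\ref{thm:duke}. Independence of the representative is the first technical check: it reduces to verifying that if $f_i = g_j$ $\meas$-a.e.~on $A_i\cap B_j$, then $\langle\nabla f_i,\nabla h\rangle = \langle\nabla g_j, \nabla h\rangle$ $\meas$-a.e.~there for every $h\in H^{1,2}(X,\dist,\meas)$, which follows from the locality of the minimal relaxed slope, transferred to $\langle\cdot,\cdot\rangle$ via the defining polarization formula. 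Symmetry is transparent, bilinearity over $L^\infty$ functions with finitely many values is obtained by a further refinement of partitions, and by construction $g(V,V) = |V|^2$ $\meas$-a.e.

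To extend $g$ to the completion, I would establish the pointwise Cauchy-Schwarz inequality
\[
|g(V,W)|\leq |V|\,|W| \qquad \text{$\meas$-a.e.~on $X$},
\]
valid for $V,W\in M$: since $\lambda\mapsto |V+\lambda W|^2 = g(V,V) + 2\lambda\, g(V,W) + \lambda^2 g(W,W)$ is a nonnegative quadratic polynomial $\meas$-a.e.~(by the $\meas$-a.e.~parallelogram identity inherited from Theorem~\ref{thm:duke}), its discriminant is nonpositive. Integrating gives $\|g(V,W)\|_{L^1}\leq \|V\|\,\|W\|$, so $g\colon M\times M \to L^1(X,\meas)$ is continuous bilinear and extends uniquely to a symmetric $\mathbb{R}$-bilinear form on all of $L^2(T(X,\dist,\meas))\times L^2(T(X,\dist,\meas))$. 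The pointwise Cauchy-Schwarz bound passes to the limit, and from it $L^\infty(X,\meas)$-bilinearity is upgraded from finitely-valued multipliers to arbitrary $\chi\in L^\infty(X,\meas)$ by approximation in $L^\infty$-norm. Uniqueness is then immediate: any other candidate agrees with $g$ on each $\nabla f$ by hypothesis, hence on $M$ by $L^\infty$-bilinearity, and hence everywhere by Theorem~\ref{thm:Gigli_density}.

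The main obstacle I expect is the bookkeeping involved in verifying well-definedness on $M$ and in pushing the pointwise Cauchy-Schwarz inequality through the completion; neither is deep, but both rely crucially on the $\meas$-a.e.~parallelogram identity and on the locality of $\langle\cdot,\cdot\rangle$ inherited from Theorem~\ref{thm:duke}. The rest reduces to a standard density and completion argument.
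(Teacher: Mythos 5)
Your construction is correct and follows essentially the same route the paper takes (and leaves almost entirely implicit): the quadraticity of $\Ch$ via Theorem~\ref{thm:duke} gives the $\meas$-a.e.\ bilinear expansion of $|\cdot|^2$, so $g$ is obtained by polarization on the dense class of simple vector fields and extended by continuity through the pointwise Cauchy--Schwarz bound, with uniqueness coming from the density statement of Theorem~\ref{thm:Gigli_density}. The only caveat --- shared with the paper's own formulation --- is that your final uniqueness step invokes density, which pins down a competing form only if that form is continuous in the $L^2(T(X,\dist,\meas))$-norm (or satisfies an analogous pointwise bound), an assumption left implicit in the statement of the proposition.
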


The Riemannian metric $g$ can be canonically viewed not only as a quadratic form or bilinear form, but also as a linear operator,
that we shall denote
$\bf g$, on the symmetric product bundle. Recalling the definition \eqref{eq:deftest} of $\mathrm{Test}F(X,\dist,\meas)$, 
the normed module $L^2(T^{\otimes 2}(X,\dist,\meas))$ of the $L^2$ sections of the symmetric tensor product of tangent bundles as constructed in \cite{Gigli} 
 arises as the $L^2$ completion
of the finite sums $\sum_i \chi_i \nabla f^1_i\otimes\nabla f^2_i$ (with $\chi_i,\,f_i^1,\,f_i^2\in\mathrm{Test}F(X,\dist,\meas)$)
with respect to a canonical Hilbert-Schmidt local norm $|\cdot|_{HS}$ (see also Definition~\ref{def:extralabel} below). 
Given this construction,  we define ${\bf g}$ as follows:
\begin{equation}\label{def:bfg}
\langle{\bf g},\sum_i \chi_i \nabla f_i^1\otimes\nabla f^2_i\rangle:=\sum_i \chi_i g(\nabla f^1_i,\nabla f^2_i).
\end{equation}
Notice that at this stage it is not clear whether the (dual) Hilbert-Schmidt norm of $\bf g$ is finite, so that $\bf g$
might not admit in general an extension to the whole of $L^2(T^{\otimes 2}(X,\dist,\meas))$.

We shall also use the $L^2(X,\meas)$-normed module $L^2(T^*(X,\dist,\meas))$ which is the dual of $L^2(T(X,\dist,\meas))$ according to \cite[Def.~1.2.6]{Gigli}. In particular, we will use the differential operator $\dist:H^{1,2}(X,\dist,\meas) \to L^2(T^*(X,\dist,\meas))$
(acting on gradient vector fields by $\dist f(\nabla g)=\langle\nabla f,\nabla g\rangle$),
which satisfies all reasonable properties like locality, chain and Leibniz rules, see \cite[Sect.~2.2.2]{Gigli}  for details.

Moreover for all Borel subset $A$ of $X$, we define $L^2(T(A, \dist, \meas))$ by the set of all $V \in L^2(T(X, \dist, \meas))$ with $|V|_{HS} =0$ $\meas$-a.e. $x \in X \setminus A$. Similarly we define $L^2(T^*(A, \dist, \meas))$. They will be used in Subsection~\ref{proof}, where it will be more useful
to distinguish the roles of vectors and covectors.

Motivated by \eqref{def:bfg}, we define also
$$
\mathrm{Test}T^{\otimes 2}(X,\dist,\meas) := \left\{ \sum_{i=1}^n 
\chi_i \nabla f_i^1 \otimes\nabla f_i^2 \, \, : \, \, \chi_i,\, f_i^1,\,f_i^2 \in \mathrm{Test}F(X,\dist,\meas) \, \, \forall 1 \le i \le n\right\},
$$
$$
\mathrm{Test}(T^*)^{\otimes 2}(X,\dist,\meas) := \left\{ \sum_{i=1}^n 
\chi_i \dist f_i^1 \otimes \dist f_i^2 \, \, : \, \, \chi_i,\, f_i^1,\,f_i^2 \in \mathrm{Test}F(X,\dist,\meas) \, \, \forall 1 \le i \le n\right\}
$$
and $L^2((T^*)^{\otimes 2}(X, \dist, \meas))$ as the $L^2$ completion of the later one.
Then $L^2((T^*)^{\otimes 2}(X, \dist, \meas))$ is canonically isometric to the dual space of $L^2(T^{\otimes 2}(X, \dist, \meas))$ (see \cite[Sect. 3.2]{Gigli}).

The following result is a consequence of the rectifiability of the set $\mathcal{R}_n$ in Theorem~\ref{th: RCD decomposition}, which
provides a canonical isometry between the tangent bundle as defined in this paper and the tangent bundle defined via measured Gromov-Hausdorff
limits, see \cite[Th.~5.1]{GigliPasqualetto1} for the proof.

\begin{lemma}\label{riemdim} If $(X,\dist,\meas)$ is a $\RCD^*(K,N)$ space, the canonical metric $g$ of Proposition~\ref{prop:gcanonical} satisfies
\begin{equation}\label{eq:righthsk}
|{\bf g}|_{HS}^2=n\qquad\text{$\meas$-a.e. in $X$, with $n=\dim_{\dist,\meas}(X)$.}
\end{equation}
\end{lemma}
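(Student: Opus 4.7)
The plan is to reduce the identity to a pointwise computation at $\meas$-a.e.\ point of the $n$-dimensional regular set $\mathcal{R}_n$, and then to identify $\bf g$ at such a point with the Euclidean metric tensor on $\mathbb{R}^n$, whose squared Hilbert--Schmidt norm equals $n$. Since by Theorem~\ref{th: RCD decomposition} one has $\meas(X\setminus\mathcal{R}_n)=0$, it suffices to prove that $|{\bf g}|_{HS}^2(x)=n$ for $\meas$-a.e.\ $x\in\mathcal{R}_n$.

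First, recall that $L^2(T^{\otimes 2}(X,\dist,\meas))$ and its dual $L^2((T^*)^{\otimes 2}(X,\dist,\meas))$ are $L^2(X,\meas)$-normed modules equipped with a pointwise Hilbert--Schmidt norm $|\cdot|_{HS}$, so the statement $|{\bf g}|_{HS}^2=n$ makes sense fiberwise. The key input is the canonical isometry, established in \cite{GigliPasqualetto} via the rectifiability of $\mathcal{R}_n$ from Theorem~\ref{th: RCD decomposition}, between the abstract tangent module $L^2(T(X,\dist,\meas))$ and a tangent bundle whose fiber at $\meas$-a.e.\ $x\in\mathcal{R}_n$ is isometric to the Euclidean tangent space $T_0\mathbb{R}^n\cong\mathbb{R}^n$. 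Under this identification, the bilinear form $g$ of Proposition~\ref{prop:gcanonical} (which is canonically derived from the Cheeger energy) must correspond to the standard Euclidean inner product on the fiber $\mathbb{R}^n$, since $g$ is determined by $\mathrm{Ch}$ and the blow-up of $\mathrm{Ch}$ at a regular point produces the Euclidean Dirichlet energy.

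Once this pointwise identification is in place, the computation of $|{\bf g}|_{HS}^2$ at $x\in\mathcal{R}_n$ is elementary. Let $\{e_1,\ldots,e_n\}$ be an orthonormal basis of the fiber and $\{e^1,\ldots,e^n\}$ its dual basis; then $\{e^i\otimes e^j\}_{1\le i,j\le n}$ is an orthonormal basis of $(T^*)^{\otimes 2}$ with respect to the Hilbert--Schmidt inner product. The metric tensor is expressed in this basis as ${\bf g}=\sum_{i=1}^n e^i\otimes e^i$, so
\[
|{\bf g}|_{HS}^2 = \sum_{i=1}^n 1 = n.
\]

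The main obstacle is justifying rigorously the passage from the abstract, $\meas$-a.e.\ defined objects in Gigli's framework to the pointwise Euclidean model at a regular point. This is exactly what the isometry of \cite{GigliPasqualetto} provides: it is built by combining the rectifiability of $\mathcal{R}_n$ (through which approximate tangent planes to Lipschitz charts can be meaningfully pulled back to the tangent module) with the fact that at a regular point the Cheeger energy blows up to the Euclidean Dirichlet energy, so that the canonical metric $g$ of Proposition~\ref{prop:gcanonical} degenerates into the standard inner product in the limit. Granted this identification, the lemma follows from the trivial linear-algebra computation above.
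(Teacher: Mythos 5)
Your argument matches the paper's: the paper also reduces the claim to the canonical isometry of \cite[Th.~5.1]{GigliPasqualetto} between the abstract module and the fiberwise Euclidean tangent bundle over $\mathcal{R}_n$ (granted by Theorem~\ref{th: RCD decomposition}), and then the identity $|{\bf g}|_{HS}^2=n$ is the same elementary trace computation on $\mathbb{R}^n$. The paper merely cites the reference without spelling out the fiberwise linear algebra, so your write-up is a slightly more explicit version of the same proof.
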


In the context of $\RCD(K,\infty)$ spaces, a good local notion of Hessian is available as symmetric bilinear form
on $L^2(T(X,\dist,\meas))$ (see also Subsection~\ref{secorm}). In this paper the Hessian will play a role only in Subsection \ref{proof}. In particular we will only use the
fact that the Hessian is defined for all $f\in D(\Delta)$ with an integral
estimate coming from Bochner's inequality \cite[Cor.~3.3.9]{Gigli}
\begin{equation}\label{eq:Hessian1}
\int_X|\mathrm{Hess}_f|^2\dist \meas\le \int_X\left(|\Delta f|^2 -K|\nabla f|^2 \right)\dist \meas \qquad \forall f \in D(\Delta).
\end{equation}
In addition, we shall use the property (see \cite{Savare}, \cite[Prop.~3.3.22]{Gigli}) that, for $f_i\in D(\Delta)$ with $|\nabla f_i| \in L^\infty(X,\meas)(i=1, 2)$, 
one has $\langle\nabla f_1,\nabla f_2\rangle\in H^{1,2}(X,\dist,\meas)$, with
\begin{equation}\label{eq:Hessian2} 
\di \langle\nabla f_1,\nabla f_2\rangle=
\mathrm{Hess}_{f_1}(\nabla f_2,\cdot)+\mathrm{Hess}_{f_2}(\nabla f_1,\cdot)
\qquad\text{in $L^2(T^*(X, \dist, \meas))$.}
\end{equation}

\section{Embeddings to $L^2$-spaces via heat kernels}\label{sec:embedding}

In this section we study the properties induced by the family of continuous embeddings $(\Phi_t)_{t>0}$ of a compact $\RCD^*(K,N)$ space $(X,\dist,\meas)$ into $L^2 (X,\meas)$. Each map $\Phi_t:X\to L^2(X,\meas)$ is defined as follows:
\begin{equation}\label{eq:mancava}
\Phi_t(x) := p(x,\cdot,t) \qquad \forall x \in X.
\end{equation}
Here $p:X\times X\times (0,\infty)\to (0,\infty)$ 
denotes the H\"older continuous representative of the heat kernel of $(X,\dist,\meas)$ and, in this section, we are assuming that $\meas$
has full support. 

We start with a brief account of the Riemannian picture, in which it is known from \cite{BerardBessonGallot} that the embeddings $\Phi_{t}$ are smooth and provide a family of pull-back metrics $\Phi_t^* g_{L^2}$ which, after rescaling, nicely converge to the original metric as 
$t$ goes to $0$ (see \eqref{eq: BBG expansion} below). We focus afterwards on the (possibly non-smooth) $\RCD^*(K,N)$ setting. 
To treat properly the Riemannian result (\ref{eq: BBG expansion}) in this context, we first introduce a meaningful notion of Riemannian metric on $(X,\dist,\meas)$. Among these Riemannian metrics on $(X,\dist,\meas)$ there is a canonical one $g$, singled out by
Proposition~\ref{prop:gcanonical}, which obviously coincides with the classical metric when $(X,\dist,\meas)$ is a weighted Riemannian manifold. Finally we define a family of well-chosen Riemannian metrics $g_{t}$ serving as pull-back metrics on $(X,\dist,\meas)$. 
The convergence of $\mathrm{sc}_t g_t$ to $g$, where $\mathrm{sc}_t$ is a suitable scaling function, will be treated in Section~\ref{sec:5}.

\subsection{Smooth case}
Let $(M^n, g)$ be an $n$-dimensional closed Riemannian manifold equipped with its canonical Riemannian distance $\dist_g$ and volume measure $\mathrm{vol}_g$. The next proposition is similar to \cite[Th.~5]{BerardBessonGallot}. We give a proof for the reader's convenience.

\begin{proposition}\label{prop:smooth embedding}
For any $t>0$ the map $\Phi_t$ is a smooth embedding. Moreover the differential $d_x\Phi_t: T_xM^n \to L^2(M^n, \mathrm{vol}_g)$ at $x \in M^n$ is given by
\begin{equation}\label{eq:derivative}
d_x \Phi_t(v) =  y\mapsto g_x(\nabla_ x p (x,y,t),v)\qquad\forall v\in T_xM^n.
\end{equation}
In particular 
$$
\|d_x\Phi_t(v) \|_{L^2(M^n,{\rm vol}_g)}^2=\int_{M^n}|g_x(\nabla_xp(x, y, t), v)|^2 \di \mathrm{vol}_g(y)\qquad\forall v\in T_xM^n.
$$
\end{proposition}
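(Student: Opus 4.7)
The plan is to reduce everything to the spectral decomposition
\[
p(x,y,t) = \sum_{i \ge 0} e^{-\lambda_i t} \phi_i(x)\phi_i(y),
\]
where $\{\phi_i\}$ is a complete orthonormal basis of $L^2(M^n,{\rm vol}_g)$ made of smooth eigenfunctions of $-\Delta$ with eigenvalues $0=\lambda_0<\lambda_1\le\lambda_2\le\cdots\to\infty$. On a closed Riemannian manifold, standard elliptic regularity combined with Weyl's law yields at most polynomial growth of $\|\phi_i\|_{C^k}$ in $\lambda_i$, so for each fixed $t>0$ the factor $e^{-\lambda_i t}$ makes the series converge in $C^\infty(M^n\times M^n)$ together with all its derivatives. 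Consequently, viewed as an $L^2$-valued map,
\[
\Phi_t(x) = \sum_i e^{-\lambda_i t}\phi_i(x)\,\phi_i
\]
is smooth in $x$, with differential
\[
d_x\Phi_t(v) = \sum_i e^{-\lambda_i t} g_x(\nabla\phi_i(x),v)\,\phi_i.
\]
Term-by-term comparison with the gradient of $p(x,\cdot,t)$ (which may itself be differentiated under the sum for the same reason) gives the identity \eqref{eq:derivative}, and then the $L^2$-norm formula is a direct application of orthonormality (or a direct computation from \eqref{eq:derivative}).

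Next I would check that $\Phi_t$ is injective. If $\Phi_t(x_1)=\Phi_t(x_2)$, then $p(x_1,\cdot,t)=p(x_2,\cdot,t)$ in $L^2$, hence everywhere by continuity; pairing against $\phi_j$ and using orthonormality gives $e^{-\lambda_j t}\phi_j(x_1)=e^{-\lambda_j t}\phi_j(x_2)$, so $\phi_j(x_1)=\phi_j(x_2)$ for every $j$. Since $\{\phi_j\}$ is total in $L^2(M^n,{\rm vol}_g)$ and is contained in $C(M^n)$, this family separates points (any $f\in C(M^n)$ is an $L^2$-limit of finite combinations of the $\phi_j$), so $x_1=x_2$.

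To see that $d_x\Phi_t$ is injective, suppose $d_x\Phi_t(v)=0$. The previous display and orthonormality force $g_x(\nabla\phi_i(x),v)=0$ for every $i$. Given any $f\in C^\infty(M^n)$, the expansion $f=\sum_i a_i\phi_i$ converges in every $H^s$-norm since $|a_i|$ decays faster than any polynomial in $\lambda_i$; Sobolev embedding then gives pointwise convergence of the gradients, so $\nabla f(x)=\sum_i a_i\nabla\phi_i(x)$ and therefore $g_x(\nabla f(x),v)=0$. As $f$ can be chosen so that $df_x$ attains any prescribed covector (e.g., a smooth extension of a local coordinate function), this forces $v=0$.

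Putting the pieces together, $\Phi_t$ is a smooth injective immersion of the compact manifold $M^n$ into the Hausdorff space $L^2(M^n,{\rm vol}_g)$; compactness upgrades continuous injectivity to a topological embedding, and combined with the immersion property this yields a smooth embedding. The step most requiring care is the first one: justifying that the spectral series for $\Phi_t$ truly gives a smooth $L^2$-valued map with the stated differential, i.e., controlling the interchange of the infinite sum with differentiation in $x$; everything else follows from completeness and separation of points of $\{\phi_i\}$.
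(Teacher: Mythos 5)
Your argument reaches the same conclusions as the paper but by a genuinely different route. For smoothness and the formula \eqref{eq:derivative}, the paper never expands $\Phi_t$ in eigenfunctions: it uses that $p$ is already smooth and applies the second-order Taylor identity $f(h)=f(0)+f'(0)h-\int_0^h s f''(s)\di s$ to $s\mapsto p(c(s),y,t)$ along a curve $c$ with $c'(0)=v$, so the $L^2$-distance between the difference quotient and the candidate differential is controlled by an integral of $|\mathrm{Hess}_{p(c(s),\cdot,t)}(c'(s),c'(s))|^2$. Your route differentiates the spectral series term by term, which needs the polynomial $C^k$-growth of $\|\phi_i\|$ in $\lambda_i$ from elliptic regularity plus Weyl's law; that is perfectly fine on a closed manifold and is arguably more transparent, but it leans on smooth elliptic machinery, whereas the paper's Taylor/Hessian argument and, for injectivity, its use of the two-sided Gaussian bounds \eqref{eq:gaussian} (from $p(x_1,x_1,s)=p(x_1,x_2,s)$ for all $s$, let $s\downarrow 0$) are chosen precisely because they transfer to the $\RCD^*(K,N)$ setting with only the Appendix-level eigenvalue/eigenfunction estimates. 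Your verification that $d_x\Phi_t$ is injective is a genuine addition the paper does not spell out, and it is correct as written.

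The one step you should repair is the justification of injectivity of $\Phi_t$. From $\phi_j(x_1)=\phi_j(x_2)$ for all $j$ you conclude that the eigenfunctions separate points because every $f\in C(M^n)$ is an $L^2$-limit of finite linear combinations of the $\phi_j$; but $L^2$-convergence carries no pointwise information, so the equality of all $\phi_j$ at $x_1$ and $x_2$ does not pass to such a limit $f$, and the parenthetical argument fails as stated. The fix is exactly the device you invoke two paragraphs later for the immersion step: for $f\in C^\infty(M^n)$ the coefficients $a_i$ decay faster than any power of $\lambda_i$, so $\sum_i a_i\phi_i$ converges uniformly and $f(x_1)=f(x_2)$ for every smooth $f$; a bump function then forces $x_1=x_2$. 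Alternatively, argue as the paper does: $\phi_j(x_1)=\phi_j(x_2)$ for all $j$ gives $p(x_1,\cdot,s)=p(x_2,\cdot,s)$ for every $s>0$, and comparing the Gaussian lower bound for $p(x_1,x_1,s)$ with the Gaussian upper bound for $p(x_1,x_2,s)$ as $s\downarrow 0$ yields $\dist_g(x_1,x_2)=0$.
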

\begin{proof} 
We first check that $\Phi_t$ is a continuous embedding. Continuity is obvious. As $(M^n, \dist_g)$ is compact, 
it suffices to show that $\Phi_t$ is injective. 
Recall the expression \eqref{eq:expansion1} of the heat kernel, we see that $\Phi_t(x_1)=\Phi_t(x_2)$ yields
\begin{equation}\label{eq:expan}
\sum_ie^{-\lambda_it}\phi_i(x_1) \phi_i(y)=\sum_ie^{-\lambda_it}\phi_i(x_2) \phi_i(y) \quad \text{for ${\rm vol}_g$-a.e. } y \in M.
\end{equation}
In particular, multiplying both sides of \eqref{eq:expan} by $\phi_j(y)$ and integrating over $M$ shows that 
$\phi_j(x_1)=\phi_j(x_2)$ holds for all $j$. 
Then since $p(x_1, x_1, s)=p(x_1, x_2, s)$ for all $s>0$ by (\ref{eq:expansion1}), the Gaussian bounds (\ref{eq:gaussian}) 
with $\epsilon=1$ yield
\begin{align*}
\frac{1}{C_1\mathrm{vol} (B_{s^{1/2}}(x_1))}\exp \left( -C_2s \right) &\le p(x_1, x_1, s) =p(x_1, x_2, s) \\
&\le \frac{C_1}{\mathrm{vol} (B_{s^{1/2}}(x_1))}\exp \left(-\frac{\dist^2 (x_1, x_2)}{5s}+C_2s \right), \\
\end{align*}
i.e. $\exp \bigl( -C_2s \bigr)  \le C_1^2 \exp \bigl(-\dist^2 (x_1, x_2)/(5s)+C_2s\bigr)$.
Then letting $s \downarrow 0$ yields $x_1=x_2$, which shows that $\Phi_t$ is injective.

Next we prove the smoothness of $\Phi_t$ along with (\ref{eq:derivative}).
Take a smooth curve $c:( -\epsilon, \epsilon) \to M^n$ with $c(0)=x$ and $c'(0)=v$ and estimate
\begin{align}\label{eq:diff}
&  \quad \quad \left\| \frac{\Phi_{t} \circ c (h) - \Phi_{t} \circ c (0)}{h} - g_x(\nabla_ x p (x,y,t),v)\right\|_{L^2}^{2} \nonumber \\
& = \int_{M^n} \left| \frac{p(c(h),y,t) - p(c(0),y,t)}{h} -  \frac{\di}{\di s}\Bigl |_{s=0}  p(c(s), y, t) \right|^2 \di \mathrm{vol}_g(y) \nonumber \\
& = \int_{M^n} \left| \int_{0}^{h} \frac{s}{h} \mathrm{Hess}_{p(c(s), \cdot, t)}\left(c'(s),c'(s)\right) \di s \right|^2 \di \mathrm{vol}_g \nonumber \\
& \le h\int_{M^n} \int_{0}^{h} \left|\mathrm{Hess}_{p(c(s), \cdot, t)}\left(c'(s),c'(s)\right) \right|^2 \di s \,  \di \mathrm{vol}_g,
\end{align}
where we applied the identity $f(h)=f(0) + f'(0)h -\int_0^{h}sf''(s)\di s$,
valid for any $f \in C^2(-\epsilon,\epsilon)$, to the family of functions $f_y(s):=p(c(s), y, t)$, $y\in M^n$.
Thus, letting $h \to 0$ in (\ref{eq:diff}) shows that $\Phi_t$ is differentiable at $x \in M^n$ and that 
(\ref{eq:derivative}) holds. The smoothness of $\Phi_t$ follows similarly.
\end{proof}

Let $g_{L^2}$ be the ``flat'' Riemannian metric on $L^2(M^n, \mathrm{vol}_g)$ 
given by the $L^2$ scalar product. Thanks to Proposition~\ref{prop:smooth embedding}, for any $t>0$ one can consider the pull-back metric $\Phi_t^*g_{L^2}$ which writes as follows:
\begin{equation}\label{eq:pullbackmetric}
\Phi_t^*g_{L^2} (v, w):=\int_{M^n} g_x(\nabla_xp(x, y, t), v) g_x(\nabla_xp(x, y, t), w) \di \mathrm{vol}_g(y), \quad \forall v, w \in T_xM^n.  
\end{equation}
The asymptotic behavior of $\Phi_t^*g_{L^2}$ was discussed in \cite[Th.~5]{BerardBessonGallot} where the authors showed
\begin{equation}\label{eq: BBG expansion}
c(n)t^{(n+2)/2}\Phi_t^*g_{L^2} =  g - \frac{2t}{3}\left(\mathrm{Ric}_g - \frac{1}{2}\mathrm{Scal}_g\,  g\right) + O(t^2), \quad t \downarrow 0,
\end{equation}
in the sense of pointwise convergence, where $c(n)$ is a positive dimensional constant and $\mathrm{Ric}_g, \, \mathrm{Scal}_g$ denote the Ricci and the scalar curvature of $(M^n,g)$ respectively. Note that B\'erard, Besson and Gallot considered the maps $\Psi_{t} : M^n \to \ell^{2}(\setN)$ defined by:
$$
\Psi_t(x):=c(n)^{1/2} t^{(n+2)/4} (e^{-\lambda_i t /2} \phi_i(x))_{i \ge 1} \qquad \forall x \in M.
$$
A direct computation based on the formula $ \nabla_xp(x, y, t) = \sum_{i} e^{-\lambda_{i}t} \phi_{i}(y)\nabla_x \phi_i(x)$ yields to
$$\Psi_{2t}^{*}g_{\ell^2}= c(n) t^{(n+2)/2}\Phi_t^* g_{L^2}.$$
Therefore, to work with $\Psi_{2t}^{*}$ or $\Phi_{t}^{*}$ is equivalent in this context.

\subsection{$\RCD$-setting}\label{secorm}

We replace now the Riemannian manifold $(M^n,g)$ by a compact $\RCD^*(K,N)$ space $(X,\dist,\meas)$. It is immediate that even in this case 
the maps $\Phi_t$ are continuous embeddings. Indeed, since (\ref{eq:expansion1}) holds in the $\RCD^*(K,N)$ setting too, we can carry out the proof of Proposition~\ref{prop:smooth embedding} to get that $\Phi_t$ is an embedding for any $t>0$. Continuity is obvious as we consider the 
continuous representative of the heat kernel, see the Appendix for more details.

Let us turn now to an analog of the expansion (\ref{eq: BBG expansion}) in the $\RCD^*(K,N)$ setting. As there is presently no pointwise notion of Ricci and scalar curvature in this context, it is unlikely to get such a precise expansion. One might however be interested in the convergence statement:
\begin{equation}\label{eq: convergence}
c(n)t^{(n+2)/2}\Phi_t^*g_{L^2} \to  g  \qquad  t \downarrow 0.
\end{equation}
In order to give a meaning to this statement on $(X,\dist,\meas)$, let us first introduce a nonsmooth notion of Riemannian metric
(recall that $L^0(X,\meas)$ denotes the space of $\meas$-measurable functions on $(X,\meas)$).

\begin{definition}[Riemannian metrics]\label{def:Riemannian metric}
We say that a symmetric bilinear form $$\bar g:L^2(T(X,\dist,\meas))\times L^2(T(X,\dist,\meas))\to L^0(X,\meas)$$ is a 
\textit{Riemannian (semi, resp.) metric} on $(X, \dist, \meas)$ if the following two properties hold:
\begin{itemize}
\item[(1)] ($L^\infty$-linearity) $\bar g(\chi V,W)=\chi \bar g(V,W)$ $\meas$-a.e. on $X$ for all $V,\,W\in L^2(T(X,\dist,\meas))$, $\chi\in L^\infty(X,\meas)$;
\item[(2)] (non (semi, resp.) degeneracy) for all $V\in L^2(T(X,\dist,\meas))$ one has
\begin{equation}\label{eq:positivity}
\bar g(V, V)>0\quad(\bar g(V, V) \ge 0, \,resp.) \qquad\text{$\meas$-a.e. on $\{|V|>0\}$.}
\end{equation}
\end{itemize}
In the sequel we denote by $g$ the canonical metric singled out by Proposition~\ref{prop:gcanonical}.
\end{definition}
As we did for the canonical metric $g$ in \eqref{def:bfg}, if we consider a Riemannian semi metric, we can also define an associated lifted metric ${\bf\bar g}$ acting on suitable elements of $L^2(T^{\otimes 2}(X,\dist,\meas))$ in the following way: for any $\chi_i \in L^{\infty}(X, \meas), f_i^j \in H^{1, 2}(X, \dist, \meas)$, we set
\begin{equation}\label{nnnnnb}
\langle{\bf \bar g},\sum_i \chi_i \nabla f_i^1\otimes\nabla f^2_i\rangle:=\sum_i \chi_i \bar g(\nabla f^1_i,\nabla f^2_i);
\end{equation}
 more generally, we shall also apply this construction to
$L^\infty(X, \meas)$-bilinear forms 
$\bar g:[L^2(T(X,\dist,\meas))]^2\to L^0(X,\meas)$ (for instance, differences of metrics).

In the class of Riemannian semi metrics a natural partial order, that we shall use, is induced by the relation
\begin{equation}\label{eq:partial_order}
g_1\leq g_2\quad\Longleftrightarrow\quad
g_1(V,V)\leq g_2(V,V)\qquad\text{$\meas$-a.e. in $X$, for all $V\in L^2(T(X,\dist,\meas))$.}
\end{equation}
It is also obvious that the class of Riemannian semi metrics is invariant under multiplication by 
positive $\meas$-a.e.~functions in $L^0(X,\meas)$. This motivates the following definition.

\begin{definition}[Local norm of a Riemannian semi metric]\label{def:extralabel}
For a $L^\infty(X, \meas)$-bilinear form $\bar g:[L^2(T(X,\dist,\meas))]^2\to L^0(X,\meas)$,
the smallest $\meas$-measurable function $h:X\to [0,\infty]$, up to $\meas$-measurable sets, satisfying
$$
|\langle{\bf \bar g},\sum_i\chi_i\nabla f_i^1\otimes\nabla f_i^2\rangle|\leq h\bigl|\sum_i\chi_i \nabla f_i^1\otimes\nabla f_i^2\bigr\vert_{HS}
\quad\text{$\meas$-a.e. in $X$}
$$
for all $\sum_i\chi_i \nabla f_i^1\otimes\nabla f_i^2\in \mathrm{Test}T^{\otimes 2}(X,\dist,\meas)$, is denoted $|{\bf \bar g}|_{HS}$ or $|{\bf \bar g}|$ for short.
\end{definition}

Whenever $|{\bf \bar g}|_{HS}\in L^0(X,\meas)$ we have a unique extension of ${\bf\bar g}$, still denoted ${\bf \bar g}$,
to the completion of $\mathrm{Test}(T^{\otimes 2}(X,\dist,\meas))$, namely $L^2(T^{\otimes 2}(X,\dist,\meas))$.

\begin{remark}\label{99jjn}
Any $T=\sum_i^k f_i^0\dist f_i^1 \otimes \dist f_i^2 \in \mathrm{Test}(T^*)^{\otimes 2}(X, \dist, \meas)$ 
(i.e. $f_i^j \in \mathrm{Test}F(X, \dist, \meas))$ induces the $L^\infty(X, \meas)$-bilinear form $b_T$ as follows:
$$
b_T(V, W) :=\sum_{i=1}^kf_i^0\langle \nabla f_i^1, V\rangle \langle \nabla f_i^2, W\rangle \in L^0(X, \meas) \qquad 
\forall V,\,W \in L^2(T(X, \dist, \meas))
$$
with the same Hilbert-Schmidt norm: $|{\bf b}_T|_{HS}(x)=|T|_{HS}(x)$ for $\meas$-a.e.~$x \in X$.
This observation can be extended to the case when $T \in L^2((T^*)^{\otimes 2}(X, \dist, \meas))$, i.e.~any  $T \in L^2((T^*)^{\otimes 2}(X, \dist, \meas))$ induces the bilinear form ${\bf b}_T$ with the same Hilbert-Schmidt norm.

Conversely, for any $L^\infty(X, \meas)$-bilinear form ${\bar g}$ with $|{\bf {\bar g}}|_{HS} \in L^2(X, \meas)$, ${\bar g}$ defines an element in $(L^2(T^{\otimes 2}(X, \dist, \meas)))^*$ by (\ref{nnnnnb}). In particular, since $L^2(T^{\otimes 2}(X, \dist, \meas))^* \cong L^2((T^*)^{\otimes 2}(X, \dist, \meas))$, there exists a unique $T \in L^2((T^*)^{\otimes 2}(X, \dist, \meas))$ such that $b_T={\bar g}$.

Therefore we will sometimes regard any Riemannian semi metric ${\bar g}$ with $|{\bf {\bar g}}|_{HS} \in L^2(X, \meas)$ 
as an element ${\bf {\bar g}}$ in $L^2((T^*)^{\otimes 2}(X, \dist, \meas))$, without making explict the distinction (e.g. in \eqref{eq:explicit expression}).
\end{remark}

Finally, we introduce a suitable notion of convergence of Riemannian semi metrics $\bar g_i$ on a fixed 
$\RCD^*(K, N)$ space $(X,\dist,\meas)$.

\begin{definition}[Convergence of Riemannian semi metrics]\label{defcong}
We say that Riemannian semi metrics $\bar g_i$ $L^2$-weakly converge to a Riemannian semi metric $\bar g$ if  $\sup_i\||{\bf \bar g_i}|_{HS}\|_{L^2}<\infty$ and
${\bar g}_i(V,V)$ $L^2$-weakly converges to $\bar g(V,V)$ for all $V \in L^{\infty}(T(X,\dist,\meas))$. We say that
$\bar g_i\to\bar g$ $L^2$-strongly if $|{\bf \bar g_i}-{\bf \bar g}|_{HS}\to 0$ in $L^2(X,\meas)$.
\end{definition}

In the previous definition, the adjective ``weakly'' refers also to the fact that convergence is required
in a pointwise sense, namely without any uniformity w.r.t. $V$, even though the convergence with
$V$ fixed might occur in the strong $L^2$ sense. Also, this terminology is justified by
the fact that this notion of convergence corresponds precisely to weak convergence in the reflexive 
space $L^2(T^{\otimes 2}(X,\dist,\meas))$, since ${\bf\bar g}$ is uniquely determined by its
value on tensor products $V\otimes V$, namely $\bar g(V,V)$. As a consequence, one has
$$
\liminf_{i\to\infty}\int_X|{\bf\bar g_i}|_{HS}^2\di\meas_i\geq
\int_X|{\bf\bar g}|_{HS}^2\di\meas
$$
whenever $\bar g_i$ $L^2$-weakly converge to $\bar g$.

Notice also that $L^2$-strong convergence of
$\bar g_i$ to $\bar g$ implies strong convergence in $L^2$ of $\bar g_i(V,V)$ to $\bar g(V,V)$ for all
$V\in L^{\infty}(T(X,\dist,\meas))$ because 
$$
|\bar g_i(V,V)-\bar g(V,V)|\leq\|V\|_{L^{\infty}}^2 |{\bf\bar g_i}-{\bf\bar g}|_{HS},
$$
so that by integration the $L^2$ convergence of $\bar g_i(V,V)$ to $\bar g(V,V)$ can be obtained.

Similarly, if $\bar g_i \le C\bar g$ for some $C\geq 0$ independent of $i$, then the $L^2$-strong convergence of ${\bar g}_i$ to ${\bar g}$ implies that ${\bar g}_i(V, V) \to {\bar g}(V, V)$ in $L^1(X, \meas)$ for all $V \in L^2(T(X, \dist, \meas))$.

The following convergence criterion will also be useful.

\begin{proposition}\label{prop:convcrigi}
Let $\bar g_i,\,\bar g$ be Riemannian semi metrics.
Then $\bar g_i$ $L^2$-strongly converge to $\bar g$ as $i\to\infty$ if and only if
\begin{equation}\label{eq:convcrigi}
\lim_{i\to\infty}\int_X \bar g_i(V,V)\di\meas=\int_X \bar g(V,V)\di\meas\qquad\forall V\in L^\infty(T(X,\dist,\meas))
\end{equation}
and
$$
\limsup_{i\to\infty}\int_X|{\bf\bar g}_i|_{HS}^2\di\meas\leq
\int_X|{\bf\bar g}|_{HS}^2\di\meas<\infty.
$$ 
\end{proposition}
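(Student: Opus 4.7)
The forward direction is a direct consequence of remarks already made in the excerpt: $L^2$-strong convergence means $|{\bf \bar g}_i - {\bf \bar g}|_{HS} \to 0$ in $L^2(X,\meas)$, so the pointwise bound $|\bar g_i(V,V) - \bar g(V,V)| \le \|V\|_{L^\infty}^2 |{\bf \bar g}_i - {\bf \bar g}|_{HS}$ and Cauchy--Schwarz give \eqref{eq:convcrigi}, while the triangle inequality in $L^2(X,\meas)$ yields convergence, hence equality, of the Hilbert--Schmidt norms in $L^2$, so in particular the $\limsup$ bound in~(ii) is realized.

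For the backward direction I would exploit the Hilbert space structure of $L^2(T^{\otimes 2}(X,\dist,\meas))$ together with the isometric identification $L^2((T^*)^{\otimes 2}(X,\dist,\meas)) \cong L^2(T^{\otimes 2}(X,\dist,\meas))^*$ recalled in Remark~\ref{99jjn}. Under this identification each $\bar g_i$ and $\bar g$ is an element of the Hilbert space $L^2((T^*)^{\otimes 2}(X,\dist,\meas))$ with squared norm $\int_X|{\bf \bar g}|_{HS}^2\di\meas$. The strategy is to prove weak convergence ${\bf \bar g}_i \weakto {\bf \bar g}$ in this Hilbert space and then upgrade to strong convergence via the standard ``weak convergence plus convergence of norms implies strong convergence'' principle; condition~(ii) combined with the weak lower semicontinuity of the norm gives $\lim_i\int_X|{\bf \bar g}_i|_{HS}^2\di\meas=\int_X|{\bf \bar g}|_{HS}^2\di\meas$, which together with weak convergence yields $\||{\bf \bar g}_i-{\bf \bar g}|_{HS}\|_{L^2}\to 0$, i.e. $L^2$-strong convergence.

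It remains to establish the weak convergence. First, by polarization applied to~\eqref{eq:convcrigi} one gets $\int_X \bar g_i(V,W)\di\meas \to \int_X \bar g(V,W)\di\meas$ for all $V,W \in L^\infty(T(X,\dist,\meas))$; inserting $\chi V$ in place of $V$ and using $L^\infty$-bilinearity then promotes this to $\int_X \chi\, \bar g_i(V,W)\di\meas \to \int_X \chi\, \bar g(V,W)\di\meas$ for any $\chi \in L^\infty(X,\meas)$. Since $\mathrm{Test}F(X,\dist,\meas)$ consists of Lipschitz functions, the gradients $\nabla f_i^j$ appearing in any test tensor $T=\sum_i \chi_i\nabla f_i^1\otimes\nabla f_i^2 \in \mathrm{Test}T^{\otimes 2}(X,\dist,\meas)$ lie in $L^\infty(T(X,\dist,\meas))$, so the previous convergence yields
\[
\int_X \langle {\bf \bar g}_i,T\rangle\di\meas \longrightarrow \int_X \langle {\bf \bar g},T\rangle\di\meas\qquad\forall T\in\mathrm{Test}T^{\otimes 2}(X,\dist,\meas).
\]
Because $\mathrm{Test}T^{\otimes 2}(X,\dist,\meas)$ is dense in $L^2(T^{\otimes 2}(X,\dist,\meas))$ by construction, and because $\sup_i \||{\bf \bar g}_i|_{HS}\|_{L^2}<\infty$ by~(ii), a standard $3\varepsilon$ approximation extends this convergence to all $T\in L^2(T^{\otimes 2}(X,\dist,\meas))$, which is exactly weak convergence in the dual Hilbert space.

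The main subtlety I expect is the density step: one must check that the completion defining $L^2(T^{\otimes 2}(X,\dist,\meas))$ (which involves only test elements) really does make $\mathrm{Test}T^{\otimes 2}(X,\dist,\meas)$ dense, so that the uniform $L^2$ bound on ${\bf \bar g}_i$ can be used to pass from convergence on test tensors to convergence on arbitrary $L^2$ tensors. Once this is in place, the Radon--Riesz upgrade from weak to strong convergence in the Hilbert space $L^2((T^*)^{\otimes 2}(X,\dist,\meas))$ is automatic and concludes the proof.
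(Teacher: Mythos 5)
Your proof is correct and follows essentially the same route as the paper: establish weak convergence of ${\bf\bar g}_i$ to ${\bf\bar g}$ in the reflexive space $L^2(T^{\otimes 2}(X,\dist,\meas))$ and then upgrade to strong convergence by combining the $\limsup$ norm bound with weak lower semicontinuity (Radon--Riesz). The only cosmetic difference is that you verify the weak convergence via polarization, $L^\infty$-linearity and density of $\mathrm{Test}T^{\otimes 2}(X,\dist,\meas)$ (which is dense by the very definition of $L^2(T^{\otimes 2}(X,\dist,\meas))$ as a completion, so the ``subtlety'' you flag is automatic), whereas the paper tests \eqref{eq:convcrigi} against $1_AV$ for Borel sets $A$ and invokes the identification of this notion of weak convergence with weak convergence in $L^2(T^{\otimes 2}(X,\dist,\meas))$.
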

\begin{proof} One implication is obvious. To prove the converse, by the reflexivity of
$L^2(T^{\otimes 2}(X,\dist,\meas))$ it is sufficient to check the weak convergence 
in that space of ${\bf\bar g}_i$ to ${\bf\bar g}$.

Then replacing $V$ by $1_AV$ in \eqref{eq:convcrigi} for all Borel subset $A$ of $X$ yields that $\int_A{\bar g}_i(V, V)\di \meas \to \int_A{\bar g}(V, V)\di \meas$ as $i \to \infty$, which implies the $L^2$-weak convergence of ${\bar g}_i$ to ${\bar g}$.
\end{proof}

For any $t>0$, a natural way to define a pull-back Riemannian semi metric $g_t$ on $(X,\dist,\meas)$ is based on an integral version of
\eqref{eq:pullbackmetric}, namely $g_t (V_1, V_2)$ satisfies:
\begin{align}\label{eq:rrcd pull back}
\int_Xg_t (V_1, V_2) (x)\di\meas(x) & = \int_X\Bigl(
\int_X\langle \nabla_xp(x, y, t), V_1(x)\rangle \langle \nabla_xp(x, y, t), V_2(x)\rangle\di\meas(y)\Bigr)\di\meas (x), \nonumber\\
&\quad \quad \quad \quad \quad  \forall V_1,\, V_2 \in L^2(T(X, \dist, \meas)).
\end{align}

To see that this is a good definition (see also the next subsection for another equivalent definition), 
notice that the integrand $G(x,y)$ in the right hand side
of \eqref{eq:rrcd pull back} is pointwise defined as a map $y\mapsto G(\cdot,y)$ with values in $L^2(X,\meas)$ ($L^2$ integrability
follows by the Gaussian estimate \eqref{eq:equi lip}). By Fubini's theorem also the map $x\mapsto \int_X G(x,y)\di\meas(y)$
is well defined, up to $\meas$-negligible sets, and this provides us with the pointwise definition, up
to $\meas$-negligible sets, of $g_t (V_1, V_2)$, namely
\begin{equation}\label{eq:rrcd pull back_bis} 
g_t (V_1, V_2) (x)  = \int_X\langle \nabla_xp(x, y, t), V_1(x)\rangle \langle \nabla_xp(x, y, t), V_2(x)\rangle\di\meas(y).
\end{equation}
As a matter of fact, since many objects of the theory are defined only up to $\meas$-measurable sets, we shall mostly work
with the equivalent integral formulation.

It is obvious that \eqref{eq:rrcd pull back_bis} defines a symmetric bilinear form on $L^2(T(X,\dist,\meas))$ with 
values in $L^0(X,\meas)$ and with the $L^\infty(X,\meas)$-linearity property.
The next proposition ensures that $g_t$ is indeed a Riemannian semi metric on $(X,\dist,\meas)$,
provides an estimate from above in terms of the canonical metric, and the representation of the lifted metric ${\bf g}_t$.

\begin{proposition}\label{prop:riemanexist}
Formula \eqref{eq:rrcd pull back_bis} defines a Riemannian metric $g_t$ on $L^2(T(X,\dist,\meas))$ with
\begin{eqnarray}\label{eq:normaPhi_t}
\int_X |{\bf g}_t|^2_{HS}\di\meas&=& \sum_i e^{-2\lambda_i t}\int_X g_t(\nabla\phi_i,\nabla\phi_i)\di\meas\\
&=&\sum_i e^{-2\lambda_it}\int_X\int_X|\langle\nabla_x p(x,y,t),\nabla\phi_i\rangle|^2\di\meas(y)\di\meas(x),\nonumber
\end{eqnarray}
\begin{equation}\label{eq:normaPhi_t_other}
|{\bf g}_t|_{HS}(x)=\biggl\vert\int_X\dist_xp(x,y,t)\otimes \dist_x p(x,y,t)\di\meas(y)\biggr\vert_{HS}\qquad\text{for $\meas$-a.e. $x\in X$}
\end{equation}
and representable as the convergent\footnote{in the sense that $
\int_X |g_t-\sum_{i=1}^le^{-2\lambda_it}\dist \phi_i \otimes \dist \phi_i|_{HS}^2 \di \meas \to 0$ as $l \to +\infty$} series
\begin{equation}\label{eq:explicit expression}
{\bf g}_t=\sum_{i=1}^{\infty}e^{-2\lambda_it}\dist \phi_i \otimes \dist \phi_i
\qquad\text{in $L^2((T^*)^{\otimes 2}(X, \dist, \meas))$.} 
\end{equation}
Moreover, the rescaled metric
$t\meas (B_{\sqrt{t}}(\cdot))g_t$ satisfies
\begin{equation}\label{eq:riem est}
t\meas (B_{\sqrt{t}}(\cdot)) g_t\le C(K,N)g\qquad\forall t\in (0,C_4^{-1}),
\end{equation}
where $C_4$ is the constant in (\ref{eq:equi lip}).
\end{proposition}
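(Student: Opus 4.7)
The plan is to derive every claim from the eigenfunction expansion \eqref{eq:expansion2} of the heat kernel, combined with the Gaussian gradient estimate \eqref{eq:equi lip} and a rescaling argument to apply Lemma~\ref{lem:volume}.

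First I would apply the gradient operator to the $H^{1,2}$-convergent series supplied by \eqref{eq:expansion2}, obtaining
\begin{equation*}
\nabla_x p(x, y, t) = \sum_i e^{-\lambda_i t}\phi_i(y)\nabla\phi_i(x) \qquad \text{for $\meas$-a.e.~}x,
\end{equation*}
as a series converging in $L^2(X,\meas)$ in the variable $y$ at fixed $x$. Pairing with $V(x)$ and invoking Parseval with respect to the orthonormal basis $\{\phi_i\}$ of $L^2(X,\meas)$ gives the pointwise identity
\begin{equation*}
g_t(V, V)(x) = \sum_i e^{-2\lambda_i t}\langle\nabla\phi_i(x), V(x)\rangle^2,
\end{equation*}
from which $L^\infty(X,\meas)$-bilinearity, symmetry and non-negativity are immediate. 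Non-degeneracy follows because if $g_t(V,V) = 0$ $\meas$-a.e.~on a Borel set $A\subset\{|V|>0\}$, then $\langle \nabla\phi_i, V\rangle = 0$ $\meas$-a.e.~on $A$ for every $i$; the spectral completeness of $\{\phi_i\}$ in $H^{1,2}$ combined with Theorem~\ref{thm:Gigli_density} ensures that linear combinations $\sum_j \chi_j \nabla\phi_{i_j}$ with $\chi_j \in L^\infty$ are dense in $L^2(T(X,\dist,\meas))$, forcing $1_A V = 0$ and hence $\meas(A)=0$.

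The same expansion applied to $\dist_x p(x,y,t)$ together with orthonormality of $\{\phi_i\}$ in $L^2(X,\meas)$ gives formally
\begin{equation*}
{\bf g}_t = \int_X \dist_x p(x,y,t) \otimes \dist_x p(x,y,t)\di\meas(y) = \sum_{i,j}e^{-(\lambda_i+\lambda_j)t}\biggl(\int_X \phi_i\phi_j\di\meas\biggr)\dist\phi_i\otimes \dist\phi_j = \sum_i e^{-2\lambda_i t}\dist\phi_i \otimes \dist\phi_i,
\end{equation*}
which is \eqref{eq:explicit expression}. To upgrade this to convergence in $L^2((T^*)^{\otimes 2}(X,\dist,\meas))$ I would use the pointwise bound $|{\bf g}_t|_{HS}(x) \le \int_X |\nabla_x p(x,y,t)|^2\di\meas(y)$, valid because the HS-norm of an integral of rank-one symmetric tensors $v_y \otimes v_y$ is controlled by the integral of $|v_y|^2$; thanks to \eqref{eq:riem est} below and compactness of $X$ this bound sits in $L^2(X,\meas)$. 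Formulas \eqref{eq:normaPhi_t} and \eqref{eq:normaPhi_t_other} then follow directly: integrating $|{\bf g}_t|_{HS}^2 = \sum_{i,j}e^{-2(\lambda_i+\lambda_j)t}\langle\nabla\phi_i,\nabla\phi_j\rangle^2$ and recognising the inner $j$-sum as $g_t(\nabla\phi_i,\nabla\phi_i)$ by the pointwise formula above yields \eqref{eq:normaPhi_t}, while \eqref{eq:normaPhi_t_other} is just the definition \eqref{eq:rrcd pull back_bis} read pointwise.

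For \eqref{eq:riem est}, Cauchy-Schwarz applied to \eqref{eq:rrcd pull back_bis} gives $g_t(V,V)(x) \le |V(x)|^2 \int_X |\nabla_x p(x,y,t)|^2\di\meas(y)$, and inserting the gradient estimate \eqref{eq:equi lip} with $\epsilon = 1$ and absorbing $e^{2C_4 t}$ into a constant (valid for $t \le C_4^{-1}$) reduces matters to bounding $\int_X \exp(-2\dist^2(x,y)/(5t))\di\meas(y)$. I would rescale to the space $(X, \dist/\sqrt{t}, \meas/\meas(B_{\sqrt{t}}(x)), x)$, which by Bishop-Gromov \eqref{eq:BishopGromov} satisfies the exponential growth hypothesis \eqref{eq:exp} of Lemma~\ref{lem:volume} with constants depending only on $K^-$ and $N$. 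Applying Lemma~\ref{lem:volume}(2) with $\ell = 0$ and unwinding the scaling yields $\int_X \exp(-2\dist^2(x,y)/(5t))\di\meas(y) \le C(K,N)\meas(B_{\sqrt{t}}(x))$, and combining gives \eqref{eq:riem est}. The main technical subtlety is the interplay between \eqref{eq:explicit expression} and \eqref{eq:riem est}: the former requires an $L^2$ bound on $|{\bf g}_t|_{HS}$, which is exactly what the pointwise estimate underlying the latter supplies; the cleanest order is therefore to prove the integral bound on $\int_X |\nabla_x p|^2\di\meas(y)$ first, and only then derive the spectral representation and the associated identities \eqref{eq:normaPhi_t}--\eqref{eq:normaPhi_t_other}.
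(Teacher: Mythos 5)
Your proposal is essentially correct and follows the same route as the paper: the Gaussian gradient bound \eqref{eq:equi lip} plus Lemma~\ref{lem:volume} (after rescaling) give the uniform estimate \eqref{eq:riem est}, the eigenfunction expansion feeds a Parseval-type identity, and the spectral representation \eqref{eq:explicit expression} and the two norm formulas then follow. The only step that deserves more care than you give it is the pointwise Parseval identity. You assert that $\nabla_x p(x,\cdot,t) = \sum_i e^{-\lambda_i t}\phi_i(\cdot)\nabla\phi_i(x)$ ``as a series converging in $L^2(X,\meas)$ in the variable $y$ at fixed $x$''; but \eqref{eq:expansion2} gives $H^{1,2}$-convergence in $x$ at fixed $y$, which is not the same statement. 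The $L^2(\meas(y))$-convergence for $\meas$-a.e.\ fixed $x$ is indeed true, but it requires an extra argument: one uses $\sum_i e^{-2\lambda_i t}\lambda_i<\infty$ (via Proposition~\ref{prop:lowerbound}) to get $\sum_i e^{-2\lambda_i t}|\nabla\phi_i(x)|^2<\infty$ for a.e.~$x$, so the partial sums are Cauchy in $L^2(\meas(y))$, and then one identifies the limit with $\nabla_x p(x,\cdot,t)$ by testing against each $\phi_j$. The paper circumvents this entirely by proving the Parseval identity only after integrating over $x$, where the exchange of sums and double integrals is handled by Fubini in the computation labelled \eqref{eigen exp}. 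Your pointwise version, once justified, is slightly stronger and does streamline the passage to \eqref{eq:normaPhi_t} and \eqref{eq:normaPhi_t_other}; similarly, your dominated-convergence argument for the $L^2$-convergence of the partial sums in \eqref{eq:explicit expression} should be spelled out, as bounding $|{\bf g}_t|_{HS}$ alone does not control the tails of the series, but the needed uniform majorant $\sum_i e^{-2\lambda_i t}|\nabla\phi_i(\cdot)|^2 \le \int_X|\nabla_x p(\cdot,y,t)|^2\di\meas(y)$ is exactly the bound you already have.
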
 

\begin{proof} Let us prove (\ref{eq:riem est}), assuming $0<t<\min\{1,C_4^{-1}\}$.
For $V\in L^2(T(X,\dist,\meas))$ and $y\in X$, the Gaussian estimate (\ref{eq:equi lip}) with $\epsilon=1$ and the upper bound on $t$ yield
\begin{equation}\label{eq:1105}
\int_X|\langle \nabla_xp(x, y, t),V(x)\rangle|^2 \di\meas(x) \le 
\int_X\frac{C_3^2e^2}{t \meas(B_{\sqrt{t}}(x))^2}\exp \left( \frac{-2\dist (x, y)^2}{5t}\right) |V(x)|^2\di\meas(x).
\end{equation}
By integration with respect to $y$ and taking into account \eqref{lem:bound} with $\ell=0$ (applied to the rescaled space
$(X,\dist_t,\meas)$ with $\dist_t=\sqrt{t}^{-1}\dist$, whose constants $c_0,\,c_1,\,c_2$ can be estimated uniformly w.r.t. $t$, since 
$(X,\dist_t,\meas)$ is $\RCD^*(t^2 K,N)$), we recover \eqref{eq:riem est}.

Let us prove now the non-degeneracy condition \eqref{eq:positivity}, using the expansion \eqref{eq:expansion2} of $\nabla_x p$. 
For all $V \in L^2(T(X, \dist,\meas))$ we have
\begin{align}\label{eigen exp}
&\int_Xg_t(V, V)\di\meas \nonumber \\
&=\int_X\int_X \langle \nabla_x p(x, y, t), V(x) \rangle^2\di\meas (x) \di\meas (y) \nonumber \\
&=\int_X\int_X \left(\sum_ie^{-\lambda_it}\phi_i(y)\langle \nabla \phi_i, V\rangle (x) \right)^2 \di\meas (x) \di\meas (y) \nonumber \\
&=\int_X\int_X \sum_{i,\, j}e^{-(\lambda_i+\lambda_j)t}\phi_i(y)\phi_j(y)\langle \nabla \phi_i, V\rangle (x)\langle \nabla \phi_j, V\rangle (x) \di\meas (x) \di\meas (y)\nonumber \\
&=\sum_ie^{-2\lambda_it}\int_X\langle \nabla \phi_i, V\rangle^2 \di\meas.
\end{align}

By $L^\infty$-linearity, it suffices to check that $\|g_t(V, V)\|_{L^1}=0$ implies $|V|(x)=0$ for $\meas$-a.e. $x \in X$.
Thus assume $\|g_t(V, V)\|_{L^1}=0$.  Then (\ref{eigen exp}) yields that for all $i$,
\begin{equation}\label{eq:couple}
\langle \nabla \phi_i, V\rangle (x)=0 \quad \text{for $\meas$-a.e. $x \in X$}.
\end{equation}
Since $L^2(T(X, \dist, \meas))$ is generated, in the sense of $L^2$-modules, by 
$\{\nabla f :\ f \in H^{1, 2}(X, \dist, \meas)\}$ and since the vector space spanned
by $\phi_i$ is dense in $H^{1,2}(X,\dist,\meas)$, it is easily seen that 
$L^2(T(X, \dist, \meas))$ is generated, in the sense of $L^2$-modules, also by 
$\{\nabla\phi_i :\ i\geq 1\}$. In particular (\ref{eq:couple}) shows that $V=0$.

In order to prove \eqref{eq:normaPhi_t} and \eqref{eq:explicit expression}, fix an integer $N\geq 1$ and let 
$$
{\bf g}_t^N:=\sum_{i=1}^{N}e^{-2\lambda_it}\dist \phi_i \otimes \dist \phi_i.
$$
Then
\begin{align}\label{eq:disuno}
\int_X|{\bf g}_t^N|_{HS}^2\di\meas &=\sum_{i,\, j=1}^Ne^{-2(\lambda_i+\lambda_j)t}\int_X\langle \nabla \phi_i, \nabla \phi_j\rangle ^2 \di\meas \\
&=\sum_{i=1}^Ne^{-2\lambda_it}\left( \sum_{j=1}^Ne^{-2\lambda_jt}\int_X\langle \nabla \phi_i, \nabla \phi_j\rangle ^2\di\meas \right)\nonumber \\
&\le \sum_{i=1}^{\infty}e^{-2\lambda_it}\int_Xg_t(\nabla \phi_i, \nabla \phi_i)\di\meas \nonumber\\
&\le C\sum_{i=1}^{\infty}e^{-2\lambda_i t}\int_X|\nabla \phi_i|^2\di\meas\le C\sum_{i=1}^{\infty}e^{-2\lambda_i t}\lambda_i<\infty\nonumber,
\end{align}
where $C=C(K,N,t,\meas (X))$ and we used \eqref{eq:riem est} and \eqref{eigen exp}, together with a uniform lower bound on
$\meas(B_{\sqrt{t}}(x))$. By Proposition \ref{prop:lowerbound},
an analogous computation shows that $\||{\bf g}^N_t-{\bf g}^M_t|_{HS}\|_2\to 0$ as $N,\,M\to\infty$, hence ${\bf g}_t^N\to
{\bf\tilde g}_t$ in $L^2((T^*)^{\otimes 2}(X, \dist, \meas))$.

Passing to the limit in the identity
$$
\int_X\langle {\bf g}^N_t,\chi^2\nabla f\otimes\nabla f\rangle\di\meas
=\sum_{i=1}^Ne^{-2\lambda_it}\int_X\chi^2\langle \nabla \phi_i, \nabla f\rangle ^2 \di\meas
$$
with $\chi\in L^\infty(X,\meas)$, $f\in\mathrm{Test}F(X,\dist,\meas)$, 
we obtain from (\ref{eigen exp}) with $V=\chi\nabla f$ 
$$
\int_X\langle {\bf\tilde g}_t,\chi^2\nabla f\otimes\nabla f\rangle\di\meas
=\int_X\langle {\bf g}_t,\chi^2\nabla f\otimes\nabla f\rangle\di\meas.
$$
Hence ${\bf\tilde g}_t={\bf g}_t$ (in particular ${\bf g}_t$ has finite Hilbert-Schmidt norm and ${\bf g}_t$ can be extended to
$L^2(T^{\otimes 2}(X, \dist, \meas))$).

In order to prove \eqref{eq:normaPhi_t} it is sufficient to pass to the limit as $N\to\infty$ in
$$
\int_X |{\bf g}_t^N|_{HS}^2\di\meas=\int_X\sum_{i,\, j=1}^Ne^{-2(\lambda_i+\lambda_j)t}\langle \nabla \phi_i, \nabla \phi_j\rangle ^2\di\meas,
$$
taking \eqref{eigen exp} into account.

Finally, \eqref{eq:normaPhi_t_other} follows by the observation that ${\bf g}_t$ is induced by the scalar product,
w.r.t. the Hilbert-Schmidt norm, with the vector $\int_X\dist_x p(x,y,t)\otimes \dist_x p(x,y,t)\di\meas(y)$.
\end{proof}

\subsection{The pull-back semi metric of a Lipschitz map into a Hilbert space}\label{sec:pull}
In this subsection we discuss the pull-back Riemannian semi metric of a Lipschitz map from a compact $\RCD^*(K, N)$ space into a Hilbert space in order to introduce the finite dimensional reduction (Proposition \ref{projj}).

Let $(X, \dist, \meas)$ be a compact $\RCD^*(K, N)$ space with $\dim_{\dist, \meas}(X)=n$ and $\mathrm{supp}\,\meas=X$, let $(H, \langle \cdot, \cdot \rangle)$ be a (real) separable Hilbert space and let $F:X \to H$ be a $L$-Lipschitz map.
We fix an orthonormal basis $\{e_i\}_{i\geq 1}$ of $H$ 
and denote by $F_i:X \to \mathbb{R}$ the projection of $F$ to $\mathbb{R} \cong \mathbb{R}e_i \subset H$.
\begin{lemma}\label{230}
We have
\begin{equation}\label{21}
\sum_{i=1}^\infty|\nabla F_i|^2\le nL^2 \qquad\text{$\meas$-a.e. in $X$.}
\end{equation}
\end{lemma}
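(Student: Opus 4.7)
The strategy combines an operator-norm bound on the Gram matrix $M_N(x):=(\langle\nabla F_i,\nabla F_j\rangle(x))_{1\le i,j\le N}$ (coming from Lipschitzness of linear combinations of the $F_i$'s) with a rank bound (coming from the essential dimension being $n$). A symmetric positive semidefinite matrix of rank at most $n$ and operator norm at most $L^2$ has trace at most $nL^2$, so combining these two ingredients and letting $N\to\infty$ by monotone convergence will yield \eqref{21}.

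First I would observe that for any $c=(c_1,\ldots,c_N)\in\setR^N$ the combination $\sum_{\alpha=1}^N c_\alpha F_\alpha$ equals $x\mapsto\langle F(x),\sum_\alpha c_\alpha e_\alpha\rangle$, which by Cauchy--Schwarz in $H$ is $L|c|$-Lipschitz. Hence $|\nabla\sum_\alpha c_\alpha F_\alpha|^2\le L^2|c|^2$ $\meas$-a.e., which reads
$$c^T M_N(x)\,c\le L^2|c|^2\qquad\text{$\meas$-a.e. in $X$.}$$
Picking a countable dense set of $c$'s in $\setR^N$, intersecting the associated $\meas$-full sets and using continuity in $c$, we obtain $\|M_N(x)\|_{\mathrm{op}}\le L^2$ for $\meas$-a.e. $x$.

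Next I would bound $\mathrm{rank}\,M_N(x)\le n$ $\meas$-a.e. Applying Theorem~\ref{thm:Cheeger}(2), cover $\meas$-almost all of $X$ by Borel chart sets $C$ carrying Lipschitz coordinates $F_1^C,\ldots,F_k^C$ ($k$ may depend on $C$). Applying \eqref{eq:Cheexp} to each $F_i$ on $C$, combined with the PI-space identity $|\nabla f|={\rm lip}\,f$ $\meas$-a.e.\ for Lipschitz $f$, yields measurable coefficients $\chi_{i,\alpha}$ with
$$\nabla F_i=\sum_{\alpha=1}^k\chi_{i,\alpha}\nabla F^C_\alpha\qquad\text{$\meas$-a.e.\ on $C$.}$$
This gives the factorization $M_N(x)=A(x)\,G(x)\,A(x)^T$ on $C$, where $A\in\setR^{N\times k}$ and $G(x)$ is the $k\times k$ Gram matrix of $(\nabla F^C_\alpha(x))_\alpha$. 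Lemma~\ref{riemdim} implies that the tangent module has essential dimension $n$ $\meas$-a.e., so $\mathrm{rank}\,G(x)\le n$ $\meas$-a.e., whence $\mathrm{rank}\,M_N(x)\le n$ $\meas$-a.e.

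Putting the two steps together, $\tr M_N(x)=\sum_{i=1}^N|\nabla F_i|^2(x)\le nL^2$ $\meas$-a.e., and monotone convergence as $N\to\infty$ gives \eqref{21}. The main obstacle is the rank bound: it relies on the nontrivial identification of the Cheeger chart dimension with the essential dimension $n$ encoded by Lemma~\ref{riemdim}, which itself rests on the full $\RCD^*(K,N)$ rectifiability theory and the uniqueness result of \cite{BrueSemola}.
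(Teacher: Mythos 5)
Your argument is correct, but it follows a genuinely different route from the paper. The paper's proof is extrinsic: it takes a $(1+\eps)$-biLipschitz chart $\phi:C\hookrightarrow\setR^n$ provided by the rectifiability part of Theorem~\ref{th: RCD decomposition}, extends $(F_1\circ\phi^{-1},\ldots,F_\ell\circ\phi^{-1})$ to a $(1+\eps)L$-Lipschitz map $G$ on $\setR^n$ by Kirszbraun, and then uses Rademacher's theorem together with the elementary observation that $\sum_i|\partial_{x_k}G_i|^2\le (1+\eps)^2L^2$ for each of the $n$ coordinate directions (this is where the factor $n$ enters), transferring the bound back to $X$ via the chain rule and \cite[Th. 1.1]{MondinoNaber} and finally letting $\eps\downarrow 0$, $\ell\to\infty$. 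You instead work intrinsically in the tangent module: your operator-norm bound $c^TM_N(x)c\le L^2|c|^2$ is clean and loss-free (the same mechanism that in the paper appears as the coordinate-wise bound on $G$), and the factor $n$ comes from a pointwise rank bound on the Gram matrix. Conceptually the two mechanisms (norm $\le L^2$, rank $\le n$) are the same in both proofs; what differs is the implementation, and what each buys: the paper's route only needs Euclidean calculus after the chart/extension step, at the price of $(1+\eps)$-losses, while yours avoids any $\eps$ and any extension theorem but leans on the module machinery. The one place you are a bit quick is the deduction ``Lemma~\ref{riemdim} $\Rightarrow\mathrm{rank}\,G(x)\le n$ a.e.'': Lemma~\ref{riemdim} only states $|{\bf g}|_{HS}^2=n$ $\meas$-a.e., and to turn this into a pointwise rank bound for Gram matrices you should invoke the dimensional decomposition of the separable Hilbert module $L^2(T(X,\dist,\meas))$ (on the set of local dimension $m$ there is a local orthonormal basis $e_1,\ldots,e_m$, so any Gram matrix factors through an $m$-column matrix and $|{\bf g}|_{HS}^2=m$ there, forcing $m=n$ a.e.); with that sentence added, your factorization $M_N=AGA^T$ obtained from \eqref{eq:Cheexp} (as in the paper's own use of \cite[Cor.~2.5.2]{Gigli}) is not even needed, since the rank bound already applies directly to $\nabla F_1,\ldots,\nabla F_N$. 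Modulo this standard but unstated module-theoretic input, the trace estimate and the monotone convergence step give \eqref{21} exactly as you say.
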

\begin{proof}
Fix $\epsilon>0$ and $\ell\in\setN \setminus \{0\}$. 
Let $C$ be the Borel domain of a $(1+\epsilon)$-biLipschitz embedding $\phi : C \hookrightarrow \setR^n$, and as a consequence of 
Theorem~\ref{th: RCD decomposition} we can assume that $(\phi)_{\sharp}(\meas\res C)$ and $\mathcal{H}^n \res \phi(C)$ are mutually absolutely continuous. Let $G=(G_1,\ldots,G_\ell):\mathbb{R}^n \to \mathbb{R}^\ell$ be a $(1+\epsilon)L$-Lipschitz extension of 
$(F_1\circ \varphi^{-1},\ldots,F_\ell\circ \varphi^{-1}) : \phi(C) \to \mathbb{R}^\ell$ (granted by Kirszbraun's theorem). 
Applying the chain rule, we get that $
\sum_{i=1}^\ell |\nabla F_i|^2 \le (1+\eps)^2 \sum_{i=1}^\ell |\nabla G_i|^2\circ \phi$ $\meas$-a.e.~on $C$.
Moreover Rademacher's theorem yields
\begin{equation}\label{boun}
\sum_{i=1}^\ell |\nabla F_i|^2 \le (1+\eps)^2 \sum_{i=1}^\ell |\nabla G_i|^2\circ \phi= 
(1+\epsilon)^2 \sum_{i=1}^\ell \sum_{k=1}^n |\partial_{x_k} G_i|^2 \circ \phi \qquad \text{$\meas$-a.e.~on $C$}.
\end{equation}
Since $G$ is $(1+\epsilon)L$-Lipschitz, for any $k=1,\ldots,n$ one has
$\sum_{i=1}^\ell |\partial_k G_i|^2 \le L^2 (1+\epsilon)^2$ $\mathcal{H}^n$-a.e. on $\mathbb{R}^n$. 
This with \eqref{boun} and \cite[Th. 1.1]{MondinoNaber} implies
$$\sum_{i=1}^\ell |\nabla F_i|^2 \le (1+ \epsilon)^4 nL^2  \qquad \text{$\meas$-a.e.~on $X$.}$$ 
The result follows by letting $\epsilon\downarrow 0$ and then letting $\ell\to\infty$.
\end{proof}

\begin{proposition}
The $L^2$-tensor
\begin{equation}\label{200}
\sum_{i=1}^\infty\dist F_i \otimes \dist F_i \in L^2((T^*)^{\otimes 2}(X, \dist, \meas))
\end{equation}
defines the Riemannian semi metric $F^*g_H$ (called the pull-back metric by $F$) with $|F^*{\bf g}_H|_{HS}(x)\le nL^2$ for $\meas$-a.e. $x \in X$, 
and it does not depend on the choice of the orthonormal basis $\{e_i\}_i$.
\end{proposition}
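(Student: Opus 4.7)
The argument splits into four stages: convergence of the partial sums $T_N:=\sum_{i=1}^N \dist F_i \otimes \dist F_i$ in $L^2((T^*)^{\otimes 2}(X,\dist,\meas))$, the pointwise Hilbert--Schmidt bound, identification with a Riemannian semi metric, and basis independence. Since each $F_i=\langle F,e_i\rangle$ is $L$-Lipschitz, $F_i \in H^{1,2}(X,\dist,\meas)$ and $T_N$ lies in $L^2((T^*)^{\otimes 2}(X,\dist,\meas))$. Using the canonical Hilbert--Schmidt inner product $\langle \dist f_1\otimes \dist f_2,\dist g_1\otimes \dist g_2\rangle_{HS}=\langle \nabla f_1,\nabla g_1\rangle\langle \nabla f_2,\nabla g_2\rangle$, for $N>M$ I would estimate
\begin{equation*}
|T_N-T_M|_{HS}^2 = \sum_{i,j=M+1}^N \langle \nabla F_i,\nabla F_j\rangle^2 \le \Bigl(\sum_{i=M+1}^N |\nabla F_i|^2\Bigr)^2,
\end{equation*}
and invoke Lemma~\ref{230} to conclude that the right-hand side tends to zero pointwise $\meas$-a.e.\ and is dominated by the constant $(nL^2)^2$. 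Since $\meas(X)<\infty$, dominated convergence yields the Cauchy property in $L^2$, so the limit $T$ is well defined. Setting $M=0$ in the same estimate gives $|T_N|_{HS}\le nL^2$ $\meas$-a.e.\ uniformly in $N$, and passing to an a.e.\ convergent subsequence for $T_N\to T$ in $L^2$ yields $|T|_{HS}\le nL^2$ $\meas$-a.e.

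For the semi metric property, Remark~\ref{99jjn} identifies $T$ with the $L^\infty(X,\meas)$-bilinear form
\begin{equation*}
F^*g_H(V,W) := \sum_{i=1}^\infty \langle \nabla F_i,V\rangle \langle \nabla F_i,W\rangle, \qquad V,W\in L^2(T(X,\dist,\meas)),
\end{equation*}
where the $L^1(X,\meas)$ convergence of the series follows from $|\langle \nabla F_i,V\rangle \langle \nabla F_i,W\rangle| \le |\nabla F_i|^2\,|V|\,|W|$ together with Lemma~\ref{230}. Symmetry and $L^\infty$-linearity are manifest, and the identity $F^*g_H(V,V)=\sum_i \langle \nabla F_i,V\rangle^2 \ge 0$ delivers the semi-definiteness in Definition~\ref{def:Riemannian metric}.

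For independence from the basis, let $\{e'_i\}$ be another orthonormal basis with $e'_i=\sum_j U_{ij} e_j$ and $U=(U_{ij})$ orthogonal. Then $F'_i=\sum_j U_{ij} F_j$ with pointwise convergence (Bessel), and $\nabla F'_i=\sum_j U_{ij}\nabla F_j$ in $L^2(T(X,\dist,\meas))$ because $\sum_j U_{ij}^2=1$. A Parseval rearrangement then gives
\begin{equation*}
\sum_i \dist F'_i \otimes \dist F'_i = \sum_{j,k}\Bigl(\sum_i U_{ij}U_{ik}\Bigr)\dist F_j\otimes \dist F_k = \sum_j \dist F_j \otimes \dist F_j,
\end{equation*}
using the column orthonormality $\sum_i U_{ij}U_{ik}=\delta_{jk}$. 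The main delicate point is justifying this rearrangement of a doubly-indexed sum of tensors, but the uniform bound $(\sum_j |\nabla F_j|^2)^2\le (nL^2)^2$ from Lemma~\ref{230} enables a clean Fubini/Parseval argument in $L^2((T^*)^{\otimes 2}(X,\dist,\meas))$.
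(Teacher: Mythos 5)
Your argument is essentially the paper's: the $L^2$-convergence of the partial sums and the pointwise Hilbert--Schmidt bound $|T|_{HS}\le nL^2$ both rest on the triangle inequality $|T_N-T_M|_{HS}\le\sum_{i=M+1}^N|\nabla F_i|^2$ together with Lemma~\ref{230} and dominated convergence, and the identification with a bilinear form via Remark~\ref{99jjn} is the intended one.

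The one place where you and the paper genuinely diverge is the basis-independence step, and your version leaves a gap. You write the rearrangement
$\sum_i \dist F'_i\otimes\dist F'_i=\sum_{j,k}\bigl(\sum_i U_{ij}U_{ik}\bigr)\dist F_j\otimes\dist F_k$
and then appeal to a ``clean Fubini/Parseval argument'' justified by the HS bound. But the triple sum is not absolutely convergent in general: the natural majorant is $\sum_{j,k}\bigl(\sum_i|U_{ij}||U_{ik}|\bigr)|\nabla F_j||\nabla F_k|\le\bigl(\sum_j|\nabla F_j|\bigr)^2$, and Lemma~\ref{230} only controls the $\ell^2$-sum $\sum_j|\nabla F_j|^2$, not the $\ell^1$-sum. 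So a blunt Fubini interchange is not licensed by the bound you cite. The paper sidesteps this by testing against a fixed pair $V_1,V_2\in L^2(T(X,\dist,\meas))$: since $\sum_i\langle\nabla F_i,V_\alpha\rangle^2\le nL^2|V_\alpha|^2$, for $\meas$-a.e.\ $x$ the sequences $\bigl(\langle\nabla F_i(x),V_\alpha(x)\rangle\bigr)_i$ lie in $\ell^2$, and the rearrangement becomes the elementary $\ell^2$-Parseval identity $\sum_j\langle A^{\mathsf T}u,e_j\rangle\langle A^{\mathsf T}v,e_j\rangle=\langle u,v\rangle$ for an orthogonal $A$, applied pointwise and then integrated. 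You can equally well finish your version this way -- either by pairing your tensor identity against test tensors $\chi\,\nabla g_1\otimes\nabla g_2$ and invoking density, or by recognizing that at a.e.\ $x$ the family $(\dist F_i(x))_i$ is $\ell^2$-valued so that $\sum_i\dist F_i(x)\otimes\dist F_i(x)$ is the Gram (trace-class) operator of the differential of $F$, manifestly basis-free -- but as written the ``Fubini'' claim is not self-justifying.
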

\begin{proof}
For all $\ell\geq i$, $|\sum_{k=i}^\ell\dist F_i \otimes \dist F_k|\le \sum_{k=i}^\ell |\nabla F_i|^2$. Hence, Lemma~\ref{230} yields
$$\sum_{k=1}^\ell\dist F_i \otimes \dist F_i \to \tilde{\bf g}\qquad\text{in $L^2((T^*)^{\otimes 2}(X, \dist, \meas))$}$$ 
as $\ell\to\infty$,  with $|\tilde{{\bf g}}|_{HS}(x)\le nL^2$ for $\meas$-a.e. $x \in X$.

In order to prove the independence of \eqref{200} with respect to the choice of the orthonormal basis $\{e_i\}_i$, let us fix another
orthonormal basis $\{v_j\}_j$ of $H$ and let us denote by $G_j$ the projection of $F$ to $\mathbb{R}v_j\subset H$.
Let $\{a_{ij}\}_{ij} \subset \mathbb{R}$ with $e_i=\sum_ja_{ij}v_i$, so that the orthogonality of
$e_i$ gives $\sum_j a_{ij}a_{kj}=\delta_{ik}$.

Then, since $G_j=\sum_ia_{ij}F_i$, for all $V_1,\,V_2\in L^2(T(X, \dist, \meas))$ we have
\begin{align*}
\sum_j\int_X\langle \nabla G_j, V_1\rangle \langle \nabla G_j, V_2\rangle \di \meas 
&=\sum_j\sum_{i,\,k}a_{ij}a_{kj}\int_X\langle \nabla F_i, V_1\rangle \langle \nabla F_k, V_2\rangle \di \meas \\
&=\sum_{i,\,k}\sum_{j}a_{ij}a_{kj}\int_X\langle \nabla F_i, V_1\rangle \langle \nabla F_k, V_2\rangle \di \meas \\
&=\sum_i\int_X\langle \nabla F_i, V_1\rangle \langle \nabla F_i, V_2\rangle \di \meas,
\end{align*}
which proves the desired independence.
\end{proof}

It is clear that in the Riemannian case $(X, \dist, \meas)=(M^n, \dist_g, \mathrm{vol}_g)$, if $F$ is smooth, then (\ref{200}) is equal to the 
standard pull-back metric. More generally one has the following:

\begin{proposition}
The Lipschitz embedding $\Phi_t: X \to L^2(X, \meas)$ in \eqref{eq:mancava} satisfies $\Phi_t^*g_{L^2}=g_t$, with $g_t$ in \eqref{eq:rrcd pull back_bis}.
\end{proposition}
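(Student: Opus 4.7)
The plan is to compute $\Phi_t^*g_{L^2}$ using the orthonormal basis of $L^2(X,\meas)$ provided by the eigenfunctions $\{\phi_i\}_{i \geq 0}$ of $-\Delta$ (this is a legitimate choice since by \eqref{eq:expansion1} and the Rellich--Kondrachov compactness this basis is indeed complete) and compare with the explicit expression \eqref{eq:explicit expression} for $\bf g_t$ obtained in Proposition~\ref{prop:riemanexist}. Since each $\phi_i$ belongs to $\mathrm{Test}F(X,\dist,\meas)$, every term $d\phi_i \otimes d\phi_i$ is a bona fide element of $L^2((T^*)^{\otimes 2}(X,\dist,\meas))$.

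First, I would identify the scalar components of $\Phi_t$ in this basis. By the spectral expansion \eqref{eq:expansion1} of the heat kernel together with the $L^2$-orthonormality of the eigenfunctions,
\begin{equation*}
(\Phi_t)_i(x) := \langle \Phi_t(x),\phi_i\rangle_{L^2(X,\meas)} = \int_X p(x,y,t)\,\phi_i(y)\di\meas(y) = e^{-\lambda_i t}\phi_i(x),
\end{equation*}
i.e.\ $(\Phi_t)_i = e^{-\lambda_i t}\phi_i$, so that $d(\Phi_t)_i = e^{-\lambda_i t}d\phi_i$. Plugging this into the definition \eqref{200} of the pull-back semi metric of a Lipschitz map to a Hilbert space, we obtain
\begin{equation*}
\Phi_t^*{\bf g}_{L^2} = \sum_{i\geq 0}d(\Phi_t)_i\otimes d(\Phi_t)_i = \sum_{i\geq 0}e^{-2\lambda_i t}\,d\phi_i \otimes d\phi_i,
\end{equation*}
with convergence in $L^2((T^*)^{\otimes 2}(X,\dist,\meas))$ (omitting the trivial $i=0$ term).

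Comparing with \eqref{eq:explicit expression} in Proposition~\ref{prop:riemanexist}, the right-hand side is precisely ${\bf g}_t$, yielding $\Phi_t^*g_{L^2} = g_t$. The one point that requires a brief justification is that $\Phi_t$ is actually Lipschitz (so that the construction \eqref{200} applies): this can be seen either from the global gradient estimate \eqref{eq:equi lip} combined with the uniform lower bound on $\meas(B_{\sqrt t}(\cdot))$ on the compact space $X$, or directly from the identity
\begin{equation*}
\|\Phi_t(x_1)-\Phi_t(x_2)\|_{L^2}^2 = \sum_{i\geq 1}e^{-2\lambda_i t}\bigl(\phi_i(x_1)-\phi_i(x_2)\bigr)^2,
\end{equation*}
using the Lipschitz bound on each $\phi_i$ and the eigenvalue asymptotics recalled in the Appendix to ensure convergence. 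Modulo this verification of Lipschitzness (already asserted in the introduction), the proof reduces to the two-line eigenbasis computation above, which is the main content; I expect no essential obstacle beyond confirming the basis independence implicit in \eqref{200}, which was already established in the preceding proposition.
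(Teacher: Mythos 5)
Your proposal is correct and follows essentially the same route as the paper: identify the components $(\Phi_t)_i=e^{-\lambda_i t}\phi_i$ via the spectral expansion of the heat kernel, note the Lipschitz continuity from \eqref{eq:equi lip}, and compare the resulting series $\sum_i e^{-2\lambda_i t}\dist\phi_i\otimes\dist\phi_i$ with the representation of $g_t$ established in Proposition~\ref{prop:riemanexist}. The only cosmetic difference is that you match against the tensor series \eqref{eq:explicit expression} while the paper matches against the integral identity \eqref{eigen exp}; both are part of the same proposition, so the arguments coincide in substance.
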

\begin{proof}
Note that the corresponding projection $(\Phi_t)_i$ of $\Phi_t$ to $\mathbb{R}\phi_i \subset L^2(X, \meas)$ is 
$$
(\Phi_t)_i(x)=\int_Xp(x, y, t)\phi_i(y)\di \meas = e^{-\lambda_it}\phi_i(x)
$$
which implies that $\Phi_t$ is Lipschitz because of Propositions \ref{prop:Jiang} and \ref{prop:lowerbound}.
Thus for $V_1,\,V_2\in L^2(T(X, \dist,  \meas))$ we have
\begin{align*}
\int_X\Phi_t^*g_{L^2}(V_1, V_2)\di \meas&=\sum_i\int_X\langle \nabla (e^{-\lambda_it}\phi_i), V_1\rangle \langle \nabla (e^{-\lambda_it}\phi_i), V_2\rangle \di \meas \\
&=\sum_ie^{-2\lambda_it}\langle \nabla \phi_i, V_1\rangle \langle \nabla \phi_i, V_2\rangle \di \meas \\
&=\int_Xg_t(V_1, V_2)\di \meas
\end{align*}
which completes the proof.
\end{proof}

Similarly we get the following:
\begin{proposition}\label{projj}
For all $\ell\in\mathbb{N}$, let $\Phi_t^\ell$ be the projection of $\Phi_t$ to $H:=\bigoplus_{i=1}^\ell\mathbb{R}\phi_i \subset L^2(X, \meas)$.
Then $(\Phi_t^\ell)^*g_H=g_t^\ell$, where 
\begin{equation}\label{eq:finalfinal}
{\bf g}_t^\ell:=\sum_{i=1}^\ell e^{-2\lambda_it}\dist \phi_i \otimes \dist \phi_i.
\end{equation}
\end{proposition}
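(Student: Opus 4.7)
The statement is a finite-dimensional analog of the immediately preceding proposition, so my plan is to follow the same three-step template but with a finite orthonormal basis, which eliminates the only nontrivial point (convergence of the series).

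First, I would observe that $\Phi_t^\ell$ is Lipschitz. Indeed, the orthogonal projection $\pi: L^2(X,\meas)\to H$ onto the finite-dimensional subspace $H=\bigoplus_{i=1}^\ell \mathbb R\phi_i$ is $1$-Lipschitz, and $\Phi_t$ is Lipschitz by \eqref{eq:equi lip}; hence $\Phi_t^\ell=\pi\circ\Phi_t$ is Lipschitz. This legitimates the construction of the pull-back metric $(\Phi_t^\ell)^*g_H$ via \eqref{200}, which is independent of the chosen orthonormal basis of $H$.

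Second, I would choose the canonical orthonormal basis $\{\phi_1,\ldots,\phi_\ell\}$ of $H$. For each $i\in\{1,\ldots,\ell\}$, the $i$-th coordinate of $\Phi_t^\ell$ is
\[
(\Phi_t^\ell)_i(x)=\langle \Phi_t(x),\phi_i\rangle_{L^2}=\int_X p(x,y,t)\phi_i(y)\di\meas(y)=e^{-\lambda_i t}\phi_i(x),
\]
where the last identity uses the eigenfunction expansion \eqref{eq:expansion1} of the heat kernel together with the $L^2$-orthonormality of $\{\phi_i\}_i$ (equivalently, one can invoke that $\phi_i$ is an eigenfunction of the heat semigroup associated to $-\Delta$ with eigenvalue $e^{-\lambda_i t}$).

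Finally, I would plug this identity into the definition \eqref{200} applied to the $\ell$-dimensional Hilbert space $H$ (the sum is finite, so there is no convergence issue) to obtain
\[
(\Phi_t^\ell)^*{\bf g}_H=\sum_{i=1}^\ell \dist(\Phi_t^\ell)_i\otimes \dist(\Phi_t^\ell)_i =\sum_{i=1}^\ell e^{-2\lambda_i t}\dist\phi_i\otimes \dist\phi_i={\bf g}_t^\ell,
\]
which is the claimed identity. There is no substantive obstacle: the only step that required care in the preceding proposition, namely passing to the limit in a series in $L^2((T^*)^{\otimes 2}(X,\dist,\meas))$ (which was handled there via \eqref{eq:disuno} and Proposition \ref{prop:lowerbound}), is not needed here because the sum has only $\ell$ terms.
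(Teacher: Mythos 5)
Your proposal is correct and follows essentially the argument the paper intends (the paper proves the $\Phi_t$ case by computing the coordinates $(\Phi_t)_i(x)=e^{-\lambda_i t}\phi_i(x)$ and plugging into the pull-back construction \eqref{200}, and then states Proposition \ref{projj} as the ``similar'' finite-dimensional variant). Your extra remarks on Lipschitzness of $\pi\circ\Phi_t$ and the absence of any convergence issue for the finite sum are exactly the right observations.
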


\section{Convergence results via blow-up}\label{sec:5}

In this section, we study the $L^2$-convergence of the rescaled metrics $\mathrm{sc}_t g_{t}$ to $g$ as $t\to 0^+$ 
on a given compact $\RCD^*(K,N)$ space $(X,\dist,\meas)$ with $\mathrm{supp}\,\meas =X$. Here the function $\mathrm{sc}_t : X \to \setR$ is a suitable
scaling function whose expression requires an immediate discussion. In the Riemannian case $(X,\dist,\meas) = (M^n, \dist_g,\vol_g)$, 
one knows by (\ref{eq: BBG expansion}) that $\mathrm{sc}_t \equiv c(n) t^{(n+2)/2}$ where $c(n)>0$ is a constant depending 
only on the dimension $n$. In the $\RCD$ setting:

\begin{itemize}
\item The analogy with the Riemannian setting suggests to take $\mathrm{sc}_{t} =  t^{(n+2)/2}$, where $n=\dim_{\dist,\meas}(X)$
(recall Theorem~\ref{th: RCD decomposition}).
\item On the other hand, since the $\RCD$ setting is closer to a weighted Riemannian setting, we can also
set $\mathrm{sc}_t = t \meas (B_{\sqrt{t}}(\cdot))$, to take also into account the effect of the weight $\theta$, namely
the density of $\meas$ with respect to $\mathcal{H}^n\res\mathcal{R}_n$.
\end{itemize}
In both cases, we prove that $\mathrm{sc}_t g_t$ converges to a rescaled version of the canonical Riemannian metric $g$ on $(X,\dist,\meas)$, 
where the rescaling reflects the choice of $\mathrm{sc}_t$. To be more precise, we prove in Theorem~\ref{th:convergence1} that 
$\hat{g}_t:=t \meas(B_{\sqrt{t}}(x))g_t$ converge to $\hat{g}=c_n g$, with $c_n$ as in \eqref{eq:defck} below.
Concerning the second scaling, as $t^{(n+2)/2} = \frac{\sqrt{t}^n}{\meas(B_{\sqrt{t}}(x))}t \meas(B_{\sqrt{t}}(x))$, 
we prove in Theorem~\ref{th:convergence2} that the limit of the newly rescaled metrics $\tilde{g}_t$ is 
$c_n(\omega_n\theta)^{-1}1_{\mathcal{R}_n^*}\hat{g}$ (notice that this is a good definition, since $\theta$
is well-defined up to $\mathcal{H}^n$-negligible sets and $\meas$ and $\mathcal{H}^n$ are mutually
absolutely continuous on $\mathcal{R}_n^*$).

We start with introducing a technical concept, namely harmonic points of vector fields. Those are points at which a vector field 
infinitesimally (meaning after blow-up of the metric measure space) looks like the gradient of a harmonic function.

\subsection{Harmonic points}

Let us first recall the definition of Lebesgue point.

\begin{definition}[Lebesgue point]
Let $f\in L_\loc^p(X, \meas)$ with $p\in [1,\infty)$. 
We say that $x \in X$ is a $p$-Lebesgue point of $f$ if there exists $a \in \setR$ such that
\[
\lim\limits_{r \to 0} \fint_{B_r (x)} |f(y)-a|^p\di \meas (y) = 0.
\]
The real number $a$ is uniquely determined by this condition and denoted by $f^*(x)$ (we omit the $p$-dependence). 
The set of $p$-Lebesgue points of $f$ is Borel and denoted by $\Leb_p(f)$.
\end{definition}

Note that the property of being a $p$-Lebesgue point and $f^*(x)$ do not depend on the choice of the
versions of $f$, and that $x\in\Leb_p(f)$ implies
$\fint_{B_r (x)} |f(y)|^p\di\meas\to |f^*(x)|^p$ as $r\downarrow 0$.
It is well-known (e.g. \cite{Heinonen}) that the doubling property ensures 
that $\meas(X \setminus \Leb_p(f))=0$, 
and that the set $\{ x\in\Leb_p(f) :\  f^*(x) = f(x) \}$ (which does depend on the choice of representative in 
the equivalence class) has full measure in $X$. When we apply these properties to a characteristic
function $f=1_A$ we obtain that $\meas$-a.e. $x\in A$ is a point of density 1 for $A$ and
$\meas$-a.e. $x\in X\setminus A$ is a point of density 0 for $A$.

\begin{definition}[Harmonic point of a function]\label{def:buhfunction}
Let  $x \in X$, $R>0$, $z \in B_R(x)$ and let $f \in H^{1, 2}(B_R(x), \dist, \meas)$. We say that \textit{$z$ is a harmonic point of $f$} if
$z\in\Leb_2(|\nabla f|)$ and for any $(Y, \dist_Y,\meas_Y,y) \in \mathrm{Tan}(X, \dist,\meas,z)$, mGH limit of 
$(X,t_{i}^{-1} \dist,\meas(B_{t_{i}}(z))^{-1} \meas,z)$, where $t_i \to 0^+$,
there exist a subsequence $(t_{i(j)})_j$ of $(t_i)_i$ and $\hat{f} \in \Lip(Y, \dist_Y) \cap \mathrm{Harm}(Y, \dist_Y, \meas_Y)$ such that the rescaled functions $f_{t_{i(j)},z}$ $H^{1, 2}_{\mathrm{loc}}$-strongly converge to $\hat{f}$ as $j\to\infty$, where $f_{z,t}$ is defined by
$$f_{t, z}:=\frac{1}{t}\left( f-\fint_{B_{t}(z)}f\di\meas \right)$$
on $(X,t^{-1} \dist,\meas(B_{t}(z))^{-1} \meas)$.
We denote by $H(f)$ the set of harmonic points of $f$.
\end{definition}

Note that being an harmonic point also does not depend on the choice of versions of $f$ and $|\nabla f|$ and that this notion is closely related to the differentiability of $f$ at $x$. For instance in the Riemannian case $(X, \dist, \meas)=(M^n, \dist_g, \mathrm{vol}_g)$ with $f \in C^1(M^n)$, every point $x \in M^n$ is a harmonic point of $f$, and the function $\hat{f}$ appearing by blow-up is unique and equals the differential of $f$ at $x$. On the other hand if $f(x)=|x|$ on $\mathbb{R}^n$, then $0_n$ is not an harmonic point of $f$. 

The definition of harmonic point can be extended to vector fields as follows.

\begin{definition}[Harmonic point of an $L^2$-vector field]\label{def:buhderivation}
Let $V \in L^2(T(X, \dist, \meas))$ and let $z \in X$.
We say that \textit{$z$ is a harmonic point of $V$} if there exists $f \in H^{1, 2}(X, \dist, \meas)$ such that
$z\in H(f)$ and
\begin{equation}\label{eq7}
\lim_{r \downarrow 0} \fint_{B_r(z)}\left|V-\nabla f \right|^2\di\meas=0.
\end{equation}
We denote by $H(V)$ the set of harmonic points of $V$. 
\end{definition}

Obviously, if $V=\nabla f$ for some $f \in H^{1, 2}(X, \dist, \meas)$, then 
Definition~\ref{def:buhderivation} is compatible with Definition~\ref{def:buhfunction}.
Notice also that, as a consequence of \eqref{eq7} and the condition $z\in\Leb_2(|\nabla f|)$, $\fint_{B_r(z)}|V|^2\di\meas$ 
converge as $r\downarrow 0$ to $(|\nabla f|^*)^2(z)$ and we shall denote this precise value by $|V|^{2*}(z)$. 
By the Lebesgue theorem, this limit coincides for $\meas$-a.e. $z\in H(V)$ 
with $|V|^2(z)$.
The statement and proof of the following result are very closely related to Cheeger's version \cite{Cheeger} of Rademacher
theorem in metric measure spaces; we simply adapt the proof and the statement to our needs.

\begin{theorem} For all $V \in L^2(T(X, \dist, \meas))$ one has $\meas(X\setminus H(V))=0$.
\end{theorem}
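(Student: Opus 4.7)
The proof adapts Cheeger's argument for Rademacher's theorem in the PI setting, with two main ingredients: the deviation estimate Theorem~\ref{thm:Cheeger}(1) to handle gradients of Sobolev functions, and the Cheeger chart structure of Theorem~\ref{thm:Cheeger}(2) combined with a local frame representation of $L^2$-vector fields to reduce the general case.

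I begin with the gradient case $V=\nabla f$ for $f\in H^{1,2}(X,\dist,\meas)$. At $\meas$-a.e. $z$ we have simultaneously $z\in \Leb_2(|\nabla f|)$ and ${\rm Dev}(f, B_r(z))=o(\meas(B_r(z)))$ as $r\downarrow 0$. Fix a sequence $t_i\to 0^+$ producing a tangent $(Y,\dist_Y,\meas_Y,y)$; the rescalings $f_{t_i,z}$ are uniformly bounded in $H^{1,2}_{\mathrm{loc}}$ on the rescaled spaces, as the local Poincar\'e inequality combined with Bishop--Gromov \eqref{eq:BishopGromov} controls the $L^2$ part and the Lebesgue property of $|\nabla f|$ controls the gradient energy on each ball. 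Theorem~\ref{thm:compact loc sob} extracts a subsequence $L^2_{\mathrm{loc}}$-strongly convergent to some $\hat f\in H^{1,2}_{\mathrm{loc}}(Y,\dist_Y,\meas_Y)$. Rewriting the deviation estimate in the rescaled frame turns it into the statement that $f_{t_{i(j)},z}$ is asymptotically energy-minimizing against compactly supported perturbations; invoking the stability of harmonic replacements (Proposition~\ref{prop:harmconti}) on radii chosen so that \eqref{eq:countably} holds — which is possible outside a countable set — forces $\Delta_Y \hat f=0$ on every ball and simultaneously upgrades the convergence of the gradients to $H^{1,2}_{\mathrm{loc}}$-strong by matching the Dirichlet energies. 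Lipschitz regularity of $\hat f$ on the RCD tangent space $Y$ then follows from the uniform $L^2$ bound on $|\nabla \hat f|$ coming from $(|\nabla f|^*)^2(z)$, together with the mean value and gradient estimates available in the $\RCD$ setting.

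For general $V\in L^2(T(X,\dist,\meas))$, I use Theorem~\ref{thm:Cheeger}(2) to cover $\meas$-almost all of $X$ by charts $C$ carrying a Lipschitz frame $F_1,\ldots,F_k\in H^{1,2}$. Since for each Lipschitz $f$ the identity \eqref{eq:Cheexp} together with the $\meas$-a.e. coincidence of local Lipschitz constant and minimal relaxed slope give $\nabla f=\sum_i \chi_i \nabla F_i$ $\meas$-a.e. on $C$, and since such gradients generate $L^2(T(X,\dist,\meas))$ as an $L^\infty$-module by Theorem~\ref{thm:Gigli_density}, one obtains a representation $V=\sum_{i=1}^k \chi_i\,\nabla F_i$ on $C$ with coefficients $\chi_i\in L^2(C,\meas)$. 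Pick $z\in C$ that is simultaneously a common $2$-Lebesgue point of the $\chi_i$ and of the $\langle \nabla F_i,\nabla F_j\rangle$, and a harmonic point of each $F_i$ (full $\meas$-measure by the previous step). Setting $f:=\sum_i \chi_i^*(z) F_i\in H^{1,2}(X,\dist,\meas)$, one has $\nabla f=\sum_i \chi_i^*(z)\nabla F_i$, and
$$
\fint_{B_r(z)}|V-\nabla f|^2\di\meas\le \Bigl(\max_i \||\nabla F_i|\|_\infty\Bigr)^2 \sum_i \fint_{B_r(z)}|\chi_i-\chi_i^*(z)|^2\di\meas\xrightarrow[r\downarrow 0]{}0
$$
by the Lebesgue property and the $L^\infty$ bound on $|\nabla F_i|$. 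The same subsequence extracted jointly for all $F_i$ shows $z\in H(f)$ with blow-up limit $\hat f=\sum_i \chi_i^*(z)\hat F_i$, still harmonic and Lipschitz on $Y$, hence $z\in H(V)$.

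The main obstacle is the simultaneous upgrade, in the gradient case, of the $L^2_{\mathrm{loc}}$-strong convergence to $H^{1,2}_{\mathrm{loc}}$-strong convergence together with the harmonicity of the limit $\hat f$; this requires transferring the deviation estimate through the pmG convergence of the dilated spaces and running Proposition~\ref{prop:harmconti} along radii fulfilling \eqref{eq:countably}. By contrast the reduction to the gradient case is essentially a Lebesgue-differentiation argument on each chart.
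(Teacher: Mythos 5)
Your proposal follows essentially the same route as the paper's proof: for $V=\nabla f$ you combine the deviation estimate of Theorem~\ref{thm:Cheeger}(1), the compactness of Theorem~\ref{thm:compact loc sob} and the continuity of harmonic replacements (Proposition~\ref{prop:harmconti}) along radii satisfying \eqref{eq:countably}, and for general $V$ you reduce to a finite representation $\sum_i\chi_i\nabla F_i$ on a Cheeger chart $C$ and pick joint Lebesgue/harmonic points, exactly as in the paper. The only (minor and fixable) deviations are that the paper deduces Lipschitzness of $\hat f$ from the a.e.\ constancy of $|\nabla\hat f|$ together with the Sobolev-to-Lipschitz property rather than from gradient estimates for harmonic functions, and that your displayed bound on $\fint_{B_r(z)}|V-\nabla f|^2\di\meas$ is only valid after first replacing $V$ by $1_CV$ (legitimate because $\fint_{B_r(z)}1_{X\setminus C}|V|^2\di\meas\to 0$ at $\meas$-a.e.\ $z\in C$), since the representation $V=\sum_i\chi_i\nabla F_i$ holds only on the chart $C$ and not on all of $B_r(z)$.
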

\begin{proof}  \textit{Step 1: the case of gradient vector fields $V=\nabla f$.} Recall that $\RCD^*(K,N)$ spaces are local doubling and satisfy a local 
(2,2)-Poincar\'e inequality, see the discussion after \eqref{eq:locPoincaré}. 
We fix $z\in\Leb_2(|\nabla f|)$ where ${\rm Dev}(f,B_r(z))$, as defined in \eqref{eq:odev} of Theorem~\ref{thm:Cheeger}, 
is infinitesimally faster than $\meas(B_r(z))$ as $r\downarrow 0$.
Let us prove that $z\in H(f)$. Let $(t_{i})_i$ and $(Y, \dist_Y, \meas_Y, y)$, $f_{t_i,z}$ be as in Definition~\ref{def:buhfunction}.
Take $R>1$, set $\dist_{t_i}=t_i^{-1}\dist$, $\meas_{t_i}=\meas(B_{t_i}(x))^{-1}\meas$ and write $H^{1,2}_{t_i}$ and $L^{2}_{t_i}$ for $H^{1, 2}(B_R^{\dist_{t_i}}(z), \dist_{t_i},\meas_{t_i})$ and $L^{2}(B_{R}^{\dist_{t_i}}(z),\meas_{t_i})$, respectively. 
Along with the existence of the limit $(|\nabla f|^*(x))^2$ of $ \fint_{B_r(z)}|\nabla f|^2\di\meas$ as $r\downarrow 0$, 
this provides, for $i$ large enough, a uniform control of the $H^{1, 2}_{t_i}$-norms of $f_{t_i, z}$ on $B_R^{\dist_{t_i}}(z)$;
\begin{align*}
\|f_{t_{i},z}\|_{H^{1,2}_{t_i}}^2 &= t_{i}^{-2} \|f - \fint_{B_{1}^{\dist_{t_i}}} f \di \meas\|_{L^{2}_{t_i}}^2 + \frac{\meas(B_{R}^{\dist_{t_i}}(z))}{\meas(B_{1}^{\dist_{t_i}}(z))} \fint_{B_{t_{i}R}^{\dist}(z)} |\nabla f|^{2} \di \meas \\
&\le C_0(K, N, R)\fint_{B_{t_{i}R}^{\dist}(z)} |\nabla f|^{2} \di \meas  +C_1(K, N, R)\fint_{B_{t_{i}R}^{\dist}(z)} |\nabla f|^{2} \di \meas \\
& \le C_2(K,N,R)((|\nabla f|^*(x))^2 + 1),
\end{align*}
where we used the Poincar\'e inequality. Thus, since $R>1$ is arbitrary, by 
Theorem~\ref{thm:compact loc sob} and a diagonal argument there exist a subsequence $(s_i)_i$ of $(t_i)_i$ 
and $\hat{f}\in H^{1,2}_{\mathrm{loc}}(Y,\dist_Y,\meas_Y)$ such that $f_{s_i, z}$ $H^{1, 2}_{\mathrm{loc}}$-weakly converge to $\hat{f}$.

Let us prove that $f_{s_i, z}$ is a $H^{1, 2}_{\mathrm{loc}}$-strong convergent sequence.
Let $R>0$ where \eqref{eq:countably} holds on $(Y, \dist_Y, \meas_Y, y)$ and let $h_{s_i, R}$ be the harmonic replacement of $f_{s_i, z}$ on $B_R^{\dist_{s_i}}(z)$.
Then applying Proposition~\ref{prop:harmconti} yields that $h_{s_i,R}$ $H^{1, 2}$-weakly converge to the harmonic replacement 
$h_R$ of $\hat{f}$ on $B_R(y)$. Since $h_{s_i,R}$ are harmonic, by 
Theorem~\ref{thm:stability lap}, $h_{s_i,r}$ $H^{1, 2}$-strongly converge to $h_R$ on $B_r(z)$ for any $r<R$.

Note that Proposition~\ref{prop: harmonic replacement} and the harmonicity of $h_{i, R}$ yield
\begin{align}\label{46}
\int_{B_R^{\dist_{s_i}}(z)} | \nabla (f_{s_i, z} - h_{s_i, R})|^2 \di\meas_{s_i} &= \int_{B_R^{\dist_{s_i}}(z)} | \nabla f_{s_i, z}|^2 \di\meas_{s_i}  - \int_{B_R^{d_{s_i}}(z)} | \nabla h_{s_i,R}|^2 \di\meas_{s_i} \nonumber ={\rm Dev}(f,B_{Rs_i}(z)).
\end{align}
Thus, since by our choice of $z$, ${\rm Dev}(f,B_{Rs_i}(z))$ goes to $0$ as $i \to \infty$, the 
Poincar\'e inequality gives $\|f_{s_i,z}-h_{s_i,R}\|_{L^2(B_R^{\dist_{s_i}}(z))}\to 0$, hence $f_{s_i,z}$ 
$H^{1,2}$-weakly converge to $h_R$ on $B_R(y)$, so that $\hat{f}=h_R$ on $B_R(y)$.
In addition, the $H^{1,2}$-strong convergence on balls $B_r(z)$, for all $r<R$,
of the functions $h_{s_i,R}$ shows that  $f_{s_i, z}$ $H^{1, 2}$-strongly converge to $\hat{f}$ on $B_r(z)$ for all $r<R$.
Since $R$ has been chosen subject to the only condition \eqref{eq:countably}, which holds with at most
countably many exceptions,  we see that $\hat{f} \in \mathrm{Harm}(Y, \dist_Y, \meas_Y)$ 
and that $f_{s_i, z}$ $H^{1, 2}_{\mathrm{loc}}$-strongly converge to $\hat{f}$.

Finally, let us show that $\hat{f}$ has a Lipschitz representative. It is easy to check that the condition $z\in\Leb_2(|\nabla f|)$, namely
$$
\lim_{r\downarrow 0}\fint_{B_r(z)}||\nabla f|-|\nabla f|^*(z)|^2\di\meas=0
$$
with the $H^{1,2}_{\mathrm{loc}}$-strong convergence of $f_{s_i, z}$ yield 
$|\nabla \hat{f}|(w)= |\nabla f|^*(z)$ for $\meas_Y$-a.e. $w \in Y$. Thus the Sobolev-to-Lipschitz property shows that $\hat{f}$ has
a Lipschitz representative.

\noindent
\textit{Step 2: the general case when $V\in L^2(T(X, \dist, \meas))$.} Let $A_j$, $M$, $k$, $F_i$ be given by Theorem~\ref{thm:Cheeger}. It is sufficient to prove the existence of $f$ as in Definition~\ref{def:buhderivation} for
$\meas$-a.e. $x\in A_j$. Since $\int_{B_r(x)\setminus A_j}|V|^2\di\meas=o(\meas(B_r(x)))$ for $\meas$-a.e.
$x\in A_j$, we can assume with no loss of generality, possibly replacing $V$ by $1_{X\setminus A_j}V$, that
$V=0$ on $X\setminus A_j$. As illustrated in \cite[Cor.~2.5.2]{Gigli} (by approximation of the $\chi_i$ by
simple functions) the expansion \eqref{eq:Cheexp} gives also
$$
1_{A_j}\left(\nabla f-\sum_{i=1}^k\alpha_i\nabla F_i\right)=0
$$
for all $f\in \Lip(X, \dist)\cap H^{1,2}(X,\dist,\meas)$, with $\sum_i\alpha_i^2\leq M|\nabla f|$ $\meas$-a.e. on $A_j$. 
By the approximation in Lusin's sense of Sobolev by 
Lipschitz functions and the locality of the pointwise norm, the same is true for Sobolev functions $f$.
Eventually, by linearity and density of gradients, we obtain the representation
 $$
V=\sum_{i=1}^k\alpha_i\nabla F_i
$$
for suitable coefficients $\alpha_i\in L^2(X,\meas)$, null on $X\setminus A_j$. It is now easily seen that
if $x$ is an harmonic point of all $F_i$ and a 2-Lebesgue point of all $\alpha_i$, then $x\in H(V)$ with
$$
f(y):=\sum_{i=1}^k\alpha_i^*(x)F_i(y).
$$

\end{proof}

\subsection{The behavior of $t\meas (B_{\sqrt{t}}(x))g_t$ as $t \downarrow 0$}

The main purpose of Subsections 5.2 and 5.3 is to prove Theorem~\ref{th:convergence1}, i.e.~the 
$L^2$-strong convergence of the metrics
\begin{equation}\label{def:hatgt}
\hat{g}_t:=t \meas (B_{\sqrt{t}}(\cdot))g_t \xrightarrow{t \downarrow 0} \hat{g},
\end{equation}
where $\hat{g}$ is the normalized Riemannian metric on $(X, \dist, \meas)$ defined by $c_n g$,
where $n=\dim_{\dist,\meas}(X)$ and the dimensional constant $c_n$ is given by
\begin{equation}\label{eq:defck}
c_n:= 
\frac{\omega_n}{(4\pi)^n}
 \int_{\setR^n}\bigl|\partial_{x_1}\bigl(e^{-|x|^2/4}\bigr)\bigr|^2\di x.
\end{equation}

Here is an important proposition whose proof contains the main technical ingredients that shall be used in the sequel.

\begin{proposition}\label{prop:blowupvector}
Let $V \in L^2(T(X, \dist, \meas))$ and $y \in \mathcal{R}_n \cap H(V)$. Then 
\begin{equation}\label{eq:blowup2}
\lim_{t \downarrow 0}\int_X t\meas (B_{\sqrt{t}}(x)) |\langle \nabla_x p(x, y, t), V(x)\rangle|^2 \di\meas (x) =
c_n |V|^{2*}(y).
\end{equation}
\end{proposition}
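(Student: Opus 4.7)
The plan is to reduce the problem to the case $V=\nabla f$ with $y\in H(f)$ and then perform a blow-up at $y$ at the natural heat-kernel scale $\sqrt{t}$, using that the rescaled spaces converge in mGH to the Euclidean tangent cone (since $y\in\mathcal R_n$). For the reduction, Definition~\ref{def:buhderivation} provides $f\in H^{1,2}(X,\dist,\meas)$ with $y\in H(f)$ and $\fint_{B_r(y)}|V-\nabla f|^2\di\meas\to 0$, so in particular $|\nabla f|^{2*}(y)=|V|^{2*}(y)$; combining the Gaussian gradient bound \eqref{eq:equi lip} (with $\epsilon=1$) and doubling yields
\begin{equation*}
\int_X t\meas(B_{\sqrt t}(x))|\langle\nabla_x p(\cdot,y,t),V-\nabla f\rangle|^2\di\meas \leq \frac{C}{\meas(B_{\sqrt t}(y))}\int_X e^{-\dist^2(x,y)/(6t)}|V-\nabla f|^2\di\meas,
\end{equation*}
and the right-hand side tends to $0$ because the kernel concentrates near $y$ at scale $\sqrt t$ and $|V-\nabla f|^{2*}(y)=0$.

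Next I would perform the blow-up. Setting $\dist_t:=t^{-1/2}\dist$ and $\meas_t:=\meas(B_{\sqrt t}(y))^{-1}\meas$, the rescaled space $(X,\dist_t,\meas_t,y)$ is $\RCD^*(tK,N)$ and, since $y\in\mathcal R_n$, mGH-converges to $(\mathbb R^n,\dist_{\mathbb R^n},\omega_n^{-1}\mathcal H^n,0_n)$ as $t\downarrow 0$. By \eqref{eq:1001} the corresponding heat kernel satisfies $\tilde p_t(\cdot,y,1)=\meas(B_{\sqrt t}(y))\,p(\cdot,y,t)$; moreover $\langle\cdot,\cdot\rangle$ rescales by $t$, and $\nabla f=t^{1/2}\nabla f_{\sqrt t,y}$ with $f_{\sqrt t,y}$ the blow-up of $f$ as in Definition~\ref{def:buhfunction}. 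A direct bookkeeping cancels all $t$-factors and yields
\begin{equation*}
\int_X t\meas(B_{\sqrt t}(x))|\langle\nabla_x p,\nabla f\rangle|^2\di\meas = \int_X \meas_t(B_1^{\dist_t}(x))\,|\langle\nabla_x\tilde p_t(x,y,1),\nabla f_{\sqrt t,y}(x)\rangle_{\dist_t}|^2\di\meas_t(x).
\end{equation*}

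To pass to the limit along $t_i\downarrow 0$, I would combine Theorem~\ref{thm:local conv heat kernel} (applied to the rescaled sequence, giving $H^{1,2}$-strong convergence of $\tilde p_{t_i}(\cdot,y,1)$ to $\hat p(\cdot,0_n,1)=\omega_n(4\pi)^{-n/2}e^{-|x|^2/4}$) with the definition $y\in H(f)$, which along a subsequence provides $H^{1,2}_{\mathrm{loc}}$-strong convergence of $f_{\sqrt{t_i},y}$ to a Lipschitz harmonic $\hat f$ on $\mathbb R^n$ with $|\nabla\hat f|\equiv|V|^*(y)$; Liouville's theorem applied to $\nabla\hat f$ forces $\hat f$ to be affine, so after a rotation $\nabla\hat f=|V|^*(y)e_1$. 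The main obstacle is upgrading the $H^{1,2}_{\mathrm{loc}}$-strong convergence to convergence of the integral over all of $X$; this is handled using the Gaussian decay \eqref{eq:gaussian_ep} (and its gradient analogue), uniform over the rescaled $\RCD^*(t_i K,N)$ spaces, to ensure uniform integrability at infinity. The resulting limit equals
\begin{equation*}
\int_{\mathbb R^n}|\langle\nabla_x\hat p(x,0_n,1),|V|^*(y)e_1\rangle|^2\omega_n^{-1}\di\mathcal H^n(x) = \frac{\omega_n}{(4\pi)^n}\int_{\mathbb R^n}|\partial_{x_1}(e^{-|x|^2/4})|^2\di x\cdot|V|^{2*}(y) = c_n|V|^{2*}(y),
\end{equation*}
via \eqref{eq:defck}. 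Since this holds along any subsequence, the full limit exists.
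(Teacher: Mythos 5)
Your proposal is correct and follows essentially the same route as the paper: kill the $W=V-\nabla f$ contribution via the Gaussian gradient bound and the Lebesgue-point property, rescale at scale $\sqrt t$ using \eqref{eq:1001}, pass to the limit on the Euclidean tangent cone via Theorem~\ref{thm:local conv heat kernel} and the linearity of the Lipschitz harmonic blow-up $\hat f$, and control the tail uniformly in $t$ (the paper does this by working on $B_{L\sqrt t}(y)$ and letting $L\to\infty$). One small caution: for the tail the estimate \eqref{eq:gaussian_ep} alone is not quite enough, since the integrand carries the weight $|\nabla f|^2\in L^1$; you need the same Cavalieri/doubling argument you used for the $W$-term, with ``infinitesimal averages'' replaced by the boundedness of $\fint_{B_r(y)}|\nabla f|^2\di\meas$ coming from the Lebesgue-point property, exactly as the paper argues ``as for \eqref{eq:10001}''.
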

\begin{proof} As $y \in H(V)$, there exists $f \in H^{1,2}(X,\dist,\meas)$ such that $y\in H(f)$ and
$\fint_{B_r(x)}|V-\nabla f|^2\di\meas\to 0$ as $r\downarrow 0$. 
With $W=V-\nabla f$, let us first prove that
\begin{equation}\label{eq:10001}
\lim_{t \downarrow 0}\int_X t\meas (B_{\sqrt{t}}(x)) |\langle \nabla_x p(x, y, t), W(x)\rangle|^2 \di\meas (x) =0.
\end{equation}
Using the heat kernel estimate \eqref{eq:equi lip} with $\epsilon=1$ we need to estimate, for $0<t<C_4^{-1}$, 
$$
\int_X \frac{1}{\meas(B_{\sqrt{t}}(x))}\exp\left(-\frac{2\dist^2(x,y)}{5t}\right)|W(x)|^2\di\meas(x)
$$
and use \eqref{doubspec} to reduce the proof to the estimate of
$$
\frac{1}{\meas(B_{\sqrt{t}}(y))}\int_X\exp\left(-\frac{2\dist^2(x,y)}{5t}+c_1\frac{\dist(x,y)}{\sqrt{t}}\right)|W(x)|^2\di\meas(x).
$$
Using the identity $\int f(\dist(\cdot,y))\di \mu=-\int_0^\infty\mu(B_r(y))f'(r)\di r$ with 
$\mu_y:=\exp(c_1\dist(\cdot,y)/\sqrt{t})|W|^2\meas$ and $f_y(r)=\exp(-2r^2/(5t))$, we need to estimate
$$
- \frac{1}{\meas(B_{\sqrt{t}}(y))}\int_0^\infty\mu_y(B_r(x))f_y'(r)\di r.
$$
Now, write $\mu_y(B_r(y))\leq\omega(r)\exp(c_2r/\sqrt{t})\meas(B_r(y))$ with $\omega$ bounded and infinitesimal as $r\downarrow 0$ and
use the change of variables $r=s\sqrt{t}$ to see that it suffices to estimate
$$
\frac 45\int_0^\infty \left( \omega(s\sqrt{t})\frac{\meas(B_{s\sqrt{t}}(y))}{\meas(B_{\sqrt{t}}(y))}\right)\exp\left(c_1 s-\frac{2s^2}{5}\right) s\di s.
$$
Now we can split outer the integration in $(0,1)$ and in $(1,\infty)$; the former obviously gives an infinitesimal contribution as
$t\downarrow 0$; the latter can be estimated with the exponential growth condition \eqref{eq:BishopGromov} 
on $\meas(B_r(y))$ and gives an infinitesimal contribution as well.
This proves \eqref{eq:10001}. 

Now, setting $c_n(L)=\omega_n/(4\pi)^n\int_{B_L(0)}\bigl|\partial_{x_1}\bigl(e^{-|x|^2/4}\bigr)\bigr|^2\di x\uparrow c_n$ as $L\uparrow\infty$, 
we shall first prove that
\begin{equation}\label{eq:blowup222}
\lim_{t \downarrow 0}\int_{B_{L\sqrt{t}}(y)}t\meas (B_{\sqrt{t}}(x)) |\langle \nabla_x p(x, y, t), V(x)\rangle|^2 \di\meas (x) =
c_n(L) |V|^{2*}(y)
\end{equation}
for any $L<\infty$.
Taking \eqref{eq:10001} into account, it suffices to prove that
\begin{equation}\label{eq:1000}
\lim\limits_{t \downarrow 0} \int_{B_{L\sqrt{t}}(y)} t \meas(B_{\sqrt{t}}(x))|\langle \nabla_{x}p(x,y,t),\nabla f (x) \rangle|^2 \di \meas(x) = c_n (L) 
(|\nabla f|^*)^2(y)\quad\forall L\in [0,\infty).
\end{equation}
In order to prove \eqref{eq:1000}, for $t>0$ let us consider the rescaling $\dist \mapsto\dist_t:=\sqrt{t}^{-1}\dist$, $\meas\mapsto\meas_t:=\meas (B_{\sqrt{t}}(y))^{-1}\meas$. We denote by $p_t$ the heat kernel on the rescaled space $(X,\dist_t,\meas_t)$. 
Applying (\ref{eq:1001}) with $a:=\sqrt{t}^{-1}$, $b:=\frac{1}{\meas (B_{\sqrt{t}}(y))}$ and $s:=t$ yields
(notice that the factor $t=a^{-2}$ disappears by the scaling term in the definition of $f_{\sqrt{t},y}$ and the scaling of gradients)
\begin{align}\label{eq:2}
&\int_{B_{L\sqrt{t}}(y)}t\meas (B_{\sqrt{t}}(x))|\langle \nabla_x p(x, y, t), \nabla f(x)\rangle|^2 \di\meas (x) \nonumber\\
&=\int_{B_L^{\dist_t}(y)}\meas_t (B_1^{\dist_t}(x))|\langle \nabla_x p_t(x, y, 1),\nabla f_{\sqrt{t}, y}(x)\rangle|^2 \di\meas_t(x).
\end{align}
Take a sequence $t_i\to 0^+$, let $(s_i)_i$ be a subsequence of $(t_i)_i$ and $\hat{f}$ be a Lipschitz and harmonic function on $\mathbb{R}^n$ as in Definition~\ref{def:buhfunction} (i.e. $\hat{f}$ is the limit of $f_{\sqrt{s_i},y}$). Note that $\hat{f}$ has necessarily linear growth. Since linear growth harmonic functions on Euclidean spaces are actually linear or constant functions, we see that $\nabla \hat{f}= \sum_j a_j\frac{\partial}{\partial x_j}$ for some $a_j \in \mathbb{R}$. Then, by Theorem \ref{thm:local conv heat kernel}, letting $i\to\infty$ in the right hand side of (\ref{eq:2}) shows
\begin{align}\label{eq:3}
&\lim_{i\to\infty}\int_{B_L^{\dist_{s_i}}(y)}\meas_{s_i} (B_1^{\dist_{s_i}}(x))
|\langle \nabla_x p_{s_i}(x, y, 1), \nabla f_{\sqrt{s_i},y}(x)\rangle|^2 \di\meas_{s_i}(x) \nonumber \\
&=\int_{B_L(0_n)}\hat{\mathcal{H}}^n (B_1(x))|\langle\nabla_x q_n(x, 0_n, 1),\nabla\hat{f}(x)\rangle|^2 \di \hat{\mathcal{H}}^n(x),
\end{align}
where $\hat{\mathcal{H}}^n=\mathcal{H}^n/\omega_n$ (hence $\hat{\mathcal{H}}^n (B_1(x)) \equiv 1$) and 
$q_n$ denotes the heat kernel on $(\mathbb{R}^n, \dist_{\mathbb{R}^n}, \hat{\mathcal{H}}^n)$. Since
 \eqref{eq:pEuclidean} and \eqref{eq:1001} give
\begin{align*}
q_n(x, 0_n, 1) \equiv \frac{\omega_n}{\sqrt{4\pi}^{n}}e^{-|x|^2/4},
\end{align*}
a simple computation shows that the right hand side of (\ref{eq:3}) is equal to $c_n(L)(\sum_j|a_j|^2)$.
Finally, from
\begin{align}
(|\nabla f|^*(z))^2=\lim_{r \downarrow 0}\left( \frac{1}{\meas (B_r(y))}\int_{B_r(y)}|\nabla f|^2\di\meas \right)
&=\lim_{i\to\infty}\int_{B_1^{\dist_{s_i}}(y)}|\nabla f_{\sqrt{s_i}, y}|^2\di\meas_{s_i}\nonumber \\
&=\int_{B_1(0_n)}|\nabla \hat{f}|^2\di \hat{\mathcal{H}}^n= \sum_j|a_j|^2,
\end{align}
we have (\ref{eq:blowup222}) because $(t_i)_i$ is arbitrary.

In order to obtain (\ref{eq:blowup2}) it is sufficient to let $L\to\infty$ in (\ref{eq:blowup222}), taking into account that
$c_n(L)\uparrow c_n$ as $L\uparrow\infty$ and that, arguing as for \eqref{eq:10001}, one can prove that
$$
\lim_{L\to\infty}\sup_{0<t<C_4^{-1}}\int_{X\setminus B_{L\sqrt{t}}(y)}
|\langle \nabla_xp(x,y,t),W(y)\rangle|^2\di\meas(x)=0.
$$ 
\end{proof}

\begin{corollary}\label{cor:asymptotic}
Let $A$ be a Borel subset of $X$. Then for any $V \in L^2(T(X, \dist, \meas))$ and $y \in H(V) \cap \mathcal{R}_n$,
one has
\begin{enumerate}
\item[(1)] if $\int_{B_r(y)\cap A}|V|^2\di\meas=o(\meas(B_r(y)))$ as $r\downarrow 0$, we have
\begin{equation}\label{eq:109}
\lim_{t \downarrow 0}\int_A t\meas (B_{\sqrt{t}}(x)) |\langle \nabla_x p(x, y, t), V(x)\rangle|^2 \di\meas (x) = 0;
\end{equation}
\item[(2)] if $\int_{B_r(y)\setminus A}|V|^2\di\meas=o(\meas(B_r(y)))$ as $r\downarrow 0$, we have
\begin{equation}\label{eq:110}
\lim_{t \downarrow 0}\int_A t\meas (B_{\sqrt{t}}(x)) |\langle \nabla_x p(x, y, t), V(x)\rangle|^2 \di\meas (x)=c_n |V|^{2*}(y).
\end{equation}
\end{enumerate}
In particular, if $V \in L^p(T(X, \dist, \meas))$ for some $p>2$, \eqref{eq:109} holds if $A$ has density $0$ at $y$, and
\eqref{eq:110} holds if $A$ has density $1$ at $y$.
\end{corollary}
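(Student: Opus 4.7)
The plan is to read (1) as a direct specialisation of the argument already used to establish \eqref{eq:10001} inside the proof of Proposition~\ref{prop:blowupvector}, with $W:=1_A V$ playing the role of the ``small'' vector field; and then to derive (2) by subtraction from Proposition~\ref{prop:blowupvector} itself. The ``in particular'' statement reduces to a Hölder estimate.

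For (1), I would first observe that $W := 1_A V$ makes sense in $L^2(T(X,\dist,\meas))$ because this space is an $L^\infty(X,\meas)$-module and $1_A\in L^\infty(X,\meas)$; by locality, $\langle\nabla_x p(x,y,t),V(x)\rangle=\langle\nabla_x p(x,y,t),W(x)\rangle$ for $\meas$-a.e.\ $x\in A$ while $W\equiv 0$ off $A$. Hence
$$\int_A t\meas(B_{\sqrt t}(x))\,|\langle \nabla_x p(x,y,t), V(x)\rangle|^2\di\meas(x)=\int_X t\meas(B_{\sqrt t}(x))\,|\langle \nabla_x p(x,y,t), W(x)\rangle|^2\di\meas(x).$$
The hypothesis in (1) translates exactly to $\fint_{B_r(y)}|W|^2\di\meas\to 0$ as $r\downarrow 0^+$, which is the only input about $W$ used in the derivation of \eqref{eq:10001}: namely, after combining the gradient bound \eqref{eq:equi lip} with $\epsilon=1$ and the volume comparison \eqref{doubspec}, performing the substitution $r=s\sqrt t$ and splitting the resulting $s$-integration into $(0,1)$ and $(1,\infty)$, one bounds the inner part by the infinitesimal factor $\omega(s\sqrt t):=\fint_{B_{s\sqrt t}(y)}|W|^2\di\meas$ (uniformly in $s\in(0,1)$ by doubling) and the outer part by dominated convergence, using the exponential growth \eqref{eq:BishopGromov} to tame the ratio $\meas(B_{s\sqrt t}(y))/\meas(B_{\sqrt t}(y))$. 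Since the proof goes through verbatim, I would simply quote this calculation.

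For (2), since both hypotheses are symmetric in $A$ and $X\setminus A$, applying (1) to $X\setminus A$ yields
$$\lim_{t\downarrow 0}\int_{X\setminus A}t\meas(B_{\sqrt t}(x))\,|\langle \nabla_x p(x,y,t), V(x)\rangle|^2\di\meas(x)=0,$$
and subtracting this identity from \eqref{eq:blowup2} of Proposition~\ref{prop:blowupvector} produces \eqref{eq:110}. For the ``in particular'' clause, Hölder with conjugate exponents $p/2$ and $p/(p-2)$ gives
$$\fint_{B_r(y)}|V|^2 1_A\di\meas\le \Bigl(\fint_{B_r(y)}|V|^p\di\meas\Bigr)^{2/p}\Bigl(\frac{\meas(B_r(y)\cap A)}{\meas(B_r(y))}\Bigr)^{(p-2)/p};$$
at $\meas$-a.e.\ $y$ the first factor is bounded (Lebesgue differentiation for $|V|^p\in L^1$), while the second factor is infinitesimal when $A$ has density $0$ at $y$, verifying the hypothesis of (1). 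The density-$1$ case for (2) is analogous after replacing $A$ by $X\setminus A$.

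The main obstacle, if any, is purely notational: ensuring that one may legitimately replace ``$V$ with $\fint|V|^2\to 0$'' by ``$1_A V$ with $\fint|1_A V|^2\to 0$'' in the argument of \eqref{eq:10001}. Once this is granted, the corollary is essentially bookkeeping, and the only subtle point is that the Hölder step in the last clause requires $y$ to be a $p$-Lebesgue point of $|V|^p$, a property valid for $\meas$-a.e.\ $y\in H(V)\cap\mathcal{R}_n$.
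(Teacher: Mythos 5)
Your proposal is correct and essentially reproduces the paper's argument: the paper also sets $W:=1_A V$, observes $y\in H(W)$ with $f\equiv 0$ so that $|W|^{2*}(y)=0$, and then applies Proposition~\ref{prop:blowupvector} to $W$ (equivalently, as you do, cites the intermediate estimate~\eqref{eq:10001}), with (2) handled ``analogously'' — your subtraction from~\eqref{eq:blowup2} is an equivalent way to phrase that. Your remark that the H\"older step in the ``in particular'' clause requires $y$ to be a $p$-Lebesgue point of $|V|^p$ is a fair caveat which the paper leaves implicit; it is harmless because the clause is only invoked later with $V\in L^\infty$, where the first factor is trivially bounded for every $y$.
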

\begin{proof} (1) Let $W=1_A V$ and notice that our assumption gives that $y\in H(W)$, with $f\equiv 0$, so that
$|W|^{2*}(y)=0$. Therefore \eqref{eq:109} follows by applying Proposition~\ref{prop:blowupvector} to $W$. 
The proof of \eqref{eq:110} is analogous.
\end{proof}

\begin{remark}\label{eq:firstblue}
Thanks to the estimate \eqref{doubspec}, a similar argument provides also the following results
for all $y\in H(V)\cap\mathcal R_n$:
\begin{enumerate}
\item[(1)]if $\int_{B_r(y)\cap A}|V|^2\di\meas=o(\meas(B_r(y)))$ as $r\downarrow 0$, we have
\begin{equation}\label{eq:21}
\lim_{t \downarrow 0}\int_At\meas (B_{\sqrt{t}}(y)) |\langle \nabla_x p(x, y, t), V(x)\rangle|^2 \di\meas (x)=0;
\end{equation}
\item[(2)]if $\int_{B_r(y)\setminus A}|V|^2\di\meas=o(\meas(B_r(y)))$ as $r\downarrow 0$, we have
\begin{equation}\label{eq:20}
\lim_{t \downarrow 0}\int_A t\meas (B_{\sqrt{t}}(y)) |\langle \nabla_x p(x, y, t), V(x)\rangle|^2 \di\meas (x) = c_n |V|^{2*}(y).
\end{equation}
\end{enumerate}
\end{remark}

\begin{theorem}\label{thm:asymptotic1}
Let $V \in L^2(T(X, \dist, \meas))$.
Then for any Borel subsets $A_1,\, A_2$ of $X$ we have
\begin{equation}\label{eq:201}
\lim_{t \downarrow 0}\int_{A_1} \left(\int_{A_2}t \meas (B_{\sqrt{t}}(x))|\langle \nabla_xp(x, y, t), V(x)\rangle|^2 \di\meas (x)\right) 
\di\meas (y)=\int_{A_1\cap A_2} \hat{g}(V,V)\di\meas.
\end{equation}
\end{theorem}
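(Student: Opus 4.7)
The plan is to establish the pointwise $\meas$-a.e.~convergence of
$$F_t(y):=\int_{A_2}t\meas(B_{\sqrt t}(x))|\langle\nabla_xp(x,y,t),V(x)\rangle|^2\di\meas(x)\quad\text{to}\quad c_n|V|^{2*}(y)1_{A_2}(y),$$
as $t\downarrow 0$, and to upgrade it to $L^1$-convergence by Vitali's theorem. Integrating the resulting limit over $A_1$ then produces \eqref{eq:201}, since $c_n|V|^{2*}=\hat g(V,V)$ $\meas$-a.e.

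For the pointwise limit I would invoke Corollary~\ref{cor:asymptotic} with $A:=A_2$. At $\meas$-a.e.~$y\in X$ we have $y\in\mathcal{R}_n\cap H(V)$, and $y$ is a Lebesgue point of the $L^1_{\mathrm{loc}}$-functions $|V|^2 1_{A_2}$ and $|V|^2 1_{X\setminus A_2}$. The elementary fact that if $y$ is a Lebesgue point of $f\in L^1_{\mathrm{loc}}$ and $E$ has density $1$ (resp. $0$) at $y$ then $y$ is a Lebesgue point of $f1_E$ with value $f^*(y)$ (resp. $0$) then gives: for $\meas$-a.e.~$y\in A_2$ one has $\int_{B_r(y)\setminus A_2}|V|^2\di\meas=o(\meas(B_r(y)))$, and for $\meas$-a.e.~$y\in X\setminus A_2$ one has $\int_{B_r(y)\cap A_2}|V|^2\di\meas=o(\meas(B_r(y)))$. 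Parts (2) and (1) of Corollary~\ref{cor:asymptotic} then yield, respectively, $F_t(y)\to c_n|V|^{2*}(y)$ and $F_t(y)\to 0$ at these points.

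For equi-integrability of $\{F_t\}_{t\in(0,t_0)}$, I would combine the gradient bound \eqref{eq:equi lip} with $\epsilon=1$ and the doubling-exponential estimate \eqref{doubspec} rescaled to $(X,t^{-1/2}\dist,\meas)$ (which is $\RCD^*(tK,N)$), completing the square in the exponent, to produce a pointwise majorant $F_t\leq C\,K_t|V|^2$ with
$$(K_tf)(y):=\frac{1}{\meas(B_{\sqrt t}(y))}\int_X f(x)\exp\Bigl(-\frac{\dist^2(x,y)}{5t}\Bigr)\di\meas(x).$$
A Cavalieri-type splitting based on \eqref{eq:BishopGromov} and \eqref{doubspec} (essentially the same computation as in Lemma~\ref{lem:volume}, up to adjusted constants) shows that both $\sup_{x,t}\int_X k_t(x,y)\di\meas(y)$ and $\sup_{y,t}\int_X k_t(x,y)\di\meas(x)$ are finite, where $k_t$ is the kernel of $K_t$. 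Hence $K_t$ is uniformly bounded on $L^1$ and on $L^\infty$ as $t\downarrow 0$. A standard truncation $|V|^2=h_1+h_2$ with $\|h_1\|_\infty\leq M$ and $\|h_2\|_{L^1}$ arbitrarily small then yields equi-integrability: the $h_1$-part contributes at most $CM\meas(E)$ on any Borel $E$, while the $h_2$-part is uniformly controlled by $C\|h_2\|_{L^1}$.

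Vitali's convergence theorem (applicable since $X$ has finite $\meas$-measure) then converts the pointwise a.e.~convergence into $L^1$-convergence, and restriction to $A_1$ produces \eqref{eq:201}. The main obstacle is the equi-integrability step: since the scaling $\meas(B_{\sqrt t}(x))^{-1}$ lies inside the $x$-integral and depends on the integration variable, $F_t$ is not directly of convolution type, and one must first symmetrize the kernel via \eqref{doubspec} before the standard $L^1/L^\infty$-interpolation machinery can be applied.
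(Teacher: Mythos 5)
Your proposal is correct, and its first half coincides with the paper's argument: both identify the pointwise a.e.\ limit of $F_t(y)$ by applying Corollary~\ref{cor:asymptotic} at points of density $1$ and density $0$ of $A_2$, and your verification of its hypotheses through Lebesgue points of $|V|^2 1_{A_2}$ and $|V|^2 1_{X\setminus A_2}$ is valid for general $V\in L^2(T(X,\dist,\meas))$. Where you genuinely diverge is in the passage from the pointwise limit to the limit of the integrals. The paper first reduces to $V\in L^\infty(T(X,\dist,\meas))$, which is legitimate by density in $L^2$ because the uniform estimate \eqref{eq:riem est} controls the approximation error uniformly in $t$; for bounded $V$ the inner integral is then bounded by a constant, via \eqref{eq:1105} (see \eqref{eq:11055}) and Lemma~\ref{lem:volume} applied to the rescaled space, and the dominated convergence theorem concludes. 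You instead keep $V\in L^2$ and prove equi-integrability of $\{F_t\}$ directly: the domination $F_t\le C\,K_t(|V|^2)$, obtained by symmetrizing the Gaussian gradient bound \eqref{eq:equi lip} with \eqref{doubspec} and completing the square, together with the Schur-type bounds $\sup_t\|K_t\|_{L^1\to L^1}+\sup_t\|K_t\|_{L^\infty\to L^\infty}<\infty$ (both following from the Cavalieri computation behind Lemma~\ref{lem:volume}, with adjusted exponents), a truncation of $|V|^2$, and Vitali's theorem on the finite measure space $X$. Both routes are sound; the paper's density reduction is shorter and exploits \eqref{eq:riem est}, while your argument avoids the reduction altogether at the cost of the kernel estimates, which are essentially the transposed (integration in $y$ rather than $x$) form of the bound the paper uses, and it makes explicit why the non-convolution structure caused by $\meas(B_{\sqrt t}(x))^{-1}$ inside the $x$-integral must be handled via \eqref{doubspec}.
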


\begin{proof} Taking the uniform $L^{\infty}$ estimate \eqref{eq:riem est} into account, it is enough to prove the result for
$V \in L^\infty(T(X, \dist, \meas))$, since this space is dense in $L^2(T(X,\dist,\meas))$. 
Take $y \in X$. By \eqref{eq:1105}, for $0<t<C_4^{-1}$, we get
\begin{equation}\label{eq:11055}
\int_Xt \meas (B_{\sqrt{t}}(x))|\langle \nabla_xp(x, y, t),V(x)\rangle|^2 \di\meas(x) \le 
\int_X\frac{C_3^2e^2\|V\|_{L^{\infty}}^2}{\meas(B_{\sqrt{t}}(x))}\exp \left( \frac{-2\dist (x, y)^2}{5t}\right)\di\meas(x)
\end{equation}
and, by applying (\ref{lem:bound}) to the rescaled space $(X, \sqrt{t}^{-1}\dist, \meas (B_{\sqrt{t}}(x)))^{-1}\meas)$, we obtain that the right hand side in \eqref{eq:11055} is uniformly bounded as function of $y$. 

Thus, denoting by $A_2^*$ the set of points of density 1 of $A_2$ and by
$A_2^{**}$ the set of points of density 0 of $A_2$ (so that $\meas(X\setminus (A_2^*\cup A_2^{**}))=0$), 
the dominated convergence theorem, Corollary~\ref{cor:asymptotic} and the definition of $\hat{g}$ imply
\begin{align}
&\int_{A_1} \left(\int_{A_2}t\meas (B_{\sqrt{t}}(x))|\langle \nabla_xp(x, y, t), V(x)\rangle|^2 \di\meas (x)\right) \di\meas (y) \nonumber \\
&=\int_{\mathcal{R}_n \cap {A_1} \cap A_2^*} \left(\int_{A_2}t\meas (B_{\sqrt{t}}(x))|\langle\nabla_xp(x, y, t), V(x)\rangle|^2 \di\meas (x)\right) \di\meas (y) \nonumber \\
&+\int_{\mathcal{R}_n \cap A_1 \cap A^{**}_2} \left(\int_{A_2}t\meas (B_{\sqrt{t}}(x))|\langle\nabla_xp(x, y, t), V(x)\rangle|^2 \di\meas (x)\right) \di\meas (y) \nonumber \\
&\to \int_{\mathcal{R}_n \cap A_1 \cap A_2^* }c_n|V|^{2*}(y)\di\meas(y)= \int_{A_1 \cap A_2}\hat{g}(V,V)\di\meas.
\end{align}
\end{proof}

\begin{remark} Building on Remark~\ref{eq:firstblue},
one can prove by a similar argument
\begin{equation}\label{eq:202}
\lim_{t \downarrow 0}\int_{A_1} \left(\int_{A_2}t\meas (B_{\sqrt{t}}(y))|\langle \nabla_xp(x, y, t), V(x)\rangle|^2
 \di\meas (x)\right) \di\meas (y)=\int_{A_1\cap A_2} \hat{g}(V,V)\di\meas.
\end{equation}
\end{remark}

In order to improve the convergence of $\hat{g}_t$ from weak to strong, a classical Hilbertian
strategy is to prove convergence of the Hilbert norms. In our case, at the level of ${\bf\hat{g}_t}$ 
(and taking \eqref{eq:righthsk} and \eqref{eq:normaPhi_t_other} into account), this translates into
\begin{equation}\label{eq:blowuphs2}
\limsup_{t\downarrow 0}\int_X
\left(t\meas (B_{\sqrt{t}}(x))\right)^2\biggl|\int_X \dist_x p(x, y, t)\otimes \dist_x p(x,y,t) \di \meas (y)\biggr|_{HS}^2
\di\meas(x)
\leq n c_n^2 \meas(X).
\end{equation}
The proof of this estimate requires a more delicate blow-up procedure, and to its proof we devoted the next subsection.
Notice that, by using the (non-sharp) estimate of the left hand side in \eqref{eq:blowuphs2} 
with $\int_X\bigl[t\meas (B_{\sqrt{t}}(\cdot))\int_X |\nabla_x p|^2 \di\meas\bigr]^2\di\meas$ one obtains 
$n^2 c^2_n\meas(X)$, but this upper bound is 
not sufficient to obtain the convergence of the Hilbert-Schmidt norms.

We are now in a position to prove the main theorem of this subsection.

\begin{theorem}\label{th:convergence1}
The family of Riemannian metrics $\hat{g}_t$ in \eqref{def:hatgt}
$L^2$-strongly converges to $\hat{g}$ as $t\downarrow 0$
according to Definition~\ref{defcong}. In particular one has $L^1$-strong convergence
of $\hat{g}_t(V,V)$ to $\hat{g}(V,V)$ as $t\downarrow 0$ for all $V\in L^2(T(X,\dist,\meas))$.
\end{theorem}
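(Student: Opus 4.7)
My plan is to invoke the convergence criterion from Proposition~\ref{prop:convcrigi}: it suffices to verify that
\begin{equation*}
\lim_{t\downarrow 0}\int_X \hat{g}_t(V,V)\di\meas = \int_X \hat{g}(V,V)\di\meas \qquad \forall V\in L^\infty(T(X,\dist,\meas))
\end{equation*}
and that
\begin{equation*}
\limsup_{t\downarrow 0}\int_X|\mathbf{\hat{g}}_t|_{HS}^2\di\meas \leq \int_X|\mathbf{\hat{g}}|_{HS}^2\di\meas.
\end{equation*}
The right hand side of the latter equals $n c_n^2 \meas(X)$ by Lemma~\ref{riemdim} and the definition of $\hat{g}=c_n g$.

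First I would handle the bilinear-form convergence. Unpacking the definition \eqref{eq:rrcd pull back_bis} of $g_t$ and multiplying by the scaling $t\meas(B_{\sqrt{t}}(x))$, Fubini's theorem (justified by the Gaussian bound \eqref{eq:equi lip} and the estimate \eqref{eq:11055} used in the proof of Theorem~\ref{thm:asymptotic1}) gives
\begin{equation*}
\int_X \hat{g}_t(V,V)\di\meas(x) = \int_X\!\!\int_X t\meas(B_{\sqrt{t}}(x))|\langle\nabla_xp(x,y,t),V(x)\rangle|^2\di\meas(x)\di\meas(y).
\end{equation*}
Applying Theorem~\ref{thm:asymptotic1} with $A_1=A_2=X$ shows that this integral tends to $\int_X\hat{g}(V,V)\di\meas$, which settles the first condition.

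The delicate step is the $\limsup$ on Hilbert-Schmidt norms. By \eqref{eq:normaPhi_t_other} we have
\begin{equation*}
\int_X|\mathbf{\hat{g}}_t|_{HS}^2\di\meas = \int_X (t\meas(B_{\sqrt{t}}(x)))^2\biggl|\int_X \dist_xp(x,y,t)\otimes\dist_xp(x,y,t)\di\meas(y)\biggr|_{HS}^2\di\meas(x),
\end{equation*}
so the desired upper bound is exactly \eqref{eq:blowuphs2}. This is the main obstacle: a direct application of the naive estimate $|\int_X \dist_xp\otimes\dist_xp\,\di\meas(y)|_{HS}\leq \int_X|\nabla_xp|^2\di\meas(y)$ only gives the upper bound $n^2c_n^2\meas(X)$, losing a factor $n$. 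To recover the sharp constant one must exploit the orthogonality (after blow-up on $\mathcal{R}_n$) of the different directions $\nabla_xp(x,y,t)$ as $y$ varies: under the rescaling $\dist\mapsto t^{-1/2}\dist$, $\meas\mapsto\meas(B_{\sqrt{t}}(x))^{-1}\meas$, the heat kernel converges to the Euclidean Gaussian, and an integration by parts together with a careful use of Theorem~\ref{thm:local conv heat kernel} and the rectifiability in Theorem~\ref{th: RCD decomposition} should reduce the computation to the Euclidean identity
\begin{equation*}
\biggl|\int_{\setR^n}\nabla_x p_n(x,y,1)\otimes\nabla_x p_n(x,y,1)\di y\biggr|_{HS}^2 = n c_n^2,
\end{equation*}
which follows from the fact that the integrand, evaluated at $x=0$, is proportional to the identity matrix times $c_n$.

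Finally, the ``in particular'' statement is immediate from the $L^2$-strong convergence together with the uniform domination \eqref{eq:riem est}, which reads $\hat{g}_t\leq C(K,N)g = (C(K,N)/c_n)\hat{g}$ for all $t\in(0,C_4^{-1})$: by the observation recorded immediately after Definition~\ref{defcong}, such a uniform upper bound upgrades $L^2$-strong convergence of the metrics to $L^1$-strong convergence of $\hat{g}_t(V,V)$ for every $V\in L^2(T(X,\dist,\meas))$.
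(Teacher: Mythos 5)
Your proposal is correct and takes essentially the same route as the paper: verify the two hypotheses of Proposition~\ref{prop:convcrigi}, using Theorem~\ref{thm:asymptotic1} with $A_1=A_2=X$ for the convergence of the total integrals and the sharp Hilbert-Schmidt bound \eqref{eq:blowuphs2} for the $\limsup$, with the ``in particular'' clause then following from the uniform domination \eqref{eq:riem est}. Note only that your sketch of \eqref{eq:blowuphs2}---which the paper proves in Subsection~\ref{proof} via harmonic replacements, Bochner-controlled Hessian estimates and $BV$-compactness rather than integration by parts---should be read as an appeal to that auxiliary result rather than as a proof of it, and the displayed Euclidean identity needs $\omega_n$ normalizing factors (the blow-up limit carries the reference measure $\mathcal{H}^n/\omega_n$) to line up with the definition \eqref{eq:defck} of $c_n$.
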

\begin{proof}
For all $V\in L^2(T(X,\dist,\meas))$, the $L^1$-weak convergence of $\hat{g}_t(V,V)$ to $\hat{g}(V,V)$ follows easily from 
Theorem~\ref{thm:asymptotic1}:  indeed, choosing $A_1=X$, we obtain that $\int_{A_2}\hat{g}_t(V,V)\di\meas$ converge as
$t\downarrow 0$ to $\int_{A_2}\hat{g}(V,V)\di\meas$ for any Borel set $A_2\subset X$. The Vitali-Hahn-Saks theorem
then grants convergence in the weak topology of $L^1$.

By combining \eqref{eq:normaPhi_t_other}, \eqref{eq:blowuphs2} and \eqref{eq:righthsk} we have
\begin{eqnarray}\label{eq:noenergyloss}
&&\limsup_{t\downarrow 0}\int_X|{\bf\hat{g}}_t|^2_{HS}\di\meas\nonumber\\ &=&
\limsup_{t\downarrow 0}\int_X \bigl(t\meas (B_{\sqrt{t}}(x))\bigr)^2\biggl\vert\int_X\dist_x p(x, y, t)\otimes
\dist_x p(x,y,t)\di\meas (y)\biggr\vert_{HS}^2 \di\meas (x)\nonumber \\
&=& n c_n^2\meas (\mathcal{R}_n)=\int_X |{\bf\hat{g}}|^2_{HS}\di\meas.
\end{eqnarray}
The $L^2$-strong convergence now comes from Proposition~\ref{prop:convcrigi}.
\end{proof}
\begin{remark}\label{counterex}
With a uniform $L^\infty$-bound (\ref{eq:riem est}), the $L^2$-strong convergence $\hat{g}_t \to \hat{g}$ implies $L^p$-strong convergence for all $1 \le p<\infty$. However, this result cannot be improved to $L^\infty$-strong convergence because of the following example (note that higher dimensional analogous examples can be obtained by taking cartesian products):

Let us consider a $\RCD^*(0,1)$ space $(X, \dist, \meas):=([0,\pi],\dist,\mathcal{H}^1/\pi)$ where $\dist$ is the Euclidean distance. In this context, the canonical Riemannian metric $g$ is ${\bf g}=\di s \otimes \di s$, the operator $\Delta$ is the Laplacian with Neumann boundary condition, the corresponding orthonormal basis of $L^2([0,\pi],\mathcal{H}^1/\pi)$ made of eigenfunctions $(\phi_i)_{i\ge0}$ is given by $\phi_0\equiv 1$ and $\phi_i(s) = \sqrt{2}\cos(is) (i \ge 1)$, and by \eqref{eq:explicit expression} for any $t>0$ the pull-back metric $g_t$ is
\begin{equation}\label{eq:expressionexample}
{\bf g}_t = 2 \sum_{i \ge 1} e^{-2 i^2 t} i^2 \sin^2(is)\dist s \otimes \dist s.
\end{equation}
Since $|{\bf g}_t|_{HS}$ is infinitesimal around $s=0, \,\pi$, we have $\||\hat{\bf g}_t - \hat{\bf g}|_{HS}\|_{L^\infty} \ge c_1$ for any $t>0$.
Moreover, $\Phi_t$ is not biLipschitz, that is, $(\Phi_t)^{-1}: \Phi_t(X) \to X$ is not Lipschitz because if it were Lipschitz, then by an argument similar to the proof of Proposition~\ref{prop:smooth embedding}, there would be $\hat{c}_t>0$ such that $g_t \ge \hat{c}_tg$, which contradicts that $|{\bf g}_t|_{HS}$ is infinitesimal around $s=0,\,\pi$. Similarly $\Phi_t^\ell$ is not biLipschitz for all $t,\,\ell$ (recall Proposition~\ref{projj} for the definition of $\Phi_t^\ell$).
\end{remark}

\subsection{Proof of \eqref{eq:blowuphs2}}\label{proof}
We set
$$
F(x, t):=\left(t\meas (B_{\sqrt{t}}(x))\right)^2\biggl|\int_X \dist_x p(x, y, t)\otimes \dist_x p(x,y,t) \di\meas (y)\biggr|_{HS}^2
$$
and \eqref{eq:riem est} provides a uniform upper bound on the $L^\infty$ norm
of $F(\cdot,t)$, for $0<t \leq1$.
Now, we claim that \eqref{eq:blowuphs2} follows by Proposition~\ref{rem:blowuptensor} below; indeed, by integration of both
sides we get
$$
\lim_{t\downarrow 0}\int_X \frac{1}{\meas (B_{\sqrt{t}}(\bar x))}\int_{B_{\sqrt{t}}(\bar x)} F(x,t)\di\meas(x)\di\meas(\bar x)=nc_n^2\meas(X)
$$
and, thanks to Fubini's theorem, the left hand side can be represented as
$$
\lim_{t\downarrow 0}\int_X F(x,t)\biggl(\int_{{B_{\sqrt{t}}(x)}}\frac{1}{\meas (B_{\sqrt{t}}(\bar x))}\di\meas(\bar x)\biggr)
\di\meas(x)
$$
Since it is easily seen that $\int_{{B_{\sqrt{t}}(x)}}\frac{1}{\meas (B_{\sqrt{t}}(\bar x))}\di\meas(\bar x)$ are uniformly
bounded and converge to $1$ as $t\downarrow 0$ for all $x \in \mathcal{R}_n$
(in particular for $\meas$-a.e. $x$), we have
\begin{align*}
&\left| \int_XF(x,t)\biggl(\int_{{B_{\sqrt{t}}(x)}}\frac{1}{\meas (B_{\sqrt{t}}(\bar x))}\di\meas(\bar x)\biggr)\di \meas (x)-\int_XF(x, t)\di \meas (x) \right| \\
&\le C\int_X\left| 1-\int_{{B_{\sqrt{t}}(x)}}\frac{1}{\meas (B_{\sqrt{t}}(\bar x))}\di\meas(\bar x)\right|\di \meas(x) \to 0,
\end{align*}
where we used the dominated convergence theorem.
Thus 
$$
\lim_{t \downarrow 0}\int_XF(x, t)\di \meas (x)=\lim_{t\downarrow 0}\int_X \frac{1}{\meas (B_{\sqrt{t}}(\bar x))}\int_{B_{\sqrt{t}}(\bar x)} F(x,t)\di\meas(x)\di\meas(\bar x)=nc_n^2\meas(X)
$$
which proves \eqref{eq:blowuphs2}.

Hence, we devote the rest of the subsection to the proof of the proposition.

\begin{proposition}\label{rem:blowuptensor}
For all $\bar x \in\mathcal{R}_n$ one has
\begin{equation}\label{eq:blowuphs}
\lim_{t\downarrow 0} \frac{1}{\meas (B_{\sqrt{t}}(\bar x))}\int_{B_{\sqrt{t}}(\bar x)} F(x,t)\di\meas(x)=n c_n^2,
\end{equation}
with $c_n$ defined as in \eqref{eq:defck}.
\end{proposition}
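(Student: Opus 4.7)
The plan is to perform a blow-up centered at $\bar x$, reduce the statement to an explicit computation on $\mathbb R^n$, and pass to the limit using the stability of heat kernels under mGH convergence. Set $\dist_t:=t^{-1/2}\dist$ and $\meas_t:=\meas(B_{\sqrt t}(\bar x))^{-1}\meas$. Since $\bar x\in\mathcal R_n$, for any sequence $t_i\downarrow 0$ there is a subsequence (not relabelled) along which
$$(X,\dist_{t_i},\meas_{t_i},\bar x) \stackrel{mGH}{\to} (\mathbb R^n, \dist_{\mathbb R^n}, \omega_n^{-1}\mathcal H^n, 0_n),$$
and by the scaling formula \eqref{eq:1001} the rescaled heat kernel at time $1$ is $\tilde p_t(x,y,1) := \meas(B_{\sqrt t}(\bar x))\,p(x,y,t)$, which by Theorem~\ref{thm:local conv heat kernel} converges to the Euclidean heat kernel $q_n(x,y,1) = \omega_n(4\pi)^{-n/2} e^{-|x-y|^2/4}$. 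Using the homogeneity $|\cdot|_{\dist_t}^2 = t|\cdot|_{\dist}^2$ of gradients and the identity $\di\meas = \meas(B_{\sqrt t}(\bar x))\di\meas_t$, one obtains the bookkeeping identity
$$F(x,t) = \theta_t(x)^2\,|\tilde\Psi_t(x)|^2_{HS,\dist_t},\qquad \tilde\Psi_t(x) := \int_X \nabla_x^{\dist_t}\tilde p_t(x,y,1)\otimes \nabla_x^{\dist_t}\tilde p_t(x,y,1)\,\di\meas_t(y),$$
with $\theta_t(x):=\meas_t(B_1^{\dist_t}(x))/\meas_t(B_1^{\dist_t}(\bar x))$. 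Noting $B_{\sqrt t}(\bar x)=B_1^{\dist_t}(\bar x)$, the quantity in \eqref{eq:blowuphs} becomes $\int_{B_1^{\dist_t}(\bar x)}\theta_t(x)^2|\tilde\Psi_t(x)|^2_{HS,\dist_t}\,\di\meas_t(x)$.

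Next I pass to the limit along the mGH-convergent subsequence. The rescaled Gaussian tail estimate \eqref{eq:gaussian_ep} (valid with uniform constants, since the rescaled spaces are $\RCD^*(t_iK,N)$) truncates the $y$-integral defining $\tilde\Psi_{t_i}$ to a fixed bounded region up to an arbitrarily small error; on such regions the $H^{1,2}$-strong convergence of heat kernels $\tilde p_{t_i}(\cdot,y,1)\to q_n(\cdot,y,1)$ and its analogue coupling two distinct base points yield the pointwise convergence
$$|\tilde\Psi_{t_i}(x)|_{HS,\dist_{t_i}}^2 \to |\Psi_{\mathrm{eucl}}(x)|_{HS}^2,\qquad \Psi_{\mathrm{eucl}}(x):=\int_{\mathbb R^n}\nabla_x q_n(x,y,1)\otimes\nabla_x q_n(x,y,1)\,\di\hat{\mathcal H}^n(y),$$
for $\hat{\mathcal H}^n$-a.e. $x\in B_1(0_n)$, with $\hat{\mathcal H}^n:=\mathcal H^n/\omega_n$. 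Volume regularity at $\bar x$ gives $\theta_{t_i}\to 1$ pointwise, while doubling \eqref{eq:locdoubling} provides uniform boundedness of $\theta_{t_i}$, and \eqref{eq:riem est} gives a uniform $L^\infty$ bound on $|\tilde\Psi_t|_{HS,\dist_t}^2$. Dominated convergence then yields
$$\lim_{i\to\infty}\int_{B_1^{\dist_{t_i}}(\bar x)}\theta_{t_i}^2\,|\tilde\Psi_{t_i}|^2_{HS,\dist_{t_i}}\,\di\meas_{t_i}=\int_{B_1(0_n)}|\Psi_{\mathrm{eucl}}|_{HS}^2\,\di\hat{\mathcal H}^n.$$

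Finally, the Euclidean computation is elementary. By translation invariance and rotational symmetry of $q_n$, $\Psi_{\mathrm{eucl}}(x)$ is a scalar multiple of the identity, independent of $x$, and a one-line computation using \eqref{eq:Gaussian_variance} identifies the proportionality constant with $c_n$ as in \eqref{eq:defck}. Hence $|\Psi_{\mathrm{eucl}}|_{HS}^2\equiv nc_n^2$ and, since $\hat{\mathcal H}^n(B_1(0_n))=1$, the limit equals $nc_n^2$. As every sequence $t_i\downarrow 0$ admits a subsequence with the same limit, the full limit \eqref{eq:blowuphs} follows.

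The principal obstacle is the coupled passage to the limit defining $|\tilde\Psi_t|^2_{HS}$: Theorem~\ref{thm:local conv heat kernel} handles only $\tilde p_t(\cdot,y,1)$ for a single fixed $y$, but
$$|\tilde\Psi_t(x)|^2_{HS,\dist_t} = \int_X\!\int_X \langle\nabla_x^{\dist_t}\tilde p_t(x,y,1),\nabla_x^{\dist_t}\tilde p_t(x,z,1)\rangle_{\dist_t}^2\,\di\meas_t(y)\di\meas_t(z)$$
couples two independent base points $(y,z)$. One clean route is the semigroup identity $\tilde\Psi_t(x)=\nabla_x^{\dist_t}\otimes\nabla_{\tilde x}^{\dist_t}\tilde p_t(x,\tilde x,2)|_{\tilde x=x}$, obtained by differentiating $\tilde p_t(x,\tilde x,2)=\int\tilde p_t(x,y,1)\tilde p_t(\tilde x,y,1)\,\di\meas_t(y)$ in both entries, which transfers the coupling onto a single $2$-time kernel amenable to Hessian-type estimates like \eqref{eq:Hessian1}; alternatively, a direct truncation in $(y,z)$ via \eqref{eq:gaussian_ep} combined with Fubini reduces the question to joint convergence of the rescaled heat kernels over fixed bounded product regions, which then follows from the $H^{1,2}$-stability of Theorem~\ref{thm:local conv heat kernel} applied twice.
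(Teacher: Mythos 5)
Your reduction to the rescaled spaces and the final Euclidean computation are correct, and they match the paper's own set-up: the paper also blows up at $\bar x$ and first replaces $\meas(B_{\sqrt t}(x))$ by $\meas(B_{\sqrt t}(\bar x))$ (your factor $\theta_t$), using uniform convergence of volumes. The gap is in the central limit passage. The claim that $|\tilde\Psi_{t_i}(x)|^2_{HS}$ converges pointwise $\hat{\mathcal{H}}^n$-a.e.\ and that one may then invoke dominated convergence is not meaningful as stated, since the functions live on varying spaces; in this framework one must prove $L^1$-strong convergence, or at least convergence of the integrals, and that is exactly the difficulty you flag at the end. Of your two proposed fixes, route (a) is not available in the $\RCD$ calculus: there is no pointwise mixed object $\nabla_x\otimes\nabla_{\tilde x}\,p(x,\tilde x,2)$ restricted to the diagonal; the Hessian controlled by \eqref{eq:Hessian1}--\eqref{eq:Hessian2} is an $L^2$ tensor in one variable, and nothing in the paper's toolkit makes sense of, let alone estimates, such a diagonal trace. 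Route (b) is viable in principle: for converging base points $y_i\to y$, $z_i\to z$, Theorem~\ref{thm:local conv heat kernel}, polarization and the gradient bound \eqref{eq:equi lip} give $L^p$-strong convergence of $\langle \nabla_x p_i(x,y_i,1),\nabla_x p_i(x,z_i,1)\rangle_i$ (a fact of exactly the type used later in the proof of Theorem~\ref{th:convergence3} for eigenfunctions); but you must then integrate this two-point convergence in $(y,z)$ against the weakly convergent product measures $\meas_i\otimes\meas_i$, which requires a continuous-convergence/uniformity argument on $B_L\times B_L$ together with the tail estimate \eqref{eq:gaussian_ep}. None of this is carried out, and it is precisely the technical heart of \eqref{eq:blowuphs}.

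For comparison, the paper avoids the two-base-point coupling altogether. It constructs, via Proposition~\ref{prop:harmconti} and good cut-off functions, harmonic almost-coordinates $h_{\ell,j}$ on the rescaled balls converging $H^{1,2}$-strongly to the Euclidean coordinates, with uniform Lipschitz bounds and Hessians tending to zero by Bochner \eqref{eq:hesstozero}; it shows that the frame components $a_j^{\ell,m}(x)=\int_X\langle\nabla_x p_j,\nabla h_{\ell,j}\rangle_j\langle\nabla_x p_j,\nabla h_{m,j}\rangle_j\di\meas_j$ converge $L^p$-strongly to $c_n\delta_{\ell m}$ --- the hard step being a uniform $L^1$ bound on $|\nabla a_j^{\ell,m}|_j$ obtained from \eqref{eq:Hessian1}, \eqref{eq:Hessian2} and \eqref{eq:lapheatbd} so that the compactness Theorem~\ref{thm:last_compactness} applies --- and it then recovers the Hilbert--Schmidt norm from these components, off a bad set of small measure, through the almost-orthonormality estimate \eqref{eq:almost}. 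If you pursue your route (b), expect to supply an amount of work comparable to this (two-point uniformity in place of Hessian bounds); as written, your argument is incomplete at its decisive step.
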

\begin{proof} Let us fix $t_j \to 0^+$ and consider the mGH convergent sequence 
\begin{equation}\label{eq:scal}
(X, \dist_j, \meas_j, \bar x):=\left(X, \sqrt{t_j}^{-1}\dist, \meas (B_{\sqrt{t_j}}(\bar x))^{-1}\meas, \bar x\right) 
\stackrel{mGH}{\to} \left(\mathbb{R}^n, \dist_{\mathbb{R}^n}, \hat{\mathcal{H}}^n, 0_n\right),
\end{equation}
where $\hat{\mathcal{H}}^n:=\mathcal{H}^n/\omega_n$. 

Setting (note that the center in the first factor is $\bar x$, unlike $F(x,t)$)
$$
\bar F(x,t):=\bigl(t\meas (B_{\sqrt{t}}(\bar x))\bigr)^2\biggl|\int_X \dist_x p(x, y, t)\otimes \dist_x p(x,y,t) \di\meas (y)\biggr|_{HS}^2,
$$
we claim that, in order to get \eqref{eq:blowuphs}, it is sufficient to prove that
\begin{equation}\label{eq:blowuphs_modified}
\lim_{j\to\infty} \frac{1}{\meas (B_{\sqrt{t_j}}(\bar x))}\int_{B_{\sqrt{t_j}}(\bar x)} F(x,t_j)\di\meas(x)=n c_n^2.
\end{equation}
Indeed, letting
\begin{equation}
H_j(x):=\left|\int_X\dist_xp(x, y, t_j) \otimes \dist_xp(x, y, t_j)\di\meas (y)\right|_{HS}^2,
\end{equation}
so that $\bar F(x,t_j)=\bigl(t_j\meas (B_{\sqrt{t_j}}(\bar x))\bigr)^2 H_j(x)$, one has
\begin{align}
&\frac{1}{\meas (B_{\sqrt{t_j}}(\bar x))}\int_{B_{\sqrt{t_j}}(\bar x)}\left|\bigl( t_j\meas (B_{\sqrt{t_j}}(\bar x))\bigr)^2H_j(x)-
\bigl(t_j\meas (B_{\sqrt{t_j}}(x))\bigr)^2H_j(x)\right|\di \meas (x) \nonumber\\
&= \int_{B_1^{\dist_j}(\bar x)}\left| 1- \bigl(\meas_j(B_1^{\dist_j}(x))\bigr)^2\right| \left|\int_X\dist_xp_j(x, y, 1) \otimes \dist_xp_j(x, y, 1)\di \meas_j (y)\right|_{HS}^2\di \meas_j(x) \nonumber\\
&\le C \int_{B_1^{\dist_j}(\bar x)}\left| 1- \bigl(\meas_j(B_1^{\dist_j}(x))\bigr)^2\right| \di \meas_j(x)
\to C\int_{B_1(0_n)}\left| 1-\bigl(\hat{\mathcal{H}}^n(B_1(x))\bigr)^2\right|\di\hat{\mathcal{H}}^n(x)=0,\nonumber
\end{align}
where $C$ comes from the Gaussian estimate \eqref{eq:equi lip} 
and we used the uniform convergence of $\meas_j(B_1^{\dist_j}(x))$ to $\hat{\mathcal{H}}^n(B_1(x))$.

Applying Proposition~\ref{prop:harmconti} with the good cut-off functions constructed in \cite{MondinoNaber}
for the standard coordinate functions 
$h_i: \mathbb{R}^n \to \mathbb{R}$ yields that (possibly extracting a subsequence) the existence of Lipschitz functions
$h_{i,j}\in D(\Delta^j)$, harmonic in $B_3^{\dist_j}(\bar x)$, such that $h_{i, j}$ $H^{1, 2}$-strongly converge to $h_i$ on $B_3(0_n)$ 
with respect to the convergence \eqref{eq:scal}. Here and in the sequel we are denoting $\Delta^j$ the Laplacian 
of $(X,\dist_j,\meas_j)$. Note that gradient estimates for solutions of Poisson's equations given in \cite{Jiang} show 
\begin{equation}\label{eq:ikk}
C:=\sup_{i, j}\||\nabla h_{i, j}|_j\|_{L^{\infty}(B_2^{\dist_j}(\bar x))}<\infty.
\end{equation}

On the other hand Bochner's inequality (we use here and in the sequel the notation $\mathrm{Hess}^j$ for the
Hessian in the rescaled space. Recall that in Subsection~\ref{se:rcdstarkn}) it is shown that
\begin{equation}
\frac{1}{2}\int_X \Delta^j \phi |\nabla h_{i, j}|_j^2\di \meas_j \ge \int_X \phi \left(  |\mathrm{Hess}^j_{h_{i, j}}|^2 +
t_jK|\nabla h_{i, j}|_j^2\right) \di \meas_j
\end{equation}
for all $\phi \in D(\Delta^j)$ with $\Delta^j \phi \in L^\infty(X, \meas_j)$ and $\supp \phi \subset B_3^{\dist_j}(\bar x)$.
In particular, taking as $\phi=\phi_j$ the good cut-off functions constructed in \cite{MondinoNaber} we obtain
\begin{equation}\label{eq:hesstozero}
\lim_{j \to \infty}\int_{B_2^{\dist_j}(\bar x)}|\mathrm{Hess}^j_{h_{i, j}}|^2\di \meas_j =0.
\end{equation}
Let us define functions $a_j^{\ell, m}:B_2^{\dist_j}(\bar x) \to \mathbb{R}$, $a^{\ell, m}:B_2(0_n) \to \mathbb{R}$ by
$$
a_j^{\ell, m}(x):=\int_X\langle \nabla_x p_j(x, y, 1), \nabla h_{\ell, j}(x)\rangle_j 
\langle \nabla_x p_j(x, y, 1), \nabla h_{m, j}(x)\rangle_j \di \meas_j(y), 
$$
$$
a^{\ell, m}(x):=\int_{\mathbb{R}^n} \langle \nabla_x q_n(x, y, 1), \nabla h_\ell(x)\rangle \langle \nabla_x
q_n(x, y, 1), \nabla h_m(x)\rangle \di \hat{\mathcal{H}}^n(y), 
$$
respectively, where $p_j(x, y, t)$ is the heat kernel of $(X, \dist_j, \meas_j)$ and $q_n(x,y,t)$ is the heat kernel of $(\mathbb{R}^n,\dist_{\mathbb{R}^n}, \hat{\mathcal{H}}^n)$ 
(we also use the $\langle\cdot,\cdot\rangle_j$ notation
to emphasize the dependence of these objects on the rescaled metric). Notice that the explicit expression
\eqref{eq:1001} of $q_n(x,y,t)$ provides the identity $a^{\ell,m}=c_n\delta_{\ell,m}$.

Now let us prove that $a_j^{\ell, m}$ $L^p$-strongly converge to $a^{\ell, m}$ on $B_1(0_n)$ for all $p \in [1, \infty)$.
It is easy to check the uniform $L^\infty$ boundedness by the Gaussian estimate \eqref{eq:equi lip} and 
\eqref{eq:ikk}, and the $L^p$-weak convergence by 
Theorem~\ref{thm:local conv heat kernel}. To improve the convergence from weak to strong, 
thanks to the compactness result stated in Theorem~\ref{thm:last_compactness}, it suffices to prove that
$a_j^{\ell, m} \in H^{1,2}(B_2^{\dist_j}(\bar x), \dist_j, \meas_j)$ for all $j$, and that
\begin{equation}\label{eq:essen1}
\sup_j\int_{B_2^{\dist_j}(\bar x)}|\nabla a_j^{\ell, m}|_j\di\meas_j<\infty.
\end{equation} 
Thus, let us check that \eqref{eq:essen1} holds as follows.
For any $y\in X$, the Leibniz rule and \eqref{eq:Hessian2} give the following equality in $L^2(T^*(B_2^{\dist_j}({\bar x}), \dist_j, \meas_j))$:
\begin{eqnarray}\label{eq:hessexp}
&&\dist_x \left(\langle \nabla_x p_j(x, y, 1), \nabla h_{\ell, j}(x)\rangle_j\langle \nabla_xp_j (x, y, 1), \nabla h_{m, j}(x)\rangle_j\right)
\\&=&\langle \nabla_x p_j(x, y, 1), \nabla h_{\ell, j}(x)\rangle_j\bigl(\mathrm{Hess}^j_{p_j(\cdot,y,1)}(\nabla h_{m,j},\cdot)+
\mathrm{Hess}^j_{h_{m,j}}(\nabla_x p_j(x,y,1),\cdot)\bigr)\nonumber
\\&+& \langle \nabla_x p_j(x, y, 1), \nabla h_{m, j}(x)\rangle_j\bigl(\mathrm{Hess}^j_{p_j(\cdot,y,1)}(\nabla h_{\ell,j},\cdot)+
\mathrm{Hess}^j_{h_{\ell,j}}(\nabla_x p_j(x,y,1),\cdot)\bigr).\nonumber
\end{eqnarray}

Now, recalling that $(X,\dist_j,\meas_j)$ arises from the rescaling of a fixed compact space, the Gaussian estimate
\eqref{eq:equi lip} yields that 
$\langle\nabla_x p_j(x, y, 1), \nabla h_{\ell, j}(x)\rangle_j\langle \nabla_xp_j (x, y, 1), \nabla h_{m, j}(x)\rangle_j$
belong to $H^{1,2}(B_2^{\dist_j}(\bar x), \dist_j, \meas_j)$, with norm for $j$ fixed uniformly bounded w.r.t.~$y$.
Hence, we can commute differentiation w.r.t.~$x$ and integration w.r.t.~$y$ to obtain that
$a_j^{\ell, m} \in H^{1,2}(B_2^{\dist_j}(\bar x), \dist_j, \meas_j)$ with
\begin{equation}\label{eq:eeeqq}
\nabla a_j^{\ell, m}(x)=\int_X
\nabla_x \left(\langle \nabla_x p_j(x, y, 1), \nabla h_{\ell, j}(x)\rangle_j\langle \nabla_xp_j (x, y, 1), \nabla h_{m, j}(x)\rangle_j\right)
\di\meas_j(y)
\end{equation}
in $L^2(T(B_2^{\dist_j}(\bar x), \dist_j, \meas_j))$. According to the referee's suggestion, let us clarify (\ref{eq:eeeqq}) as follows. It easily follows from (\ref{eq:expansion2}) that $a_j^{l, m} \in H^{1,2}(B_2^{\dist_j}(\bar x), \dist_j, \meas_j)$. On the other hand for any $f_i \in D(\Delta_j, B_2^{\dist_j}(\bar x)) (i=1, 2)$ with $\supp f_i \subset B_2^{\dist_j}(\bar x)$, denoting $a_j^{l, m}(x)=\int_XH_j^{l, m}(x, y)\di \meas_j(y)$, we have
\begin{align}
&\int_{B_2^{\dist_j}(\bar x)}\langle \nabla a_j^{l, m}, f_1\nabla f_2\rangle \di \meas_j \nonumber \\
&=\int_{B_2^{\dist_j}(\bar x)}a_j^{l, m}(x)\left( -\langle \nabla f_1, \nabla f_2\rangle (x) -f_1(x)\Delta_j f_2(x)\right)\di \meas_j(x) \nonumber \\
&=\int_X\left(\int_{B_2^{\dist_j}(\bar x)} H_j^{l, m}(x, y) \cdot \left( -\langle \nabla f_1, \nabla f_2\rangle (x) -f_1(x)\Delta_j f_2(x)\right) \di \meas_j(x)\right)\di \meas_j(y) \nonumber \\
&=\int_{B_2^{\dist_j}(\bar x)}\left(\int_X\left \langle \nabla_x H_j^{l, m}(x, y), f_1(x)\nabla f_2(x)\right \rangle \di \meas_j(y)\right)\di \meas_j(x)
\end{align}
which proves (\ref{eq:eeeqq}) because the set $\{\sum_{\alpha =1}^{\beta}f_{1, \alpha}\nabla f_{2, \alpha}; \beta \in \mathbb{N}, f_{i, \alpha} \in D(\Delta_j, B_2^{\dist_j}(\bar x)), \supp f_{i, \alpha} \subset B_2^{\dist_j}(\bar x)\}$ is dense in $L^2(T(B_2^{\dist_j}(\bar x), \dist_j, \meas_j))$.  

 {From} \eqref{eq:hessexp} we then get
\begin{eqnarray*}
|\nabla a_j^{\ell, m}|_j&\leq &C\bigl(|\mathrm{Hess}^j_{p_j(\cdot,y,1)}(\nabla h_{\ell,j},\nabla h_{\ell,j})|+
|\mathrm{Hess}^j_{p_j(\cdot,y,1)}(\nabla h_{m,j},\nabla h_{m,j})|\bigr)|\nabla p_j(\cdot,y,1)|_j
\\&+&
C\bigl(|\mathrm{Hess}^j_{h_{\ell,j}}(\nabla p(\cdot,y,1),\nabla p(\cdot,y,1))|+
|\mathrm{Hess}^j_{h_{m,j}}(\nabla p(\cdot,y,1),\nabla p(\cdot,y,1))|\bigr)|\nabla p_j(\cdot,y,1)|_j
\end{eqnarray*}
where $C$ is the constant in \eqref{eq:ikk}, so that using \eqref{eq:ikk} once more we get
\begin{align}\label{eq:eqqqqq}
&\||\nabla a_j^{\ell, m}|_j\|_{L^1(B^{\dist_j}_2(\bar x))} \nonumber \\
&\le \tilde{C} \left( \int_X \int_{B^{\dist_j}_2(\bar x)} |\mathrm{Hess}^j_{p_j( \cdot, y, 1)}|^2\di \meas_j(x) \di \meas_j(y) \right)^{1/2} \left( 
\int_X\int_{B^{\dist_j}_2(\bar x)}|\nabla_x p_j(x, y, 1))|_j^2\di \meas_j(x) \di \meas_j(y)\right)^{1/2} \nonumber \\
& +\tilde{C}\int_X\int_{B_2^{\dist_j}(\bar x)}\left(|\mathrm{Hess}^j_{h_{\ell, j}}|(x)|\nabla_xp_j(x, y, 1)|_j^2
+|\mathrm{Hess}^j_{h_{m, j}}|(x) |\nabla_xp_j(x, y, 1)|_j^2\right)\di \meas_j(x)\di \meas_j(y)
\end{align}
for some positive constant $\tilde{C}$ (recall that the Hessian norm is the Hilbert-Schmidt norm).
Note that the second term of the right hand side of (\ref{eq:eqqqqq}) is uniformly bounded with respect to $j$ because of the Gaussian estimate \eqref{eq:equi lip}, \eqref{eq:hesstozero}
and Cavalieri's formula (see Lemma \ref{lem:volume}).
Note that \eqref{eq:equi lip} and \eqref{eq:lapheatbd} with Lemma \ref{lem:volume} show
$$
\sup_j\left( \int_X \int_{B^{\dist_j}_2(\bar x)} |\Delta^j_x p_j(x, y, 1)|^2\di \meas_j(x) \di \meas_j(y) + 
\int_X\int_{B^{\dist_j}_2(\bar x)}|\nabla_x p_j(x, y, 1))|_j^2\di \meas_j(x) \di \meas_j(y) \right) <\infty.
$$
In particular by applying \eqref{eq:Hessian1} to the scaled spaces, with a sequence of
good cut-off functions constructed in \cite{MondinoNaber}, we obtain
$$
\sup_j \int_X \int_{B^{\dist_j}_2(\bar x)} |\mathrm{Hess}^j_{p_j( \cdot, y, 1)}|^2(x) \di \meas_j(x) \di \meas_j(y) <\infty.
$$
Thus \eqref{eq:eqqqqq} yields (\ref{eq:essen1}), which completes the proof of the $L^p$-strong convergence of $a_j^{\ell, m}$ to 
$a^{\ell, m}$ for all $p\in [1,\infty)$.

Then, since $a^{\ell,m}=c_n\delta_{\ell m}$ we get
\begin{equation}\label{eq:sharpblowup}
\lim_{j \to \infty}\int_{B_1^{\dist_j}(\bar x)}\sum_{\ell, m} |a_j^{\ell, m}|^2 \di \meas_j 
=\int_{B_1(0_n)}\sum_{\ell, m} |a^{\ell, m}|^2 \di\hat{\mathcal{H}}^n=n c_n^2.
\end{equation}
Hence, to finish the proof of \eqref{eq:blowuphs_modified}, and then of the proposition, it suffices to check that
\begin{equation}\label{eq:asymptoticequal}
\fint_{B_{\sqrt{t_j}}(\bar x)}\left|
\sum_{\ell, m} |a_j^{\ell, m}|^2 - \bigl(t_j \meas (B_{\sqrt{t_j}}(\bar x))\bigr)^2 
\left| \int_X \dist_x p(x, y, t_j)\otimes \dist_x p(x,y,t_j) \di\meas (y) \right|^2_{HS} \right| \di\meas (x)
\end{equation}
is infinitesimal as $j \to \infty$. 

To prove this fact, we first state an elementary property of Hilbert spaces whose proof is quite standard, and therefore
omitted: for any $r$-dimensional Hilbert space $(V, \langle \cdot, \cdot \rangle)$, $\epsilon>0$, $\{e_i\}_{i=1}^r \subset V$ one has the 
implication
\begin{equation}\label{eq:almost}
|\langle e_i, e_j \rangle - \delta_{ij}|<\epsilon\quad\forall i,\,j
\quad\Rightarrow\quad
\left| |v|^2- \sum_{i=1}^r|\langle v, e_i\rangle|^2 \right|\le C(r) \epsilon^2 |v|^2\qquad\forall v\in V.
\end{equation}
Note that the scaling property \eqref{eq:1001} of the heat kernel gives
\begin{align}\label{eq:asymptoticequal1}
(\ref{eq:asymptoticequal})=\int_{B_{1}^{\dist_j}(\bar x)}\left|\sum_{\ell,m} |a_j^{\ell, m}|^2 - G_j \right|\di\meas_j,
\end{align}
where 
$$
G_j(x):=\left| \int_X \dist_x p_j(x, y, 1)\otimes \dist_x p_j(x,y,1) \di\meas_j (y) \right|^2_{HS}.
$$
Let 
$$
\epsilon_j:=\max_{\ell, m}\int_{B_1^{\dist_j}(\bar x)}\left| \langle \nabla h_{\ell, j}, \nabla h_{m, j}\rangle_j -\delta_{\ell m}\right|\di \meas_j.
$$
Then notice that for all $\ell,\,m$, one has
$$
\int_{B_1^{\dist_j}(\bar x)}\left| \langle \nabla h_{\ell, j}, \nabla h_{m, j}\rangle_j -\delta_{\ell m}\right|\di \meas_j \to 
\int_{B_1(0_n)}\left| \langle \nabla h_\ell, \nabla h_m\rangle -\delta_{\ell m}\right|\di\hat{\mathcal{H}}^n=0
$$
as $j \to \infty$. In particular $\epsilon_j \to 0$.

Let
$$
K_j^{\ell, m}:=\left\{w \in B_1^{\dist_j}(\bar x):\ |\langle \nabla h_{\ell, j}, \nabla h_{m, j}\rangle_j(w) -\delta_{lm}|>\sqrt{\epsilon_j}\right\}.
$$
Then the Markov inequality and the definition of $\epsilon_j$ give $\meas_j(K_j^{\ell, m}) \le \sqrt{\epsilon_j}$, so that
$K_j:= \bigcup_{\ell, m}K_j^{\ell, m}$ satisfy $\meas_j(K_j) \to 0$ as $j \to \infty$.

On the other hand, \eqref{eq:almost} with $r=n^2$ yields
\begin{equation}
\int_{B_1^{\dist_j}(\bar x)\setminus K_j}
\left|\sum_{\ell, m} |a_j^{\ell, m}|^2 - G_j \right|\di\meas_j \le C(n^2)\epsilon_j \int_{B_1^{\dist_j}(\bar x)}|G_j|^2\di \meas_j \to 0, 
\end{equation}
where we used $\sup_j\|G_j\|_{L^\infty(B^{\dist_j}_1(\bar x))}<\infty$, as a consequence of the Gaussian estimate
\eqref{eq:equi lip}.

Then since
$$
\int_{K_j}\left|\sum_{\ell, m} |a_j^{\ell, m}|^2 - G_j \right|\di\meas_j \le \sqrt{\meas_j(K_j)} 
\left( \int_{B_1^{\dist_j}(\bar x)}\left| \sum_{\ell, m} |a_j^{\ell, m}|^2 - G_j\right|^2\di\meas_j \right)^{1/2} \to 0, 
$$
where we used the uniform $L^{\infty}$-bounds on $a_j^{\ell, m}$, we have 
$$
\eqref{eq:asymptoticequal1} = \int_{K_j}\left|\sum_{\ell, m} |a_j^{\ell, m}|^2 - G_j \right|\di \meas_j + 
\int_{B_1^{\dist_j}(\bar x) \setminus K_j}\left|\sum_{\ell, m} |a_j^{\ell, m}|^2 - G_j \right|\di \meas_j  \to 0.
$$
Thus we have that the expression in \eqref{eq:asymptoticequal} is infinitesimal as $j\to\infty$, which completes the proof of 
Proposition~\ref{rem:blowuptensor}.
\end{proof}

\subsection{The behavior of $t^{(n+2)/2}g_t$ as $t \downarrow 0$}

Let us now consider the convergence result
\[
\tilde{g}_t := t^{(n+2)/2}g_t \to \tilde{g},
\]
where $n=\dim_{\dist,\meas}(X)$ and, with our notation $\meas=\theta\mathcal{H}^n$, where $\theta$ is the density of $\meas$ w.r.t. $\mathcal{H}^n$. The normalized metric $\tilde{g}$ is defined by
\[
\tilde{g} = \frac{c_n}{\omega_n\theta} 1_{\mathcal{R}_n^*} g.
\]
We shall need the following well-known lemma, already used in \cite{AmbrosioHondaTewodrose}, and whose simple proof is omitted here.

\begin{lemma}\label{lem:dct}
Let $f_i, \,g_i,\, f,\, g \in L^1(X, \meas)$.
Assume that $f_i\to f$ and  $g_i \to g$ $\meas$-a.e., that $|f_i|\le g_i$ $\meas$-a.e., 
and that $\lim_{i \to \infty}\|g_i\|_{L^1(X, \meas)}=\|g\|_{L^1(X, \meas)}$.
Then $f_i \to f$ in $L^1(X, \meas)$. 
\end{lemma}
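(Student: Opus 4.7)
My plan is to prove this via the classical Fatou-Scheff\'e trick, i.e.\ to apply Fatou's lemma to a suitable nonnegative sequence in which the ``domination defect'' $g_i-|f_i|$ plays the role of a slack variable that collapses thanks to the hypothesis $\|g_i\|_{L^1}\to\|g\|_{L^1}$. This is essentially the generalized dominated convergence theorem.

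First I would observe that the pointwise bounds pass to the limit: from $|f_i|\le g_i$ and the $\meas$-a.e.\ convergences $f_i\to f$, $g_i\to g$ one obtains $|f|\le g$ $\meas$-a.e. Consequently
\[
|f_i-f|\le |f_i|+|f|\le g_i+g\qquad\text{$\meas$-a.e.,}
\]
so that the functions
\[
h_i:=g_i+g-|f_i-f|
\]
are nonnegative $\meas$-a.e.\ and satisfy $h_i\to 2g$ $\meas$-a.e.\ by the assumed pointwise convergence of $f_i,g_i$.

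Next I would apply Fatou's lemma to $(h_i)$, which yields
\[
\int_X 2g\,\di\meas\le\liminf_{i\to\infty}\int_X h_i\,\di\meas
=\int_X g\,\di\meas+\lim_{i\to\infty}\int_X g_i\,\di\meas-\limsup_{i\to\infty}\int_X|f_i-f|\,\di\meas,
\]
where I used the hypothesis $\|g_i\|_{L^1}\to\|g\|_{L^1}$ to convert the $\liminf$ of the sum into the sum of the limit and a $\limsup$. Rearranging and using $g\in L^1(X,\meas)$ gives
\[
\limsup_{i\to\infty}\int_X|f_i-f|\,\di\meas\le 0,
\]
which together with nonnegativity yields $\|f_i-f\|_{L^1(X,\meas)}\to 0$, as desired.

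There is no real obstacle here; the only subtlety to handle carefully is the step where the $\liminf$ is split, which requires both $\int g\,\di\meas<\infty$ and the convergence (not merely lower semicontinuity) of $\int g_i\,\di\meas$. Both are granted by the hypotheses, so the argument is a couple of lines.
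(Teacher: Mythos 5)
Your argument is correct and is exactly the standard generalized dominated convergence (Fatou--Scheff\'e) proof that the paper has in mind when it says the ``simple proof is omitted''; the splitting of the $\liminf$ is legitimate since $\int_X g_i\di\meas=\|g_i\|_{L^1}$ converges (note $g_i\ge|f_i|\ge 0$ and hence $g\ge 0$ $\meas$-a.e., so the $L^1$-norms coincide with the integrals) and $\int_X g\di\meas<\infty$. Nothing further is needed.
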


Let us start with the analog of Theorem~\ref{thm:asymptotic1} in this setting.
\begin{theorem}\label{thm:asymptotic4}
Let $V \in L^\infty(T(X, \dist, \meas))$ and $A_1\subset\mathcal{R}^*_n$ Borel. If 
\begin{equation}\label{eq:25}
\lim_{r \downarrow 0} \int_{A_1}\frac{r^n}{\meas (B_r(y))}\di\meas(y) 
=\int_{A_1}\lim_{r \downarrow 0}\frac{r^n}{\meas (B_r(y))}\di\meas(y)<\infty,
\end{equation}
then for any Borel set $A_2\subset X$ one has
\begin{equation}\label{eq:23}
\lim_{t \downarrow 0}\int_{A_1} \left(\int_{A_2}t^{(n+2)/2} |\langle \nabla_xp(x, y, t), V(x)\rangle|^2 \di\meas (x)\right) \di\meas (y)
=\frac{c_n}{\omega_n}\int_{A_1\cap A_2} |V|^2\di \mathcal{H}^n. 
\end{equation}
\end{theorem}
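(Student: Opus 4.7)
The plan is to reduce the statement to the already-established analog~\eqref{eq:202} for the rescaling $t\meas(B_{\sqrt t}(y))$, by exploiting the scaling identity
$$
t^{(n+2)/2}=\frac{\sqrt{t}^{\,n}}{\meas(B_{\sqrt{t}}(y))}\cdot t\meas(B_{\sqrt{t}}(y))
$$
together with the absolute-continuity information on $A_1\subset\mathcal{R}_n^*$ provided by Theorem~\ref{thm:RN}. Set
$$
g_t(y):=\frac{\sqrt{t}^{\,n}}{\meas(B_{\sqrt{t}}(y))},\qquad g(y):=\frac{1_{\mathcal{R}_n^*}(y)}{\omega_n\theta(y)},
$$
$$
F_t(y):=\int_{A_2}t\meas(B_{\sqrt{t}}(y))\,|\langle\nabla_xp(x,y,t),V(x)\rangle|^2\,\di\meas(x),
$$
so that the left-hand side of~\eqref{eq:23} equals $I(t):=\int_{A_1}g_t(y)F_t(y)\di\meas(y)$.

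First I would establish the strong $L^1(A_1,\meas)$-convergence $g_t\to g$. By \eqref{eq:goodlimitRN} one has $g_t\to g$ $\meas$-a.e.~on $X$, and the hypothesis \eqref{eq:25} is exactly the equality of $L^1$-norms needed to invoke Scheff\'e's lemma (or directly Lemma~\ref{lem:dct}) in order to upgrade this to $L^1(A_1,\meas)$-strong convergence; in particular $g\in L^1(A_1,\meas)$.

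Next I would control $F_t$. The Gaussian gradient estimate \eqref{eq:equi lip}, combined with the doubling comparison \eqref{doubspec} and Lemma~\ref{lem:volume} applied to the rescaled space $(X,\sqrt{t}^{-1}\dist,\meas(B_{\sqrt{t}}(y))^{-1}\meas)$, yields a uniform bound $\sup_{0<t<C_4^{-1}}\|F_t\|_{L^\infty(X,\meas)}<\infty$ (exactly as in the proof of Theorem~\ref{thm:asymptotic1}). Moreover, applying Remark~\ref{eq:firstblue} with $A=A_2$ gives the pointwise limit
$$
F_t(y)\ \longrightarrow\ c_n\,|V|^{2*}(y)\,1_{A_2^{(1)}}(y)\qquad\text{for $\meas$-a.e.~$y\in H(V)\cap\mathcal{R}_n$,}
$$
where $A_2^{(1)}$ denotes the set of $\meas$-density-$1$ points of $A_2$ (which agrees with $A_2$ up to a $\meas$-null set, by Lebesgue differentiation).

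Finally I would couple the two limits via the splitting
$$
I(t)=\int_{A_1}g\,F_t\,\di\meas+\int_{A_1}(g_t-g)F_t\,\di\meas.
$$
The second term is bounded by $\|F_t\|_{L^\infty}\,\|g_t-g\|_{L^1(A_1,\meas)}\to 0$. For the first term, the uniform $L^\infty$-bound on $F_t$ gives $|gF_t|\le C g\in L^1(A_1,\meas)$, so dominated convergence and $\meas=\theta\haus^n\res\mathcal{R}_n$ give
$$
\int_{A_1}g\,F_t\,\di\meas\ \longrightarrow\ \frac{c_n}{\omega_n}\int_{A_1\cap A_2\cap\mathcal{R}_n^*}\frac{|V|^{2*}(y)}{\theta(y)}\,\di\meas(y)=\frac{c_n}{\omega_n}\int_{A_1\cap A_2}|V|^2\,\di\haus^n,
$$
using $A_1\subset\mathcal{R}_n^*$ and $|V|^{2*}=|V|^2$ $\meas$-a.e. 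The main obstacle I expect is not the pointwise analysis---which is encapsulated in Remark~\ref{eq:firstblue}---but rather the $L^1(A_1)$-convergence $g_t\to g$: this is precisely where the technical hypothesis \eqref{eq:25} enters, preventing the a~priori too-strong rescaling $t^{(n+2)/2}$ from blowing up on the region where $\theta$ vanishes or is small.
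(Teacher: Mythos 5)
Your proof is correct and follows essentially the same route as the paper: the uniform bound on $F_t$ (the paper's $\phi_t$ in \eqref{eq:22}) via the Gaussian gradient estimate, the pointwise limit from Remark~\ref{eq:firstblue}, and hypothesis \eqref{eq:25} entering through Scheff\'e/Lemma~\ref{lem:dct}. The only cosmetic difference is that you split $\int_{A_1}g_tF_t\di\meas$ into $\int_{A_1}gF_t\di\meas+\int_{A_1}(g_t-g)F_t\di\meas$ and use Scheff\'e plus ordinary dominated convergence, whereas the paper applies the generalized dominated convergence Lemma~\ref{lem:dct} once, directly to $f_t=\frac{\sqrt{t}^{\,n}}{\meas(B_{\sqrt{t}}(\cdot))}1_{A_1}\phi_t$ with the varying dominating functions $g_t$.
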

\begin{proof} Recall that \eqref{eq:goodlimitRN} of Theorem~\ref{thm:RN} gives that $r^n/\meas(B_r(y))$ converges as
$r\to 0$ to $1/(\omega_n\theta(y))$ for $\meas$-a.e. $y\in A_1$, 
By an argument similar to the proof of Theorem~\ref{th:convergence1},  using also \eqref{eq:exp}, we obtain
\begin{equation}\label{eq:22}
\phi_t(y):=t\meas (B_{\sqrt{t}}(y)))\int_{A_2}|\langle \nabla_xp(x,y,t), V(x)\rangle|^2 \di\meas (x)\le C(K, N)\|V\|_{L^{\infty}}^2 
\end{equation}
for all $y\in X$ and $t\in (0,C_4^{-1})$. Let  
\begin{equation}
f_t(y):=\frac{\sqrt{t}^{n}}{\meas (B_{\sqrt{t}}(y))}1_{A_1}(y)\phi_t(y),\qquad
g_t(y):=C(K, N)\|V\|_{L^{\infty}}^21_{A_1}(y)\frac{\sqrt{t}^{n}}{\meas (B_{\sqrt{t}}(y))},
\end{equation}
so that \eqref{eq:22} gives $f_t(y)\leq g_t(y)$.
Note that (\ref{eq:21}) and (\ref{eq:20})  yield
\begin{equation}\label{eq:222}
\lim_{t \downarrow 0}f_t(y) = \frac{c_n}{\omega_n} 1_{A_2}(y)
\frac{1}{\theta(y)} |V|^{2}(y)\quad\text{for $\meas$-a.e. $y\in A_1$.}
\end{equation}
Applying Lemma~\ref{lem:dct} with $g(y)=C(K, N)\|V\|_{L^{\infty}}^21_{A_1}(y)/(\omega_n\theta(y))$
and taking \eqref{eq:25} into account we get
\begin{equation}
\lim_{t \downarrow 0}\int_Xf_t\di\meas=\int_X\lim_{t \downarrow 0}f_t\di\meas=
\frac{c_n}{\omega_n}\int_{A_1\cap A_2} |V|^2\di \mathcal{H}^n,
\end{equation}
which proves (\ref{eq:23}).
\end{proof}

We are now in a position to prove the main result of this subsection.

\begin{theorem}\label{th:convergence2}
Assume that as $r \downarrow 0$
\begin{equation}\label{eq:goodlimit22}
\frac{r^n}{\meas(B_r(x))} \to \lim\limits_{r \downarrow 0} \frac{r^n}{\meas(B_r(x))} \quad \mathrm{in}\, L^2(X, \meas).
\end{equation}
Then $\tilde{g}_t$ $L^2$-strongly converge to $\tilde{g}$ as $t \downarrow 0$.
\end{theorem}

\begin{proof}
Let $A_2 \subset X$ be a Borel set and $V \in L^\infty(T(X,\dist,\meas))$. Then Fubini's theorem leads to 
\[
\int_{A_2} \tilde{g}_t(V,V) \di \meas  =  \int_X  \int_{\mathcal{R}_n^* \cap A_2} \scal{\nabla_x p(x,y,t)}{V(x)}^2 \di \meas(x) \di \meas (y)
\]
Then, we can apply Theorem~\ref{thm:asymptotic4} to get 
\begin{equation}\label{hhhhgggbb}
\int_{A_2} \tilde{g}_t(V,V) \di \meas  \to \frac{c_n}{\omega_n} \int_{\mathcal{R}_n^* \cap A_2} |V|^2 \di \mathcal{H}^n = \int_{A_2}\tilde{g}(V,V) \di \meas.
\end{equation}
Let us prove now the $L^2$-strong convergence of ${\bf \tilde{g}}_t$ to ${\bf \tilde{g}}$ as $t \downarrow 0$ using
Proposition~\ref{prop:convcrigi} with (\ref{hhhhgggbb}). Note that
\[
|{\bf \tilde{g}}|_{HS} = \frac{c_n}{\omega_n\theta}1_{\mathcal{R}_n^*} |{\bf g}|_{HS},
\qquad
|{\bf \tilde{g}}_t|_{HS} = t^{(n+2)/2}  |{\bf g}_t|_{HS}.
\]
Let us write for clarity $\tilde{F}(x,t)=\left| \int_X \dist_x p(x,y,t) \otimes \dist_xp(x,y,t) \di \meas(y) \right|_{HS}$. Applying  (\ref{eq:normaPhi_t_other}), 
\eqref{eq:blowuphs2} and \eqref{eq:righthsk} we get
\begin{align}\label{b0}
\limsup\limits_{t \downarrow 0} \int_X |{\bf \tilde{g}}_t|_{HS}^2 \di \meas 
& = \limsup\limits_{t \downarrow 0} \int_{\mathcal{R}_n} t^{n+2} |{\bf g}_t|_{HS}^2 \di \meas \nonumber\\
& = \limsup\limits_{t \downarrow 0} \int_{\mathcal{R}_n} t^{n+2} \tilde{F}^2(x,t) \di \meas(x)\nonumber \\
& \le \int_{\mathcal{R}_n} \limsup\limits_{t \downarrow 0} \left( \frac{t^{(n+2)/2}}{t \meas(B_{\sqrt{t}}(x)))} \right)^2 t^2\meas(B_{\sqrt{t}}(x))^2 \tilde{F}^2(x,t) \di \meas(x)\nonumber \\
& = \int_{\mathcal{R}_n} \frac{1}{\omega_n^2\theta^2}  n c_n^2 \di \meas(x) = \int_X|{\bf \tilde{g}}|_{HS}^2 \di \meas <\infty.
\end{align}
Notice that we are enabled to pass to the limit under the integral sign thanks to \eqref{eq:goodlimit22} and Lemma~\ref{lem:dct}, since the convergence
in \eqref{eq:blowuphs2} is dominated.
\end{proof}

We obtain in particular the following corollary when the metric measure space $(X,\dist,\meas)$ is Ahlfors $n$-regular: indeed, in this
case obviously one has $n=\dim_{\dist,\meas}(X)$, $\meas$ and $\mathcal{H}^n$ are mutually absolutely continuous 
and the existence of the limits in \eqref{eq:goodlimit22}, as well as the validity of the
equality, are granted by the rectifiability of $\mathcal{R}_n$ and by the dominated convergence theorem.

\begin{corollary}\label{cor:ahl}
Assume that $\meas$ is Ahlfors $n$-regular, i.e. there exists $C\geq 1$ such that 
$$
C^{-1}\le \frac{\meas (B_r(x))}{r^n} \le C\quad \forall r\in (0,1], \, \forall x \in X.
$$
Then $\tilde{g}_t$ $L^2$-strongly converge to $\tilde{g}$ as $t \downarrow 0$.
\end{corollary}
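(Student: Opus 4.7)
The plan is to deduce Corollary~\ref{cor:ahl} from Theorem~\ref{th:convergence2} by checking its hypothesis~\eqref{eq:goodlimit22}. A preliminary matter to settle is the compatibility of the exponent $n$ in the assumption of Ahlfors regularity with the essential dimension appearing in the definition of $\tilde g$. Writing $n':=\dim_{\dist,\meas}(X)$, Theorem~\ref{th: RCD decomposition} provides $\meas=\theta\mathcal{H}^{n'}\res\mathcal{R}_{n'}$, and \eqref{eq:gooddensity} of Theorem~\ref{thm:RN} yields $\meas(B_r(x))/(\omega_{n'}r^{n'})\to\theta(x)\in(0,\infty)$ for $\meas$-a.e.~$x\in\mathcal{R}_{n'}^*$. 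If $n\neq n'$, then $\meas(B_r(x))/r^n=(\meas(B_r(x))/r^{n'})\cdot r^{n'-n}$ would diverge to $0$ or to $\infty$ as $r\downarrow 0$ at any such point, in contradiction with the two-sided Ahlfors bound. Hence $n=n'$, and the statement makes sense as written.

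Next I would exploit the uniform Ahlfors bound to verify~\eqref{eq:goodlimit22} by dominated convergence. On the one hand, the upper bound $\meas(B_r(y))\geq C^{-1}r^n$ gives
$$
\frac{r^n}{\meas(B_r(y))}\leq C\qquad\forall y\in X,\,\forall r\in(0,1],
$$
so that the integrand is dominated by the constant $C$, which is integrable on $\mathcal{R}_n^*$ since $X$ is compact and $\meas(X)<\infty$. On the other hand, for $\meas$-a.e.~$y\in\mathcal{R}_n^*$ we already know from~\eqref{eq:gooddensity} that the pointwise limit $r^n/\meas(B_r(y))\to 1/(\omega_n\theta(y))$ exists. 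Applying the dominated convergence theorem then justifies the interchange of limit and integral in~\eqref{eq:goodlimit22}, with finite right-hand side bounded by $C\meas(\mathcal{R}_n^*)$.

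With~\eqref{eq:goodlimit22} in hand, Theorem~\ref{th:convergence2} applies directly and yields the $L^2$-strong convergence of $\tilde g_t$ to $\tilde g$, which is the claim of the corollary.

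The only genuinely non-trivial point in this plan is the identification of the Ahlfors exponent with the essential dimension; once this is in place, both the pointwise convergence of the densities and the uniform majoration are immediate consequences of the two-sided bound, so the application of dominated convergence and the reduction to Theorem~\ref{th:convergence2} are essentially automatic.
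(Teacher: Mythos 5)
Your argument is correct and matches the paper's own (very brief) justification: identify the Ahlfors exponent with the essential dimension, then verify \eqref{eq:goodlimit22} via the uniform bound $r^n/\meas(B_r(y))\le C$ and dominated convergence, using the pointwise limits from Theorem~\ref{thm:RN}. (One small slip in wording: the inequality $\meas(B_r(y))\ge C^{-1}r^n$ is the \emph{lower} Ahlfors bound on the measure, not the upper bound — but the resulting majoration is what you wrote.)
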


\subsection{Behavior with respect to the mGH-convergence}

Fix a mGH-convergent sequence of compact $\RCD^*(K, N)$-spaces, with $(X,\dist,\meas)$ compact as well:
$$
(X_j, \dist_j, \meas_j) \stackrel{mGH}{\to} (X, \dist, \meas).
$$
In this section we can adopt the extrinsic point of view of Subsection~\ref{secopt}, viewing when necessary all metric measure spaces as
isometric subsets of a compact metric space $(Y,\dist)$, with $X_j$ convergent to $X$ w.r.t. the Hausdorff distance and 
$\meas_j$ weakly convergent to $\meas$.

Let us denote by $\lambda_{i, j},\, \lambda_i,\, \phi_{i, j},\, \phi_i$ the corresponding eigenvalues and eigenfunctions of $-\Delta_j$, $-\Delta$, 
respectively, listed taking into account their multiplicities (we will also use a similar notation below), recall that $\{\phi_{i,j}\}_{i\geq 0}$ are orthonormal bases
of $L^2(X_j,\meas_j)$ and that, according to \cite{GigliMondinoSavare13}, for any $i$ one has
$\lambda_{i,j}\to\lambda_i$ as $j\to\infty$, so-called spectral convergence. 
In addition, by the uniform bound on the diameters of the spaces,
we know from Proposition~\ref{prop:Jiang} (see also \cite{Jiang}) that uniform Lipschitz continuity of 
eigenfunctions holds, i.e.
\begin{equation}\label{1}
\sup_j\||\nabla \phi_{i, j}|_j\|_{L^{\infty}}<\infty \qquad \forall i\geq 0.
\end{equation}
With no loss of generality, we can also assume that the $\phi_{i,j}$ are restrictions of Lipschitz functions defined on
$Y$, with Lipschitz constant equal to $\|\nabla \phi_{i, j}\|_{L^\infty(X_j,\meas_j)}$.

Although the following lemma was already discussed in the proof of \cite[Th. 7.8]{GigliMondinoSavare13}, we give the proof for 
the reader's convenience.

\begin{lemma}\label{lem:conveigenfunct}
Under the same setting as above, there exist $j(k)$ and an $L^2$-orthonomal basis $\{\psi_i\}_{i\geq 0}$ of $L^2(X, \meas)$ such that 
$\phi_{i, j(k)}$ $H^{1, 2}$-strongly converge as $k\to\infty$ to $\psi_i$ for all $i$. In addition, the convergence is also
uniform in this sense: for all $\epsilon>0$ there exist $\delta>0$ and $k_0$ such that $k\geq k_0$, $x_k\in\supp\meas_{j(k)}$
and $x\in\supp\meas$ with $\dist(x_k,x)<\delta$ imply $|\phi_{i,j(k)}(x_k)-\psi_i(x)|<\epsilon$.
\end{lemma}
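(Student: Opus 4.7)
The plan is to construct $\psi_i$ as $L^2$-strong limits of $\phi_{i,j(k)}$ along a diagonal subsequence, verify they are eigenfunctions with $H^{1,2}$-strong convergence, check they form a complete orthonormal basis, and finally upgrade to uniform convergence by Arzel\`a--Ascoli.

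First I would extract the subsequence. Since $\int \phi_{i,j}^2\di\meas_j=1$ and $\int |\nabla\phi_{i,j}|_j^2\di\meas_j=\lambda_{i,j}\to\lambda_i$, the sequence $\{\phi_{i,j}\}_j$ is bounded in $H^{1,2}$ for each fixed $i$. By the global analogue of Theorem~\ref{thm:compact loc sob} for a convergent sequence of compact $\RCD^*(K,N)$ spaces (proved in \cite{GigliMondinoSavare13} via good cut-offs and the local result), for each $i$ one can extract a subsequence on which $\phi_{i,j}$ $L^2$-strongly converges to some $\psi_i\in H^{1,2}(X,\dist,\meas)$. A diagonal argument yields a single subsequence $j(k)$ realising this simultaneously for all $i$.

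Next, since $-\Delta_{j(k)}\phi_{i,j(k)}=\lambda_{i,j(k)}\phi_{i,j(k)}$ and $\lambda_{i,j(k)}\to\lambda_i$, the right-hand side $L^2$-strongly converges to $\lambda_i\psi_i$. Applying Theorem~\ref{thm:stability lap} (in its natural global form on varying compact spaces), we get $\psi_i\in D(\Delta)$ with $-\Delta\psi_i=\lambda_i\psi_i$, and $|\nabla\phi_{i,j(k)}|_{j(k)}$ $L^2$-strongly converges to $|\nabla\psi_i|$; hence $\phi_{i,j(k)}\to\psi_i$ $H^{1,2}$-strongly. Orthonormality of $\{\psi_i\}$ is obtained by passing to the limit in $\int\phi_{i,j(k)}\phi_{\ell,j(k)}\di\meas_{j(k)}=\delta_{i\ell}$, which is legitimate because the product of two $L^2$-strongly convergent sequences converges $L^1$-strongly.

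For completeness of $\{\psi_i\}$ in $L^2(X,\meas)$, I would argue by multiplicity. Spectral convergence ensures that for every eigenvalue $\mu$ of $-\Delta$, the number of indices $i$ (counted with multiplicity) with $\lambda_i=\mu$ equals $\dim\ker(-\Delta-\mu\Id)$; the orthonormal family $\{\psi_i:\lambda_i=\mu\}$ therefore spans the entire $\mu$-eigenspace, and since the direct sum of eigenspaces of $-\Delta$ is dense in $L^2(X,\meas)$ by compactness of the resolvent, $\{\psi_i\}$ is a complete orthonormal basis.

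Finally, for the uniform statement: by \eqref{1} each $\phi_{i,j(k)}$, extended with its Lipschitz constant to the ambient compact space $Y$, belongs to a uniformly Lipschitz and uniformly bounded family (the $L^\infty$ bound follows e.g.~from the heat-kernel/eigenfunction estimates of Proposition~\ref{prop:Jiang}). Arzel\`a--Ascoli on $(Y,\dist)$ yields a further subsequence converging uniformly on $Y$ to a Lipschitz function whose restriction to $\supp\meas$ agrees, by the $L^2$-convergence already established, with the continuous representative of $\psi_i$ (its continuity being granted by the Sobolev-to-Lipschitz property). As any further subsequence admits a uniformly convergent sub-subsequence with the same restricted limit on $\supp\meas$, the whole sequence $\phi_{i,j(k)}$ converges uniformly on $Y$ to this Lipschitz extension; the stated $\epsilon$--$\delta$ conclusion is then immediate. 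The main obstacle in the argument is checking completeness of $\{\psi_i\}$, which forces the careful bookkeeping of eigenvalue multiplicities under spectral convergence.
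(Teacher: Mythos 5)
Your proposal is correct and takes essentially the same route as the paper's proof: extract a diagonal subsequence using the $H^{1,2}$-bounds, identify the limits as eigenfunctions via stability of the Laplacian and spectral convergence, pass to the limit in the orthonormality relation, and invoke \eqref{1} for uniformity. The two points you elaborate — completeness of $\{\psi_i\}$ via the multiplicity count afforded by spectral convergence, and the Arzel\`a--Ascoli/sub-subsequence argument upgrading $L^2$-strong plus uniform Lipschitz bounds to uniform convergence near $\supp\meas$ — are exactly the details the paper treats tersely, deferring the former to the discussion in \cite{GigliMondinoSavare13} and dispatching the latter with the single sentence that it ``is justified by the $L^2$-strong convergence of $\phi_i$ with \eqref{1}''.
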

\begin{proof}
Since $\| |\nabla \phi_{i, j}|_j \|_{L^2}^2=\lambda_{i, j}$, by Theorem~\ref{thm:stability lap} and a diagonal argument there exist a
subsequence $j(k)$ and $\psi_i \in L^2(X, \meas)$ such that $\phi_{i, j(k)}$ $H^{1, 2}$-strongly converge as $k\to\infty$ to $\psi_i$ 
for all $i\geq 0$, with $L^2$-weak convergence of $\Delta_{j(k)} \phi_{i, j(k)}$ to $\Delta \psi_i$. 
In particular we obtain that $\Delta \psi_i=\lambda_i\psi_i$ for all $i$ and that 
$$
\int_X\psi_\ell\psi_m\di\meas=\lim_{k \to \infty}\int_{X_{j(k)}}\phi_{\ell, j(k)}\phi_{m, j(k)}\di\meas_{j(k)}=\delta_{\ell m}.
$$
Thus, as written above, $\{\psi_i\}_{i\geq 0}$ is an $L^2$-orthonormal basis of $L^2(X, \meas)$.
Finally the uniform convergence is justified by the $L^2$-strong convergence of $\phi_i$ with (\ref{1}).
\end{proof}

Taking Lemma~\ref{lem:conveigenfunct} into account, with no loss of generality in the sequel we can assume that 
$\phi_{i, j}$ $H^{1, 2}$-strongly converge to $\phi_i$ for all $i\geq 0$, in addition with uniform convergence in $Y$.

Let us discuss the $L^2$-convergence of Riemannian semi metrics ${\bar g}_i$ with respect to mGH convergence. 
It is easy to check that the following definition is compatible with Definition~\ref{defcong}, dealing with metrics in
a fixed metric measure structure.

\begin{definition}\label{def:conriemaspaces}
We say that Riemannian semi metrics ${\bar g}_j$ on $(X_j, \dist_j, \meas_j)$ $L^2$-weakly converge to a Riemannian semi metric 
${\bar g}$ on $(X, \dist, \meas)$ if  
$\sup_j\int_{X_j}|{\bf {\bar g}}_j|_{HS}^2\di\meas_j<\infty$ and 
${\bar g}_j(\nabla h_j, \nabla h_j)$ $L^2$-weakly converge to ${\bar g}(\nabla h, \nabla h)$, whenever $h_j$ $H^{1, 2}$-strongly converge to 
$h$ with $\sup_j\||\nabla h_j|_j \|_{L^{\infty}}<\infty$.
$L^2$-strong convergence is defined by requiring, in addition, that $\lim_j\int_{X_j}|{\bf {\bar g}}_j|_{HS}^2\di\meas_j=
\int_{X}|{\bf {\bar g}}|_{HS}^2\di\meas$.
\end{definition}

It is not difficult to show several fundamental properties of $L^2$-strong/weak convergence of semi metrics, including $L^2$-weak compactness
(not needed in this paper) and lower semicontinuity of $L^2$-norms with respect to $L^2$-weak convergence, 
as discussed in Definition~\ref{prop:convcrigi}; in particular, the convergence can be improved from weak to strong if 
$$
\limsup_j\int_{X_j}|{\bf {\bar g}}_j|_{HS}^2\di\meas_j\leq\int_{X}|{\bf {\bar g}}|_{HS}^2\di\meas.
$$

\begin{theorem}\label{th:convergence3}
Let $t_j \to t \in (0, \infty)$, let $\Phi^{X_j}_{t_j}:X_j\to L^2(X_j,\meas_j)$ be the corresponding embeddings 
and let $g_{t_j}^{X_j}$ be the corresponding pull-back metrics of $(X_j,\dist_j,\meas_j)$.
 Then $g_{t_j}^{X_j}$ $L^2$-strongly converge to $g_t^X$ and $\Phi_{t_j}^{X_j}(X_j)$,
endowed with the $L^2(X_j,\meas_j)$ distance, GH-converge to
$\Phi_t^X(X)$ endowed with the $L^2(X,\meas)$ distance.
\end{theorem}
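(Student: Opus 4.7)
Both claims follow by reducing to the spectral data via the series representation \eqref{eq:explicit expression} of Proposition~\ref{prop:riemanexist} and the spectral stability of Lemma~\ref{lem:conveigenfunct}: $\lambda_{i,j}\to\lambda_i$ with $\phi_{i,j}\to\phi_i$ both $H^{1,2}$-strongly and uniformly, together with the uniform Lipschitz bound \eqref{1}. Throughout I abbreviate ${\bf g}_t^j := {\bf g}_{t_j}^{X_j}$, $\Phi_t^j := \Phi_{t_j}^{X_j}$, and write $p_j$ for the heat kernel of $X_j$.

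\textbf{Convergence of the pull-back metrics.} I will verify the two conditions of Definition~\ref{def:conriemaspaces} via the natural varying-space analogue of Proposition~\ref{prop:convcrigi}. For $h_j\to h$ converging $H^{1,2}$-strongly with $\sup_j\||\nabla h_j|_j\|_\infty<\infty$, the identity \eqref{eigen exp} gives
\begin{equation*}
\int_{X_j} g_{t_j}^j(\nabla h_j,\nabla h_j)\di\meas_j = \sum_{i\geq 1}e^{-2\lambda_{i,j}t_j}\int_{X_j}\langle\nabla\phi_{i,j},\nabla h_j\rangle_j^2\di\meas_j.
\end{equation*}
For each fixed $i$, the stability of gradients (Theorem~\ref{thm:stability lap}(3) plus polarization, using the uniform Lipschitz bounds) yields $L^2$-strong convergence $\langle\nabla\phi_{i,j},\nabla h_j\rangle_j\to\langle\nabla\phi_i,\nabla h\rangle$, so each summand converges. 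The tail is dominated uniformly in $j$ by $\|\nabla h_j\|_\infty^2\sum_{i>\ell}e^{-2\lambda_{i,j}t_j}\lambda_{i,j}$, which is infinitesimal in $\ell$ thanks to the Weyl-type lower bounds proved in the Appendix (uniform across the $X_j$ by spectral convergence and the uniform doubling/diameter bounds). Hence the first condition in Definition~\ref{def:conriemaspaces} holds. For the HS-norms, \eqref{eq:normaPhi_t} rewrites $\int |{\bf g}_t^j|^2_{HS}\di\meas_j=\sum_i e^{-2\lambda_{i,j}t_j}\int g_{t_j}^j(\nabla\phi_{i,j},\nabla\phi_{i,j})\di\meas_j$; the previous step applied with $h_j=\phi_{i,j}$ provides termwise convergence, and the same tail estimate closes the argument. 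This gives the $L^2$-strong convergence.

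\textbf{Gromov--Hausdorff convergence of the images.} Because $L^2(X_j,\meas_j)$ is Hilbertian, the squared $L^2$-distance in the image is explicitly
\begin{equation*}
\|\Phi_{t_j}^j(x)-\Phi_{t_j}^j(y)\|_{L^2(X_j,\meas_j)}^2 = p_j(x,x,2t_j)-2p_j(x,y,2t_j)+p_j(y,y,2t_j).
\end{equation*}
The pointwise convergence of heat kernels under mGH convergence (\cite[Th.~3.3]{AmbrosioHondaTewodrose}, already invoked in the excerpt), combined with the uniform local H\"older regularity granted by the Gaussian bounds \eqref{eq:gaussian} and \eqref{eq:equi lip}, upgrades this to uniform convergence on $X_j\times X_j$. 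Using as candidate $\varepsilon$-isometries the maps $\Phi_{t_j}^j(x_j)\mapsto\Phi_t^X(x)$ whenever $x_j\to x$ in the ambient space $Y$ (well-defined up to an arbitrarily small error once $j$ is large), the near-isometry property follows from the uniform kernel convergence, and the near-surjectivity from the Hausdorff convergence $X_j\to X$ in $Y$ together with the equicontinuity of the $\Phi_{t_j}^j$.

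\textbf{Main obstacle.} The delicate step is upgrading the stability of mixed inner products $\langle\nabla\phi_{i,j},\nabla h_j\rangle_j\to\langle\nabla\phi_i,\nabla h\rangle$ from $L^2$-weak to $L^2$-strong, which is required to get convergence (not merely lower semicontinuity) of the bilinear form. This relies on the full strength of Theorem~\ref{thm:stability lap}(3), polarization, and the uniform Lipschitz control \eqref{1}. A secondary but essential point is the uniformity in $j$ of the eigenvalue tail estimate $\sum_{i>\ell}e^{-2\lambda_{i,j}t_j}\lambda_{i,j}\to 0$, which is what lets one exchange the limit in $j$ with the spectral summation.
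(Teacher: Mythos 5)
Your argument is correct in substance, and it is worth comparing with the paper's proof, which also reduces (after rescaling to $t_j\equiv t$) to the spectral expansion and a finite part plus tail decomposition. For the metrics, the paper controls the tail $\sum_{\ell\ge k}\lambda_{\ell,j}e^{-2\lambda_{\ell,j}}$ not via the Appendix, but by observing that the full sum equals $\int\int|\nabla_xp_j(x,y,1)|_j^2\di\meas_j\di\meas_j$, which converges by Theorem~\ref{thm:local conv heat kernel}, and combining this with termwise spectral convergence; your route via the uniform Weyl-type bound $\lambda_{i,j}\ge C_0 i^{2/N}$ of Proposition~\ref{prop:lowerbound} (uniform in $j$ thanks to the uniform diameter bound and the uniform positive lower bound on $\lambda_{1,j}$, not ``spectral convergence'') is equally legitimate and is in fact what the paper itself uses in the second half of its proof. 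Two small points deserve care in your write-up: Definition~\ref{def:conriemaspaces} asks for $L^2$-weak convergence of the functions $g_{t_j}^{X_j}(\nabla h_j,\nabla h_j)$, not merely of their integrals, but your termwise $L^2$-strong convergence of $\langle\nabla\phi_{i,j},\nabla h_j\rangle_j$ together with the uniform tail bound and the uniform $L^\infty$ bound \eqref{eq:riem est} does yield this (indeed $L^1$-strong convergence); and Theorem~\ref{thm:stability lap}(3) does not literally apply to $h_j$, which carries no Laplacian bound, so the polarization step needs either the fact that $H^{1,2}$-strong convergence implies $L^2$-strong convergence of minimal relaxed slopes (as in \cite[Th.~5.7]{AmbrosioHonda}, which the paper invokes for exactly this kind of statement) or a reduction to test functions in the spirit of Lemma~\ref{lem:suf}; the paper sidesteps this by only ever pairing eigenfunctions with eigenfunctions, for which the Laplacian bounds are available. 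For the Gromov--Hausdorff convergence of the images your route is genuinely different: you use the identity $\|\Phi^j_{t_j}(x)-\Phi^j_{t_j}(y)\|^2=p_j(x,x,2t_j)-2p_j(x,y,2t_j)+p_j(y,y,2t_j)$ and upgrade the pointwise heat-kernel convergence of \cite{AmbrosioHondaTewodrose} to a uniform statement (a compactness argument in the ambient space $Y$, using the uniform Lipschitz bound \eqref{eq:equi lip} together with the uniform lower volume bound from Bishop--Gromov and $\meas_j(X_j)\to\meas(X)>0$), then build explicit $\epsilon$-isometries; the paper instead isometrically identifies the images with subsets of $\ell_2$ via eigenfunction coordinates, truncates using the uniform bounds of Propositions~\ref{prop:Jiang} and \ref{prop:lowerbound}, and concludes from the uniform convergence of finitely many eigenfunctions. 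Your argument avoids the eigenfunction sup-norm estimates entirely at the price of the uniform kernel convergence; both are complete proofs.
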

\begin{proof}  By rescaling with no loss of generality we can assume that $t_j \equiv t=1$.

Let us prove first the convergence of metrics. Note that (\ref{eq:riem est}) yields $\sup_j\||{\bf g}_{1}^{X_j}|_{HS}\|_{L^{\infty}}<\infty$. 
For all $k\geq 1$, recalling the representation formula \eqref{eq:explicit expression} for the metrics, we define  
$$
{\bf G}_j^k:=\sum_{i \ge k}^\infty e^{-2\lambda_{i, j}}\dist \phi_{i, j} \otimes \dist \phi_{i, j} \left( ={\bf g}_{1}^{X_j}-({\bf g}_{1}^{X_j})^{k-1}\right)
$$
and define ${\bf G}^k$ analogously. Then, arguing as in \eqref{eq:disuno}, we get
\begin{equation}\label{2}
\int_{X_j}|{\bf G}_{j}^k|_{HS}^2\di\meas_j= \sum_{\ell, \,m \ge k}^{\infty}e^{-2(\lambda_{\ell, j}+\lambda_{m, j})}\int_{X_j}\langle \nabla \phi_{\ell, j}, \nabla \phi_{m, j}\rangle^2_j \di\meas_j
\leq C\sum_{\ell \ge k}^{\infty}\lambda_{\ell, j}e^{-2\lambda_{\ell, j}}
\end{equation}
with $C=C(K,N, \meas (X))$, and a similar estimate holds for $\int_{X}|{\bf G}^k|_{HS}^2\di\meas$.
On the other hand, since 
$$
\int_{X_j}\int_{X_j}|\nabla_xp_j(x, y,1)|^2_j\di\meas_j(x) \di\meas_j(y)=\sum_{\ell=1}^{\infty}\lambda_{\ell, j}e^{-2\lambda_{\ell, j}}
$$
and
$$
\int_{X_j}\int_{X_j}|\nabla_xp_j(x, y, 1)|^2_j\di\meas_j(x) \di\meas_j(y) \to \int_{X}\int_{X}|\nabla_xp (x, y,1)|^2\di\meas(x) \di\meas (y),
$$
taking also the spectral convergence into account we get
\begin{equation}\label{3}
\sum_{\ell\ge k}^{\infty}\lambda_{\ell, j}e^{-2\lambda_{\ell, j}} \to \sum_{\ell \ge k}^{\infty}\lambda_\ell e^{-2\lambda_\ell}\qquad\forall k. 
\end{equation}
In particular for any $\epsilon >0$ there exists $k$ such that for all sufficiently large $j$
$$
\sum_{\ell\ge k}^{\infty}\lambda_{\ell,\, j}e^{-2\lambda_{\ell, j}}+\sum_{\ell\ge k}^{\infty}\lambda_\ell e^{-2\lambda_\ell}<\epsilon.
$$
Thus, for sufficiently large $j$ one has
\begin{equation}\label{4}
\int_{X_j}|{\bf G}_{ j}^k|_{HS}^2\di\meas_j+\int_X|{\bf G}^k|_{HS}^2\di\meas <2C\epsilon.
\end{equation}
On the other hand, since $\phi_{\ell, j}$ $H^{1,2}$-strongly converge to $\phi_\ell$, (\ref{1}) yields that 
$\langle \nabla \phi_{\ell, j}, \nabla \phi_{m, j}\rangle$ $L^p$-strongly converge to $\langle \nabla \phi_\ell, \nabla \phi_m\rangle$ for all $p \in [1, \infty)$.
In particular, as $j \to \infty$ we  get
\begin{align}\label{5}
\int_{X_j}|({\bf g}_{1}^{X_j})^k|_{HS}^2\di\meas_j &=\sum_{\ell, \,m=1}^ke^{-2(\lambda_{\ell, j}+\lambda_{m, j})}
\int_{X_j}\langle \nabla \phi_{\ell, j}, \nabla \phi_{m, j}\rangle^2_j\di\meas_j \nonumber \\
&\to \sum_{\ell,\, m=1}^ke^{-2(\lambda_\ell+\lambda_m)}\int_{X}\langle \nabla \phi_\ell, \nabla \phi_{m}\rangle^2\di\meas 
= \int_X|({\bf g}_1^X)^k|_{HS}^2\di\meas.
\end{align}
Since $\epsilon$ is arbitrary, combining \eqref{4} with \eqref{5} yields 
\begin{equation}\label{L2normconv}
\int_{X_j}|{\bf g}_{1}^{X_j}|_{HS}^2\di\meas_j \to \int_X|{\bf g}_1^X|^2_{HS}\di\meas.
\end{equation}
Since it is easy to check that Lemma \ref{lem:conveigenfunct} yields that $({\bf g}_{1}^{X_j})^{k-1}$ $L^2$-weakly converge to $({\bf g}_{1}^X)^{k-1}$, combining (\ref{4}) with (\ref{L2normconv}) completes the proof of the $L^2$-strong convergence of metrics.

Now we prove the second part of the statement.
Using the eigenfunctions $\varphi_{i,j}$ we can embed isometrically all $\Phi_t^{X_j}(X_j)\subset L^2(X_j,\meas_j)$ into $\ell_2$,
and then we need only to prove the Hausdorff convergence inside $\ell_2$ of the sets $W_j$ to $W$, where
$$
W_j=\left\{\bigl(e^{-\lambda_{i,j}}\varphi_{i,j}(x)\bigr)_{i\geq 1}:\ x\in X_j\right\},\qquad
W=\left\{\bigl(e^{-\lambda_{i}}\varphi_{i}(x)\bigr)_{i\geq 1}:\ x\in X\right\}.
$$
By Propositions \ref{prop:lowerbound} and \ref{prop:Jiang}, for all $\epsilon>0$ there exists $k \in \mathbb{N}$ such that 
for all $j$
$$
\sum_{i \ge k+1}e^{-2\lambda_{i, j}} \|\phi_{i, j}\|_{L^{\infty}}^2 <\epsilon^2 \qquad \sum_{i \ge k+1}e^{-2\lambda_i}\|\phi_i\|_{L^{\infty}}^2<\epsilon^2.
$$
Denoting $\pi^k: \ell_2 \to \ell_2$ the projection defined by $\pi^k((x)_i):=(x_1, \ldots, x_k, 0, \ldots)$,
from this it is easy to get
$$
\dist_H^{\ell_2}(W_j, W_j^k)<\epsilon, \qquad \dist_H^{\ell_2}(W, W^k)<\epsilon,
$$
where $W_j^k:=\pi^k(W_i)$, $W^k:=\pi^k(W)$ and $\dist_H^{\ell_2}$ denotes the Hausdorff distance. Hence, by the triangle inequality,
 it suffices to check that $\dist_H^{\ell_2}(W_j^k, W^k) \to 0$ for fixed $k$. Since 
$$
W^k_j=\left\{\bigl(
e^{-\lambda_{1,j}}\varphi_{1,j}(x),e^{-\lambda_{2,j}}\varphi_{2,j}(x),\ldots,e^{-\lambda_{k,j}}\varphi_{k,j}(x),0,0,\ldots\bigr):\ x\in X_j\right\},
$$
and an analogous formula holds for $W^k$, from the uniform convergence of the $\phi_{i,j}$ to $\phi_i$ 
we immediately get that $\dist_H^{\ell_2}(W_j^k, W^k) \to 0$. 
\end{proof}

\begin{remark}\label{6666000}
The canonical Riemannian metrics $g^{X_j}$ $L^2$-weakly converge to $g^X$, as a direct consequence of \cite[Th.~5.7]{AmbrosioHonda}.
In particular the lower semicontinuity of the $L^2$-norms of $g^{X_j}$, namely
\begin{equation}
\liminf_{j \to \infty}\int_{X_j}|\mathbf{g}^{X_j}|_{HS}^2\di \meas_j \ge \int_X|\mathbf{g}^X|^2_{HS}\di \meas
\end{equation}
yields
\begin{equation}\label{lowersemidim}
\liminf_{j \to \infty}\dim_{\dist_j,\meas_j}(X_j)\ge\dim_{\dist, \meas}(X).
\end{equation}
Indeed, setting $n_j=\dim_{\dist_j,\meas_j}(X_j)$, $n=\dim_{\dist,\meas}(X)$,
Lemma~\ref{riemdim} shows that $\int_{X_j}|\mathbf{g}^{X_j}|^2_{HS}\di \meas_j=n_j\meas_j(X_j)$
and $\int_X|\mathbf{g}^X|^2_{HS}\di \meas=n\meas(X)$.

This allows us to define the notion that \textit{$\{(X_j, \dist_j, \meas_j)\}_j$ is a noncollapsed convergent sequence to $(X, \dist, \meas)$} 
if $\lim_jn_j=n$ (see also \cite{Kita}). Moreover, convergence occurs without collapse if and only if
$$
\lim_{j \to \infty}\int_{X_j}|\mathbf{g}^{X_j}|_{HS}^2\di \meas_j=\lim_{j \to \infty}n_j\meas_j(X_j)=n\meas (X) =\int_X|\mathbf{g}^X|^2_{HS}\di \meas
$$
that is, if and only if $g^{X_j}$ $L^2$-strongly converge to $g^X$
(these observation are justified even for the noncompact case if we replace $X_j, X$ by $B_1(x_j), B_1(x)$, where $x_j \to x$).
One of the important points in Theorem~\ref{th:convergence3} is that the Riemannian metrics $g_{t_j}^{X_j}$ are $L^2$-strongly convergent 
even without the noncollapsed assumption, if $t_j\to t>0$. Compare with the next section.
\end{remark}

\section{Quantitative $L^2$-convergence for noncollapsed spaces}\label{sec:6}

Let us start this section with the following three questions, related to each other:

\noindent (1) Can Theorem~\ref{th:convergence3} be improved as follows:
\begin{equation}\label{ddrt}
(c_{n_j})^{-1}t_j\meas_j(B_{\sqrt{t_j}}(x))g_{t_j}^{X_j} \to g^X 
\end{equation}
in the sense of $L^2$-strong convergence, whenever $t_j \to 0^+$ and one has a mGH convergent sequence of compact $\RCD^*(K, N)$ spaces $(X_j, \dist_j, \meas_j) \stackrel{mGH}{\to} (X, \dist, \meas)$ with uniformly bounded diameter and $n_j:=\mathrm{dim}_{\dist_j, \meas_j} (X_j)$?

\noindent (2) Does a quantitative version of Theorem~\ref{th:convergence1} hold? Namely, for all $\epsilon>0$, $d \ge 1$ does there exist 
$t_0:=t_0(K, N, \epsilon, d)>0$ such that
\begin{equation}\label{tthgff}
\sup\limits_{0<t<t_0} \||\hat{\bf g}_t - \hat{\bf g}|_{HS}\|_{L^2(X,\meas)} \le \epsilon
\end{equation}
holds for any $\RCD^*(K,N)$ space $(X,\dist,\meas)$ with $d^{-1} \le \mathrm{diam}(X, \dist) \le d$, $\mathrm{supp}\,\meas=X$ and $\meas(X)=1$?

\noindent (3) Recall a result proved in \cite{Portegies16}: for all $\epsilon>0$, $\tau>0$, $d>0$, $n \in \mathbb{N}$, there exists 
$t_0:=t_0(K, n, \epsilon, \tau, d)>0$ such that for all $0<t\le t_0$ there exists $N_0:=N_0(K, n, \epsilon, \tau, d, t)\ge 1$ such 
that if a closed Riemannian manifold $(M^n, g)$ satisfies $\mathrm{Ric}_g \ge K$, $\mathrm{diam} (M^n, \dist_g)\le d$, $\mathrm{inj}_g \ge \tau$, 
then for all $x \in M^n$ and $N \ge N_0$,
\begin{equation}\label{bbp2}
1-\epsilon \le |g-c(n)t^{(n+2)/2}(\Phi_t^N)^*g_{\mathbb{R}^N}|_{HS}(x) \le 1+\epsilon
\end{equation}
where $\mathrm{inj}_g$ denotes the injectivity radius and
$\Phi_t^N$ is the truncated embedding map of Proposition~\ref{projj}.
What happens if we replace the assumption ``$\mathrm{inj}_g \ge \tau$'' by the weaker one ``$\mathcal{H}^n(M^n)\ge \tau$''?

Let us give a simple example where \eqref{ddrt} is not satisfied. As a consequence,
also the second question has no positive answer in general, because (\ref{tthgff}) easily implies (\ref{ddrt}).
 
\begin{example}
We consider a sequence of collapsing flat tori:
$$
(X^r, \dist^r, \meas^r):=\left(\mathbb{S}^1(1) \times \mathbb{S}^1(r), \dist_{g^{1}\times g^r}, \frac{\mathcal{H}^2}{4\pi^2r}\right) \stackrel{mGH}{\to} \left(\mathbb{S}^1(1), \dist_{g^1}, \frac{\mathcal{H}^1}{2\pi}\right)=:(X, \dist, \meas) \quad (r \downarrow 0),
$$
where $\mathbb{S}^1(r):=\{x \in \mathbb{R}^2; |x|=r\}$ with the standard Riemannian metric $g^r$.

Then choosing sufficiently small $t_r$ with $\||(c_2)^{-1}t_r\meas^r(B_{\sqrt{t_r}}(x))({\bf g^1\times g^r})-({\bf g^1 \times g^r})|_{HS}\|_{L^2}<r$ yields that $\||(c_2)^{-1}t_r\meas^r(B_{\sqrt{t_r}}(x))({\bf g^1\times g^r})|_{HS}\|_{L^2}^2 \to 2 \neq 1=\||{\bf g^1}|_{HS}\|_{L^2}^2$ as $r \downarrow 0$ which shows that (\ref{ddrt}) is not satisfied.
\end{example}

In this section we give positive answers to all above questions for \textit{noncollapsed} $\RCD^*(K, N)$ spaces 
(Theorems~\ref{thm:quantitativ} and \ref{label}), except for the embeddedness of $\Phi_t^N$.
For that, we introduce two useful notations to simplify our arguments (for the latter one, see also \cite{CheegerColding}), 
\begin{enumerate}
\item for $a, \,b \in \mathbb{R}$ and $\epsilon \in (0, \infty)$, we write $a=b \pm \epsilon$ if $|a-b| \le \epsilon$,
\item any function $f:(\mathbb{R}_{>0})^{k+m} \to \mathbb{R}_{\ge 0}$, satisfying that
$$
\lim_{\epsilon_1, \ldots, \epsilon_k \to 0}f(\epsilon_1, \ldots, \epsilon_k, c_1, \ldots, c_m)=0
$$
for all fixed $c_1, \ldots, c_m \in \mathbb{R}$, is denoted by $\Psi(\epsilon_1, \ldots, \epsilon_k; c_1, \ldots, c_m)$ for simplicity.
\end{enumerate} 

The following three convergence results are valid in general compact $\RCD^*(K, N)$ spaces, and they will play key roles in the proofs of the main theorems.

Let us recall that for pointed (not necessary compact) $\RCD^*(K, N)$ spaces, the pointed mGH-convergence topology is metrizable. For example, the $p\mathbb{G}_W$-distance introduced in \cite{GigliMondinoSavare13} gives such a distance.
\begin{proposition}\label{q97j}
Let $(X, \dist, \meas)$ be a compact $\RCD^*(K, N)$ space and 
$f \in D(\Delta)$ with $\||\nabla f|\|_{L^{\infty}} +\|\Delta f\|_{L^{\infty}} \le L$.
Assume that the $p\mathbb{G}_W$-distance between $(X, \sqrt{t}^{-1}\dist, \meas (B_{\sqrt{t}}(x))^{-1}\meas, x)$ and $(\mathbb{R}^n, \dist_{\mathbb{R}^n}, \omega_n^{-1}\mathcal{H}^n,  0_n)$ is at most $\epsilon$ for some $t>0$, $x\in X$. Then 
\begin{equation}\label{eq:quant heat conv}
\int_Xt\meas (B_{\sqrt{t}}(z))\langle \nabla_zp(z, x, t), \nabla f(z) \rangle^2\di \meas (z) =c_n\fint_{B_{\sqrt{t}}(x)}|\nabla f|^2\di \meas \pm \Psi(\epsilon, t; K, N, L).  
\end{equation}
\end{proposition}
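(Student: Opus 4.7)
The plan is to argue by contradiction using a blow-up (compactness) argument, in the same spirit as Proposition~\ref{prop:blowupvector}, but starting from a quantitative closeness hypothesis instead of from a single point.

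\textbf{Contradiction setup.} Suppose the conclusion fails: then there exist $K$, $N$, $L$, $\delta>0$, and sequences $(X_j,\dist_j,\meas_j)$ of compact $\RCD^*(K,N)$ spaces, $x_j\in X_j$, $f_j\in D(\Delta_j)$ with $\||\nabla f_j|\|_\infty+\|\Delta_j f_j\|_\infty\le L$, together with $\epsilon_j\downarrow 0$ and $t_j\downarrow 0$, such that the $p\mathbb{G}_W$-distance between the rescaled space $(X_j,\dist_j^{(j)},\meas_j^{(j)},x_j):=(X_j,t_j^{-1/2}\dist_j,\meas_j(B_{\sqrt{t_j}}(x_j))^{-1}\meas_j,x_j)$ and $(\mathbb{R}^n,\dist_{\mathbb{R}^n},\omega_n^{-1}\mathcal{H}^n,0_n)$ is at most $\epsilon_j$, while the difference in \eqref{eq:quant heat conv} exceeds $\delta$ for every $j$. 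The rescaled spaces are $\RCD^*(t_jK,N)$, so all relevant constants (doubling, Gaussian, Poincaré) remain uniformly controlled; by assumption $(X_j,\dist_j^{(j)},\meas_j^{(j)},x_j)\stackrel{pmGH}{\to}(\mathbb{R}^n,\dist_{\mathbb{R}^n},\omega_n^{-1}\mathcal{H}^n,0_n)$.

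\textbf{Rescale the functions.} Set $\tilde f_j:=t_j^{-1/2}(f_j-f_j(x_j))$. A direct check using the scaling of metric and measure gives $|\nabla^{(j)}\tilde f_j|_j=|\nabla f_j|\le L$, $\|\Delta^{(j)}\tilde f_j\|_{L^\infty}=\sqrt{t_j}\|\Delta_jf_j\|_{L^\infty}\le L\sqrt{t_j}\to 0$, and $\tilde f_j(x_j)=0$ gives the pointwise bound $\|\tilde f_j\|_{L^\infty(B_R^{(j)}(x_j))}\le LR$. By Theorem~\ref{thm:compact loc sob} and a diagonal argument, up to a subsequence $\tilde f_j$ is $L^2$-strongly convergent on each $B_R^{(j)}(x_j)$ to some $f_\infty\in H^{1,2}_{\rm loc}(\mathbb{R}^n)$. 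Applying Theorem~\ref{thm:stability lap} (after multiplying by good cut-off functions in the spirit of \cite{MondinoNaber}) yields $f_\infty\in D(\Delta,B_R(0_n))$ with $\Delta f_\infty\equiv 0$ for every $R>0$; hence $f_\infty$ is harmonic on $\mathbb{R}^n$ with Lipschitz constant $\le L$, and so affine: $f_\infty(z)=\langle a,z\rangle$ for some $a\in\mathbb{R}^n$. Theorem~\ref{thm:stability lap}(3) moreover gives $H^{1,2}$-strong convergence of $\tilde f_j$ to $f_\infty$ on each ball.

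\textbf{Rewrite both sides in the rescaled space.} Using the heat kernel scaling \eqref{eq:1001} ($p_j^{(j)}(z,x_j,1)=\meas_j(B_{\sqrt{t_j}}(x_j))p_j(z,x_j,t_j)$) and the identities $\nabla^{(j)}h=t_j\nabla h$, $\langle\cdot,\cdot\rangle_j=t_j^{-1}\langle\cdot,\cdot\rangle$, a direct computation gives
\begin{equation*}
\int_{X_j}t_j\meas_j(B_{\sqrt{t_j}}(z))\langle\nabla_zp_j(z,x_j,t_j),\nabla f_j(z)\rangle^2\di\meas_j(z)=\int_{X_j}\meas_j^{(j)}(B_1^{(j)}(z))\langle\nabla^{(j)}_zp_j^{(j)}(z,x_j,1),\nabla^{(j)}\tilde f_j(z)\rangle_j^2\di\meas_j^{(j)}(z),
\end{equation*}
while the right hand side becomes $c_n\fint_{B_1^{(j)}(x_j)}|\nabla^{(j)}\tilde f_j|_j^2\di\meas_j^{(j)}$.

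\textbf{Pass to the limit.} For the right hand side the $H^{1,2}$-strong convergence of $\tilde f_j$ on $B_1$ together with $\meas_j^{(j)}(B_1^{(j)}(x_j))\to\hat{\mathcal H}^n(B_1(0_n))=1$ gives convergence to $c_n|a|^2$. For the rewritten left hand side: Theorem~\ref{thm:local conv heat kernel} provides $H^{1,2}$-strong convergence of $p_j^{(j)}(\cdot,x_j,1)$ to the Gaussian heat kernel $q_n(\cdot,0_n,1)=\omega_n(4\pi)^{-n/2}e^{-|\cdot|^2/4}$; combined with the uniform $L^\infty$-bound on $|\nabla^{(j)}\tilde f_j|_j$ and a polarization argument this yields local $L^1$ convergence of the integrand. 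Tails outside large balls $B_R^{(j)}(x_j)$ are controlled uniformly in $j$ by the Gaussian gradient estimate \eqref{eq:equi lip} applied to the $\RCD^*(t_jK,N)$ spaces, together with \eqref{doubspec} and Lemma~\ref{lem:volume}, exactly as in the proof of \eqref{eq:10001}. Hence the left hand side converges to $\int_{\mathbb{R}^n}\langle\nabla_zq_n(z,0_n,1),a\rangle^2\di\hat{\mathcal H}^n(z)$, which equals $c_n|a|^2$ by \eqref{eq:Gaussian_variance} and the definition \eqref{eq:defck} of $c_n$. Both sides have the same limit $c_n|a|^2$, contradicting $|LHS-RHS|>\delta$.

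\textbf{Main obstacle.} The delicate point is twofold: extracting an $H^{1,2}$-strongly convergent subsequence from the almost-harmonic rescaled functions $\tilde f_j$ (which requires combining the vanishing of $\|\Delta^{(j)}\tilde f_j\|_\infty$ with the good cut-off technology to apply Theorem~\ref{thm:stability lap}), and obtaining a uniform tail bound on the LHS along the rescaled spaces $(X_j,\dist_j^{(j)},\meas_j^{(j)})$ that are only $\RCD^*(t_jK,N)$ with $t_jK\to 0$; this is handled by verifying that the constants appearing in \eqref{doubspec}, \eqref{eq:equi lip} and Lemma~\ref{lem:volume} remain bounded as $t_j\downarrow 0$ and $K^-\geq 0$ is fixed.
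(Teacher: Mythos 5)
Your proposal is correct and follows essentially the same route as the paper: a contradiction/blow-up argument in which the rescaled almost-harmonic functions converge (via Theorems~\ref{thm:compact loc sob} and \ref{thm:stability lap}) to a linear function on $\mathbb{R}^n$, the rescaled heat kernels converge by Theorem~\ref{thm:local conv heat kernel} with Gaussian tail control as in Proposition~\ref{prop:blowupvector}, and both sides of \eqref{eq:quant heat conv} converge to $c_n|a|^2$. The only cosmetic difference is that the paper normalizes $f_{\sqrt{t_i},x_i}$ by subtracting the ball average rather than the value at the base point, which changes nothing.
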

\begin{proof}
The proof is achieved by contradiction.
Assume that \eqref{eq:quant heat conv} does not hold for some $K$, $N$ and $L$. Then there exist $\tau >0$ and sequences as follows;
\begin{enumerate}
\item $(X_i, \dist_i, \meas_i)$ are compact $\RCD^*(K, N)$ spaces,
\item $t_i \in (0, 1)$ with $t_i \to 0^+$, 
\item $f_i \in D(\Delta_i)$ with $\||\nabla f_i|_i\|_{L^{\infty}}+\|\Delta^i f_i\|_{L^{\infty}}\le L$
\item $x_i \in X_i$ with $(X_i, \sqrt{t_i}^{-1}\dist_i, \meas (B_{\sqrt{t_i}}(x_i))^{-1}\meas, x_i) \stackrel{mGH}{\to} (\mathbb{R}^n, \dist_{\mathbb{R}^n}, \omega_n^{-1}\mathcal{H}^n, 0_n)$ and
\begin{equation}\label{6t}
\left|\int_{X_i}t_i\meas (B_{\sqrt{t_i}}(z))\langle \nabla_zp_i(z, x_i, t), \nabla f_i(z) \rangle^2_i\di \meas_i (z) -c_n\fint_{B_{\sqrt{t_i}}(x_i)}|\nabla f_i|_i^2\di \meas_i\right| \ge \tau.
\end{equation}
\end{enumerate} 
Since $|\Delta^{t_i} f_{\sqrt{t_i}, x_i}| \le t_i L$, where $\Delta^{t_i}$ is the Laplacian on $(X_i, \sqrt{t_i}^{-1}\dist_i, \meas_i(B_{\sqrt{t_i}}(x_i))^{-1}\meas_i)$, by combining Theorem~\ref{thm:compact loc sob} and Theorem \ref{thm:stability lap},
with no loss of generality we can assume that $f_{\sqrt{t_i}, x_i}$ $H^{1, 2}_{\mathrm{loc}}$-converge to a linear growth harmonic function $f$ on $\mathbb{R}^n$.
Note that $|\nabla f|^{*2} \equiv |\nabla f|^{*2}(0_n)$. 

By an argument similar to the proof of Proposition~\ref{prop:blowupvector}, we have
\begin{align}\label{aw}
&\int_{X_i}t_i\meas (B_{\sqrt{t_i}}(z))\langle \nabla_zp_i(z, x_i, t), \nabla f_i(z) \rangle^2_i\di \meas_i (z) \nonumber \\
&=\int_{B_{R\sqrt{t_i}}(x_i)}t_i\meas (B_{\sqrt{t_i}}(z))\langle \nabla_zp_i(z, x_i, t), \nabla f_i(z) \rangle^2_i\di \meas_i (z) \pm \Psi (R^{-1}; K, N, L).
\end{align}
Then taking the limit $i \to \infty$ shows
\begin{align}\label{eq:aw}
&\int_{B_{R\sqrt{t_i}}(x_i)}t_i\meas (B_{\sqrt{t_i}}(z))\langle \nabla_zp_i(z, x_i, t), \nabla f_i(z) \rangle^2_i\di \meas_i (z) \nonumber \\
&\to\int_{B_R(0_n)}\hat{\mathcal{H}}^n(B_1(z))\langle \nabla_zq_n(z, 0_n, 1), \nabla f(z)\rangle^2\di \hat{\mathcal{H}}^n=c_n(R)|\nabla f|^{*2}(0_n),
\end{align} 
where recall $\hat{\mathcal{H}}^n=\omega_n^{-1}\mathcal{H}^n$.
Thus letting $R \uparrow \infty$ and taking \eqref{aw}, \eqref{6t} and \eqref{eq:aw} into account yields
\begin{equation}\label{x}
\left|c_n|\nabla f|^{*2}(0_n)-c_n\fint_{B_1(0_n)}|\nabla f|^2\di \mathcal{H}^n\right| \ge \tau
\end{equation}
which contradicts the fact that $|\nabla f|^{*2} \equiv |\nabla f|^{*2}(0_n)$. 
Thus the proof is completed.
\end{proof}

\begin{corollary}\label{applied}
Under the same assumptions as in Proposition~\ref{q97j}, let $h \in \mathrm{Lip}(X, \dist)$ with $\|h\|_{L^{\infty}}+\||\nabla h|\|_{L^{\infty}}\le L$. 
Then 
\begin{align}\label{w3x}
&\int_Xt\meas (B_{\sqrt{t}}(z))\langle \nabla_zp(z, x, t), h(z)\nabla f(z) \rangle^2\di \meas (z) =c_nh(x)^2\fint_{B_{\sqrt{t}}(x)}|\nabla f|^2\di \meas\pm 
\Psi,   
\end{align}
with $\Psi(\epsilon, t; K, N, L)$.
\end{corollary}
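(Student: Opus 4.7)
The natural strategy is to compare $h(z)$ with its value at the base point $x$ and exploit the fact that the heat kernel $p(\cdot,x,t)$ (and its gradient) concentrates at scale $\sqrt{t}$ around $x$. Writing $h(z)=h(x)+(h(z)-h(x))$ and expanding the square, I would decompose
\begin{align*}
\langle\nabla_z p(z,x,t),h(z)\nabla f(z)\rangle^2
&=h(x)^2\langle\nabla_z p,\nabla f\rangle^2\\
&\quad+2h(x)(h(z)-h(x))\langle\nabla_z p,\nabla f\rangle^2\\
&\quad+(h(z)-h(x))^2\langle\nabla_z p,\nabla f\rangle^2.
\end{align*}
Integrating the first term against $t\meas(B_{\sqrt t}(z))\di\meas(z)$ and applying Proposition~\ref{q97j} immediately yields the main term $c_n h(x)^2\fint_{B_{\sqrt t}(x)}|\nabla f|^2\di\meas$ up to an error $h(x)^2\Psi(\epsilon,t;K,N,L)\le L^2\Psi(\epsilon,t;K,N,L)$, which is absorbed into the final $\Psi$.

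For the two remaining terms the plan is to use the Lipschitz bound $|h(z)-h(x)|\le L\,\dist(z,x)$ together with the Gaussian gradient estimate \eqref{eq:equi lip} (with $\epsilon=1$), which gives
$$
t\meas(B_{\sqrt t}(z))|\nabla_z p(z,x,t)|^2\le\frac{C_3^2 e^{2C_4 t}}{\meas(B_{\sqrt t}(z))}\exp\!\Bigl(-\tfrac{2\dist^2(z,x)}{5t}\Bigr).
$$
Using $\||\nabla f|\|_\infty\le L$, the cross term is thus bounded (up to a universal constant times $L^4 e^{2C_4 t}$) by
$$
\int_X\frac{\dist(z,x)}{\meas(B_{\sqrt t}(z))}\exp\!\Bigl(-\tfrac{2\dist^2(z,x)}{5t}\Bigr)\di\meas(z),
$$
and the third term by the analogous integral with $\dist(z,x)$ replaced by $\dist^2(z,x)$.

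Finally, to see that both integrals are $\Psi(t;K,N,L)$, I would rescale by $\dist_t:=t^{-1/2}\dist$ and $\meas_t:=\meas(B_{\sqrt t}(x))^{-1}\meas$, which turns the rescaled space into an $\RCD^*(tK,N)$ space and exhibits an explicit factor $\sqrt t$ (resp. $t$) from the Euclidean factor $\dist(z,x)$ (resp. $\dist^2(z,x)$). The remaining integral
$$
\int_X\frac{\dist_t(z,x)^{k}}{\meas_t(B_1^{\dist_t}(z))}\exp\!\Bigl(-\tfrac{2\dist_t^2(z,x)}{5}\Bigr)\di\meas_t(z),\qquad k\in\{1,2\},
$$
is controlled uniformly in $t\in(0,1]$ by Lemma~\ref{lem:volume}(2) applied with $\ell=-1$, once one absorbs the polynomial factor $\dist_t^{k}$ into the Gaussian by $\dist_t^k\le C_k\exp(\dist_t^2/10)$ (so that a Gaussian with slightly smaller constant survives), together with the exponential growth \eqref{eq:BishopGromov}. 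This produces an overall error of order $L^4\sqrt t$, which is $\Psi(t;K,N,L)$. The main (and only) technical care is to check that the rescaled measures satisfy a doubling/growth bound uniform in $t\in(0,1]$, which is exactly what is guaranteed by \eqref{eq:BishopGromov} for $\RCD^*(tK,N)$ spaces; once this is in place, the three contributions combine to yield \eqref{w3x}.
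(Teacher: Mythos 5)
Your argument is correct, and it rests on the same basic idea as the paper's proof (freeze $h$ at the center point $x$, use the Lipschitz bound together with the concentration of $\nabla_z p(z,x,t)$ at scale $\sqrt t$, and invoke Proposition~\ref{q97j} for the frozen term), but the execution is genuinely different. The paper does not expand the square: it restricts both integrals to $B_{R\sqrt t}(x)$ up to a tail error $\Psi(R^{-1};K,N,L)$ (as in \eqref{kk2}--\eqref{kk2'}), replaces $h(z)$ by $h(x)$ there using the uniform bound $|h\nabla f_{\sqrt t,x}-h(x)\nabla f_{\sqrt t,x}|\le L^2R\sqrt t$ on the ball, applies Proposition~\ref{q97j} to the function $h(x)f$, and then optimizes the cutoff radius by choosing $R=t^{-1/4}$. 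You instead expand $\langle\nabla_z p,h\nabla f\rangle^2$ into the frozen term plus a cross and a quadratic remainder, and control the remainders globally by the Gaussian gradient estimate \eqref{eq:equi lip} with the polynomial weight $\dist(z,x)^k$ absorbed into the exponential, reducing after the rescaling $\dist_t=\sqrt t^{-1}\dist$, $\meas_t=\meas(B_{\sqrt t}(x))^{-1}\meas$ to a weighted volume integral handled by Lemma~\ref{lem:volume}(2) with $\ell=-1$. What your route buys is an explicit error rate of order $L^4\sqrt t$ (resp.\ $L^4 t$) with no auxiliary radius to optimize, and it parallels the way \eqref{eq:riem est} is proved; what the paper's route buys is that it recycles verbatim the localization estimates already set up in the proof of Proposition~\ref{prop:blowupvector}. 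One small caveat in your write-up: Lemma~\ref{lem:volume} is stated with the specific Gaussian exponent $2/5$, while after absorbing $\dist_t^k$ you are left with a smaller exponent (e.g.\ $3/10$); this is harmless, since the lemma's proof (Cavalieri plus the exponential growth from \eqref{eq:BishopGromov}, with constants uniform in $t\le 1$ for the rescaled $\RCD^*(tK,N)$ spaces) goes through for any positive Gaussian rate, but you should state the modified version rather than cite the lemma as is.
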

\begin{proof}
For the sake of brevity, let us write $H(z,x,t):=t\meas(B_{\sqrt{t}}(z))\langle \nabla_zp(z, x, t), h(z)\nabla f(z) \rangle^2,$ $\bar{H}(z,x,t) = t\meas(B_{\sqrt{t}}(z))\langle \nabla_zp(z, x, t), h(x)\nabla f(z) \rangle^2$ and $C:=\fint_{B_{\sqrt{t}}(x)}|\nabla f|^2\di \meas$. 
As discussed in the proof of Proposition~\ref{prop:blowupvector}, we know that
\begin{equation}\label{kk2}
\int_X H(z,x,t) \di \meas (z) =\int_{B_{R\sqrt{t}}(x)}H(z,x,t) \di \meas (z) \pm \Psi(R^{-1}; K, N, L)
\end{equation}
and
\begin{equation}\label{kk2'}
\int_X \bar{H}(z,x,t) \di \meas (z) =\int_{B_{R\sqrt{t}}(x)}\bar{H}(z,x,t) \di \meas (z) \pm \Psi(R^{-1}; K, N, L).
\end{equation}
{From} \eqref{eq:2} and the fact that $|h\nabla f_{\sqrt{t}, x}-h(x)\nabla f_{\sqrt{t}, x}| \le L^2R\sqrt{t}$ holds on $B_R^{\dist_t}(x)$, where $\dist_t=\sqrt{t}^{-1}\dist$, \eqref{kk2} and \eqref{kk2'} imply
$$
\int_X H(z,x,t) \di \meas (z) = \int_X \bar{H}(z,x,t) \di \meas (z) \pm (\Psi(R^{-1}; K, N, L) + \Psi(R\sqrt{t}; K, N, L)).
$$
Proposition \ref{q97j} applied to $h(x)f$ gives $\int_{X}\bar{H}(z,x,t)\di \meas (z) = c_nh(x)^2C\pm \Psi(t, \epsilon; K, N, L)$, which yields to
\begin{align}\label{kk9}
\int_X H(z,x,t) \di \meas (z) = c_nh(x)^2C \pm (\Psi(R^{-1}; K, N, L) + \Psi(R\sqrt{t}; K, N, L)+\Psi(t, \epsilon; K, N, L)).
\end{align}
Thus taking $R=t^{-1/4}$ in  \eqref{kk9} completes the proof.
\end{proof}

\begin{lemma}\label{lem:suf}
Let $(X_j, \dist_j, \meas_j) \stackrel{mGH}{\to} (X, \dist, \meas)$ be a mGH-convergent sequence of compact $\RCD^*(K, N)$ spaces with uniformly bounded diameter.
Then a sequence of Riemannian semi metrics ${\bar g}_j$ on $(X_j, \dist_j, \meas_j)$, with $\sup_j\int_{X_j}|{\bf {\bar g}}_j|_{HS}^2\di\meas_j<\infty$, $L^2$-weakly converge to a Riemannian semi metric ${\bar g}$ on $(X, \dist, \meas)$ according to Definition~\ref{def:conriemaspaces}
if and only if
\begin{equation}\label{11q}
\int_{X_j}{\bar g}_j(h_j^1\nabla h_j^2, h_j^1\nabla h_j^2)\di \meas_j \to \int_X{\bar g}(h^1\nabla h^2, h^1\nabla h^2)\di \meas
\end{equation}
whenever $h_j^1 \in \mathrm{LIP}(X_j, \dist_j), h_j^2 \in \mathrm{Test}F(X_j, \dist_j, \meas_j)$ $L^2$-strongly converge to $h^1 \in \mathrm{Lip}(X, \dist), h^2 \in \mathrm{Test}F(X, \dist, \meas)$, respectively, with $\sup_{j} (\||\nabla h_j^1|_j\|_{L^{\infty}}+\||\nabla h_j^2|_j\|_{L^{\infty}} +\|\Delta^j h_j^2\|_{L^{\infty}})<\infty$.
\end{lemma}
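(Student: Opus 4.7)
\emph{Necessity.} Under the stated assumptions on $h^1_j,\,h^2_j$, the $L^\infty$-linearity of $\bar g_j$ yields the pointwise identity
\[
\bar g_j(h^1_j\nabla h^2_j,\,h^1_j\nabla h^2_j)=(h^1_j)^2\,\bar g_j(\nabla h^2_j,\nabla h^2_j).
\]
The uniform $L^\infty$-bound on $|\nabla h^1_j|_j$, combined with the uniform diameter bound on $X_j$ and the uniform lower bound on $\meas_j(X_j)$ (a consequence of the mGH convergence), yields a uniform $L^\infty$-bound on $h^1_j$, so that $(h^1_j)^2\to(h^1)^2$ strongly in $L^p$ for every $p<\infty$. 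On the other hand, the uniform $L^\infty$-bound on $\Delta^j h^2_j$ upgrades the $L^2$-strong convergence of $h^2_j$ to $H^{1,2}$-strong convergence via Theorem~\ref{thm:stability lap} applied on a ball containing all $X_j$, so that Definition~\ref{def:conriemaspaces} applies and delivers the $L^2$-weak convergence of $\bar g_j(\nabla h^2_j,\nabla h^2_j)$ to $\bar g(\nabla h^2,\nabla h^2)$. The strong-weak pairing then gives \eqref{11q}.

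\emph{Sufficiency.} Fix $h_j\to h$ $H^{1,2}$-strongly with $L:=\sup_j\||\nabla h_j|_j\|_{L^\infty}<\infty$ and $\phi\in C_c(Y)$; the target is
\[
\int_{X_j}\phi\,\bar g_j(\nabla h_j,\nabla h_j)\,\di\meas_j\longrightarrow\int_{X}\phi\,\bar g(\nabla h,\nabla h)\,\di\meas.
\]
I argue in three steps. First, applying \eqref{11q} with $h^1_j=1\pm\eps k_j$ for a Lipschitz sequence $k_j\to k$ with $\sup_j\||\nabla k_j|_j\|_{L^\infty}<\infty$ and subtracting the two limits yields, after dividing by $4\eps$, the linear version
\begin{equation}\label{eq:plin}
\int_{X_j} k_j\,\bar g_j(\nabla h^2_j,\nabla h^2_j)\,\di\meas_j\longrightarrow\int_X k\,\bar g(\nabla h^2,\nabla h^2)\,\di\meas
\end{equation}
for any admissible $h^2_j$. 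Second, any $\phi\in C_c(Y)$ is uniformly approximated on $Y$ by bounded Lipschitz functions $\tilde\phi^\eps$; the restrictions $\tilde\phi^\eps|_{X_j}$ form an admissible family in \eqref{eq:plin}, and the approximation error is controlled by $\|\phi-\tilde\phi^\eps\|_{L^\infty(Y)}\cdot\sup_j\|\bar g_j(\nabla h^2_j,\nabla h^2_j)\|_{L^1(\meas_j)}$, which is finite by Cauchy-Schwarz together with the uniform $L^\infty$-bound on $|\nabla h^2_j|_j$ and the $L^2$-bound on $|{\bf\bar g}_j|_{HS}$. Hence \eqref{eq:plin} remains valid for any $\phi\in C_c(Y)$ in place of $k_j,\,k$.

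Third, to drop the $\mathrm{Test}F$-assumption on $h_j$, set $h^s_j:=P^j_s h_j$ and $h^s:=P_s h$. For each fixed $s>0$, the Bakry-\'Emery gradient estimate gives $\||\nabla h^s_j|_j\|_{L^\infty}\le e^{-Ks}L$, and the identity $\Delta^j P^j_s h_j=P^j_{s/2}(\Delta^j P^j_{s/2}h_j)$ combined with \eqref{eq:lapheatbd} yields $\|\Delta^j h^s_j\|_{L^\infty}\le C(s,K,N)\|h_j\|_{L^\infty}$, so that $h^s_j\in\mathrm{Test}F(X_j,\dist_j,\meas_j)$ with uniform bounds in $j$; moreover $h^s_j\to h^s$ $H^{1,2}$-strongly as $j\to\infty$ by Theorem~\ref{thm:local conv heat kernel} combined with Theorem~\ref{thm:stability lap}. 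Applying the previous step with $h^2_j=h^s_j$ gives, for each fixed $s>0$,
\[
\int_{X_j}\phi\,\bar g_j(\nabla h^s_j,\nabla h^s_j)\,\di\meas_j\xrightarrow{j\to\infty}\int_{X}\phi\,\bar g(\nabla h^s,\nabla h^s)\,\di\meas.
\]
The residual errors on both sides are controlled via $|\bar g_j(V,V)-\bar g_j(W,W)|\le|{\bf\bar g}_j|_{HS}\,|V-W|_j\,|V+W|_j$, Cauchy-Schwarz, and the uniform $L^\infty$-bound on $|\nabla h_j|_j+|\nabla h^s_j|_j$, so the whole argument reduces to the uniform stability
\[
\lim_{s\downarrow 0}\sup_j\||\nabla(h_j-P^j_s h_j)|_j\|_{L^2(\meas_j)}=0,
\]
which is the main technical obstacle. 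I would prove it by contradiction: any sequence $(j_k,s_k)$ with $s_k\downarrow 0$ violating it either has $j_k$ bounded (contradicting the $H^{1,2}$-continuity of $s\mapsto P^{j_0}_s h_{j_0}$ at $s=0$ on the fixed space $(X_{j_0},\dist_{j_0},\meas_{j_0})$) or has $j_k\to\infty$, in which case both $h_{j_k}$ and $P^{j_k}_{s_k}h_{j_k}$ converge $H^{1,2}$-strongly to $h$ (the latter by the stability of the heat flow under mGH convergence, itself a consequence of Theorem~\ref{thm:local conv heat kernel}), again a contradiction. Letting $j\to\infty$ first and then $s\downarrow 0$ completes the proof.
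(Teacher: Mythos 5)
Your proof is correct in substance and follows the same three-stage architecture as the paper's: linearize the quadratic weight $(h^1_j)^2$, upgrade from Lipschitz weights to $C_c(Y)$ test functions so as to obtain genuine $L^2$-weak convergence of $\bar g_j(\nabla h^2_j,\nabla h^2_j)$ for $\mathrm{Test}F$-data, and finally remove the $\mathrm{Test}F$ restriction by approximation, controlling errors through $|\bar g_j(V,V)-\bar g_j(W,W)|\le |{\bf \bar g}_j|_{HS}\,|V-W|_j\,|V+W|_j$ and Cauchy--Schwarz. (You also verify the easy "only if" direction, which the paper simply omits.) The differences are in the devices: for the linearization you polarize with $h^1_j=1\pm\eps k_j$, while the paper substitutes $\sqrt{|h^1_j|+1}$ and subtracts the case $h^1_j\equiv1$; both are equally valid. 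The substantial divergence is in the last stage: the paper approximates the limit function $h$ by test functions $h_i$ on the limit space (mollified heat flow) and lifts each $h_i$ to sequences $h_{i,j}\in\mathrm{Test}F(X_j,\dist_j,\meas_j)$ with uniform bounds, so the error $\||\nabla(h_{i,j}-h_j)|_j\|_{L^2}$ is dealt with by the double limit $j\to\infty$ then $i\to\infty$, and no uniform-in-$j$ statement about heat-flow regularization is ever needed; you instead regularize each $h_j$ by its own heat semigroup, which forces you to prove the uniform estimate $\lim_{s\downarrow0}\sup_j\||\nabla(h_j-P^j_sh_j)|_j\|_{L^2}=0$, exactly the point you flag as the main obstacle.

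Your compactness/contradiction argument for that estimate does work, but two ingredients should be made explicit. First, for $s_k\downarrow0$ the $H^{1,2}$-strong convergence $P^{j_k}_{s_k}h_{j_k}\to h$ is not covered by Theorem~\ref{thm:local conv heat kernel}, which concerns times converging to a positive limit; it follows instead from the elementary bound $\|P^{j}_{s}f-f\|^2_{L^2}\le s\,\Ch^{j}(f)$ (giving the $L^2$ part uniformly, since the energies $\Ch^{j}(h_j)$ are uniformly bounded), combined with the energy monotonicity $\Ch^{j_k}(P^{j_k}_{s_k}h_{j_k})\le\Ch^{j_k}(h_{j_k})\to\Ch(h)$ and the lower semicontinuity of Cheeger energies along $L^2$-strong convergence on varying spaces. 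Second, deducing the contradiction from "both $h_{j_k}$ and $P^{j_k}_{s_k}h_{j_k}$ converge $H^{1,2}$-strongly to $h$" is not purely formal on varying spaces: one shows that the energy of the difference vanishes via the parallelogram identity together with the same lower semicontinuity applied to $h_{j_k}+P^{j_k}_{s_k}h_{j_k}\to 2h$. With these points spelled out your argument is complete; the paper's double-limit route buys precisely the avoidance of the uniform-in-$s$ statement, at the price of invoking the existence of the lifted approximating sequences $h_{i,j}$ with uniform Lipschitz and Laplacian bounds.
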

\begin{proof}
It is enough to check the `` if '' part.

By an argument similar to the proof of \cite[Th.10.3]{AmbrosioHonda}, we see that $\||{\bf {\bar g}}|_{HS}\|_{L^2}\le \liminf_{i \to \infty}\||{\bf {\bar g}}_i|_{HS}\|_{L^2}$, in particular, ${\bf {\bar g}} \in L^2((T^*)^{\otimes 2}(X, \dist, \meas))$.

First let us remark that if a sequence $\phi_j \in L^2(X_j, \meas_j)$ with $\phi \in L^2(X, \meas)$ satisfies $\sup_j\|\phi_j\|_{L^2}<\infty$ and
\begin{equation}\label{ff4}
\int_{X_j}\psi_j\phi_j\di \meas_j \to \int_X\psi\phi\di \meas
\end{equation}
for every $L^2$-strongly convergent sequence $\psi_j \in \mathrm{Lip}(X_j, \dist_j) \to \psi \in \mathrm{Lip}(X, \dist)$ with  $\sup_j\||\nabla \psi_j|_j\|_{L^{\infty}}<\infty$, then $\phi_j$ $L^2$-weakly converge to $\phi$ because for every uniformly convergent sequence $\eta_j \in C^0(X_j) \to \eta \in C^0(X)$
and all $\epsilon >0$, we can find a uniformly convergent sequence $\hat{\eta}_j \in \mathrm{Lip}(X_j, \dist_j) \to \hat{\eta} \in \mathrm{Lip}(X, \dist)$ with $\sup_j\||\nabla \eta_j|_j\|_{L^{\infty}}<\infty$ and $\sup_j\|\eta_j-\hat{\eta}_j\|_{L^{\infty}}\le \epsilon$.

Replacing $h_j^1, h^1$ by $\sqrt{|h_j^1|+1}, \sqrt{|h^1|+1}$ respectively in (\ref{11q}) yields
\begin{align}\label{1126}
&\int_{X_j}|h_j|{\bar g}_j(\nabla h_j^2, \nabla h_j^2)\di \meas_j +\int_{X_j}{\bar g}_j(\nabla h_j^2, \nabla h_j^2)\di \meas_j \nonumber \\
&\to \int_X|h_1|{\bar g}(\nabla h^2, \nabla h^2)\di \meas + \int_X{\bar g}(\nabla h^2, \nabla h^2)\di \meas.
\end{align}
Since letting $h^1_j \equiv 1$ in (\ref{11q}) shows
$$
\int_{X_j}{\bar g}_j(\nabla h_j^2, \nabla h_j^2)\di \meas_j \to  \int_X{\bar g}(\nabla h^2, \nabla h^2)\di \meas,
$$
by (\ref{1126}) we have
$$
\int_{X_j}|h_j|{\bar g}_j(\nabla h_j^2, \nabla h_j^2)\di \meas_j \to  \int_X|h_1|{\bar g}(\nabla h^2, \nabla h^2)\di \meas,
$$
which easily yields (after truncation for $h_j$)
$$
\int_{X_j}h_j{\bar g}_j(\nabla h_j^2, \nabla h_j^2)\di \meas_j \to  \int_Xh_1{\bar g}(\nabla h^2, \nabla h^2)\di \meas,
$$
By (\ref{ff4}), we see that ${\bar g}_j(\nabla h_j^2, \nabla h_j^2)$ $L^2$-weakly converge to ${\bar g}(\nabla h^2, \nabla h^2)$ on $X$.

Let $f_j \in \mathrm{Lip}(X_j, \dist_j)$ with $\sup_j\||\nabla f_j|_j\|_{L^{\infty}}<\infty$, and let $f \in \mathrm{Lip}(X, \dist)$ be the $H^{1, 2}$-strong limit of $f_j$.
Then our goal is to prove that ${\bar g}_j(\nabla f_{j}, \nabla f_{j})$ $L^2$-weakly converge to ${\bar g}(\nabla f^2, \nabla f^2)$.

By using a mollified heat flow (c.f. \cite{AmbrosioHonda, Gigli}), we can find a sequence $h_i \in \mathrm{Test}F(X, \dist, \meas)$ with $\Delta h_i \in L^{\infty}$ for any fixed $i$, $\sup_{i}\| |\nabla h_i| \|_{L^{\infty}}<\infty$ and $h_i \to f$ in $H^{1, 2}(X, \dist, \meas)$.
Moreover, for any $i$, there exists a sequence $h_{i, j} \in \mathrm{Test}F(X_j, \dist_j, \meas_j)$ such that $\sup_j (\||\nabla h_{i, j}|_j\|_{L^{\infty}}+\|\Delta^j h_{i, j}\|_{L^{\infty}})<\infty$ and that $h_{i, j}$ $H^{1, 2}$-strongly converge to $h_i$.
Note that the argument above shows that  ${\bar g}_j(\nabla h_{i, j}^2, \nabla h_{i, j}^2)$ $L^2$-weakly converge to ${\bar g}(\nabla h^2_i, \nabla h^2_i)$ on $X$.

Letting $L_{i, j}:=\||\nabla h_{i, j}|_j\|_{L^{\infty}}+\||\nabla f_j|_j\|_{L^{\infty}}$ shows
\begin{align}\label{prg}
&\int_{X_j}\left|{\bar g}_j(\nabla h_{i, j}, \nabla h_{i, j})-{\bar g}_j(\nabla f_{j}, \nabla f_{j})\right|\di \meas_j \nonumber \\
&\le \int_{X_j}|{\bar g}_j|_{HS}  |\nabla h_{i, j} \otimes \nabla h_{i, j}-\nabla f_j \otimes \nabla f_j|_{HS}\di \meas_j \nonumber \\
&=\int_{X_j}|{\bar g}_j|_{HS}  |\nabla h_{i, j} \otimes \nabla h_{i, j}-\nabla f_j \otimes \nabla h_{i, j}+\nabla f_j \otimes \nabla h_{i, j}-\nabla f_j \otimes \nabla f_j|_{HS}\di \meas_j \nonumber \\
&\le \int_{X_j}||{\bf {\bar g}}_j|_{HS}|_{HS} \left( |\nabla h_{i, j} -\nabla f_j|_j |\nabla h_{i, j}|_j+|\nabla f_j|_j |\nabla h_{i, j}-\nabla f_j|_j\right)\di \meas_j \nonumber \\
&\le 2L_{i, j}\||{\bf {\bar g}}_j|_{HS}\|_{L^2} \||\nabla (h_{i, j}-f_j)|_j\|_{L^2}.
\end{align}
Then the right hand side of (\ref{prg}) converges to $0$ when letting $j \to \infty$ and then letting $i \to \infty$.
Combining these observations with the $L^2$-weak convergence of ${\bar g}_j(\nabla h_{i, j}^2, \nabla h_{i, j}^2)$ to ${\bar g}(\nabla h_i^2, \nabla h_i^2)$ yields that ${\bar g}_j(\nabla f^2_{j}, \nabla f_{j}^2)$ $L^2$-weakly converge to ${\bar g}(\nabla f^2, \nabla f^2)$, which completes the proof.
\end{proof}

{From} now on we focus on 
noncollapsed $\RCD^*(K, N)$ spaces,
namely $\RCD^*(K, N)$ spaces $(X, \dist, \meas)$ with $\mathrm{supp}\,\meas =X$ and $\meas=\mathcal{H}^N$. Such spaces were introduced and studied in \cite{DePhillippisGigli} where they proved the following facts which generalize important properties of noncollapsed Ricci limit spaces \cite{CheegerColding1} to the $\RCD$ setting.

\begin{theorem}\label{aaa4ff}
If $(X, \dist, \mathcal{H}^N)$ is a noncollapsed $\RCD^*(K, N)$ space, then $N=\dim_{\dist, \mathcal{H}^N}(X)$. Moreover
\begin{equation}\label{eqrigi}
\lim_{r \downarrow 0}\frac{\mathcal{H}^N(B_r(x))}{\omega_Nr^N} \le 1\qquad\forall x\in X.
\end{equation}
The equality in \eqref{eqrigi} holds if and only if $x \in \mathcal{R}_N$.

Finally, for all $v >0$,
the $p\mathbb{G}_W$-distance and the pointed Gromov-Hausdorff distance induces the same compact topology
on the set $\mathcal{M}(K, N, v)$ of all isometry classes of pointed noncollapsed $\RCD^*(K, N)$ spaces $(Y, \dist, \mathcal{H}^N, y)$ with $\mathcal{H}^N(B_1(y)) \ge v$, 
\end{theorem}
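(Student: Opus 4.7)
The plan is to argue in four steps, following the blueprint of De Philippis--Gigli.

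For the identity $N=\dim_{\dist,\mathcal{H}^N}(X)$, I would invoke Theorem~\ref{th: RCD decomposition}, which gives a unique integer $n\in[1,N]$ with $\mathcal{H}^N(X\setminus\mathcal{R}_n)=0$ and presents $\mathcal{R}_n$ as countably $n$-rectifiable. Any countably $n$-rectifiable set has $\sigma$-finite $\mathcal{H}^n$-measure, hence is $\mathcal{H}^N$-null as soon as $n<N$. Since $\mathrm{supp}\,\mathcal{H}^N=X$ guarantees $\mathcal{H}^N(B_r(x))>0$ for every $x\in X$ and $r>0$, the only possibility is $n=N$.

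For the density bound, Bishop-Gromov \eqref{eq:BishopGromov} shows that $r\mapsto \mathcal{H}^N(B_r(x))/\mathrm{Vol}_{K,N}(r)$ is non-increasing, so the density $\vartheta_N(x):=\lim_{r\downarrow 0}\mathcal{H}^N(B_r(x))/(\omega_N r^N)$ exists in $(0,+\infty]$. I would then blow up with the special normalization $(X,r_i^{-1}\dist,r_i^{-N}\mathcal{H}^N,x)$: because Hausdorff $N$-measure is $N$-homogeneous under scaling, the rescaled measure is again $\mathcal{H}^N$ for the rescaled distance, so any subsequential mGH limit $(Y,\dist_Y,\mathcal{H}^N_Y,y)$ is noncollapsed $\RCD^*(0,N)$, and the density of $\mathcal{H}^N_Y$ at $y$ coincides with $\vartheta_N(x)$. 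A cone-splitting argument at the tangent makes $(Y,\dist_Y)$ a metric cone over $y$; comparing the Bishop-Gromov ratio at the apex with the Euclidean model $(\mathbb{R}^N,\dist_{\mathbb{R}^N},\omega_N^{-1}\mathcal{H}^N)$ then forces $\vartheta_N(x)\le 1$.

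For the rigidity $\vartheta_N(x)=1 \Leftrightarrow x\in\mathcal{R}_N$, if $x\in\mathcal{R}_N$ the unique tangent is $(\mathbb{R}^N,\dist_{\mathbb{R}^N},\omega_N^{-1}\mathcal{H}^N,0_N)$ and the volume-continuity of $\mathcal{H}^N$ along mGH sequences forces $\vartheta_N(x)=1$. Conversely, equality $\vartheta_N(x)=1$ makes the Bishop-Gromov ratio constantly equal to $1$ on a small interval $(0,r_0)$; the rigidity part of Bishop-Gromov in the $\RCD^*$ framework (metric-cone structure plus density $1$ at the apex) forces the tangent to be isometric as a metric measure space to $(\mathbb{R}^N,\dist_{\mathbb{R}^N},\omega_N^{-1}\mathcal{H}^N,0_N)$, whence $x\in\mathcal{R}_N$.

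Finally, every space in $\mathcal{M}(K,N,v)$ satisfies the uniform local doubling \eqref{eq:locdoubling}, so pGH and pmG induce the same topology wherever both are defined (cf.\ \cite[Th.~3.15]{GigliMondinoSavare13}); the nontrivial content of Step~4 is the \emph{closedness} of $\mathcal{M}(K,N,v)$ under pGH convergence. I would deduce this from De Philippis--Gigli's volume-continuity theorem: along a pGH-convergent sequence in $\mathcal{M}(K,N,v)$, the Hausdorff measures $\mathcal{H}^N$ pass to $\mathcal{H}^N$ on the limit, preserving the lower bound $\mathcal{H}^N(B_1(y))\ge v$, and Gromov precompactness then gives the required compactness. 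The main obstacle is precisely this volume-continuity theorem: it goes well beyond ordinary Bishop-Gromov and is what actually underlies the sharp density bound in Step~2 as well as the rigidity statement in Step~3.
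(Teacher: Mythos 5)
The paper itself gives no proof of Theorem~\ref{aaa4ff}: it is quoted verbatim from \cite{DePhillippisGigli}, so there is no internal argument to compare against, and your choice to cite the volume-continuity theorem of De Philippis--Gigli is in itself consistent with the paper's treatment. Within your sketch, Step~1 is correct and is a clean shortcut through Theorem~\ref{th: RCD decomposition}: a countably $n$-rectifiable set has $\sigma$-finite $\mathcal{H}^n$-measure, hence is $\mathcal{H}^N$-null when $n<N$, and since $\mathcal{H}^N(X\setminus\mathcal{R}_n)=0$ and $\supp\mathcal{H}^N=X$ this forces $n=N$ (note this leans on \cite{BrueSemola}, which \cite{DePhillippisGigli} did not use, but the paper makes it available). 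Step~4 is also the right reduction: uniform local doubling gives the equivalence of the $p\mathbb{G}_W$ and pointed GH topologies via \cite[Th.~3.15]{GigliMondinoSavare13}, and compactness of $\mathcal{M}(K,N,v)$ needs Gromov precompactness, stability of the $\RCD^*$ condition, and the volume-continuity theorem to keep the limit reference measure equal to $\mathcal{H}^N$ with the lower bound on $\mathcal{H}^N(B_1(y))$.

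The genuine gaps are in Steps~2 and~3, which you present as arguments. In Step~2, the assertion that the blow-ups $(X,r_i^{-1}\dist,r_i^{-N}\mathcal{H}^N,x)$ subconverge to a \emph{noncollapsed} space with reference measure $\mathcal{H}^N$ of the limit is exactly the volume-continuity theorem you are deferring, and since a priori $\vartheta_N(x)$ could be $+\infty$, this normalization need not even be precompact in the pointed measured sense. Worse, even granting all of this and the metric-cone structure of the tangent, the concluding step does not follow: the vertex density of an $N$-dimensional metric measure cone is governed by the $\mathcal{H}^{N-1}$-measure of its cross-section, and bounding it by $1$ is precisely a Bishop-type inequality, i.e.\ a restatement of what is to be proved. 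The actual route in \cite{DePhillippisGigli} for \eqref{eqrigi} is more elementary: the classical density theorem for Hausdorff measures gives $\limsup_{r}\mathcal{H}^N(B_r(x))/(\omega_Nr^N)\le 1$ at $\mathcal{H}^N$-a.e.\ point, and Bishop--Gromov monotonicity exhibits $\vartheta_N$ as a supremum over $r$ of lower semicontinuous functions of $x$, hence lower semicontinuous, which upgrades the a.e.\ bound to every point. In Step~3 the converse contains an outright error: $\vartheta_N(x)=1$ together with monotonicity only yields that the Bishop--Gromov ratio is $\le 1$ for all $r$, not that it is constantly $1$ on some interval $(0,r_0)$ (it may be strictly below $1$ for every $r>0$), so the rigidity cannot be applied to the space itself; it must be applied to a tangent cone, where the monotone quantity is genuinely constant, after volume continuity identifies the tangent's measure with $\mathcal{H}^N$, and one must check that \emph{every} element of $\mathrm{Tan}(X,\dist,\mathcal{H}^N,x)$ is $(\mathbb{R}^N,\dist_{\mathbb{R}^N},\omega_N^{-1}\mathcal{H}^N,0_N)$ in order to conclude $x\in\mathcal{R}_N$.
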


The almost rigidity of \eqref{eqrigi} shown in the next proposition is a direct consequence of \cite[Th.1.3 and 1.5]{DePhillippisGigli}.

\begin{proposition}\label{almostrigidity}
Let $(X, \dist, \mathcal{H}^N)$ be a noncollapsed $\RCD^*(K, N)$ space, $x \in X$ and $\epsilon >0$.
Assume that $\frac{\mathcal{H}^N(B_r(x))}{\omega_Nr^N}\ge 1-\epsilon$ for some $r\le 1$. 
Then, for all $t<1$, the $p\mathbb{G}_W$-distance between $(X, t^{-1}\dist, \mathcal{H}^N(B_t(x))^{-1}\mathcal{H}^N, x)$ and $(\mathbb{R}^N, \dist_{\mathbb{R}^N}, \omega_N^{-1}\mathcal{H}^N, 0_N)$ is at most $\Psi (\epsilon, t/r, r; K, N)$.
\end{proposition}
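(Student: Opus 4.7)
The plan is to combine Bishop--Gromov monotonicity with a contradiction/compactness argument, using the volume rigidity and almost-rigidity theorems for noncollapsed $\RCD^*(K,N)$ spaces collected in Theorem~\ref{aaa4ff} (the results originating in \cite{DePhillippisGigli}). The main difficulty is that the hypothesis gives information only at scale $r$, whereas we must control the geometry after rescaling by $t^{-1}$ with possibly $t\ll r$, so we first propagate the volume lower bound down to scale $t$ and then compare the resulting rescaled space with $\setR^N$ by a limiting argument.

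First, I would transfer the density hypothesis from scale $r$ to all smaller scales. Bishop--Gromov \eqref{eq:BishopGromov} says that $s\mapsto \mathcal H^N(B_s(x))/\mathrm{Vol}_{K,N}(s)$ is non-increasing; together with the elementary expansion $\mathrm{Vol}_{K,N}(s)=\omega_N s^N(1+O_{K,N}(s^2))$ for $s\leq 1$, this yields, for every $t\leq r$,
\begin{equation*}
\frac{\mathcal H^N(B_t(x))}{\omega_N t^N}\geq (1-\epsilon)\,\frac{\omega_N r^N}{\mathrm{Vol}_{K,N}(r)}\cdot\frac{\mathrm{Vol}_{K,N}(t)}{\omega_N t^N}\geq 1-\Psi(\epsilon,r;K,N).
\end{equation*}
(When $t>r$ the quantity $t/r$ is not small, so the claim $\Psi(\epsilon,t/r,r;K,N)$ is vacuous and there is nothing to prove; hence one may assume $t\leq r$.) Consequently the rescaled pointed space $(\tilde X,\tilde\dist,\tilde\meas,x):=(X,t^{-1}\dist,\mathcal H^N(B_t(x))^{-1}\mathcal H^N,x)$ is an $\RCD^*(t^2K,N)$ space with $\tilde\meas(B_1(x))=1$, and its measure equals $c_t\cdot\omega_N^{-1}\mathcal H^N_{\tilde\dist}$ with $c_t=\omega_N t^N/\mathcal H^N(B_t(x))\to 1$.

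Next, I would argue by contradiction. Suppose there exist $\delta_0>0$ and sequences $\epsilon_i\to 0$, $r_i\to 0$, $t_i/r_i\to 0$ with $t_i<1$, together with noncollapsed $\RCD^*(K,N)$ spaces $(X_i,\dist_i,\mathcal H^N,x_i)$ such that the hypothesis holds but the $p\mathbb G_W$-distance of the rescaled space $\tilde X_i$ to $(\setR^N,\dist_{\setR^N},\omega_N^{-1}\mathcal H^N,0_N)$ is at least $\delta_0$. Since $t_i\leq r_i(t_i/r_i)\to 0$, the effective curvature parameter $t_i^2K$ tends to $0$. By the previous step $\tilde\meas_i(B_1(x_i))=1$ and the Hausdorff-measure normalizing constant $c_{t_i}$ tends to $1$; by Theorem~\ref{aaa4ff} the normalized spaces therefore lie eventually in $\mathcal M(0,N,1/2)$, which is compact in the $p\mathbb G_W$-topology. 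Extracting a subsequence yields convergence to some pointed noncollapsed $\RCD^*(0,N)$ space $(Y,\dist_Y,\omega_N^{-1}\mathcal H^N,y)$.

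Finally, I would identify $Y$ with $\setR^N$. Volume continuity for noncollapsed $\RCD$ spaces (a principal output of \cite{DePhillippisGigli}) passes the lower bound $\tilde\meas_i(B_1(x_i))\geq 1-o(1)$ to the limit, giving $\mathcal H^N_Y(B_1(y))=\omega_N$. On the other hand, since $Y$ is noncollapsed $\RCD^*(0,N)$, Theorem~\ref{aaa4ff} provides $\lim_{s\downarrow 0}\mathcal H^N_Y(B_s(y))/(\omega_N s^N)\leq 1$, while Bishop--Gromov forces the density to be non-increasing in $s$; so the density equals $1$ at every scale $s\leq 1$. The equality case in Bishop--Gromov for noncollapsed $\RCD^*(0,N)$ spaces (Theorem~1.3 of \cite{DePhillippisGigli}) then identifies $Y$ with $\setR^N$ on the unit ball, and iterating the argument (or directly using the rigidity statement globally) yields $Y=\setR^N$, contradicting the $\delta_0$-separation. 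The main obstacle in executing this plan is making the compactness/rigidity step quantitative enough to control the $p\mathbb G_W$-distance rather than merely a weaker GH-type quantity; this is precisely what Theorem~\ref{aaa4ff} is designed to give, so the argument closes.
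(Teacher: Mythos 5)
Your overall strategy matches the paper's: argue by contradiction, use Bishop--Gromov to propagate the density hypothesis to small scales, extract a limit of the rescaled pointed spaces via the De~Philippis--Gigli compactness theory, identify the limit with $\setR^N$, and close by the equivalence of $p\mathbb{G}_W$- and pGH-topologies on $\mathcal M(K,N,v)$. The paper streamlines this by never forming the limit $Y$ explicitly: it uses the almost-rigidity of \cite[Th.~1.5]{DePhillippisGigli} to get pointed GH convergence of closed balls and then invokes Theorem~\ref{aaa4ff} directly, whereas you extract a limit and then apply the rigidity (equality) case. Both routes are viable.

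However, there is a genuine gap in the limit identification. You only establish that the density of $Y$ at $y$ equals $1$ for radii $s\le 1$: the volume-continuity passes the information $\tilde\meas_i(B_1(x_i))=1$, $c_{t_i}\to 1$ to $\mathcal H^N_Y(B_1(y))=\omega_N$, and Bishop--Gromov together with the upper density bound of Theorem~\ref{aaa4ff} then pin down the density on $[0,1]$. This does \emph{not} force $Y=\setR^N$: for instance a flat half-space $\{x_N\ge 0\}\subset\setR^N$ with basepoint $y$ at distance $1$ from the boundary is a noncollapsed $\RCD^*(0,N)$ space whose density at $y$ equals $1$ for all $s\le 1$ but which is not isometric to $\setR^N$, so there is no contradiction yet. ``Iterating the argument'' is not a substitute, because the density information you have is all anchored at the single point $y$ and at scales $\le 1$.

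The repair is already contained in your contradiction hypothesis but you do not use it: since $t_i/r_i\to 0$, for every fixed $R>0$ one has $Rt_i<r_i$ for $i$ large, and Bishop--Gromov then propagates the hypothesis to give
$\mathcal H^N(B^{\dist_i}_{Rt_i}(x_i))/(\omega_N (Rt_i)^N)\to 1$, i.e.\ the density of $\tilde X_i$ at $x_i$ at scale $R$ tends to $1$ for \emph{every} $R$, not just $R=1$. Passing to the limit, the density of $Y$ at $y$ equals $1$ at all scales, and the volume rigidity of \cite{DePhillippisGigli} then gives $Y=\setR^N$. This is exactly the step the paper performs (applying the density estimate at the radii $2Rt_i<r_i$ for all $R>0$) before invoking \cite[Th.~1.5]{DePhillippisGigli}.
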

\begin{proof}
The proof is achieved by contradiction.
If the above statement does not hold, there exist pointed noncollapsed $\RCD^*(K, N)$ spaces $(X_i, \dist_i, \mathcal{H}^N, x_i)$, positive numbers $t_i \downarrow 0, r_i \downarrow 0$ and $\tau>0$ such that $\frac{\mathcal{H}^N(B_{r_i}(x_i))}{\omega_Nr_i^N} \to 1$, $t_i/r_i \to 0$, and that the $p\mathbb{G}_W$-distance (denoted by $D_i$ for short) between $(X_i, t_i^{-1}\dist_i, \mathcal{H}^N(B_{t_i}(x_i))^{-1}\mathcal{H}^N, x_i)$ and $(\mathbb{R}^N, \dist_{\mathbb{R}^N}, \omega_N^{-1}\mathcal{H}^N, 0_N)$ is at least $\tau$.
Then for all $R>0$ by the Bishop-Gromov theorem (\ref{eq:BishopGromov}) we have
\begin{align*}
\lim_{i \to \infty}\frac{\mathcal{H}^N(B_{2R}^{t_i^{-1}\dist_i}(x_i))}{\omega_N(2R)^N}=\lim_{i \to \infty}\frac{\mathcal{H}^N(B_{2Rt_i}^{\dist_i}(x_i))}{\omega_N(2Rt_i)^N}
&=\lim_{i \to \infty}\frac{\mathcal{H}^N(B_{2Rt_i}^{\dist_i}(x_i))}{\mathrm{Vol}_{K, N} (2Rt_i)} \cdot \frac{\mathrm{Vol}_{K, N} (2Rt_i)}{\omega_N(2Rt_i)^N} \\
&\ge \lim_{i \to \infty}\frac{\mathcal{H}^N(B^{\dist_i}_{r_i}(x_i))}{\omega_Nr_i^N} = 1
\end{align*}
because of $\lim_{r \downarrow 0}\frac{\mathrm{Vol}_{K, N} (r)}{\omega_Nr^N}=1$.
Thus by \cite[Th.1.5]{DePhillippisGigli}, $({\bar B}_R^{t_i^{-1}\dist_i}(x_i), t_i^{-1}\dist_i, x_i)$ pointed Gromov-Hausdorff converge to $({\bar B}_R(0_N), \dist_{\mathbb{R}^N}, 0_N)$. This implies that $(X_i, t_i^{-1}\dist_i, x_i)$ pointed Gromov-Hausdorff converge to $(\mathbb{R}^N, \dist_{\mathbb{R}^N}, 0_N)$. 
By Theorem~\ref{aaa4ff}, $D_i$ is infinitesimal, which contradicts $D_i \ge \tau$.
\end{proof}

We are now in a position to improve Theorem~\ref{th:convergence3}, including the case when $t=0$, 
thus giving a positive answer to our first question \eqref{ddrt} in the setting of noncollapsed spaces.
For the proof, let us recall the maximal function theorem for a compact $\RCD (K, N)$ space $(X, \dist, \meas)$:
\begin{equation}\label{maxfunc}
\meas (\{M(f)>t\}) \le \frac{C(K, N, d)}{t}\int_X|f|\di \meas \qquad \forall t>0,\,\forall f \in L^1(X, \meas),
\end{equation}
where $d$ is a constant with $\mathrm{diam}\,X\le d$, and $M(f)(x):=\sup_{r>0}\fint_{B_r(x)}|f|\di \meas$ (see for instance \cite{Heinonen} for the proof).

\begin{theorem}\label{thm: mgh l2 strong}
Let $(X_j, \dist_j, \mathcal{H}^N) \stackrel{mGH}{\to} (X, \dist, \mathcal{H}^N)$ be a sequence of compact noncollapsed $\RCD^*(K, N)$ spaces
with uniformly bounded diameter.
Then $t_j\mathcal{H}^N(B_{\sqrt{t_j}}(x))g_{t_j}^{X_j}$ $L^2$-strongly converge to $c_Ng^X$ for all $t_j \to 0^+$.
In particular $\omega_Nt_j^{(N+2)/2}g_{t_j}^{X_j}$ $L^2$-strongly converge to $c_Ng^X$.
\end{theorem}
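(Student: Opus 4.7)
The plan is to establish $L^2$-weak convergence of $\hat{g}^{X_j}_{t_j} := t_j\mathcal{H}^N(B_{\sqrt{t_j}}(\cdot))g^{X_j}_{t_j}$ to $c_N g^X$ via the criterion of Lemma~\ref{lem:suf}, then upgrade to $L^2$-strong convergence via the Hilbert-Schmidt norm criterion of Proposition~\ref{prop:convcrigi}. The ``in particular'' statement will then follow from the first convergence by writing $\omega_Nt_j^{(N+2)/2}=(\omega_N\sqrt{t_j}^N/\mathcal{H}^N(B_{\sqrt{t_j}}(\cdot)))\cdot t_j\mathcal{H}^N(B_{\sqrt{t_j}}(\cdot))$ and using the noncollapsed identity $\omega_N\sqrt{t}^N/\mathcal{H}^N(B_{\sqrt{t}}(x))\to 1$ at $\mathcal{H}^N$-a.e.\ $x$ from Theorem~\ref{aaa4ff}, handled via the dominated convergence Lemma~\ref{lem:dct} exactly as in the proof of Theorem~\ref{th:convergence2}.

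For the weak convergence, I take test sequences $h_j^1\in\mathrm{LIP}(X_j,\dist_j)$ and $h_j^2\in\mathrm{Test}F(X_j,\dist_j,\mathcal{H}^N)$ $L^2$-strongly convergent to $h^1, h^2$ with uniform $L^\infty$-bounds on $|\nabla h_j^i|_j$ and $\Delta^jh_j^2$, and use Fubini to rewrite
\[
L_j := \int_{X_j}\hat{g}^{X_j}_{t_j}(h_j^1\nabla h_j^2, h_j^1\nabla h_j^2)\di\mathcal{H}^N = \int_{X_j}I_j(y)\di\mathcal{H}^N(y),
\]
where $I_j(y)$ is the inner integral in $z$ of $t_j\mathcal{H}^N(B_{\sqrt{t_j}}(z))\langle\nabla_z p_j(z,y,t_j), h_j^1(z)\nabla h_j^2(z)\rangle^2$. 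At every $y\in X_j$ for which the rescaled space $(X_j,\sqrt{t_j}^{-1}\dist_j,\mathcal{H}^N(B_{\sqrt{t_j}}(y))^{-1}\mathcal{H}^N,y)$ is $\epsilon$-close in $p\mathbb{G}_W$-distance to $(\mathbb{R}^N,\dist_{\mathbb{R}^N},\omega_N^{-1}\mathcal{H}^N,0_N)$, Corollary~\ref{applied} gives $I_j(y)=c_N(h_j^1(y))^2\fint_{B_{\sqrt{t_j}}(y)}|\nabla h_j^2|_j^2\di\mathcal{H}^N\pm\Psi$. Proposition~\ref{almostrigidity} guarantees this $\epsilon$-closeness as soon as $\Theta_{r_0}(y):=\mathcal{H}^N(B_{r_0}(y))/(\omega_Nr_0^N)\ge 1-\delta(\epsilon)$ at some fixed scale $r_0$ with $\sqrt{t_j}/r_0\to 0$. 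The decisive step is to control the ``bad'' set $\{y\in X_j:\Theta_{r_0}(y)<1-\delta(\epsilon)\}$ \emph{uniformly in $j$}: Theorem~\ref{aaa4ff} forces the corresponding bad set on $X$ to have $\mathcal{H}^N$-measure tending to zero as $r_0\downarrow 0$, and the mGH-continuity of $\Theta_{r_0}$ for fixed $r_0>0$ transfers this smallness to $X_j$ for $j$ large. Combining the asymptotic for $I_j$ on the good set, the Lebesgue differentiation $\fint_{B_{\sqrt{t_j}}(y)}|\nabla h_j^2|_j^2\di\mathcal{H}^N\to|\nabla h_j^2|_j^2(y)$ (dominated in $L^2$ via the maximal function estimate~\eqref{maxfunc}), the $L^2$-strong convergence $|\nabla h_j^2|_j\to|\nabla h^2|$ granted by Theorem~\ref{thm:stability lap} since $\Delta^jh_j^2$ is uniformly bounded, and the $L^\infty$ bound~\eqref{eq:riem est} on the bad set, one obtains $L_j\to c_N\int_X(h^1)^2|\nabla h^2|^2\di\mathcal{H}^N$, which by Lemma~\ref{lem:suf} gives the $L^2$-weak convergence.

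The upgrade to $L^2$-strong convergence needs, by the Hilbert-Schmidt criterion behind Proposition~\ref{prop:convcrigi},
\[
\limsup_{j\to\infty}\int_{X_j}|{\bf \hat g}^{X_j}_{t_j}|_{HS}^2\di\mathcal{H}^N\le Nc_N^2\mathcal{H}^N(X)=\int_X|c_N{\bf g}^X|_{HS}^2\di\mathcal{H}^N,
\]
the right-hand identity using Lemma~\ref{riemdim} together with the noncollapsed equality $\dim_{\dist,\mathcal{H}^N}(X)=N$ of Theorem~\ref{aaa4ff}. This is the main obstacle: it demands a version of Proposition~\ref{rem:blowuptensor} uniform across the mGH convergent sequence, since the blow-up analysis at a single point must be carried out simultaneously along $j\to\infty$ and $t_j\to 0$. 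I would establish it by reproducing the proof of that proposition at $\mathcal{H}^N$-a.e.\ $\bar x\in X_j$, identified uniformly in $j$ by the same good-set argument as in the weak-convergence step: via the good cut-off functions of Mondino-Naber and Proposition~\ref{prop:harmconti}, one constructs harmonic approximations $h_{i,j}$ of the Euclidean coordinate functions on the rescaled spaces $(X_j,\sqrt{t_j}^{-1}\dist_j,\mathcal{H}^N(B_{\sqrt{t_j}}(\bar x))^{-1}\mathcal{H}^N,\bar x)$ with Hessians infinitesimal in $L^2$, and the decomposition $|{\bf \hat g}^{X_j}_{t_j}|_{HS}^2=\sum_{\ell,m}|a_j^{\ell,m}|^2+o(1)$ of~\eqref{eq:sharpblowup}, combined with the almost-orthonormality of $\{\nabla h_{i,j}\}$ granted by almost rigidity (Proposition~\ref{almostrigidity}), yields the sharp pointwise bound $Nc_N^2$ almost everywhere, which upon integration delivers the required upper estimate.
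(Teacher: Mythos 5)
Your overall strategy is the same as the paper's: weak convergence through the criterion of Lemma~\ref{lem:suf} by combining Fubini, Corollary~\ref{applied}, the almost-rigidity Proposition~\ref{almostrigidity} (which, thanks to the rigidity \eqref{eqrigi} and Bishop--Gromov, propagates closeness of the density ratio to $1$ from one scale down to all smaller scales), the maximal function estimate \eqref{maxfunc} and the $L^\infty$ bound \eqref{eq:riem est} on the exceptional set; then the upgrade to strong convergence via the sharp Hilbert--Schmidt bound obtained by a uniform-in-$j$ version of the blow-up Proposition~\ref{rem:blowuptensor}, with the Mondino--Naber cut-offs, harmonic replacements and almost orthonormality. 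However, three of your steps, as written, are statements ``along the diagonal'' ($j\to\infty$ and $t_j\to 0$ simultaneously, with the space varying) that do not parse and would fail if executed literally. First, the claimed ``Lebesgue differentiation'' $\fint_{B_{\sqrt{t_j}}(y)}|\nabla h_j^2|_j^2\to|\nabla h_j^2|_j^2(y)$ compares two $j$-dependent quantities; there is no differentiation theorem applicable here. The paper avoids this by a Vitali covering of the limit space by balls $\overline B_{r_i}(x_i)$ centered at points of $\mathcal{R}_N\cap H(f)$ chosen so that both the almost-Euclidean condition \emph{and} the Lebesgue-point condition \eqref{fd} for $|\nabla f|^2$ hold, and then compares the $\sqrt{t_j}$-scale averages to the \emph{constants} $|\nabla f|^{*2}(x_i)$ via \eqref{maxfunc} applied to $\bigl||\nabla f_j|_j^2-|\nabla f|^{*2}(x_i)\bigr|$ (inequalities \eqref{8y8} and \eqref{key quant1}); your good set, defined only by a density threshold at a single scale $r_0$, does not encode this and the comparison-to-constants device is exactly what is missing.

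Second, in the strong-convergence step the blow-up machinery does not yield a ``sharp pointwise bound $Nc_N^2$ almost everywhere'' for $F_j(\cdot,t_j)$: for fixed $t_j$ the Hilbert--Schmidt density can genuinely deviate on sets of small but positive measure (compare Remark~\ref{counterex}, where $|{\bf g}_t|_{HS}$ degenerates near the endpoints). What the argument gives, and what suffices after the Fubini swap of Subsection~\ref{proof}, is the averaged statement $\fint_{B_{\sqrt{t_j}}(z_j)}F_j(x,t_j)\di\mathcal{H}^N(x)=N c_N^2\pm\Psi(\epsilon;K,N)$ for $z_j$ in the transferred Vitali balls, integrated over the covering and complemented by \eqref{eq:riem est} on the remaining small set; you should phrase and use the bound in this averaged form. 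Third, for the ``in particular'' statement, Lemma~\ref{lem:dct} and the a.e.\ convergence $\omega_N\sqrt{t}^N/\mathcal{H}^N(B_{\sqrt t}(x))\to1$ live on a fixed space, so they cannot be invoked ``exactly as in Theorem~\ref{th:convergence2}'' along the varying sequence; you need the uniform smallness of $\int_{X_j}\bigl|\omega_N\sqrt{t_j}^N/\mathcal{H}^N(B_{\sqrt{t_j}}(x))-1\bigr|^2\di\mathcal{H}^N(x)$, which again follows from the almost-rigidity/covering argument (equivalently, the uniform Ahlfors regularity of noncollapsed spaces behind Corollary~\ref{cor:ahl}, which is what the paper cites). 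All three gaps are repairable with tools you already invoke, but they are precisely the points where the proof has real content beyond the fixed-space results of Section~\ref{sec:5}.
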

\begin{proof}
First let us check the $L^2$-weak convergence.
For that, by Lemma \ref{lem:suf}, it is enough to prove that for all $L^2$-strong convergent sequences $h_j \in \mathrm{LIP} (X_j, \dist_j), f_j \in \mathrm{Test}F(X_j, \dist_j, \mathcal{H}^N)$ to $h \in \mathrm{LIP} (X, \dist), f \in \mathrm{Test}F(X, \dist, \mathcal{H}^N)$, respectively, with 
$$L:=\sup_j(\|h_j\|_{L^{\infty}}+\||\nabla h_j|_j\|_{L^{\infty}}+\|f_j\|_{L^{\infty}}+\||\nabla f_j|_j\|_{L^{\infty}}+\|\Delta^j f_j\|_{L^{\infty}})<\infty,$$ 
it holds that as $j \to \infty$
\begin{align}\label{goal}
\int_{X_j}\int_{X_j}t_j\mathcal{H}^N(B_{\sqrt{t_j}}(x))\langle \nabla_xp_j(x, y, t_j), h_j(x)\nabla f_j(x)\rangle^2_j\di \mathcal{H}^N(x)\di \mathcal{H}^N(y) \to c_N\int_X|h\nabla f|^2\di \mathcal{H}^N.
\end{align}
Fix $0<\epsilon <1$.
For all $x \in \mathcal{R}_N \cap H(f)$, there exists $0<r(x)<\epsilon$ such that $(\overline{B}_t(x), \dist, x)$ is $(\epsilon t)$-Gromov-Hausdorff close to $(\overline{B}_t(0_N), \dist_{\mathbb{R}^N}, 0_N)$ for all $0<t \le 2r(x)$ and 
\begin{equation}\label{fd}
\fint_{B_t(x)}||\nabla f|^2-|\nabla f|^{*2}(x)|\di \mathcal{H}^N \le \epsilon \qquad \forall t\in (0, 2r(x)].
\end{equation}
Then, applying Vitali's theorem to the cover $\mathcal{B}:=\{\overline{B}_t(x)\}_{x \in \mathcal{R}_N \cap H(f), t<r(x)}$ of $\mathcal{R}_N \cap H(f)$  we obtain a disjoint subcover $\hat{\mathcal{B}}:=\{\overline{B}_{r_i}(x_i)\}_{i \in \mathbb{N}} \subset \mathcal{B}$ such that  
$$
\mathcal{R}_N \cap H(f) \setminus \bigcup_{i=1}^l \overline{B}_{r_i}(x_i) \subset \bigcup_{i=\ell+1}^{\infty}\overline{B}_{5r_i}(x_i) \qquad \forall\ell\in \mathbb{N}.
$$
Take $\ell=N_0$ with $\sum_{i=N_0+1}\mathcal{H}^N(\overline{B}_{r_i}(x_i)) \le \epsilon$. In particular 
\begin{align}\label{alin}
\mathcal{H}^N(X)&=\sum_{i=1}^{N_0}\mathcal{H}^N(\overline{B}_{r_i}(x_i)) \pm \sum_{i=N_0+1}^{\infty}\mathcal{H}^N(\overline{B}_{5r_i}(x_i)) =\sum_{i=1}^{N_0}\mathcal{H}^N(\overline{B}_{r_i}(x_i)) \pm \Psi (\epsilon; K, N).
\end{align}
For all $i=1, 2, \ldots, N_0$, fix a convergent sequence $(x_{i, j})\subset X_j $ with $x_{i,j}\to x_i \in X$.
Note that for all such $i$, \eqref{fd} yields
\begin{equation}\label{8y8}
\fint_{B_{2r_i}(x_{i, j})}||\nabla f_j|_j^2-|\nabla f|^{*2}(x_i)|\di \mathcal{H}^N\le 2\epsilon
\end{equation}
for any sufficiently large $j$.

For any sufficiently large $j$, since $(\overline{B}_{2r_i}(x_{i, j}), \dist_j, x_{i, j})$ is $(2\epsilon r_i)$-Gromov-Hausdorff close to $(\overline{B}_{2r_i}(0_N), \dist_{\mathbb{R}^N}, 0_N)$, we see that $(\overline{B}_{r_i}(y), \dist_j, y)$ is $(3\epsilon r_i)$-Gromov-Hausdorff close to $(\overline{B}_{r_i}(0_N), \dist_{\mathbb{R}^N}, 0_N)$ for all $y \in \overline{B}_{r_i}(x_{i, j})$.
In particular, Theorem~\ref{aaa4ff} (after rescaling $r_i^{-1}\dist_j$) yields
$$
\frac{\mathcal{H}^N(B_{r_i}(y))}{\mathcal{H}^N(B_{r_i}(0_N))} =1 \pm \Psi (\epsilon; K, N).
$$
Applying the Bishop-Gromov inequality with (\ref{eqrigi}) yields
\begin{equation}
\frac{\mathcal{H}^N(B_{t}(y))}{\mathcal{H}^N(B_{t}(0_N))} =1 \pm \Psi (\epsilon; K, N) \quad \forall t\in (0, r_i].
\end{equation}
Thus Proposition \ref{almostrigidity} yields that the $p\mathbb{G}_W$-distance between $(X_j, t^{-1}\dist_j,  \mathcal{H}^N(B_t(y))^{-1}\mathcal{H}^N, y)$ and $(\mathbb{R}^N, \dist_{\mathbb{R}^N},  \omega_N^{-1}\mathcal{H}^N, 0_N)$ is at most  $\Psi (t/r_i, \epsilon; K, N)$.
Combining this (as $t:=\sqrt{s}$) with Corollary \ref{applied} yields that for all $i$, for all sufficiently large $j$, 
\begin{align*}
&\int_{X_j}s\mathcal{H}^N(B_{\sqrt{s}}(x))\langle \nabla_xp_j(x, y, s), h_j(x)\nabla f_j(x)\rangle^2_j\di \mathcal{H}^N(x)\\
&=c_Nh_j(y)^2\fint_{B_{\sqrt{s}}(y)}|\nabla f_j|_j^2(x)\di \mathcal{H}^N(x)\pm \Psi(\sqrt{s}/r_i, \epsilon; K, N, L) \quad \forall y \in B_{r_i}(x_i), 
\forall s\in (0,r_i^2].
\end{align*}
In particular
\begin{align}\label{key quant}
&\int_{X_j}t_j\mathcal{H}^N(B_{\sqrt{t_j}}(x))\langle \nabla_xp_j(x, y, t_j), h_j(x)\nabla f_j(x)\rangle^2_j\di \mathcal{H}^N(x) \nonumber \\
&=c_Nh_j(x_{i, j})^2\fint_{B_{\sqrt{t_j}}(y)}|\nabla f_j|_j^{2}(x)\di \mathcal{H}^N(x)\pm \Psi(\epsilon; K, N, L) \quad \forall y \in B_{r_i}(x_{i, j}).
\end{align}
On the other hand, letting $d:=\sup_j\mathrm{diam}\,(X_j, \dist_j)$ with (\ref{maxfunc}) and (\ref{8y8}) shows
\begin{align*}
&\mathcal{H}^N\left(\left\{y \in B_{r_i}(x_{i, j}); \left|\fint_{B_{\sqrt{t_j}}(y)}|\nabla f_j|^2(x)\di \mathcal{H}^N(x)-|\nabla f|^{*2}(x_i)\right|>\sqrt{\epsilon}\right\}\right) \\
&\le C_1(K, N, d)\sqrt{\epsilon}\mathcal{H}^N(B_{r_i}(x_{i, j}))
\end{align*}
which easily yields
\begin{equation}\label{key quant1}
\int_{B_{r_i}(x_{i, j})}\fint_{B_{\sqrt{t_j}}(y)}|\nabla f_j|_j^{2}(x)\di \mathcal{H}^N(x)\di \mathcal{H}^N(y) =(|\nabla f|^{*2}(x_i) \pm C_2(K, N, d)\sqrt{\epsilon})\mathcal{H}^N(B_{r_i}(x_{i, j})).
\end{equation}

Note that by the gradient heat kernel estimate (\ref{eq:equi lip}), the $L^2$ norm of the function 
\begin{equation}\label{ddx}
y \mapsto \int_{X_j}t_j\mathcal{H}^N(B_{\sqrt{t_j}}(x))\langle \nabla_xp_j(x, y, t_j), h_j\nabla f_j\rangle^2_j\di \mathcal{H}^N(x)
\end{equation}
is bounded from above by a constant depending only on $K, N$ and $L$.
Moreover it is clear that for all $\phi \in L^2(X_j, \dist_j, \mathcal{H}^N)$ with $\|\phi \|_{L^2} \le D<\infty$, if a Borel subset $A$ of $X_j$ satisfies $\mathcal{H}^N(X_j \setminus A) \le \delta$, then 
\begin{equation}\label{cr}
\int_{X_j}\phi\di \mathcal{H}^N=\int_{A}\phi \di \mathcal{H}^N \pm \delta^{1/2}D
\end{equation}
because of the Cauchy-Schwartz inequality.
Applying (\ref{cr}) for $A:=\bigcup_i^{N_0}B_{r_i}(x_{i, j})$ and the function defined in (\ref{ddx}) with (\ref{key quant}) and (\ref{key quant1}) shows that for any sufficiently large $j$
\begin{align}
&\int_{X_j}\int_{X_j}t_j\mathcal{H}^N(B_{\sqrt{t_j}}(x))\langle \nabla_xp_j(x, y, t_j), h_j(x)\nabla f_j(x)\rangle^2_j\di \mathcal{H}^N(x)\di \mathcal{H}^N(y) \nonumber \\
&=\sum_{i=1}^{N_0}\int_{B_{r_i}(x_{i, j})}\int_{X_j}t_j\mathcal{H}^N(B_{\sqrt{t_j}}(x))\langle \nabla_xp_j(x, y, t_j), h_j(x)\nabla f_j(x)\rangle^2_j\di \mathcal{H}^N(x)\di \mathcal{H}^N(y) \pm \Psi (\epsilon; K, N, L) \nonumber \\
&=\sum_{i=1}^{N_0}c_Nh_j(x_{i, j})^2|\nabla f|^{*2}(x_i)\mathcal{H}^N(B_{r_i}(x_{i, j})) \pm \Psi(\epsilon; d, K, N, L, V) \nonumber \\
&=(1 \pm \Psi (\epsilon; K, N, L))\sum_{i=1}^{N_0}c_Nh(x_i)^2|\nabla f|^{*2}(x_i)\mathcal{H}^N (B_{r_i}(x_i)) \pm \Psi(\epsilon; d, K, N, L, V) \nonumber \\
&=(1 \pm \Psi (\epsilon; K, N, L))\int_Xc_N|h\nabla f|^2\di \mathcal{H}^N \pm \Psi (\epsilon; d, K, N, L, V),
\end{align}
where $V:=\sup_j\mathcal{H}^N(X_j)<\infty$.
Since $\epsilon$ is arbitrary, we have \eqref{goal} and then the claimed $L^2$-weak convergence.

In order to improve this to the $L^2$-strong convergence, let us remark that under the same notation as above, by an argument similar to 
the proof of Proposition~\ref{rem:blowuptensor}, we can prove that for all $z \in B_{r_i}(x_i)$ and all $z_j \in B_{r_i}(x_{i, j})$ with $z_j \to z$,
\begin{equation}
\fint_{B_{\sqrt{t_j}}(z_j)}F_j(x, t_j)\di \mathcal{H}^N(x)=N (c_N)^2 \pm \Psi (\epsilon; K, N)
\end{equation}
for any sufficiently large $j$, where 
$$
F_j(x, t):=\left(t\mathcal{H}^N(B_{\sqrt{t}}(x))\right)^2\left|\int_{X_j}\dist_xp_j(x, y, t)\otimes \dist_xp_j(x, y, t) \di \mathcal{H}^N(y)\right|_{HS}^2.
$$
Therefore we have
\begin{align}
&\int_{X_j}\fint_{B_{\sqrt{t_j}}(z)}F_j(x, t_j)\di \mathcal{H}^N(x) \di \mathcal{H}^N(z) \nonumber \\
&=\sum_{i=1}^{N_0}\int_{B_{r_i}(x_{i, j})}\fint_{B_{\sqrt{t_j}}(z)}F_j(x, t_j)\di \mathcal{H}^N(x) \di \mathcal{H}^N(z) \pm \Psi (\epsilon; K, N) \nonumber \\
&=\sum_{i=1}^{N_0}N(c_N)^2\mathcal{H}^N(B_{r_i}(x_{i, j})) \pm \Psi (\epsilon; K, N) \nonumber \\
&=(1 \pm \Psi (\epsilon; K, N))N(c_N)^2\mathcal{H}^N(X) \pm \Psi (\epsilon; K, N) \nonumber \\
&=(1 \pm \Psi (\epsilon; K, N))\int_X|c_N{\bf g}^X|_{HS}^2\di \mathcal{H}^N \pm \Psi(\epsilon; K, N)
\end{align}
which yields
\begin{equation}
\int_{X_j}\fint_{B_{\sqrt{t_j}}(z)}F_j(x, t_j)\di \mathcal{H}^N(x) \di \mathcal{H}^N(z) \to \int_X|c_N{\bf g}^X|_{HS}^2\di \mathcal{H}^N 
\end{equation}
because $\epsilon$ is arbitrary.
Note that it is easy to check by Proposition \ref{almostrigidity} that for all $i=1, 2, \ldots, N_0$,
$$
\left| 1-\int_{B_{\sqrt{t_j}}(z)}\frac{1}{\mathcal{H}^N(B_{\sqrt{t_j}}(x))}\di \mathcal{H}^N(x)\right| \le \Psi (\epsilon; K, N) \qquad \forall z \in B_{r_i}(x_{i, j})
$$
for any sufficiently large $j$. Thus an argument similar to that in the begining of Subsection~\ref{proof} shows 
\begin{align*}
\lim_{j \to \infty}\int_{X_j}|t_j\mathcal{H}^N(B_{\sqrt{t_j}}(x)){\bf g}_{t_j}^{X_j}|_{HS}^2\di \mathcal{H}^N(x)
&=\lim_{j\to \infty}\int_{X_j}F_j(x, t_j)\di \mathcal{H}^N(x) \\
&= \lim_{j \to \infty}\int_{X_j}\fint_{B_{\sqrt{t_j}}(z)}F_j(x, t_j)\di \mathcal{H}^N(x) \di \mathcal{H}^N(z) \\
&= \int_X|c_N{\bf g}^X|_{HS}^2\di \mathcal{H}^N
\end{align*}
which completes the proof of the desired $L^2$-strong convergence. 

Finally, the remaining convergence result comes from Corollary~\ref{cor:ahl}.
\end{proof}

Let us give positive answers to the remaining questions. 

\begin{theorem}\label{thm:quantitativ}
For all $\epsilon>0$ and $1\le p <\infty$, there exists $t_0:=t_0(K, N, v, d, \epsilon, p)>0$ such that any compact noncollapsed 
$\RCD^*(K, N)$ space $(X, \dist, \mathcal{H}^N)$ with $\mathcal{H}^N(X) \ge v$ and $\mathrm{diam}\,(X, \dist) \le d$ satisfies
\begin{equation}\label{eq:quant lp}
\||\omega_Nt^{(N+2)/2}{\bf g}_t-c_N{\bf g}|_{HS}\|_{L^p} \le \epsilon \qquad \forall t\in (0,t_0].
\end{equation}
\end{theorem}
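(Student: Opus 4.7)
The plan is to argue by contradiction, relying on Theorem~\ref{thm: mgh l2 strong} and the compactness of noncollapsed $\RCD^*(K,N)$ spaces with controlled volume and diameter. Suppose the statement fails: there exist $\epsilon_0>0$, $p_0\in[1,\infty)$ and a sequence of compact noncollapsed $\RCD^*(K,N)$ spaces $(X_j,\dist_j,\mathcal{H}^N)$ with $\mathcal{H}^N(X_j)\geq v$, $\mathrm{diam}(X_j,\dist_j)\leq d$, together with $t_j\downarrow 0$, such that
\begin{equation}\label{eq:contrap}
\||\omega_N t_j^{(N+2)/2}{\bf g}_{t_j}^{X_j}-c_N{\bf g}^{X_j}|_{HS}\|_{L^{p_0}(X_j,\mathcal{H}^N)}>\epsilon_0.
\end{equation}
By Theorem~\ref{aaa4ff}, the set of pointed noncollapsed $\RCD^*(K,N)$ spaces with $\mathcal{H}^N(B_1(\cdot))\geq v'$ (for some $v'=v'(v,d,K,N)>0$ deduced from Bishop--Gromov) is mGH-compact. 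So, up to a subsequence, $(X_j,\dist_j,\mathcal{H}^N)$ mGH-converges to a compact noncollapsed $\RCD^*(K,N)$ space $(X,\dist,\mathcal{H}^N)$ (noncollapsedness of the limit is granted by Theorem~\ref{aaa4ff}, as $n_j\equiv N$ in \eqref{lowersemidim}).

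Next I would invoke the two strong convergence results: on the one hand, Theorem~\ref{thm: mgh l2 strong} yields that $\omega_N t_j^{(N+2)/2}{\bf g}_{t_j}^{X_j}$ converges $L^2$-strongly to $c_N{\bf g}^X$; on the other hand, Remark~\ref{6666000}, combined with the noncollapsing identity $\dim_{\dist_j,\mathcal{H}^N}(X_j)=N=\dim_{\dist,\mathcal{H}^N}(X)$, implies that $c_N{\bf g}^{X_j}$ converges $L^2$-strongly to $c_N{\bf g}^X$. Since both sequences have the same $L^2$-strong limit,
\begin{equation}\label{eq:l2zero}
\lim_{j\to\infty}\int_{X_j}|\omega_N t_j^{(N+2)/2}{\bf g}_{t_j}^{X_j}-c_N{\bf g}^{X_j}|_{HS}^2\di\mathcal{H}^N=0.
\end{equation}

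To upgrade the $L^2$-convergence \eqref{eq:l2zero} to an $L^{p_0}$-convergence I would establish a uniform $L^\infty$ bound for the difference. By Bishop--Gromov, the assumptions $\mathcal{H}^N(X_j)\geq v$ and $\mathrm{diam}(X_j,\dist_j)\leq d$ force
$$
\mathcal{H}^N(B_{\sqrt{t}}(x))\geq C_0(K,N,v,d)\,t^{N/2}\qquad\forall x\in X_j,\,\forall t\in(0,d^2],
$$
so that $\omega_N t^{N/2}/\mathcal{H}^N(B_{\sqrt{t}}(x))$ is uniformly bounded. Combining with the bound \eqref{eq:riem est} of Proposition~\ref{prop:riemanexist}, namely $t\mathcal{H}^N(B_{\sqrt{t}}(\cdot))g_{t}^{X_j}\le C(K,N)g^{X_j}$, and the pointwise identity $|{\bf g}^{X_j}|_{HS}\equiv\sqrt{N}$ from Lemma~\ref{riemdim}, one obtains a constant $C_1=C_1(K,N,v,d)$ such that
$$
|\omega_N t^{(N+2)/2}{\bf g}_{t}^{X_j}-c_N{\bf g}^{X_j}|_{HS}\leq C_1\qquad\text{$\mathcal{H}^N$-a.e. in $X_j$, $\forall t\in(0,d^2]$.}
$$

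With this uniform bound in hand, for $p_0\in[1,2]$ Hölder's inequality (using $\mathcal{H}^N(X_j)\leq C_D(K,N,d)$) directly converts \eqref{eq:l2zero} into $L^{p_0}$-convergence to $0$. For $p_0>2$ one writes
$$
\int_{X_j}|\omega_N t_j^{(N+2)/2}{\bf g}_{t_j}^{X_j}-c_N{\bf g}^{X_j}|_{HS}^{p_0}\di\mathcal{H}^N
\leq C_1^{p_0-2}\int_{X_j}|\omega_N t_j^{(N+2)/2}{\bf g}_{t_j}^{X_j}-c_N{\bf g}^{X_j}|_{HS}^{2}\di\mathcal{H}^N,
$$
which converges to $0$ by \eqref{eq:l2zero}. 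Either way we contradict \eqref{eq:contrap}, completing the proof. The delicate step is the double $L^2$-strong convergence: verifying that both the pullback metrics and the canonical metrics converge strongly to the \emph{same} limit $c_N{\bf g}^X$ requires the full force of Theorem~\ref{thm: mgh l2 strong} (for the former) and the noncollapsing rigidity of Theorem~\ref{aaa4ff} (for the latter); once this is secured, the $L^\infty$ bound and interpolation are routine.
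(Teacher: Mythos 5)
Your proposal is correct and follows essentially the same route as the paper: reduce to $p=2$ via the uniform $L^\infty$ bound coming from \eqref{eq:riem est} (plus the Bishop--Gromov volume lower bound and $|{\bf g}|_{HS}=\sqrt N$), argue by contradiction using the compactness of noncollapsed $\RCD^*(K,N)$ spaces from Theorem~\ref{aaa4ff}, and conclude from the $L^2$-strong convergence of $\omega_N t_j^{(N+2)/2}{\bf g}_{t_j}^{X_j}$ to $c_N{\bf g}^X$ (Theorem~\ref{thm: mgh l2 strong}) together with the noncollapsed $L^2$-strong convergence of the canonical metrics (Remark~\ref{6666000}). You merely make explicit some steps the paper compresses (the $L^\infty$ interpolation and the comparison of the two strongly convergent sequences), so no further comment is needed.
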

\begin{proof} Note that thanks to \eqref{eq:riem est}, it is enough to check the statement in the case when $p=2$ only.
Assume it does not hold. Then there exist $\epsilon_0 >0$, $t_i \to 0^+$ and a sequence of compact noncollapsed $\RCD^*(K, N)$ 
spaces $(X_i, \dist_i, \mathcal{H}^N)$ with $\mathrm{diam}\,(X_i, \dist_i) \le d$, $\mathcal{H}^N(X_i)\ge v$ satisfying
\begin{equation}\label{uy8}
\||\omega_Nt_i^{(N+2)/2}{\bf g}_{t_i}^{X_i}-c_N{\bf g}^{X_i}|_{HS}\|_{L^2} \ge \epsilon_0 \quad \forall i.
\end{equation}
Thanks to Theorem~\ref{aaa4ff} we know that, up to a subsequence, $(X_i, \dist_i, \mathcal{H}^N)$ converge in the
measured Gromov-Hausdorff sense to a compact noncollapsed $\RCD^*(K, N)$ space $(X, \dist, \mathcal{H}^N)$.
Then, Theorem~\ref{thm: mgh l2 strong} yields a contradiction.
\end{proof}

\begin{theorem}\label{label}
For all $d>0$, $v>0$, $\epsilon>0$ and $1\le p <\infty$ let $t_0:=t_0(K, N, v, d, \epsilon, p) >0$ be given by
Theorem~\ref{thm:quantitativ}. Then 
for all $0<t \le t_0$ and any compact noncollapsed 
$\RCD^*(K, N)$ space $(X, \dist, \mathcal{H}^N)$ with  $\mathcal{H}^N(X) \ge v$ and $\mathrm{diam} (X, \dist) \le d$, we have
\begin{equation}\label{pppp0}
\||\omega_Nt^{(N+2)/2}{\bf g}_t^\ell-c_N{\bf g}|_{HS}\|_{L^p}\le \epsilon \qquad \forall\ell\ge N_0=N_0(K, N, v, d, \epsilon, p, t),
\end{equation}
where ${\bf g}_t^\ell$ is the finite-dimensional approximation in \eqref{eq:finalfinal}.
\end{theorem}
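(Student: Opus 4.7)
The strategy is to reduce the claim for $\mathbf{g}_t^\ell$ to the corresponding claim for the full metric $\mathbf{g}_t$ (i.e.\ Theorem~\ref{thm:quantitativ}) via a tail estimate on the eigenfunction expansion \eqref{eq:explicit expression}. By triangle inequality,
\[
|\omega_N t^{(N+2)/2}\mathbf{g}_t^\ell - c_N \mathbf{g}|_{HS} \leq |\omega_N t^{(N+2)/2}\mathbf{g}_t - c_N \mathbf{g}|_{HS} + \omega_N t^{(N+2)/2}|\mathbf{g}_t - \mathbf{g}_t^\ell|_{HS}
\]
pointwise $\meas$-a.e. After absorbing constants into $t_0$ (equivalently, applying Theorem~\ref{thm:quantitativ} with the constant $\epsilon/2$ in place of $\epsilon$), the $L^p$ norm of the first term is already $\leq \epsilon/2$ for $t \leq t_0$. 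The whole task thus reduces to choosing $N_0$ so that $\omega_N t^{(N+2)/2}\||\mathbf{g}_t - \mathbf{g}_t^\ell|_{HS}\|_{L^p} \leq \epsilon/2$ for all $\ell \geq N_0$, where $t$ is now fixed.

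First, I handle $p=2$. By \eqref{eq:explicit expression}, $\mathbf{g}_t - \mathbf{g}_t^\ell = \sum_{i>\ell} e^{-2\lambda_i t} \dist\phi_i \otimes \dist\phi_i$ with $HS$-convergence, and the computation in \eqref{eq:disuno} (applied with the truncation starting at $\ell+1$ rather than $1$) together with \eqref{eigen exp} gives
\[
\int_X |\mathbf{g}_t - \mathbf{g}_t^\ell|_{HS}^2 \di\meas = \sum_{i,j > \ell} e^{-2(\lambda_i+\lambda_j)t} \int_X \langle\nabla\phi_i,\nabla\phi_j\rangle^2 \di\meas \leq \sum_{i>\ell} e^{-2\lambda_i t} \int_X g_t(\nabla\phi_i,\nabla\phi_i)\di\meas.
\]
Combining \eqref{eq:riem est} with the lower bound $\meas(B_{\sqrt{t}}(x)) \geq c(K,N,v,d)\,t^{N/2}$ (which follows from $\meas(X)\geq v$, $\diam(X,\dist)\leq d$, and Bishop–Gromov \eqref{eq:BishopGromov}), we obtain $g_t \leq C(K,N,v,d,t)\,g$, hence $\int_X g_t(\nabla\phi_i,\nabla\phi_i)\di\meas \leq C(K,N,v,d,t)\lambda_i$. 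Thus
\[
\||\mathbf{g}_t - \mathbf{g}_t^\ell|_{HS}\|_{L^2}^2 \leq C(K,N,v,d,t)\sum_{i>\ell}\lambda_i e^{-2\lambda_i t}.
\]
By Proposition~\ref{prop:lowerbound} (Weyl-type asymptotics in the appendix), the full series $\sum_i \lambda_i e^{-2\lambda_i t}$ converges, so its tail can be made smaller than any prescribed quantity by choosing $\ell$ large, depending on $K,N,v,d,t,\epsilon$. Taking this threshold to be $(\epsilon/(2\omega_N t^{(N+2)/2}))^2$ times $C^{-1}$ yields the desired $N_0$ for $p=2$.

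Next, I extend to arbitrary $p \in [1,\infty)$ by interpolation, exploiting the positivity of the tail. Since $\mathbf{g}_t - \mathbf{g}_t^\ell = \sum_{i>\ell} e^{-2\lambda_i t} \dist\phi_i \otimes \dist\phi_i$ is a sum of positive semi-definite tensors, one has $0 \leq g_t - g_t^\ell \leq g_t$ in the order of Riemannian semi metrics (recall \eqref{eq:partial_order}). Combined with \eqref{eq:riem est} and the lower bound $\meas(B_{\sqrt{t}}(x)) \geq c(K,N,v,d)t^{N/2}$ used above, this gives the pointwise bound
\[
t^{(N+2)/2}|\mathbf{g}_t - \mathbf{g}_t^\ell|_{HS} \leq C(K,N,v,d) \qquad \text{$\meas$-a.e. in $X$,}
\]
uniformly in $\ell$. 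For $p \leq 2$ the conclusion is immediate from the $p=2$ case via H\"older, since $\meas(X) \leq C(K,N,d)$. For $p > 2$, interpolation between $L^2$ and $L^\infty$ yields
\[
\|t^{(N+2)/2}|\mathbf{g}_t - \mathbf{g}_t^\ell|_{HS}\|_{L^p} \leq \|t^{(N+2)/2}|\mathbf{g}_t - \mathbf{g}_t^\ell|_{HS}\|_{L^2}^{2/p}\,\|t^{(N+2)/2}|\mathbf{g}_t - \mathbf{g}_t^\ell|_{HS}\|_{L^\infty}^{1-2/p},
\]
which can be made $\leq \epsilon/(2\omega_N)$ by taking $\ell$ large enough, depending on $K,N,v,d,\epsilon,p,t$. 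Combining with the first term finishes the proof.

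The only subtle point is the dependence of $N_0$ on $t$, which is unavoidable since tail bounds on $\sum_i \lambda_i e^{-2\lambda_i t}$ degenerate as $t \downarrow 0$; this is why the theorem's $N_0$ is allowed to depend on $t$.
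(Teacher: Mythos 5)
Your proposal is correct, but it proves the key step by a genuinely different route than the paper. Both arguments reduce, via the triangle inequality and Theorem~\ref{thm:quantitativ}, to showing a uniform tail bound of the form $\||\omega_N t^{(N+2)/2}({\bf g}_t-{\bf g}_t^\ell)|_{HS}\|_{L^2}\le\epsilon$ for $\ell\ge N_0(K,N,v,d,t,\epsilon)$, and both handle general $p$ through the uniform $L^\infty$ bound coming from \eqref{eq:riem est} together with the noncollapsed volume lower bound $\mathcal{H}^N(B_{\sqrt t}(x))\ge c(K,N,v,d)t^{N/2}$. The difference is in how the uniformity of $N_0$ over the class of spaces is obtained: the paper argues by contradiction, using the compactness of the class of noncollapsed $\RCD^*(K,N)$ spaces (Theorem~\ref{aaa4ff}), the mGH-continuity of $g_t$ for fixed $t>0$ (Theorem~\ref{th:convergence3}) and the spectral convergence of Lemma~\ref{lem:conveigenfunct}; you instead give a direct quantitative estimate, bounding the tail by $C(K,N,v,d,t)\sum_{i>\ell}\lambda_ie^{-2\lambda_i t}$ as in \eqref{eq:disuno}/\eqref{2} and then invoking the uniform eigenvalue lower bound $\lambda_i\ge C_0(K,N,d)\,i^{2/N}$ of Proposition~\ref{prop:lowerbound}. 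Your route buys an (in principle) explicit $N_0$ and avoids all the convergence machinery; the paper's route is shorter given that Theorem~\ref{th:convergence3} is already available and requires no tracking of constants.

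Two small points you should make explicit. First, the phrase ``the full series converges, so its tail can be made small'' does not by itself give uniformity over the class; what does is the uniform lower bound $\lambda_i\ge C_0 i^{2/N}$ combined with the monotonicity of $\lambda\mapsto\lambda e^{-2\lambda t}$ on $[(2t)^{-1},\infty)$, which dominates the tail by the space-independent convergent series $\sum_{i>\ell}C_0 i^{2/N}e^{-2tC_0 i^{2/N}}$; also note that Proposition~\ref{prop:lowerbound} is stated for $\meas(X)=1$ and under $\lambda_1\ge D^{-2}$, so one should normalize $\mathcal{H}^N$ (which leaves the spectrum unchanged) and use the $(2,2)$-Poincar\'e bound $\lambda_1\ge C(K,N,d)>0$ to choose $D=D(K,N,d)$. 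Second, your replacement of $\epsilon$ by $\epsilon/2$ in Theorem~\ref{thm:quantitativ} changes $t_0$, whereas the statement fixes $t_0=t_0(\epsilon)$; this is harmless $\epsilon$-bookkeeping (the paper's own proof is equally loose on this point), but worth a remark.
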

\begin{proof} Again, thanks to \eqref{eq:riem est}, it is enough to check the statement in the case when $p=2$ only.
It suffices to check that for all $t >0$ and $\epsilon >0$, there exists $N_0:=N_0(K, N, v, d, t, \epsilon) \ge 1$ such that for all $\ell\ge N_0$ and any compact noncollapsed $\RCD^*(K, N)$ space $(X, \dist, \mathcal{H}^N)$ with $\mathcal{H}^N(X) \ge v$ and $\mathrm{diam}\,(X, \dist) \le d$, we have
\begin{equation}\label{n558}
\||\omega_Nt^{(N+2)/2}({\bf g}_t-{\bf g}_t^\ell)|_{HS}\|_{L^2} \le \epsilon
\end{equation}
because applying \eqref{n558} for $t\le t_0$ yields \eqref{pppp0}.

Assume that \eqref{n558} is not satisfied.
Then, as in the proof of Theorem~\ref{thm:quantitativ}, there exist $\epsilon_0 >0$, $N_j \to \infty$ and a mGH-convergent sequence $(X_j, \dist_j, \mathcal{H}^N) \stackrel{mGH}{\to} (X, \dist, \mathcal{H}^N)$ of compact noncollapsed $\RCD^*(K, N)$ spaces such that 
\begin{equation}\label{wdp}
\||\omega_Nt^{(N+2)/2}({\bf g}_{t}^{X_j}-({\bf g}_{t}^{X_j})^{N_j})|_{HS}\|_{L^2}\ge \epsilon_0.
\end{equation}
Theorem~\ref{th:convergence3} with Lemma~\ref{lem:conveigenfunct} yields 
\begin{align*}
\||\omega_Nt^{(N+2)/2}({\bf g}_t^X-({\bf g}_t^X)^\ell)|_{HS}\|_{L^2}&=\lim_{j \to \infty}\||\omega_Nt^{(N+2)/2}({\bf g}_t^{X_j}-({\bf g}_t^{X_j})^\ell)|_{HS}\|_{L^2} \\
&\ge \limsup_{j \to \infty}\||\omega_Nt^{(N+2)/2}({\bf g}_{t}^{X_j}-({\bf g}_{t}^{X_j})^{N_j})|_{HS}\|_{L^2}\ge \epsilon_0,\nonumber
\end{align*}
for all $\ell$, which is a contradiction because the left hand side converges to $0$ as $\ell\to \infty$,
\end{proof}

Theorems~\ref{thm:quantitativ} and \ref{label} are new even for smooth Riemannian manifolds and Alexandrov spaces. 
Moreover, recall that these convergence results are sharp because of Remark~\ref{counterex}.

\section{Appendix: expansion of the heat kernel}\label{sec:appendix}

Throughout this section we assume that $(X,\dist,\meas)$ is a compact metric measure space with $\meas(X)=1$ (this is not restrictive, up to a normalization), $\mathrm{diam}\,(X, \dist)>0$ and $\supp\meas=X$. 

The main aim of this section is to provide a complete proof of the expansions
\begin{equation}\label{eq:expansion1'}
p(x,y,t) = \sum_{i \ge 0} e^{- \lambda_i t} \phi_i(x) \phi_i (y) \qquad \text{in $C(X \times X)$}
\end{equation}
for any $t>0$ and
\begin{equation}\label{eq:expansion2'}
p(\cdot,y,t) = \sum_{i \ge 0} e^{- \lambda_i t} \phi_i(y) \phi_i \qquad \text{in $H^{1,2}(X,\dist,\meas)$}
\end{equation}
for any $y \in X$ and $t>0$, where $p$ denotes the locally H\"older representative of the heat kernel in the case when, in addition,
$(X,\dist,\meas)$ is a $\RCD^{*}(K,N)$ space. Our goal is to justify the convergence of the series in \eqref{eq:expansion1'} and
\eqref{eq:expansion2'}: as soon as this is secured, a standard argument shows that they provide good representatives of the
heat kernel. Here and in the sequel $0=\lambda_0<\lambda_1 \le \lambda_2 \le \cdots \to +\infty$ are the eigenvalues of $-\Delta$, and $\phi_0, \phi_1, \phi_2, \ldots$ are corresponding eigenfunctions forming an orthonormal basis of $L^2(X,\meas)$, with $\phi_0\equiv 1$.

In the following proposition we obtain an explicit estimate on the $L^\infty$ norm and the Lipschitz constant of
eigenfunctions of $-\Delta$ in terms of the size of eigenvalues. Recall that, under our assumptions, we can use
the continuous version of the $\phi_i$  which are even Lipschitz \cite{Jiang}.
It is worth pointing out that a local $(2. 2)$-Poincar\'e inequality for $\RCD^*(K, N)$ spaces (recall just after \eqref{eq:locPoincaré}) 
yields $\lambda_1\ge C(K, N, d)>0$ if $\mathrm{diam} (X) \le d$.

\begin{proposition} \label{prop:Jiang} 
Assuming that $(X, \dist, \meas)$ is a compact $\RCD^*(K, N)$ space, and that $D>0$ is such that $\mathrm{diam} (X, \dist) \le D$ and $\lambda_i \geq D^{-2}$, one has for some $C=C(K, N, D)>0$
\[
\|\phi_i\|_{L^\infty} \leq C \lambda_i^{N/4}, \qquad \| \nabla \phi_i \|_{L^\infty} \leq C \lambda_i^{(N+2)/4}.
\]
\end{proposition}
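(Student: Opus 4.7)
The strategy is the classical one of bootstrapping the identity $\phi_i = e^{\lambda_i t} P_t \phi_i$ (valid since $-\Delta \phi_i = \lambda_i \phi_i$) at the optimal time scale $t = 1/\lambda_i$, which by the assumption $\lambda_i \geq D^{-2}$ satisfies $0 < t \le D^2$ and therefore stays in a regime where all constants depending on time are bounded.

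For the $L^\infty$ bound, I write for any $x \in X$
$$
|\phi_i(x)| = e^{\lambda_i t}\left|\int_X p(x,y,t)\phi_i(y)\di\meas(y)\right|
$$
and apply the Cauchy--Schwarz inequality together with $\|\phi_i\|_{L^2} = 1$ and the semigroup-plus-symmetry identity $\int_X p^2(x,y,t)\di\meas(y) = p(x,x,2t)$ to get $|\phi_i(x)|^2 \le e^{2\lambda_i t} p(x,x,2t)$. I then invoke the Gaussian upper bound \eqref{eq:gaussian}, which yields $p(x,x,2t) \le C_1 \meas(B_{\sqrt{2t}}(x))^{-1} e^{2C_2 t}$. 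The volume denominator is controlled from below by means of the Bishop--Gromov inequality \eqref{eq:BishopGromov}: since $\mathrm{diam}(X,\dist)\le D$ and $\meas(X)=1$, one has $\meas(B_r(x))\ge c(K,N,D) r^N$ for all $r\in (0,D]$. Setting $t = 1/\lambda_i$ makes $e^{2\lambda_i t}=e^2$ and $e^{2C_2 t}\le e^{2C_2 D^2}$ uniformly bounded, and produces the factor $\lambda_i^{N/2}$, giving $\|\phi_i\|_{L^\infty}\le C\lambda_i^{N/4}$.

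For the gradient bound, I differentiate under the integral in $\phi_i = e^{\lambda_i t}P_t\phi_i$ to get
$$
|\nabla\phi_i(x)| = e^{\lambda_i t}\left|\int_X \nabla_x p(x,y,t)\,\phi_i(y)\di\meas(y)\right|
$$
and apply Cauchy--Schwarz again, reducing matters to bounding $\int_X|\nabla_x p(x,y,t)|^2\di\meas(y)$. I use the gradient heat kernel estimate \eqref{eq:equi lip} (with $\epsilon=1$) to get
$$
|\nabla_x p(x,y,t)|^2 \le \frac{C_3^2 e^{2C_4 t}}{t\,\meas(B_{\sqrt{t}}(x))^2}\exp\!\left(-\frac{2\dist^2(x,y)}{5t}\right),
$$
and then apply Lemma~\ref{lem:volume} (with $\ell=0$) to the rescaled space $(X,\sqrt{t}^{-1}\dist,\meas)$ -- which is $\RCD^*(t K,N)$ with constants $c_0,c_1$ uniform in $t\in(0,D^2]$ -- to estimate the Gaussian-weighted integral of $1$ by $C\meas(B_{\sqrt{t}}(x))$. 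This yields $\int_X|\nabla_x p(x,y,t)|^2\di\meas(y)\le C t^{-1}\meas(B_{\sqrt{t}}(x))^{-1}e^{2C_4 t}$. Setting $t=1/\lambda_i$ and using the Bishop--Gromov lower bound once more produces the clean power $\lambda_i^{(N+2)/2}$, and extracting the square root gives $\|\nabla\phi_i\|_{L^\infty}\le C\lambda_i^{(N+2)/4}$.

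The only delicate point is ensuring that all constants depend only on $K$, $N$, $D$ and not on $i$. This is where the assumption $\lambda_i \ge D^{-2}$ is essential: it keeps $t=1/\lambda_i$ bounded by $D^2$, so that the ``harmless'' factors $e^{2C_2 t}$, $e^{2C_4 t}$, and the Bishop--Gromov constants stay controlled. The rescaling step for applying Lemma~\ref{lem:volume} is the main bookkeeping obstacle, but it is routine once one observes that the doubling constant of $(X,\sqrt{t}^{-1}\dist,\meas)$ is uniform in $t\in(0,D^2]$.
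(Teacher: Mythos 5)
Your proof is correct and follows essentially the same route as the paper: the identity $\phi_i = e^{\lambda_i t}\,h_t\phi_i$, Cauchy--Schwarz with $\|\phi_i\|_{L^2}=1$, the Gaussian bounds \eqref{eq:gaussian} and \eqref{eq:equi lip} combined with (a rescaled) Lemma~\ref{lem:volume} and Bishop--Gromov, and the choice $t=1/\lambda_i$ made admissible by $\lambda_i\ge D^{-2}$. The only (cosmetic) deviation is your use of the semigroup identity $\int_X p^2(x,y,t)\di\meas(y)=p(x,x,2t)$ for the $L^\infty$ bound, where the paper instead bounds $p(x,y,t)$ pointwise and integrates the Gaussian directly via Lemma~\ref{lem:volume}; both give the same estimate.
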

\begin{proof}
Without loss of generality we assume that $K \leq 0$. 
Since $\phi_i$ is an eigenfunction with eigenvalue $\lambda_i$, for all $t > 0$ one has $h_t \phi_i = e^{-\lambda_i t} \phi_i$, where $h_t$ denotes the heat flow, so that
\[
\phi_i(x) = e^{\lambda_i t} \int_X p(x,y,t) \phi_i(y) \di \meas(y), \quad \forall x \in X.
\]
Now by \eqref{eq:gaussian}
\[
\begin{split}
|\phi_i(x)| 
&\leq e^{\lambda_i t} \int_X p(x,y,t) |\phi_i(y)| \di \meas(y) \\
&\leq e^{\lambda_i t} \| \phi_i \|_{L^2} \left( \int_X p(x,y,t)^2 \di \meas(y) \right)^{1/2} \\
&\leq e^{\lambda_i t}  \frac{C_1}{\meas(B_{\sqrt{t}}(x))}\exp(C_2 t) 
\left( \int_X \exp\left( - \frac{2 \dist^2(x,y)}{5 t} \right) \di \meas(y) \right)^{1/2},
\end{split}
\]
where in the last line we used the normalization $\|\phi_i\|_{L^2} = 1$ and constants $C_i$ depending on $K$ and $N$. 
Now we use a scaled version of Lemma~\ref{lem:volume}, and get for $t \leq D^2$
\[
\begin{split}
|\phi_i(x)|& \leq C_1 C\exp(\lambda_i t + C_2 t) \frac{1}{\sqrt{\meas(B_{\sqrt{t}}(x))}}\\
&= C_1 C \exp(\lambda_i t + C_2 t) \sqrt{\frac{\meas(B_{D}(x))}{\meas(B_{\sqrt{t}}(x))}},
\end{split}
\]
where the constant $C$ depends on $D, K$ and $N$. The last equality follows by the assumption that 
$\mathrm{diam} (X, \dist) \leq D$ and $\meas(X) = 1$. By the Bishop-Gromov inequality \eqref{eq:BishopGromov}, we find
\[
\frac{\meas(B_{\sqrt{t}}(x))}{\mathrm{Vol}_{K,N}(\sqrt{t})}\geq \frac{\meas(B_D(x))}{\mathrm{Vol}_{K,N}(D)}.
\]
Therefore,
\[
|\phi_i(x)| \leq C_1 C \exp(\lambda_i t + C_2 t) \sqrt{\frac{\mathrm{Vol}_{K,N}(D)}{\mathrm{Vol}_{K,N}(\sqrt{t})}}.
\]
We choose $t = 1/\lambda_i$ to conclude the proof. 

Let us now prove the second inequality. We start from
\[
\phi_i(x) = e^{\lambda_i t} \int_X p(x,y,t) \phi_i(y) \di \meas(y), \quad \forall x \in X,
\]
to derive for $\meas$-almost all $x \in X$,
\[
\begin{split}
|\nabla \phi_i (x)| &\leq e^{\lambda_i t}\int_X |\nabla_x p(x,y,t)| |\phi_i(y)| \di \meas(y) \\
&\leq e^{\lambda_i t} \| \phi_i \|_{L^2} \left(\int_X |\nabla_x p(x,y,t) |^2 \di \meas(y) \right)^{1/2}.
\end{split}
\]
By the gradient bound in \eqref{eq:equi lip} and again Lemma~\ref{lem:volume} we get
\[
\begin{split}
|\nabla \phi_i(x)| &\leq e^{\lambda_i t} \frac{C_3 }{\sqrt{t} \meas(B_{\sqrt{t}}(x))} \exp(C_4 t) \left(\int_X \exp\left( - \frac{\dist^2(x,y)}{5t}\right)\di \meas(y) \right)^{1/2}\\
&\leq C_3 C \exp(\lambda_i t + C_4 t ) \frac{1}{\sqrt{t \meas(B_{\sqrt{t}}(x))}}\\
&\leq C_3 C \exp\left( \lambda_i t +C_4 t\right) \sqrt{\frac{\meas(B_D(x))}{t \meas(B_{\sqrt{t}}(x))}}.
\end{split}
\]
We use the Bishop-Gromov inequality once more to get
\[
|\nabla \phi_i(x) | \leq C_3 C \exp(\lambda_i t + C_4 t)\sqrt{\frac{\mathrm{Vol}_{K,N}(D)}{t \mathrm{Vol}_{K,N}(\sqrt{t})}}.
\]
Again, we pick $t = 1/\lambda_i$ to conclude the proof.
\end{proof}

The following result, well-known for compact Riemannian manifolds,  
provides a polynomial lower bound for the eigenvalues of $- \Delta$. The 
estimate we provide is not sharp, but sufficient for our purposes.

\begin{proposition}\label{prop:lowerbound}
Let $D>0$.
Assuming that $(X, \dist, \meas)$ is a $\RCD^*(K, N)$ space with $\mathrm{diam}(X, \dist) \leq D$ and
$\lambda_1 \ge D^{-2}$, there exists a constant $C_0=C_0(D, K, N)>0$ such that
$$
\lambda_i\ge C_0 i^{2/N}  \qquad \forall i \ge 1.
$$
\end{proposition}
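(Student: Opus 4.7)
The plan is to exploit the trace of the heat semigroup. By \eqref{eq:expansion1'}, the semigroup property, and Parseval,
$$\int_X p(x,x,2t)\,\di\meas(x) = \sum_{i\ge 0} e^{-2\lambda_i t}\int_X\phi_i^2\,\di\meas = \sum_{i\ge 0} e^{-2\lambda_i t},$$
so I would first derive a sharp upper bound for the right-hand side. The Gaussian bound \eqref{eq:gaussian} (with $\epsilon=1$) at $(x,x,2t)$ gives $p(x,x,2t)\le C_1 e^{2C_2 t}/\meas(B_{\sqrt{2t}}(x))$. Because $\mathrm{diam}(X,\dist)\le D$ and $\meas(X)=1$, one has $\meas(B_D(x))=1$, so Bishop-Gromov \eqref{eq:BishopGromov} applied between the radii $\sqrt{2t}$ and $D$ yields
$$\meas(B_{\sqrt{2t}}(x))\ge \frac{\mathrm{Vol}_{K,N}(\sqrt{2t})}{\mathrm{Vol}_{K,N}(D)}\ge c(K,N,D)\,t^{N/2}\qquad\forall t\in(0,D^2/2].$$
Integrating over $X$ produces the key trace estimate
$$\sum_{i\ge 0} e^{-2\lambda_i t}\le \frac{C(K,N,D)}{t^{N/2}}\qquad\forall t\in(0,D^2/2].$$

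The second step is a standard ``bracketing'' argument: since $\lambda_0\le\lambda_1\le\cdots\le\lambda_i$, the first $i+1$ terms of the series are each at least $e^{-2\lambda_i t}$, giving
$$(i+1)e^{-2\lambda_i t}\le \frac{C(K,N,D)}{t^{N/2}},$$
equivalently $2\lambda_i t\ge \log\bigl((i+1)t^{N/2}/C\bigr)$. I would then optimize by choosing $t=\alpha\, i^{-2/N}$ with $\alpha=\alpha(K,N,D)$ large enough that the logarithm is $\ge 1$; this is admissible provided $\alpha\,i^{-2/N}\le D^2/2$, i.e.\ provided $i\ge i_0(K,N,D)$. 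For such $i$ we obtain $\lambda_i\ge (2\alpha)^{-1}i^{2/N}$. For the finitely many indices $1\le i<i_0$ we simply invoke the standing hypothesis $\lambda_i\ge \lambda_1\ge D^{-2}$, which clearly dominates $C_0 i^{2/N}$ after possibly shrinking $C_0$. Combining the two ranges gives the stated inequality $\lambda_i\ge C_0 i^{2/N}$ for all $i\ge 1$ with $C_0=C_0(K,N,D)$.

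No step looks genuinely difficult: the Gaussian bound, the Bishop-Gromov inequality, and the trace formula \eqref{eq:expansion1'} are all at our disposal from the preliminaries. The only mildly delicate point is tracking how the constants $C_1,\,C_2$ of \eqref{eq:gaussian} and $c_0,\,c_1$ of \eqref{eq:BishopGromov} aggregate into the final $C_0$ and ensuring one obtains a uniform lower bound on $\meas(B_{\sqrt{2t}}(x))/t^{N/2}$ for $t$ up to order $D^2$; this is a straightforward consequence of the asymptotic $\mathrm{Vol}_{K,N}(r)\sim\omega_N r^N$ as $r\downarrow 0$ together with monotonicity.
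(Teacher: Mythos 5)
Your heat-trace argument is correct and yields the stated bound, but it is a genuinely different route from the paper's. The paper works directly inside the span $E_i=\Span(\phi_1,\dots,\phi_i)$: it manufactures an $L^2$-normalized $f_o\in E_i$ with $\sup f_o^2\ge i$ (by evaluating $\sum_{j\le i}\phi_j(p)\phi_j$ at a maximum point $p$ of $F=\sum_{j\le i}\phi_j^2$), and then plays this against the $L^\infty$ bound $\|\phi_j\|_\infty\le C\lambda_j^{N/4}$ from Proposition~\ref{prop:Jiang} to force $\lambda_i$ up. Your argument instead integrates the on-diagonal Gaussian upper bound $p(x,x,2t)\le C_1 e^{2C_2 t}/\meas(B_{\sqrt{2t}}(x))$ against $\meas$, lower-bounds the ball volume via Bishop--Gromov and $\mathrm{Vol}_{K,N}(r)\sim\omega_N r^N$, and then compares the resulting heat-trace bound $\sum_i e^{-2\lambda_i t}\le C t^{-N/2}$ with the trivial bracketing $(i+1)e^{-2\lambda_i t}\le\sum_j e^{-2\lambda_j t}$; optimizing $t\sim i^{-2/N}$ closes the estimate. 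The heat-trace route is the classical Weyl-type argument and has the advantage of never touching $L^\infty$ bounds on individual eigenfunctions; the paper's route stays closer to the machinery it has just set up (Proposition~\ref{prop:Jiang}).

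One thing you should repair: you invoke \eqref{eq:expansion1'} to write $\int_X p(x,x,2t)\,\di\meas=\sum_i e^{-2\lambda_i t}$, but in the paper's logical order \eqref{eq:expansion1'} is proved in the Appendix \emph{using} Proposition~\ref{prop:lowerbound}, so citing it here is circular. Fortunately the trace identity does not need the pointwise/uniform expansion at all. By the semigroup property and symmetry of $p$ one has $p(x,x,2t)=\|p(x,\cdot,t)\|_{L^2}^2$, and since $\langle p(x,\cdot,t),\phi_i\rangle_{L^2}=h_t\phi_i(x)=e^{-\lambda_i t}\phi_i(x)$, Parseval gives $\|p(x,\cdot,t)\|_{L^2}^2=\sum_i e^{-2\lambda_i t}\phi_i(x)^2$; integrating in $x$ and using monotone convergence with $\|\phi_i\|_{L^2}=1$ yields the trace identity. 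With that substitution (drop the reference to \eqref{eq:expansion1'} and argue from Parseval alone), the proof is self-contained and correct. The rest — the Gaussian bound, the Bishop--Gromov comparison on $(0,D^2/2]$, the choice $t=\alpha i^{-2/N}$ with $\alpha$ large, and absorbing the finitely many small indices via $\lambda_i\ge\lambda_1\ge D^{-2}$ — all goes through as you describe.
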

\begin{proof}
Take $i \ge 1$, write $E_i = \Span(\phi_1, \ldots, \phi_i)$.
We claim that there exists $f_o \in E_i$ such that $\sup f_o^2\ge i$ and 
$\|f_o\|_2 = 1$. Let us define the continuous function $F= \sum_{j=1}^{i} \phi_j^2$ and let $p\in X$ be a maximum point of $F$.
Then 
$$
f_o(x):=\frac{1}{\sqrt{F(p)}}\sum_{j=1}^i \phi_j(p)\phi_j(x)
$$
satisfies $\|f_o\|_2=1$ and $f_o(p)=\sqrt{F(p)}$, so that 
\begin{equation}\label{eq:lowerboundeigenvalues1}
i = \dim E_i= \int_X F\di\meas \le F(p) \leq  \sup f_o^2.
\end{equation}

We claim now that there exists $C_1>0$ depending only on $K$ and $N$ such that 
\begin{equation}\label{eq:claim1}
\sup |f| \le C_1 \lambda_i^{N/4} \| f \|_{L^2}\qquad\forall f\in E_i.
\end{equation}
Using this claim with $f=f_o$ together with \eqref{eq:lowerboundeigenvalues1}, we obtain the stated lower bound on $\lambda_i$.

Proposition~\ref{prop:Jiang} yields that for all $a_j \in \mathbb{R}$ we have
\begin{align*}
\left|\sum_{j=1}^ia_j\phi_j\right|^2 \le \sum_{j,\,k=1}^i|a_j||a_k||\phi_j||\phi_k| &\le 
C(D, K, N)\sum_{j,\,k=1}^i\lambda_j^{N/4}\lambda_k^{N/4}|a_j||a_k| \\
&\le C(D, K, N)\lambda_i^{N/2}\sum_{j=1}^i(a_j)^2
\end{align*}
which proves (\ref{eq:claim1}).
\end{proof}
We are now in a position to conclude. The first expansion ~\eqref{eq:expansion1'} is a direct consequence of 
Propositions \ref{prop:Jiang} and \ref{prop:lowerbound}. The second expansion \eqref{eq:expansion2'} 
follows, thanks to the simple observation that $\|\nabla \phi_i\|_2^2 = \lambda_i$.

\end{document}